%% Template for the submission to:
%%   The Annals of Applied Probability [AAP]
%%
%%%%%%%%%%%%%%%%%%%%%%%%%%%%%%%%%%%%%%%%%%%%%%
%% In this template, the places where you   %%
%% need to fill in your information are     %%
%% indicated by '???'.                      %%
%%                                          %%
%% Please do not use \input{...} to include %%
%% other tex files. Submit your LaTeX       %%
%% manuscript as one .tex document.         %%
%%%%%%%%%%%%%%%%%%%%%%%%%%%%%%%%%%%%%%%%%%%%%%

\documentclass[aap]{imsart}

%% Packages
\RequirePackage{amsthm,amsmath,amsfonts,amssymb}
\RequirePackage[numbers]{natbib}
\RequirePackage[colorlinks,citecolor=blue,urlcolor=blue]{hyperref}%% uncomment this for coloring bibliography citations and linked URLs
\usepackage{amsmath,amsthm,amssymb,latexsym,enumerate, mathtools,mathrsfs,todonotes}

\startlocaldefs
%%%%%%%%%%%%%%%%%%%%%%%%%%%%%%%%%%%%%%%%%%%%%%
%%                                          %%
%% Uncomment next line to change            %%
%% the type of equation numbering           %%
%%                                          %%
%%%%%%%%%%%%%%%%%%%%%%%%%%%%%%%%%%%%%%%%%%%%%%
\numberwithin{equation}{section}
%%%%%%%%%%%%%%%%%%%%%%%%%%%%%%%%%%%%%%%%%%%%%%
%%                                          %%
%% For Axiom, Claim, Corollary, Hypothesis, %%
%% Lemma, Theorem, Proposition              %%
%% use \theoremstyle{plain}                 %%
%%                                          %%
%%%%%%%%%%%%%%%%%%%%%%%%%%%%%%%%%%%%%%%%%%%%%%
\theoremstyle{plain}

\newtheorem{lemma}{Lemma}
\newtheorem{proposition}[lemma]{Proposition}
\newtheorem{theorem}[lemma]{Theorem}
\newtheorem{corollary}[lemma]{Corollary}

%%%%%%%%%%%%%%%%%%%%%%%%%%%%%%%%%%%%%%%%%%%%%%
%%                                          %%
%% For Assumption, Definition, Example,     %%
%% Notation, Property, Remark, Fact         %%
%% use \theoremstyle{remark}                %%
%%                                          %%
%%%%%%%%%%%%%%%%%%%%%%%%%%%%%%%%%%%%%%%%%%%%%%
\theoremstyle{remark}
\newtheorem{definition}[lemma]{Definition}
\newtheorem{remark}[lemma]{Remark}

\numberwithin{lemma}{section}
%\numberwithin{proposition}{section}
%\numberwithin{theorem}{section}
%\numberwithin{corollary}{section}
%\numberwithin{definition}{section}
%\numberwithin{remark}{section}

%\newtheorem{???}{???}
%\newtheorem*{???}{???}
%\newtheorem{???}{???}[???]
%\newtheorem{???}[???]{???}
%%%%%%%%%%%%%%%%%%%%%%%%%%%%%%%%%%%%%%%%%%%%%%
%% Please put your definitions here:        %%

\newcommand{\RR}{{\mathbb R}}
\newcommand{\NN}{{\mathbb N}}

\newcommand{\del}{\partial}

\newcommand{\dist}{\on{dist}}
\newcommand{\eps}{\varepsilon}
\newcommand{\Id}{\mathrm{Id}}

\newcommand{\norm}[1]{ \left\| #1 \right\| }
\newcommand{\oline}[1]{\overline{#1}}
\newcommand{\osc}{\on{osc}}
\newcommand{\oo}{\infty}
\newcommand{\sgn}{\on{sgn}}

\newcommand{\tr}{\on{tr}}
\newcommand{\uline}[1]{\underline{#1}}

\DeclareMathOperator*{\argmax}{arg\,max}
\DeclareMathOperator*{\argmin}{arg\,min}
\DeclareMathOperator*{\esssup}{ess\,sup}

\newcommand{\mcl}{\mathcal}
\newcommand{\mbb}{\mathbb}

\newcommand{\on}{\operatorname}
%%%%%%%%%%%%%%%%%%%%%%%%%%%%%%%%%%%%%%%%%%%%%%

\endlocaldefs

\begin{document}

\begin{frontmatter}
%%%%%%%%%%%%%%%%%%%%%%%%%%%%%%%%%%%%%%%%%%%%%%
%%                                          %%
%% Enter the title of your article here     %%
%%                                          %%
%%%%%%%%%%%%%%%%%%%%%%%%%%%%%%%%%%%%%%%%%%%%%%
\title{The Neumann problem for fully nonlinear SPDE}
%\title{A sample article title with some additional note\thanksref{T1}}
\runtitle{The Neumann problem for fully nonlinear SPDE}
%\thankstext{T1}{A sample of additional note to the title.}

\begin{aug}
%%%%%%%%%%%%%%%%%%%%%%%%%%%%%%%%%%%%%%%%%%%%%%%
%% Only one address is permitted per author. %%
%% Only division, organization and e-mail is %%
%% included in the address.                  %%
%% Additional information can be included in %%
%% the Acknowledgments section if necessary. %%
%%%%%%%%%%%%%%%%%%%%%%%%%%%%%%%%%%%%%%%%%%%%%%%
\author[A]{\fnms{Paul} \snm{Gassiat}\ead[label=e1]{gassiat@ceremade.dauphine.fr}},
\author[B]{\fnms{Benjamin} \snm{Seeger}\ead[label=e2]{seeger@math.utexas.edu}}
%\and
%\author[B]{\fnms{???} \snm{???}\ead[label=e3,mark]{???@???}}
%%%%%%%%%%%%%%%%%%%%%%%%%%%%%%%%%%%%%%%%%%%%%%
%% Addresses                                %%
%%%%%%%%%%%%%%%%%%%%%%%%%%%%%%%%%%%%%%%%%%%%%%
\address[A]{Universit\'e Paris-Dauphine, PSL University, UMR 7534, CNRS, CEREMADE, 75775 Paris Cedex 16, France, \printead{e1}}

\address[B]{University of Texas at Austin, 2515 Speedway, PMA 8.100, Austin, TX 78712, \printead{e2}}
\end{aug}

\begin{abstract}
We generalize the notion of pathwise viscosity solutions, put forward by Lions and Souganidis to study fully nonlinear stochastic partial differential equations, to equations set on a sub-domain with Neumann boundary conditions. Under a convexity assumption on the domain, we obtain a comparison theorem which yields existence and uniqueness of solutions as well as continuity with respect to the driving noise. As an application, we study the long time behaviour of a stochastically perturbed mean-curvature flow in a cylinder-like domain with right angle contact boundary condition.
\end{abstract}

\begin{keyword}[class=MSC]
\kwd{60H15,35R60,35D40,35K55,35K93}
\end{keyword}

\begin{keyword}
\kwd{stochastic viscosity solutions}
\kwd{Neumann boundary conditions}
\kwd{mean curvature flow}
\end{keyword}

\end{frontmatter}
%%%%%%%%%%%%%%%%%%%%%%%%%%%%%%%%%%%%%%%%%%%%%%
%% Please use \tableofcontents for articles %%
%% with 50 pages and more                   %%
%%%%%%%%%%%%%%%%%%%%%%%%%%%%%%%%%%%%%%%%%%%%%%
\tableofcontents

%%%%%%%%%%%%%%%%%%%%%%%%%%%%%%%%%%%%%%%%%%%%%%
%%%% Main text entry area:

%\tableofcontents

\section{Introduction} 

The focus of this paper is the Neumann problem for the equation given by
\begin{equation}\label{E:neumann}
	\begin{dcases}
		du = F(D^2 u, Du,u,x,t) dt + \sum_{i=1}^m H^i(Du) \cdot d\zeta^i & \text{in } \Omega \times (0,T],\\
		Du \cdot n = 0 & \text{on } \del \Omega \times [0,T],
	\end{dcases}
\end{equation}
where $\Omega \subset \RR^d$ is a given subdomain, $F: \mbb S^d \times \RR^d \times \RR \times \RR^d \times [0,T] \to \RR$ is degenerate elliptic, $H: \RR^d \to \RR^m$ is sufficiently regular, and $\zeta: [0,T] \to \RR^m$ is a fixed continuous path. We provide more precise assumptions in what follows.

In the case when $\zeta$ is sufficiently regular (say $C^1$), this equation is covered by the classical framework of Crandall-Lions viscosity solutions (e.g. \cite{CIL}). Our interest in this paper is to treat the ``rough'' case, where we only assume that $\zeta$ is a continuous function, so that $\dot{\zeta}(t)$ does not make sense as a pointwise function (this is also known as the ``stochastic'' case, since a major motivation is to apply the theory for cases when $\zeta$ is the realisation of a stochastic process such as Brownian motion, which is naturally an irregular object).

{
These equations were introduced in a series of works by Lions and Souganidis \cite{LSfirst, LSnonsmooth, LSsemilinear, LSunique}, who explained how to extend the theory of viscosity solutions to deal with such irregular terms; see \cite{Snotes} for a comprehensive overview. A number of applications were also discussed in these works, among them the level-set formulation of the motion of hypersurfaces when the dynamics are perturbed by a stochastic noise (this includes in particular the case of stochastic mean curvature flow). Other recent developments include the analysis of qualitative behavior, for instance, long-time behavior, regularity/regularization by noise, and finite/infinite speed of propagation \cite{G,GG19, GGLS, LSreg}; the construction of numerical schemes \cite{Seegerschemes}; and applications to stochastically perturbed mean curvature flow \cite{SY,LSac}.

Several new tools and techniques are required to extend the already well-developed viscosity solution theory to the setting of equations driven by rough multiplicative noise. Many of the aforementioned works take advantage of the spatial homogeneity of the Hamiltonians $H^i$ and the translation invariance of the spatial domain, which is taken to be the whole space $\RR^d$ or the torus. Some recent works treat equations in which this homogeneity is broken by considering $x$-dependent Hamiltonians \cite{FGLS, Shomog, SPerrons}. The treatment of equations with $(x,t)$-dependence is by now standard in the classical viscosity solution theory, with well-posedness holding under quite general structural assumptions on the nonlinearities. By contrast, if the noise coefficients $H^i$ in \eqref{E:neumann} depend on $x$, the analysis must be completely revisited on account of the wild behavior of the term $d \zeta$. As a consequence, quite particular restrictions are put on the Hamiltonians in those works, for instance a separated structure $h(Du) + f(x)$, or a ``metric'' structure in which $H$ is scalar valued, convex, and coercive.

The main purpose of the present paper is to further the scope of the pathwise viscosity solution theory to ``inhomogenous'' settings, by considering in particular the Neumann problem for \eqref{E:neumann} on a subdomain $\Omega \subset \RR^d$. To our knowledge, this is the first treatment of fully nonlinear SPDEs with boundary conditions (apart from the periodic boundary conditions on the torus, which may be recast as a problem on the whole space). In analogy to the setting of $x$-dependent Hamiltonians, many of the simplifications that are used in the analysis of the homogenous problem on the whole space are no longer available. We are able to prove well-posedness results that are, while new and sufficiently general for the applications we have in mind, somewhat more restrictive than in the standard, ``non-rough'' setting. The most notable such restriction is that we only treat convex $\Omega$. The Hamiltonian, meanwhile, require certain structural assumptions.

Boundary value problems for (deterministic) fully nonlinear equations are by now a classical topic. In particular, the definition of Neumann boundary conditions in the viscosity sense goes back to Lions \cite{Lions85}, who obtained a comparison principle for first-order equations. This was then extended by various authors to more general cases, such as second-order equations (in particular of geometric type) and fully nonlinear boundary conditions, see for instance \cite{Ishii91,GS93, Bar93,Bar99, IS04, Bou08}. 

The Neumann boundary condition for \eqref{E:neumann} is relevant for the case of geometric equations, in which case the level sets of the solutions $u$ model the motion of hypersurfaces with a prescribed normal velocity that is rough in time. The imposition of the boundary condition is then equivalent to requiring that the contact angle of the hypersurface with the boundary be a right angle. 
}

\subsection{Well-posedness results: method of proof and main difficulties} We define a notion of (sub/super) solution for the boundary value problem \eqref{E:neumann}. Following the Lions-Souganidis theory, we make use of specific test functions in order to deal with the singularity of $d\zeta$. Crucially, just as in the classical viscosity solution theory, the boundary condition has to be understood in a weak sense (see the discussion at the beginning of subsection \ref{subsec:definitions} below). We show that our definition is consistent with the classical one when $\zeta$ is $C^1$, and satisfies a stability property under half-relaxed limits.

We then proceed to prove well-posedness results for this notion of solution. The domain $\Omega$ is assumed to be $C^1$ and convex (see below for a discussion of this second assumption), and $F$ is taken to satisfy ``standard'' assumptions from the theory of viscosity solutions, which encompass, for example, the nonlinearity arising from the level-set equation for mean curvature flow. As is usual in the Lions-Souganidis theory of ``rough'' viscosity solutions, the Hamiltonian $H$ requires more assumptions than in the classical case. We assume either that $H \in C^2$ with $D^2 H$ bounded, or each $H^i$ is equal to a difference of convex functions, a condition which is introduced already in \cite{LSnonsmooth}. In the latter case, we also need some stronger conditions: either $\Omega$ is a half-space, the convexity of $\Omega$ satisfies a nondegeneracy condition (see \eqref{A:uniformlyconvexdomain} below), or $H$ is radial. A polynomial growth assumption for $H$ is needed as well; in this paper, we assume quadratic growth for simplicity, but this can be generalized (see Remark \ref{R:growth} below).

We then prove the following:
\begin{itemize}

\item Comparison: if  $u$ and $v$ are respectively a sub- and super-solution of \eqref{E:neumann}, then
	\[		\sup_{(x,t) \in \oline{\Omega} \times [0,T]} \Big\{ u(x,t) - v(x,t)\Big\} \le \sup_{x \in \oline{\Omega}} \Big\{ u(x,0) - v(x,0) \Big\}. \]
\item Continuity with respect to the noise: if, for $n \in \NN$, $(u^n)_{n \in \NN}$ are solutions of \eqref{E:neumann} driven by given signals $\zeta^n \in C([0,T],\RR^m)$ which converge uniformly, as $n \to \oo$, to $\zeta \in C([0,T],\RR^m)$, then $u^n \xrightarrow{n \to \oo} u$ locally uniformly, where $u$ is a solution of \eqref{E:neumann} corresponding to $\zeta$.
\end{itemize}

These statements imply the existence of a unique solution to the initial value problem for \eqref{E:neumann} for arbitrary continuous $\zeta$ and initial datum $u_0$. As is usual in the viscosity solution theory, the proof of the comparison principle also yields some additional information on the solutions. For instance, we also obtain in the case where $F$ is independent of $(u,x,t)$:

\begin{itemize}
\item Spatial continuity: if $u_0$ is uniformly continuous, any solution $u$ of \eqref{E:neumann}  is uniformly continuous in $x$, with a modulus which does not depend on $\zeta$. 

\item Monotonicity in the path variable: if the $H^i$ are convex and, $\zeta^1(0)=\zeta^2(0)$ and $\zeta^1 \le \zeta^2$ on $[0,T]$, then the corresponding solutions satisfy $u^1 \leq u^2$ on $\Omega \times [0,T]$. 
\end{itemize}

{
In order to explain the specific difficulties and the need for our assumptions, due to the presence of both the boundary condition and the irregularity of $\zeta$, we describe now the strategy of proof for the comparison principle. As usual in the viscosity solution theory, we double the variables and apply maximum principle arguments to a quantity of the form
\[\sup_{x,y \in \Omega } \left\{ u(x,t) - v(y,t) - \Phi(x,y,t) \right\}   \]
for well-chosen test functions $\Phi$. The fact that $\zeta$ is irregular means there is little flexibility in choosing $\Phi$, and, in order to ``cancel out'' the rough term in the dynamics, $\Phi$ must be smooth in the $(x,y)$-variable and a solution of the doubled equation
\begin{equation}\label{introdoubled}
d\Phi = \sum_{i=1}^m \left( H^i\left(D_x \Phi\right) - H^i \left(- D_y \Phi\right) \right) \cdot d\zeta^i.
\end{equation}

The family of test functions that we use are indexed by two small parameters $\delta,\eps > 0$, which each play a different role. We will have, roughly,
\begin{equation}\label{Phibehavior}
	\Phi(x,y,t) \approx \frac{|x-y|^2}{2\delta} + \eps d_\Omega(x) + \eps d_\Omega(y),
\end{equation}
where $d_\Omega$ is the signed distance to $\Omega$. Thus, $\delta$ corresponds to a penalization outside of the diagonal, and the $\varepsilon$-term ensures that the test function is a strict super-solution of the boundary condition on $\partial \Omega$. A large part of this work is devoted to constructing a test function $\Phi$ that solves \eqref{introdoubled}, while also behaving as in \eqref{Phibehavior} on a sufficiently long time interval.

The main difficulty is that the time interval on which the desired constraints hold shrinks with $\delta$, and, therefore, in the well-posedness proofs, we take first $\eps \to 0$ and then $\delta \to 0$. This is contrary to the classical setting, where the parameters are balanced in some way. 
%Note that the order in which we let the parameters $\varepsilon, \delta$ to $0$ (here first $\varepsilon$, then $\delta$) is different from the usual one in the viscosity Neumann literature, and this is due to the fact that doing it the other way around would lead to a loss of the boundary conditions under the doubled equations which would be too fast for the test functions to be useful. 
It is for this reason that we must assume $\Omega$ is convex, and why we can only deal with the case of the homogeneous Neumann boundary condition. Note, in particular, that, if $\Omega$ is convex, then $x \mapsto |x-y|^2/2\delta$ is always a super-solution of the homogenous Neumann boundary condition.

The construction of the test functions $\left(\Phi_{\delta, \varepsilon}\right)_{\delta,\eps > 0}$ is determined by the precise assumptions on the Hamiltonian $H$. When $H \in C^2$, their analysis, via the method of characteristics, is rather straightforward (see Section \ref{sec:smoothH}). 

In the case when each $H^i$ is a difference of convex functions, we use $C^{1,1}$-regular test functions defined by Hopf-type formulae, in which case proving that they satisfy all the properties that we need requires a more involved analysis. To explain the difficulty, let us assume for simplicity here that $m = 1$ and $H$ is convex. Then a candidate for the test function $\Phi_{\delta,\eps}$, defined for $t$ in a neighborhood of some fixed $t_0 \in [0,T]$, is
\begin{equation}\label{Phicandidate}
\begin{split}
	\sup_{p,u,v \in \RR^d}& \left\{ (p+u) \cdot x - (p-v) \cdot y - \delta \frac{|p|^2}{2} - \eps \psi^* \left( \frac{u}{\eps} \right) - \eps \psi^* \left( \frac{v}{\eps} \right) \right.\\
	&+ \left. (\zeta_t - \zeta_{t_0}) \left( H(p + u) - H(p - v) \right) - \gamma \left( H(p + u) + H(p - v)\right) \right\}.
\end{split}
\end{equation}
Here, $\gamma > 0$ is some small parameter and $\psi$ is a certain uniformly convex regularization of the signed distance function to $\Omega$ (see \eqref{psi} below). 

In view of the uniform concavity in $(p,u,v)$ of the function inside the supremum, one can check that \eqref{Phicandidate} is $C^{1,1}$ in $(x,y)$ and solves \eqref{introdoubled} as long as $|\zeta_t - \zeta_{t_0}| \le \gamma$. The main difficulty is in proving that this test function is a supersolution of the boundary condition, for an appropriate choice of $\gamma$ depending on $\delta$. As it turns out, however, this may not necessarily be the case, even when $\zeta_t = \zeta_{t_0}$. 

We overcome this issue by ``pushing'' the supremum $(p,u,v)$ attained in \eqref{Phicandidate} in a convenient direction. This is done by adding to $|p|^2/2$ some convex function $\ell(p)$. The precise choice of $\ell(p)$ depends on the more particular cases described above, that is, when $\Omega$ is a half space or nondegenerately convex, or when $H$ is radial. The precise construction of the test functions, and the proofs of their specific properties, are laid out in Section \ref{sec:testfn}.
}

\subsection{Long-time behavior for stochastic mean curvature flow}
As an application of our comparison and stability estimates, we study the qualitative long-time behaviour of a stochastically perturbed mean-curvature flow in a convex cylinder-like domain $\Omega = D \times \RR$ . Namely, we consider a bounded hypersurface $\Gamma(t) \subset \Omega$ which satisfies a right angle boundary condition on $\partial \Omega$ and evolves according to the normal velocity
$$ V(t) = - \kappa + dB(t)$$
where $\kappa$ is the mean curvature and $B=(B(t))_{t \geq 0}$ is a scalar Brownian motion. We show that, for large times, $\Gamma$ is arbitrarily close in the Hausdorff distance to a (possibly fat) hyperplane of the form $D \times [a+B(t), b+B(t)]$ (see Section \ref{sec:mcf} below for the precise results). This result is an extension of a deterministic result obtained by Giga, Ohnuma and Sato \cite{GOS99}. In fact, our proof crucially relies on their result, which we combine with the stability properties of the level set PDE to obtain our convergence (in particular, both the spatial modulus and monotonicity results are needed). { Note that the presence of the noise term allows to slightly improve their result, since they only obtained convergence in Hausdorff distance under an additional condition. }Similar results have also been obtained in the stochastic PDE literature in the simpler case of periodic graphs and by different methods in \cite{ER12,DHR21}.

\subsection{Organization of the paper}

In Section \ref{sec:prelim}, we introduce some preliminary objects, state the main assumptions, and define the notion of viscosity solution to \eqref{E:neumann}. Section \ref{sec:smoothH} is devoted to the proof of the comparison principle when $H$ is smooth ($C^2$). The case of non-smooth $H$ is dealt with in the following sections: in Section \ref{sec:testfn}, we define a family of test functions and derive their properties, and in Section \ref{sec:nonsmoothwellposed} we use these results to prove the well-posedness theorems. The case of geometric equations requires a slightly different family of test functions to deal with the singularity in the second-order part of the equation, and is treated in Section \ref{sec:geo}. In Section \ref{sec:mcf} we apply our results to the long-time behaviour of a stochastically perturbed mean curvature flow in a domain. Section \ref{sec:cl} contains a discussion of some possible extensions and open questions. Finally, in the Appendix, we prove some auxiliary results from the pathwise viscosity solution theory used throughout the paper, such as consistency and stability properties.

\subsection{Notation}
We denote by $\cdot$ the usual scalar product  in $\RR^d$ and $|\cdot|$ the associated Euclidean norm.

$\mbb S^d$ is the space of $d\times d$ symmetric matrices, equipped with the norm $\norm{X} := \max_{|v| = 1} |Xv \cdot v|$. $\Id$ is the identity matrix (its dimension will be clear from the context).

Given a convex function $\psi: \RR^d \to \RR$, we let $\partial \psi(x)$ be its sub-differential at a point $x \in \RR^d$, and $\psi^{\ast}(p) := \sup_{x \in \RR^d} \left( \left \langle p, x \right\rangle - \psi(x) \right)$ its convex conjugate.

We let $D f, D^2 f$ denote respectively the gradient and Hessian of a function $f$. If the arguments of $f$ are $(x,t)$, $Df$ and $D^2 f$ will always be understood as being only w.r.t. the $x$ variable.

$\|\cdot\|_{\infty,A}$ is the supremum norm for functions defined over a set $A$.

$(B)USC(A)$, $(B)LSC(A)$, $(B)UC(A)$ are respectively (bounded) upper semi-continuous, lower semi-continuous and uniformly continuous from $A$ to $\RR$. $F^\ast$, $F_{\ast}$ are the upper and lower semi-continuous envelope of a function $F$ (there is a minor clash of notations with the convex conjugate, which one we mean will be clear from the context)  .

We let $C_0([0,T],\RR^m) =\left\{ \zeta \in C([0,T],\RR^m) \mbox{ s.t. }\zeta(0)=0\right\}$.

\section{Preliminaries}\label{sec:prelim}

\subsection{The domain}

%We assume throughout that
%\begin{equation}\label{A:C1domain}
%	\Omega \subset \RR^d \quad \text{is open with a $C^1$-boundary,}
%\end{equation}
%so that, for all $x \in \partial \Omega$, there is a well-defined outward unit normal vector $n(x) \in S^{d-1}$.
%Except in some specific examples we discuss later, we will always assume that
%\begin{equation}\label{A:convexdomain}
%	\Omega \quad \text{is convex.}
%\end{equation}

We assume throughout that
\begin{equation}\label{A:C1convexdomain}
	\left\{
	\begin{split}
	&\Omega \subset \RR^d \quad \text{is convex and open with a $C^1$-boundary, and }\\
	&\text{the outward normal $n(x)$ is uniformly continuous in $x \in \partial \Omega$.}
	\end{split}
	\right.
\end{equation}
At times, we will impose a stronger convexity assumption on $\Omega$ (see \eqref{A:uniformlyconvexdomain} for instance).

In view of \eqref{A:C1convexdomain}, there exists $\psi: \RR^d \to \RR$ such that
\begin{equation}\label{psi}
	\left\{
	\begin{split}
	&\psi \in C^2(\RR^d) \text{ is convex, } \norm{D\psi}_{\oo} + \norm{D^2 \psi}_{\oo} < \oo,\\
	&\inf \psi = -1, \quad \lim_{|x| \to +\oo} \psi(x) = +\oo, \quad \text{and }\\
	&D\psi(x) \cdot n(x) \ge 1 \quad \text{for all } x \in \partial \Omega.
	\end{split}
	\right.
\end{equation}
Indeed, let $\rho: \RR^d \to \RR$ be the signed distance function to $\Omega$, negative in the interior of $\Omega$ and positive in the exterior. Then we may take $\psi$ to be a regularization (by convolution with a standard mollifier, for example) of the function $2(\rho \vee -1)$. If $\Omega$ is unbounded, the penalization property for large $x$ can be ensured by adding a term of the form $\eps \sqrt{1 + |x|^2}$, with $\eps$ taken sufficiently small that the desired inequality on $\partial \Omega$ is still satisfied by $\psi$.
%\todo[inline]{Check the above, seems unclear if $\Omega$ is unbounded}

The global bounds on the gradient and Hessian of $\psi$ in \eqref{psi} imply that its convex conjugate satisfies
\begin{equation}\label{psistar}
	\left\{
	\begin{split}
	&\psi^* \text{ is finite only in a fixed compact set $K \subset \RR^d$, and, for some fixed $\kappa > 0$,}\\
	&D^2\psi^*  \ge \kappa \Id \text{ in the sense of distributions on $K$.}
	\end{split}
	\right.
\end{equation}

{
\subsection{The nonlinearity $F$}

The second order dependence in the equations will be determined by a function $F$ satisfying
\begin{equation}\label{A:Fcts}
	F \in C(\mbb S^d, \RR^d , \RR , \RR^d, [0,T]) \text{ is uniformly continuous in } (r,x,t) \text{ for bounded } (X,p),		
\end{equation}
\begin{equation}\label{A:Fmonotone}
	r \mapsto F(X,p,r,x,t) \text{ is non-increasing for all } (X,p,x,t) \in \mbb S^d \times \RR^d \times \RR^d \times [0,T],
\end{equation}
and
\begin{equation}\label{A:F}
	\left\{
	\begin{split}
	&\text{for all $R,C > 0$, there exists $\omega_{R,C}: [0,\oo) \to [0,\oo)$ such that $\lim_{r \to 0^+} \omega_{R,C}(r) = 0$}\\
	&\text{and, for all $p,q \in \RR^d$, $-R \le r \le s \le R$, $x,y \in \RR^d$, $t \in [0,T]$, $\delta,\eta > 0$,}\\
	&\text{and $X,Y \in \mbb S^d$ satisfying}\\
	&- \frac{C}{\delta}
		\begin{pmatrix}
			\Id & 0 \\
			0 & \Id
		\end{pmatrix}
		\le
		\begin{pmatrix}
			X & 0 \\
			0 & - Y
		\end{pmatrix}
		\le \frac{C}{\delta}
		\begin{pmatrix}
			\Id & -\Id \\
			-\Id & \Id
		\end{pmatrix}
		+ \eta
		\begin{pmatrix}
			\Id & 0 \\
			0 & \Id
		\end{pmatrix}, \\
		&\text{we have } F(X,p,r,x,t) - F(Y,q,s,y,t)\\
		&\quad \le \omega_{R,C}\left( \frac{|x-y|^2 + |p-q|^2}{\delta} + (1 + |p| + |q|)|x-y| + (s-r) + \eta\right).
	\end{split}
	\right.
\end{equation}
The condition \eqref{A:F} is a general assumption on the coupled dependence of $F$ on the derivative variables $(X,p)$ and the environment variables $(x,t)$. Such a requirement is usual in the theory of fully nonlinear second order equations; cf. \cite[(3.10), (3.14)]{CIL}. One difference is that the modulus in \eqref{A:F} accounts for different values in the gradient variable $p$.

We note our assumptions encompass all ``standard'' examples of nonlinearities that we list now, referring to \cite{CIL} and the references therein for details. For instance, \eqref{A:F} is clearly satisfied if $F$ is independent of $p$ and $x$, nondecreasing in $X \in \mbb S^d$, and nonincreasing in $u$, or, more generally for $F$ with a separated dependence, that is, if $F$ is given, for $F_0$ and $F_1$ satisfying appropriate conditions, by
\[
	F(X,p,r,x,t) = F_1(X,r,t) + F_0(p,r,x,t).
\]
A more nontrivial example of coupling is given by
\[
	F(X,p,x,t) = \tr[a(p,x,t)X],
\]
where $a = \sigma \sigma^t$ for some $\sigma \in C^{0,1}(\RR^d \times \RR^d; \RR^{d \times m})$. Finally, other examples can be generated by observing that \eqref{A:Fcts} - \eqref{A:F} are closed under the $\inf$ and $\sup$ operattions. That is, if $(F_{\alpha\beta})_{\alpha \in \mcl A, \, \beta \in \mcl B}$ satisfy the assumptions uniformly in $\alpha$ and $\beta$, then the same is true for
\[
	\inf_{\alpha \in \mcl A} \sup_{\beta \in \mcl B} F_{\alpha\beta} \quad \sup_{\beta \in \mcl B} \inf_{\alpha \in \mcl A} F_{\alpha\beta}.
\]

Later in the paper, motivated by geometric examples in which $F$ has a singularity at $p = 0$, we allow for $F$ to be discontinuous at $p = 0$. Although general dependence on all variables can be treated with similar assumption to those above, for brevity and simplicity of presentation, we focus then on $F$ depending only on $X$ and $p$, which still allows for the treatment of the nonlinearity arising from perturbed mean curvature flow, i.e.
\[
	F(X,p) = \tr\left[ \left(\Id - \frac{p \otimes p}{|p|^2} \right)X \right].
\]
}

\subsection{Definition of solutions}\label{subsec:definitions}

{
An important feature in the study of fully nonlinear equations on domains is the need to understand the boundary conditions in a weak sense. Indeed, consider the initial value problem
\begin{equation}\label{E:sampleboundary}
	\begin{dcases}
		\del_t u = |\del_x u| & \text{in }(-1,1) \times [0,T],\\
		\del_x u \cdot \nu = 0 & \text{on } \del(-1,1) \times [0,T], \text{ and}\\
		u(x,0) = x & \text{in } (-1,1)
	\end{dcases}
\end{equation}
(here $\nu(1) = 1$ and $\nu(-1) = -1$). It turns out that the unique viscosity solution (obtained, for instance, from a vanishing viscosity limit) is given by $u(x,t) = \min\{x + t, 1 \}$, which does not satisfy the Neumann condition at the left endpoint $-1$. Instead, on the boundary $\{-1,1\}$, $u$ satisfies the subsolution property
\[
	\min\left\{ \del_t u - |\del_x u|, \del_x u \cdot \nu \right\}  \le 0
\]
and the supersolution property
\[
	\max \left\{ \del_t u - |\del_x u|, \del_x u \cdot \nu \right\} \ge 0.
\]
Observe that, although $\del_x u(-1,t)\cdot \nu(-1) = -1 < 0$, the boundary supersolution property is still satisfied, because $(\del_t u - |\del_x u|)(-1,t) = 0$.

The nature of the definition of pathwise viscosity solutions rests on appropriate classes of test functions, which, in turn, is determined by the regularity of the Hamiltonian $H$. 
}

\subsubsection{Smooth Hamiltonians}

We shall first assume that
\begin{equation}\label{A:HC2}
	H \in C^2(\RR^d;\RR^m) \quad \text{and} \quad \sup_{p \in \RR^d} |D^2 H(p)| < \oo.
\end{equation}
The method of characteristics then yields, for any $\phi \in C^2(\RR^d)$, a number $h > 0$, depending only on $\norm{D^2H}_\oo$ and $\norm{D^2\phi}_\oo$, and, for any $t_0 > 0$, a solution $\Phi \in C((t_0 - h, t_0+h) \cap [0,T] , C^2(\RR^d))$ of 
\begin{equation}\label{E:local}
	d\Phi = \sum_{i=1}^m H^i(D\Phi) \cdot d\zeta^i \quad \text{in } \RR^d\times (t_0 - h, t_0 + h)
\end{equation}
satisfying $\Phi(\cdot,t_0) = \phi$. The solution of \eqref{E:local} is understood in the sense of continuous extension of the map $\zeta \mapsto \Phi$, and in fact, because $H$ is independent of $x$, $\Phi$ can be obtained by composing the solution operators for the Hamilton-Jacobi equations associated to the Hamiltonians $(H^i)_{i=1}^m$ with increments of the path $\zeta$ (see Lemma \ref{L:solutionops} below).

Following \cite{LSfirst}, we use test functions satisfying \eqref{E:local} in order to adapt the notion of viscosity solutions for the Neumann problem to the ``rough'' setting:
\begin{definition}\label{D:smoothH}
	A function $u \in USC( \oline{\Omega} \times [0,T])$ (resp. $LSC$) is a sub- (resp. super-) solution of \eqref{E:neumann} if, whenever $\mu \in C^1((0,T))$, $\Phi$ is a smooth-in-space solution of \eqref{E:local}, and $u(x,t) - \Phi(x,t) - \mu(t)$ achieves a local maximum (minimum) at some $(x_0,t_0) \in \bar{\Omega} \times (0,T]$, then
	\[
		\left\{
		\begin{split}
		&\mu'(t_0) - F(D^2\Phi(x_0,t_0), D\Phi(x_0,t_0), u(x_0,t_0),x_0,t_0) \le 0 \quad \text{if $x_0 \in \Omega$ and}\\
		&\min\Big\{ \mu'(t_0) - F(D^2\Phi(x_0,t_0), D\Phi(x_0,t_0), u(x_0,t_0),x_0,t_0), \\
		& \qquad \qquad \qquad D\Phi(x_0,t_0) \cdot n(x_0)\Big\} \le 0 \quad \text{if $x_0 \in \del \Omega$}
		\end{split}
		\right.
	\]
	(resp.
	\[
		\left\{
		\begin{split}
		 &\mu'(t_0) - F(D^2\Phi(x_0,t_0), D\Phi(x_0,t_0), u(x_0,t_0),x_0,t_0) \ge 0 \quad\text{if $x_0 \in  \Omega$ and}\\
		&\max\Big\{ \mu'(t_0) - F(D^2\Phi(x_0,t_0), D\Phi(x_0,t_0), u(x_0,t_0),x_0,t_0), \\
		& \qquad \qquad \qquad D\Phi(x_0,t_0) \cdot n(x_0)\Big\} \ge 0  \quad \text{if }x_0 \in \del \Omega.
		\end{split}
		\right.
	\]
	A function $u \in UC(\oline{\Omega} \times [0,T])$ is called a solution if it is both a sub- and super-solution.
\end{definition}

\subsubsection{Nonsmooth Hamiltonians}

We next relax \eqref{A:HC2} and assume that
\begin{equation}\label{A:DCH}
	\text{for each $i = 1,2,\ldots,m$, there exist convex $H^i_1,H^i_2: \RR^d \to \RR$ such that } H^i = H^i_1 - H^i_2.
\end{equation}
Even when $\Omega = \RR^d$, \eqref{A:DCH} is the weakest possible assumption for which \eqref{E:neumann} is well-posed for any given initial datum and path; see \cite{LSnonsmooth,Snotes}. 

For such Hamiltonians, it is no longer possible, in general, to find local-in-time solutions of \eqref{E:local} that are $C^2$ in space. However, through a different procedure, one can build local-in-time, $C^{1,1}$-in-space solutions (see Lemma \ref{L:C11solutions} below). If $H$ satisfies \eqref{A:DCH} and $F \equiv 0$, then Definition \ref{D:smoothH} can still be used to define solutions using such test functions. However, when $F$ is nontrivial, we are forced to generalize the definition.

We use the fact (see \cite[Theorem 7.1]{Snotes}) that, if $H$ satisfies \eqref{A:DCH}, then, for any $\phi \in UC(\RR^d)$, there exists a unique pathwise viscosity solution $\Phi \in UC(\RR^d \times [0,T])$, satisfying $\Phi(\cdot,0) = \phi$, of
\begin{equation}\label{E:pathwiseHJnonsmooth}
	d\Phi = \sum_{i=1}^m H^i(D\Phi) \cdot d\zeta^i \quad \text{in } \RR^d \times [0,T].
\end{equation}

Below, we also allow for discontinuous $F$. We do this to allow for the study of geometric equations in Section \ref{sec:geo}.

\begin{definition}\label{D:nonsmoothH}
	A function $u \in USC(\oline{\Omega} \times (0,T))$ is called a sub-solution of \eqref{E:neumann} if, whenever $\Phi \in UC(\RR^d \times (0,T))$ is a pathwise viscosity solution of \eqref{E:pathwiseHJnonsmooth}, the function
	\[
		\overline{u}(\xi,t) := \max_{x \in \oline{\Omega}} \left( u(x,t) - \Phi(x-\xi,t)\right), \quad (\xi,t) \in \RR^d \times (0,T)
	\]
	is a viscosity sub-solution of 
	\begin{align*}
		\sup_{x \in A^+_{\xi,t}} 
		\min \Big\{ &\overline{u}_t(\xi,t) - F^*\left(D^2_{\xi\xi} \oline{u}(\xi,t) , D_\xi \oline{u}(\xi,t),\oline{u}(\xi,t) + \Phi(x-\xi,t),x,t \right), \\
		&\qquad D_\xi \oline{u}(\xi,t) \cdot n(x) \Big\} \le 0, \quad (\xi,t) \in \RR^d \times (0,T),
	\end{align*}
	where $A^+_{\xi,t} := \argmax_{ \oline{\Omega}} \left( u(\cdot,t) - \Phi(\cdot-\xi,t)\right)$.
		
	A function $u \in LSC(\oline{\Omega} \times (0,T))$ is called a super-solution of \eqref{E:neumann} if, whenever $\Phi \in UC(\RR^d \times (0,T))$ is a pathwise viscosity solution of \eqref{E:pathwiseHJnonsmooth}, the function
	\[
		\underline{u}(\xi,t) := \min_{x \in \oline{\Omega}} \left( u(x,t) - \Phi(x-\xi,t)\right), \quad (\xi,t) \in \RR^d \times (0,T)
	\]
	is a viscosity super-solution of 
	\begin{align*}
		\inf_{x \in A^-_{\xi,t}}
		\max\Big\{ &\underline{u}_t(\xi,t) -  F_*\left(D^2_{\xi\xi} \underline{u}(\xi,t), D_\xi \underline{u}(\xi,t), \underline{u}(\xi,t) + \Phi(x - \xi,t),x,t\right),\\
		& \qquad D_\xi \uline{u}(\xi,t) \cdot n(x) \Big\} \ge 0,\quad (\xi,t) \in \RR^d \times (0,T),
	\end{align*}
	where $A^-_{\xi,t} := \argmin_{ \oline{\Omega}} \left( u(\cdot,t) - \Phi(\cdot-\xi,t)\right)$.
	
	Finally, $u \in UC(\oline{\Omega} \times [0,T])$ is called a solution if it is both a sub- and super-solution.
\end{definition}

\begin{remark}
	If $A^+_{\xi,t}$ or $A^-_{\xi,t}$ are empty, then the corresponding inequality is vacuous. Moreover, if, for fixed $(\xi,t) \in \RR^d \times [0,T]$, the set $A^+_{\xi,t} \cap \partial \Omega$ (resp. $A^-_{\xi,t} \cap \partial \Omega$) is empty, then the quantity $D\oline{u}(\xi,t) \cdot n(x)$ is understood to be $+\oo$ (resp. $-\oo$) in the verification of the sub-(resp. super-) solution inequality; that is, the boundary condition is not checked in that case.
\end{remark}

\subsection{Construction of test functions for non-smooth Hamiltonians}
We now describe the general method for constructing $C^{1,1}$-test functions for use in Definition \ref{D:nonsmoothH}, which is based on the Hopf formula for solutions of Hamilton-Jacobi equations; see \cite{LR}. This construction below is also used in \cite{Seegerschemes}. We also record an important Hessian bound for such solutions, which will be a key part of the proofs for second-order equations.

Throughout the paper, for $H$ satisfying \eqref{A:DCH}, we fix convex $H^i_1,H^i_2$ such that $H^i = H^i_1 - H^i_2$, and we define the convex function
\begin{equation}\label{Gfn}
	G := \sum_{i=1}^m \left( H^i_1 + H^i_2\right).
\end{equation}
Observe that
\begin{equation}\label{Gprop}
	G + \sum_{i=1}^m \theta^i H^i \quad \text{is convex whenever }\theta^1,\theta^2,\ldots,\theta^m \in [-1,1].
\end{equation}

\begin{lemma}\label{L:C11solutions}
	Assume that $\gamma > 0$, $A$ is a positive symmetric matrix, and $L: \RR^d \to \RR$ is such that $D^2 L \ge A$ in the sense of distributions. For $(x,\tau) \in \RR^d \times \RR^m$, define
	\[
		\Phi(x,\tau) := \sup_{p \in \RR^d} \left\{ p \cdot x - L(p) - \gamma G(p) + \sum_{i=1}^m \tau_i H^i(p) \right\}.
	\]
	Then $\Phi \in C^{1,1}(\RR^d \times (-\gamma,\gamma)^m)$, $0 \le D^2 \Phi \le A^{-1}$ in the sense of distributions, and
	\[
		\del_{\tau_i} \Phi(x,\tau) = H^i(D_x \Phi(x,\tau)) \quad \text{for } i = 1,2,\ldots,m, \quad (x,\tau) \in \RR^d \times (-\gamma,\gamma)^m.
	\]
\end{lemma}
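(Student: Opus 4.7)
The plan is to recognize $\Phi(\cdot,\tau)$ as the Legendre transform of a strongly convex function and read off every stated property from convex duality together with Danskin's envelope theorem. For fixed $\tau \in (-\gamma,\gamma)^m$, set
$$F_\tau(p) := L(p) + \gamma G(p) - \sum_{i=1}^m \tau_i H^i(p);$$
since $-\tau_i/\gamma \in [-1,1]$, \eqref{Gprop} (applied with $\theta^i = -\tau_i/\gamma$) gives that $\gamma G - \sum_i \tau_i H^i$ is convex, so $F_\tau$ is convex with $D^2 F_\tau \ge A$ in the sense of distributions, and by construction $\Phi(x,\tau) = F_\tau^*(x)$ (Legendre transform in the $p$-variable).

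Strict convexity and at-least-quadratic growth of $F_\tau$ (inherited from the bound on $D^2 L$) produce a unique maximizer $p^*(x,\tau)$ for every $(x,\tau)$. As a supremum of functions affine in $(x,\tau)$, $\Phi$ is jointly convex; uniqueness of the maximizer then puts us in the setting of Danskin's theorem, which yields joint Gateaux differentiability together with
$$D_x \Phi(x,\tau) = p^*(x,\tau), \qquad \partial_{\tau_i}\Phi(x,\tau) = H^i(p^*(x,\tau)),$$
and in particular the identity $\partial_{\tau_i}\Phi = H^i(D_x\Phi)$. A routine argument (any accumulation point of maximizers is a maximizer, hence equal to $p^*$) shows $p^*$ is continuous, so $\Phi \in C^1$.

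For the Hessian bounds, $D^2\Phi \ge 0$ (in $x$) is just convexity. For the upper bound $D^2\Phi \le A^{-1}$, I would mollify: for $F_{\tau,\eps} := F_\tau * \rho_\eps$, smooth and satisfying $D^2 F_{\tau,\eps} \ge A$ pointwise, the inverse function theorem gives $D^2 F_{\tau,\eps}^*(x) = \bigl(D^2 F_{\tau,\eps}(p^*)\bigr)^{-1} \le A^{-1}$ by monotonicity of matrix inversion on positive definite matrices. Since $F_{\tau,\eps}^* \to F_\tau^*$ locally uniformly, the concavity of $x \mapsto F_{\tau,\eps}^*(x) - \tfrac{1}{2} x^T A^{-1} x$ passes to the limit, proving the bound distributionally. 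To upgrade to joint $C^{1,1}$ on $\RR^d \times (-\gamma,\gamma)^m$, it suffices to show $p^*$ is locally Lipschitz. Lipschitz dependence in $x$ at fixed $\tau$ is exactly the Hessian bound just proved. Lipschitz dependence in $\tau$ at fixed $x$ follows from a strong-convexity comparison: adding the two optimality inequalities defining $p_j := p^*(x,\tau_j)$ and using $D^2 F_\tau \ge A$ yields
$$(p_1 - p_2)^T A (p_1 - p_2) \le \sum_{i=1}^m (\tau_{2,i}-\tau_{1,i})\bigl(H^i(p_2)-H^i(p_1)\bigr),$$
which, combined with local Lipschitz continuity of each $H^i = H^i_1 - H^i_2$ on the compact set where $p^*$ ranges, closes the estimate.

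The main subtle point is the distributional Hessian upper bound $D^2\Phi \le A^{-1}$, which cannot be read off pointwise since $L$ and $G$ need not be $C^2$; the mollification-and-pass-to-the-limit step is exactly the place where the convex-analytic duality between strong convexity and Lipschitz gradient must be handled with care. Everything else is bookkeeping built on Danskin's theorem and the strong-convexity comparison above.
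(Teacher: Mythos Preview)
Your argument is correct and reaches the same conclusions, but the route differs from the paper's in two places. First, for the $\tau$-derivative identity and the differentiability of $\Phi$, you invoke Danskin's envelope theorem; the paper instead writes down the optimality condition $x \in \partial \tilde L(p(x,\tau),\tau)$ directly and squeezes $\Phi(x,\tau+h)-\Phi(x,\tau)$ between two explicit bounds to extract $\partial_\tau\Phi = H(p)$ by hand. Second, and more notably, for the Hessian bound $D^2_x\Phi \le A^{-1}$ you mollify $F_\tau$, apply the inverse function theorem pointwise, and pass to the limit; the paper avoids mollification entirely by adding the two strong-convexity inequalities at $p(x,\tau)$ and $p(\hat x,\tau)$ to obtain $A(p-\hat p)\cdot(p-\hat p) \le (p-\hat p)\cdot(x-\hat x)$, and then uses the elementary identity $\max\{P\cdot X : AP\cdot P \le P\cdot X\} = A^{-1}X\cdot X$ to conclude $(D_x\Phi(x)-D_x\Phi(\hat x))\cdot(x-\hat x) \le A^{-1}(x-\hat x)\cdot(x-\hat x)$. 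The paper's approach is more self-contained and yields the Lipschitz bound on $D_x\Phi$ as a direct increment estimate rather than via a limiting procedure; your mollification argument is the standard convex-analytic route and is perhaps more conceptual, but it does require checking that $F_{\tau,\eps}^* \to F_\tau^*$ locally uniformly, which is routine but not entirely free. Your Lipschitz-in-$\tau$ argument for $p^*$ is essentially identical to the paper's.
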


{
When constructing particular test functions in Section \ref{sec:testfn}, the specific uniformly convex function $L$ is chosen so that $\Phi$ has similar behavior to the convex conjugate $L^*(x) = \sup_{p \in \RR^d} \left\{ p \cdot x - L(p) \right\}$.
}

\begin{proof}[Proof of Lemma \ref{L:C11solutions}]
	For $\tau \in (-\gamma,\gamma)^m$, set
	\[
		\tilde L(p,\tau) := L(p) + \gamma G(p) - \sum_{i=1}^m \tau_i H^i(p).
	\]
	Then, in view of \eqref{Gprop}, $\tilde L(\cdot,\tau)$ is uniformly convex and $D^2 \tilde L(\cdot,\tau) \ge A$. It follows that, for every $x \in \RR^d$, the supremum in the definition of $\Phi(x,\tau)$ is attained for a unique value $p(x,\tau) \in \RR^d$, which is equivalent to $x \in \del \tilde L(p(x,\tau),\tau)$, as well as $p(x,\tau) \in \del \Phi(x,\tau)$. The uniqueness of $p(x,\tau)$ them implies that $\Phi$ is differentiable in $x \in \RR^d$, with $D_x \Phi(x,\tau) = p(x,\tau)$. 
	
	Fix $x,\hat x \in \RR^d$. Then
	\begin{align*}
		\tilde L(p(\hat x,\tau),\tau) &\ge \tilde L(p(x,\tau),\tau) + x \cdot \left( p(\hat x,\tau) - p(x,\tau) \right) \\
		&+ \frac{1}{2} A\left(p(x,\tau) - p(\hat x,\tau)\right) \cdot \left(p(x,\tau) - p(\hat x,\tau)\right)
	\end{align*}
	and
	\begin{align*}
		\tilde L(p(x,\tau),\tau) &\ge \tilde L(p(\hat x,\tau),\tau) + \hat x \cdot \left( p(x,\tau) - p(\hat x,\tau) \right) \\
		&+ \frac{1}{2} A\left(p(x,\tau) - p(\hat x,\tau)\right) \cdot \left(p(x,\tau) - p(\hat x,\tau)\right).
	\end{align*}
	Adding the two inequalities yields
	\[
		 A\left(p(x,\tau) - p(\hat x,\tau)\right) \cdot \left(p(x,\tau) - p(\hat x,\tau)\right) \le \left( p(x,\tau) - p(\hat x,\tau) \right) \cdot (x - \hat x).
	\]
	The method of Lagrange multipliers gives, for any $X \in \RR^d$,
	\[
		\max \left\{ P \cdot X : P \in \RR^d \text{ and } AP \cdot P \le P \cdot X \right\} = A^{-1} X \cdot X,
	\]
	and therefore
	\[
		\left( p(x,\tau) - p(\hat x,\tau) \right) \cdot (x - \hat x) \le A^{-1} (x - \hat x) \cdot (x - \hat x).
	\]
	This implies that $D^2 \phi \le A^{-1}$, as desired.
	
	We next claim that $p$ is Lipschitz in $\tau$. Let $\theta > 0$ be the smallest eigenvalue of $A$. Fix $h \in \RR^m$. Then
	\[
		\tilde L(p(x,\tau +h), \tau) \ge \tilde L(p(x,\tau),\tau) + x \cdot (p(x,\tau+h) - p(x,\tau)) + \frac{\theta}{2} |p(x,\tau+h) - p(x,\tau) |^2
	\]
	and
	\begin{align*}
		\tilde L(p(x,\tau), \tau+h) &\ge \tilde L(p(x,\tau+h),\tau+h) + x \cdot (p(x,\tau) - p(x,\tau+h)) \\
		&+ \frac{\theta}{2} |p(x,\tau+h) - p(x,\tau) |^2.
	\end{align*}
	Adding the two inequalities gives
	\[
		\theta |p(x,\tau+h) - p(x,\tau)|^2 \le h \cdot \left[ H(p(x,\tau+h)) - H(p(x,\tau)) \right].
	\]
	We conclude that $p$ is locally Lipschitz in $\tau$, in view of the fact that $H$ is locally Lipschitz in $p$.
	
	Finally, the uniform convexity of $\tilde L$ and the definition of $\Phi$ imply
	\begin{align*}
		\Phi(x,\tau) &+ h \cdot H(p(x,\tau+h)) - \frac{\theta}{2} |p(x,\tau+h) - p(x,\tau)|^2
		\le \Phi(x, \tau+h)\\
		&\le \Phi(x,\tau) + h \cdot H(p(x,\tau+h)).
	\end{align*}
	Because $p$ is Lipschitz in $\tau$, this implies that $\del_\tau \Phi(x,\tau) = H(p(x,\tau)) = H(D_x \Phi(x,\tau))$.
\end{proof}

%\subsection{Properties of pathwise viscosity solutions}
%
%We now discuss some consistency and stability results for the notions of pathwise solutions discussed above. 
%

\section{Smooth Hamiltonians}\label{sec:smoothH}

Throughout this section, we assume that $\Omega$ satisfies \eqref{A:C1convexdomain}, $H$ satisfies \eqref{A:HC2}, and $\zeta \in C([0,T],\RR^m)$, and we study the well-posedness of the initial value problem for \eqref{E:neumann}. 

\subsection{A particular test function}\label{subsec:smoothtest}

For $i = 1,2,\ldots,m$, let $(S^i_+(\tau))_{\tau \in \RR}$ and $(S^i_-(t))_{t \in \RR}$ be the solution operators for respectively
\[
	\frac{\partial U}{\partial \tau} = H^i(D_x U) \quad \text{and} \quad \frac{\partial U}{\partial \tau} = - H^i(-D_y U),
\]
that is, $U(x,y,\tau) = S^i_{\pm}(\tau)U_0(x,y)$ solves the corresponding equation with $U(x,y,0) = U_0(x,y)$. Define also $S^i_d(\tau) = S^i_+(\tau)\circ  S^i_-(\tau)$.

By the method of characteristics, for any $\phi \in C^2(\RR^d \times \RR^d)$ with $\norm{D^2\phi}_{\oo} < \oo$, there exists $\tau_0 > 0$, depending only on $\norm{D^2\phi}_\oo$ and $\max_{i=1}^m \norm{D^2H^i}_\oo$, such that, for all $\tau \in (-\tau_0,\tau_0)$ and $i = 1,2,\ldots, m$, $S^i_{\pm}(\tau)\phi \in C^2(\RR^d \times \RR^d)$. Moreover, the Poisson bracket of any two of the Hamiltonians
\[
	\left\{ (p,q) \mapsto H^i(p), H^i(-q), i = 1,2,\ldots,m \right\}
\]
is $0$. This implies that the corresponding Hamiltonian flows commute, and, in particular, gives the following result.

\begin{lemma}\label{L:solutionops}
	The solution operators $S^1_{\pm}, S^2_{\pm}, \ldots, S^m_{\pm}$ all commute for smooth initial data and sufficiently short time. In particular, if $\phi \in C^2(\RR^d \times \RR^d)$, $\norm{D^2 \phi}_\oo < \oo$, and
	\[
		\Phi(x,y,\sigma,\tau) = \prod_{i=1}^m S^i_+(\sigma_i) S^i_-(\tau_i)\phi(x,y), \quad x,y \in \RR^d, \quad \sigma,\tau \in \RR^m,
	\]
	then there exists $\tau_0 > 0$, depending only on $\norm{D^2H}_\oo$ and $\norm{D^2\phi}_\oo$, such that, for $i = 1,2,\ldots,m$,
	\[
		\del_{\sigma_i} \Phi = H^i(D_x \Phi) \quad \text{and} \quad \del_{\tau_i} \Phi = -H^i(-D_y \Phi) \quad \text{in } \RR^d \times \RR^d \times (-\tau_0,\tau_0)^m \times (-\tau_0,\tau_0)^m.
	\]
\end{lemma}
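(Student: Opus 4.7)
My plan is to establish commutation of the solution operators first, and then deduce the Hamilton--Jacobi identities for $\Phi$ as immediate consequences. The commutation splits into two cases. Since $S^i_+(\sigma)$ solves an equation involving only $D_x$-derivatives, it acts on a function of $(x,y)$ by treating $y$ as a parameter; similarly, $S^j_-(\tau)$ acts only in $y$. Hence any $S^i_+$ commutes with any $S^j_-$ automatically, since the two flows operate on disjoint sets of variables.

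The substantive case is the commutation of two operators acting on the same variable, say $S^i_+(\sigma)$ with $S^j_+(\sigma')$; the argument for two $S^i_-$'s is analogous. Since each $H^i$ depends only on $p$, the method of characteristics is especially clean: along a characteristic for $\partial_\sigma U = H^i(D_x U)$, the gradient $p$ is preserved while the base point moves at constant velocity $-DH^i(p)$. Concatenating the flows associated to $H^i$ and $H^j$, a direct calculation shows that the value of $S^i_+(\sigma) S^j_+(\sigma')\phi$ at a point $x$ equals
\[
\phi(x_0) + \sigma\bigl(H^i(q) - q\cdot DH^i(q)\bigr) + \sigma'\bigl(H^j(q) - q\cdot DH^j(q)\bigr),
\]
where $q = D\phi(x_0)$ and $x_0$ solves the implicit relation $x_0 = x + \sigma\, DH^i(q) + \sigma'\, DH^j(q)$. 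This expression is manifestly symmetric in the pairs $(i,\sigma)$ and $(j,\sigma')$, so the two orderings produce the same function.

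Using commutation, for each fixed $i$, I reorder the product to move $S^i_+(\sigma_i)$ to the outermost position and write $\Phi(x,y,\sigma,\tau) = S^i_+(\sigma_i)\bigl[\Psi(\cdot,y,\sigma,\tau)\bigr](x)$, where $\Psi$ denotes the composition of the remaining $2m-1$ operators applied to $\phi$. Differentiating in $\sigma_i$ and invoking the defining equation of $S^i_+$ immediately yields $\partial_{\sigma_i}\Phi = H^i(D_x \Phi)$. Rearranging $S^i_-(\tau_i)$ to the outside in the same way gives $\partial_{\tau_i}\Phi = -H^i(-D_y\Phi)$.

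The main obstacle is ensuring that every partial product in this composition remains $C^2$ with a Hessian bound sufficient for the next operator to be well-defined on a common time interval. The characteristic method shows that if $\psi \in C^2$ with $\norm{D^2\psi}_\oo \le M$, then $S^i_\pm(\tau)\psi \in C^2$ with $\norm{D^2 S^i_\pm(\tau)\psi}_\oo \le 2M$ for $|\tau| \le c(\norm{D^2 H}_\oo M)^{-1}$, where the implicit-function argument defining the characteristic inversion relies on this Hessian estimate to remain a contraction. Iterating through the $2m$ factors of the product then produces a uniform interval $(-\tau_0,\tau_0)^m \times (-\tau_0,\tau_0)^m$ on which all identities hold, with $\tau_0 > 0$ depending only on $\norm{D^2 H}_\oo$ and $\norm{D^2\phi}_\oo$.
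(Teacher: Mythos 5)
Your proof is correct but takes a more explicit and elementary route than the paper. The paper's argument is essentially a one-line appeal to a structural fact: since all $2m$ Hamiltonians on the doubled phase space, namely $(p,q)\mapsto H^i(p)$ and $(p,q)\mapsto H^i(-q)$, are $x$- and $y$-independent, they have pairwise vanishing Poisson brackets, and so (by a classical result) the associated local Hamiltonian flows commute. Your approach instead computes the characteristic formulas directly: the gradient is constant along characteristics, the base point moves at constant velocity, and the composed formula
\[
\phi(x_0) + \sigma\bigl(H^i(q) - q\cdot DH^i(q)\bigr) + \sigma'\bigl(H^j(q) - q\cdot DH^j(q)\bigr),
\quad x_0 = x + \sigma\, DH^i(q) + \sigma'\, DH^j(q),\quad q = D\phi(x_0),
\]
is manifestly symmetric. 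This is a perfectly valid proof of the same commutation fact and has the virtue of being self-contained, not requiring the reader to recall the Poisson bracket criterion. The trade-off is that your iterated Hessian-doubling estimate for the $2m$-fold composition gives a time interval shrinking geometrically in $m$, whereas a direct analysis of the composed characteristic map (using that the full chain preserves the gradient, so the implicit relation $\tilde x = x + \sum_i \sigma_i DH^i(D_x\phi(\tilde x,\tilde y))$, $\tilde y = y - \sum_i \tau_i DH^i(-D_y\phi(\tilde x,\tilde y))$ can be inverted all at once) yields a $\tau_0$ shrinking only linearly in $m$; since $m$ is fixed here, both suffice, but it is worth noting. The remaining step, deducing the Hamilton--Jacobi identities by moving $S^i_\pm$ to the outermost position, is exactly right.
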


We now define, for $x,y \in \RR^d$ and $\delta,\eps > 0$,
\[
	\psi_{\delta,\eps}(x,y) := \frac{1}{2\delta} |x-y|^2 + \eps \psi(x) + \eps \psi(y),
\]
where $\psi$ is as in \eqref{psi}, and, for $\sigma,\tau \in \RR^m$,
\begin{equation}\label{testfnsmoothH}
	\Phi_{\delta,\eps}(x,y,\sigma,\tau) := \prod_{i=1}^m S^i_+(\sigma_i)S^i_-(\tau_i) \psi_{\delta,\eps}(x,y).
\end{equation}

\begin{lemma}\label{L:testfunctionsmoothH}
	There exists $c_0 > 0$ depending only on $\norm{\psi}_{C^2}$ and $\norm{D^2 H}_\oo$ such that, if $\gamma_\delta = c_0 \delta$, then the following hold:
	\begin{enumerate}[(i)]
	\item\label{smoothtestfneqs} For all $0 < \delta,\eps < 1$, the function \eqref{testfnsmoothH} belongs to $C^2(\RR^d \times \RR^d \times (-\gamma_\delta,\gamma_\delta)^m \times (-\gamma_\delta,\gamma_\delta)^m)$, and, for all $i = 1,2,\ldots,m$,
	\[
		\left\{
		\begin{split}
		&\del_{\sigma_i} \Phi_{\delta,\eps} = H^i(D_x \Phi_{\delta,\eps}) \quad \text{and} \quad
		\del_{\tau_i} \Phi_{\delta,\eps} = -H^i(-D_y\Phi_{\delta,\eps}) \\
		& \text{in } \RR^d \times \RR^d \times (-\gamma_\delta,\gamma_\delta)^m \times (-\gamma_\delta,\gamma_\delta)^m.
		\end{split}
		\right.
	\]
	\item\label{smoothpenalizing} There exists $C > 0$, independent of $\delta$ or $\eps$, such that, for all $(x,y,\sigma) \in \RR^d \times \RR^d \times (-\gamma_\delta,\gamma_\delta)^{m}$,
	\[
		\Phi_{\delta,\eps}(x,y,\sigma,\sigma) \ge -C\eps + \frac{|x-y|^2}{2\delta} \quad \text{and} \quad
		\Phi_{\delta,\eps}(x,y,\sigma,\sigma) \ge -C\eps + \eps \psi(x) + \eps \psi(y).
	\]
	\item\label{smoothepsto0} If $r_\eps > 0$ satisfies $\lim_{\eps \to 0} r_\eps = 0$ and $R > 0$, then, for fixed $\delta > 0$,
	\begin{align*}
		\lim_{\eps \to 0} \sup \Bigg\{ \left| \Phi_{\delta,\eps}(x,y,\sigma,\sigma) - \frac{|x-y|^2}{2\delta} \right| &: \sigma \in (-\gamma_\delta,\gamma_\delta)^m, \; |x-y| \le R, \\
		& \eps \psi(x) + \eps \psi(y)  \le r_\eps \Bigg\} = 0.
	\end{align*}
	\item\label{smoothtestfnboundary} There exists $c_1 > 0$ such that, for all $\sigma \in (-\gamma_\delta,\gamma_\delta)$, $0 < \delta,\eps < 1$, and $x,y \in \oline{\Omega}$ with $|x-y| \le 1$,
	\[
		\left\{
		\begin{split}
		&D_x \Phi_{\delta,\eps}(x,y,\sigma,\sigma) \cdot n(x) \ge c_1 \eps \quad \text{if } (x,y) \in \partial \Omega \times \oline{\Omega} \quad \text{and}\\
		&D_y \Phi_{\delta,\eps}(x,y,\sigma,\sigma) \cdot n(y) \ge c_1 \eps \quad \text{if } (x,y) \in \oline{\Omega} \times \partial \Omega.
		\end{split}
		\right.
	\]
	\end{enumerate}
\end{lemma}

\begin{remark}\label{R:xydistance}
	In the proof of the comparison principle below, the test function will be evaluated at points where $x$ and $y$ are close, depending on $\delta$. Therefore, for $\delta$ sufficiently small, the condition $|x-y| \le 1$ is easily satisfied. 
\end{remark}

\begin{proof}[Proof of Lemma \ref{L:testfunctionsmoothH}]
	The regularity claimed in part \eqref{smoothtestfneqs} is a consequence of the method of characteristics and the fact that $\norm{D^2 H}_{\oo} < \oo$ and, for $0 < \eps < 1$,
	\[
		\norm{D^2 \psi_{\delta,\eps}}_{\oo} \le \frac{1}{\delta} + 2\norm{D^2 \psi}_{\oo}.
	\]
	The fact that $\Phi_{\delta,\eps}$ satisfies the equations in part \eqref{smoothtestfneqs} follows from Lemma \ref{L:solutionops}. In particular, if we set
	\[
		\tilde \Phi(x,y,\sigma) := \Phi_{\delta,\eps}(x,y,\sigma,\sigma),
	\]
	then, for all $i = 1,2,\ldots,m$,
	\[
		\frac{\del \tilde \Phi}{\del \sigma_i}  = H^i(D_x \tilde \Phi) - H^i(-D_y \tilde \Phi).
	\]
	The inequalities in part \eqref{smoothpenalizing} are then an easy consequence of the comparison principle and the fact that $|x-y|^2/2\delta$ exactly solves the equation.
	
	To prove the next two parts, we look more closely at the formula from the solution arising from the method of characteristics. Namely, for any fixed $x,y \in \oline{\Omega}$, there exist unique $\tilde x, \tilde y \in \RR^d$ such that
	\[
	x = \tilde x - \sum_{i=1}^m DH^i\left( \frac{\tilde x - \tilde y}{\delta} + \eps D\psi(\tilde x)\right)\sigma_i,
	\quad
	y = \tilde y - \sum_{i=1}^m DH^i\left(\frac{\tilde x - \tilde y}{\delta} - \eps D\psi(\tilde y)\right)\sigma_i,
	\]
	\begin{align*}
		D_x \Phi_{\delta,\eps}(x,y,\sigma,\sigma) &= D_x \psi_{\delta,\eps}(\tilde x, \tilde y)
		= \frac{\tilde x - \tilde y}{\delta} + \eps D\psi(\tilde x),\\
		D_y \Phi_{\delta,\eps}(x,y,\sigma,\sigma) &= D_y \psi_{\delta,\eps}(\tilde x, \tilde y) = - \frac{\tilde x - \tilde y}{\delta} + \eps D\psi(\tilde y),
	\end{align*}
	and
	\begin{align*}
		&\Phi_{\delta,\eps}(x,y,\sigma,\sigma) = \psi_{\delta,\eps}(\tilde x, \tilde y) + \sum_{i=1}^m \sigma_i \bigg[ H^i\left( D_x \psi_{\delta,\eps}(\tilde x,\tilde y) \right) - H^i\left(- D_y \psi_{\delta,\eps}(\tilde x, \tilde y)\right) \\
		& \qquad - DH^i\left( D_x \psi_{\delta,\eps}(\tilde x, \tilde y) \right) \cdot D_x \psi_{\delta,\eps}(\tilde x, \tilde y) +  DH^i\left( D_y \psi_{\delta,\eps}(\tilde x, \tilde y) \right) \cdot D_y \psi_{\delta,\eps}(\tilde x, \tilde y)\bigg].
	\end{align*}
	If, as in part \eqref{smoothpenalizing}, $|x-y| \le R$ and $\eps \psi(x) + \eps \psi(y) \le r_\eps$, then, for some $C = C_\delta > 0$,
	\[
		|\tilde x - \tilde y| \le R + C\eps, \quad |(\tilde x - \tilde y) - (x-y)| \le C \eps, \quad \text{and} \quad - 2\eps \le \eps \psi(\tilde x) + \eps \psi(\tilde y) \le r_\eps + C \eps.
	\]
	Using this in the formula for $\Phi_{\delta,\eps}(x,y,\sigma,\sigma)$ then gives the desired limit.
	
	Suppose now that, as in part \eqref{smoothtestfnboundary}, we assume $x \in \del \Omega$ and $y \in \oline \Omega$ with $|x-y| \le 1$, and let $\sigma \in (-\gamma_\delta,\gamma_\delta)^m$. Then, for some constant $C$ depending on $\norm{D\psi}_\oo$ and the local bounds for $DH$ and $D^2 H$,
	\[
		|x - \tilde x| \vee |y - \tilde y| \le \frac{C \gamma_\delta}{\delta}
		\quad \text{and} \quad
		|x - y - (\tilde x- \tilde y)| \le \frac{C\gamma_\delta \eps}{\delta}.
	\]
	The property \eqref{psi} and the convexity of $\Omega$ yield, for some further $\tilde C > 0$,
	\begin{align*}
		D_x \Phi_{\delta,\eps}(x,y,\sigma,\sigma) \cdot n(x) 
		\ge \frac{x -y}{\delta} \cdot n(x) + \eps D\psi(x) \cdot n(x) - \frac{\tilde C\gamma_\delta \eps}{\delta}
		\ge \eps \left( 1 - \tilde C c_0\right).
	\end{align*}
	Shrinking $c_0$ further if necessary gives the result for $D_x\Phi_{\delta,\eps}$, and the argument for $D_y \Phi_{\delta,\eps}$ is similar.
\end{proof}

\subsection{The comparison principle: first order equations}

We now prove a comparison principle for the first order problem
\begin{equation}\label{E:smoothHJ}
	\begin{dcases}
		du = \sum_{i=1}^m H^i(Du) \cdot d\zeta^i & \text{in } \Omega \times (0,T], \\
		Du \cdot n = 0 & \text{on } \del \Omega \times (0,T].
	\end{dcases}
\end{equation}

\begin{theorem}\label{T:firstorderconvexdomain}
	Let $u \in BUSC(\oline{\Omega} \times [0,T])$ and $v \in BLSC(\oline{\Omega} \times [0,T])$ be respectively a bounded sub- and super-solution of \eqref{E:smoothHJ} in the sense of Definition \ref{D:smoothH}. Then
	\[
		\sup_{(x,t) \in \oline{\Omega} \times [0,T]} \left( u(x,t) - v(x,t)\right) \le \sup_{x \in \oline{\Omega}} \left( u(x,0) - v(x,0)\right).
	\]
\end{theorem}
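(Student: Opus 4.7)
I would argue by contradiction, assuming there exist $\theta > 0$ and $(x^*, t^*) \in \oline{\Omega} \times (0,T]$ with $u(x^*,t^*) - v(x^*,t^*) > \sup_{\oline{\Omega}}(u(\cdot,0) - v(\cdot,0)) + \theta$. The plan is to double the variables and use the family $\Phi_{\delta,\eps}$ from Lemma \ref{L:testfunctionsmoothH} as a test function for the doubled quantity
\[
\mathcal{M}(t) := \sup_{(x,y) \in \oline{\Omega} \times \oline{\Omega}}\left\{u(x,t) - v(y,t) - \Phi_{\delta,\eps}(x,y,\zeta(t)-\zeta(s),\zeta(t)-\zeta(s))\right\},
\]
sending $\eps \to 0$ before $\delta \to 0$. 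Since $\Phi_{\delta,\eps}$ is only a valid test function when $|\zeta(t) - \zeta(s)| \le \gamma_\delta = c_0\delta$, I would use the uniform continuity of $\zeta$ to pick $h_\delta > 0$ so that this holds on $[s, s+h_\delta]$, and work locally on such short intervals before iterating.

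The core step is to show $\mathcal{M}(t) \le \mathcal{M}(s)$ on $[s, s+h_\delta]$. For $\alpha > 0$, I would rule out an interior maximum of $\mathcal{M}(t) - \alpha(t-s)$ on $(s, s+h_\delta]$. At a putative maximum $(x_0, y_0, t_0)$, the \emph{boundary term} in Definition \ref{D:smoothH} is neutralized by part (iv) of Lemma \ref{L:testfunctionsmoothH}: the strict inequality $D_x \Phi_{\delta,\eps}\cdot n(x_0) \ge c_1 \eps > 0$ (and symmetrically in $y$) forces the interior equation inequality to be the active one in the min/max. The \emph{rough term} then cancels because $\Phi_{\delta,\eps}$ satisfies the doubled equation from part (i) of the lemma: the $\dot\zeta^i$-contributions in the subsolution inequality for $u$ and in the supersolution inequality for $v$ precisely match those of $\partial_t\Phi_{\delta,\eps}$, so subtracting yields $\alpha \le 0$, a contradiction.

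To rigorously apply Definition \ref{D:smoothH} at a doubled maximum, I would exploit the product structure $\Phi_{\delta,\eps} = \prod_i S^i_+(\sigma_i)S^i_-(\tau_i)\psi_{\delta,\eps}$ together with Lemma \ref{L:solutionops}: for fixed $y_0$, the partial test function $x \mapsto \prod_i S^i_+(\zeta^i(t)-\zeta^i(s))\tilde\psi(\cdot,y_0)(x)$ (with $\tilde\psi$ the $S^i_-$-evolved initial data, frozen at $t=t_0$) is a legitimate single-variable test function in the sense of Definition \ref{D:smoothH}, and symmetrically for $v$. The ``opposite'' evolution is absorbed via an additional time-doubling trick as in the proof of Proposition \ref{P:nonsmoothdefconsistent}, which simultaneously deals with the lack of smoothness of $v(y_0,\cdot)$ in $t$. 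Once the non-increase of $\mathcal{M}$ on each subinterval is established, iterating over a partition of $[0,T]$ with mesh $h_\delta$, and using $\Phi_{\delta,\eps}(\cdot,\cdot,0,0) = \psi_{\delta,\eps}$ to reset the test function at each restart, produces a chained estimate. Then sending $\eps \to 0$, using parts (ii)--(iii) of Lemma \ref{L:testfunctionsmoothH} (which concentrate maximizers in a bounded set and reduce $\Phi_{\delta,\eps}$ to $|x-y|^2/2\delta$), and finally $\delta \to 0$ (which concentrates maximizers on the diagonal), gives the desired comparison.

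\textbf{The main obstacle} is controlling the iteration across the $N \sim T/h_\delta$ short intervals: at each restart, the test function produced by the previous step, $\Phi_{\delta,\eps}(x,y,\zeta(s_{k+1})-\zeta(s_k),\zeta(s_{k+1})-\zeta(s_k))$, disagrees with the freshly started $\psi_{\delta,\eps}(x,y)$, and the discrepancy must be shown to be $o(1)$ as $\eps \to 0$ uniformly in $\delta$, so that it vanishes when limits are taken in the correct order $\eps \to 0$ then $\delta \to 0$. A secondary difficulty is the clean reduction of the doubled sub/super-solution test to the single-variable Definition \ref{D:smoothH}; this is what the commutativity of solution operators in Lemma \ref{L:solutionops} and the auxiliary time-doubling are for.
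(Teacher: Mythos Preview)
Your strategy is sound and would work, but the paper sidesteps what you call ``the main obstacle'' entirely. The key observation you are missing is that $\frac{1}{2\delta}|x-y|^2$ is an \emph{exact, global-in-time} solution of the doubled equation \eqref{E:doubled}. The paper therefore first considers, for small $\mu,\delta>0$, the map
\[
	[0,T]\ni t \longmapsto \sup_{x,y\in\oline\Omega}\Big(u(x,t)-v(y,t)-\tfrac{1}{2\delta}|x-y|^2\Big)-\mu t,
\]
which, if comparison fails, attains its maximum at some $t_0\in(0,T]$. Only \emph{locally near $t_0$} is the test function perturbed to $\Phi_{\delta,\eps}(x,y,\zeta_t-\zeta_{t_0},\zeta_t-\zeta_{t_0})$; one adds a harmless $\frac{1}{2}|t-t_0|^2$ penalty, uses Lemma \ref{L:testfunctionsmoothH}\eqref{smoothepsto0} to locate a nearby maximum $t_\eps\to t_0$, and then applies the monotonicity lemma (Lemma \ref{L:doubled}) on a single short interval around $t_0$ to obtain $\mu+t_\eps-t_0\le 0$, hence $\mu\le 0$ after $\eps\to 0$. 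No partition of $[0,T]$, no chaining, and no restart errors arise.

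Your reduction of the doubled test to the single-variable Definition \ref{D:smoothH} via the commuting operators of Lemma \ref{L:solutionops} is exactly what the paper packages as Lemma \ref{L:doubled}, so that part of your plan matches. The genuine simplification in the paper is the two-stage use of test functions: the unperturbed quadratic (globally valid, but blind to the boundary) to localize in time, then the $\eps$-perturbed $\Phi_{\delta,\eps}$ (only locally valid, but enforcing the strict boundary inequality) to conclude. Incidentally, your worry that the restart discrepancy must be $o(1)$ \emph{uniformly in $\delta$} is stronger than needed even for your approach: for fixed $\delta$ the number $N=N(\delta)$ of subintervals is finite, so $N\cdot o_\eps(1)\to 0$ as $\eps\to 0$ already suffices before sending $\delta\to 0$.
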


An important role in the proof of Theorem \ref{T:firstorderconvexdomain} is played by the doubled equation
\begin{equation}\label{E:doubled}
	d\Phi = \sum_{i=1}^m \left( H^i(D_x \Phi) - H^i(-D_y \Phi) \right) \cdot d\zeta^i \quad \text{in } \RR^d \times \RR^d \times [0,T].
\end{equation}

\begin{lemma}\label{L:doubled}
	Assume that $u\in USC(\oline{\Omega} \times [0,T])$ and $v \in LSC(\oline{\Omega} \times [0,T])$ are respectively a sub- and super-solution of \eqref{E:smoothHJ}. Fix $0 \le a < b \le T$ and let $\Phi \in C([a,b],C^2(\oline{\Omega}))$ be a solution of \eqref{E:doubled} such that
	\begin{equation}\label{penalize}
		\lim_{|x-y| \to +\oo} \min_{t \in [a,b]} \Phi(x,y,t) = +\oo,
	\end{equation}
	\begin{equation}\label{boundedx-y}
		\begin{split}
		\sup_{x,y \in \oline{\Omega}}& \left( u(x,t) - v(y,t) - \Phi(x,y,t) \right)\\
		&> 
		\sup_{x,y \in \oline{\Omega}, \; |x-y| > 1/2} \left( u(x,t) - v(y,t) - \Phi(x,y,t) \right) \quad \text{for all } t \in [a,b],
		\end{split}
	\end{equation}
	and, uniformly for $t \in [a,b]$ and $x,y \in \oline{\Omega}$ with $|x-y| \le 1$,
	\begin{equation}\label{boundary}
		D_x \Phi(x,y,t) \cdot n(x) > 0  \quad \text{if } x \in \del \Omega \quad \text{and} \quad
		D_y \Phi(x,y,t) \cdot n(y) > 0 \quad \text{if } y \in \del \Omega.
	\end{equation}
	Then $[a,b] \ni t \mapsto \sup_{x,y \in \oline{\Omega}} \left( u(x,t) - v(y,t) - \Phi(x,y,t) \right)$ is nonincreasing.
\end{lemma}

\begin{proof}
	Fix $t_0 \in (a,b]$ and set $\phi := \Phi(\cdot,\cdot,t_0)$. Then, by Lemma \ref{L:solutionops}, for $t$ sufficiently close to $t_0$,
	\[
		\Phi(\cdot,\cdot,t) = \prod_{i=1}^m S^i_d(\zeta^i_t - \zeta^i_{t_0})\phi = \prod_{i=1}^m S^i_+(\zeta^i_t - \zeta^i_{t_0})S^i_-(\zeta^i_t - \zeta^i_{t_0})\phi = \prod_{i=1}^m S^i_-(\zeta_t - \zeta_{t_0})S^i_+(\zeta_t - \zeta_{t_0})\phi.
	\]
	Define
	\[
		\Psi(x,y,s,t) := \sum_{i=1}^m S^i_+(\zeta^i_s - \zeta^i_{t_0})S^i_-(\zeta^i_t - \zeta^i_{t_0})\phi(x,y).
	\]
	By Lemma \ref{L:solutionops}, there exists $\nu > 0$ sufficiently small such that $\Psi$ is $C^2$ in $x$ and $y$ for $s,t \in (t_0 - \nu, t_0 + \nu)$.
	
	To establish the result, it suffices to show that
	\begin{equation}\label{nonincreasingclaim}
		\left\{
		\begin{split}
		&(s,t) \mapsto \sup_{x,y \in \oline{\Omega}} \left( u(x,s) - v(y,t) - \Psi(x,y,s,t) \right)\\
		&\text{is nonincreasing in both } s,t \in (t_0 - \nu, t_0 + \nu) \text{ if $\nu$ is sufficiently small.}
		\end{split}
		\right.
	\end{equation}
	By \eqref{boundedx-y}, if $\nu$ is sufficiently small, then the supremum in \eqref{nonincreasingclaim} may be restricted to $|x-y| \le 1$. It then suffices to show that, for fixed $(y,t) \in \oline{\Omega} \times (t_0 - \nu, t_0 + \nu)$ with $|x-y| \le 1$,
	\[
		s \mapsto \sup_{x \in \oline{\Omega}} \left( u(x,s) - \Phi(x,y,s,t) \right) \quad \text{is nonincreasing}
	\]
	and, for fixed $(x,s) \in \oline{\Omega} \times (t_0 -\nu,t_0 + \nu)$ with $|x-y| \le 1$,
	\[
		t \mapsto \inf_{y \in \oline{\Omega}} \left( v(y,t) + \Phi(x,y,s,t) \right) \quad \text{is nondecreasing}.
	\]
	We note that the above supremum and infimum are both attained, in view of \eqref{penalize}. Also, if $\nu$ is sufficiently small, then the same boundary behavior in \eqref{boundary} is satisfied. The claim then follows from Definition \ref{D:smoothH}.
\end{proof}

\begin{proof}[Proof of Theorem \ref{T:firstorderconvexdomain}]
	Assume by contradiction that the claim is false. Then, for sufficiently small $\delta > 0$ and $\mu > 0$,
	\[
		[0,T] \ni t \mapsto \sup_{x,y \in \oline{\Omega} } \left( u(x,t) - v(y,t) - \frac{1}{2\delta} |x - y|^2\right) - \mu t
	\]
	attains a maximum at some $t_0 \in (0,T]$.
	
	Because $(1/2\delta)|x-y|^2$ solves \eqref{E:doubled}, we have, for all $x,y, t$,
	\[
		\frac{1}{2\delta} |x-y|^2 = \prod_{i=1}^m S^i_d(\zeta^i_t - \zeta^i_{t_0})\left( (1/2\delta)|\cdot - \cdot|^2\right)(x,y).
	\]
	Let $\Phi_{\delta,\eps}$ be defined as in \eqref{testfnsmoothH}, and let $h_\delta > 0$ be such that
	\[
		\max_{|t - t_0| < h_\delta} \max_{i=1,2,\ldots,m} \left| \zeta^i_t - \zeta^i_{t_0} \right| < \gamma_\delta,
	\]
	where $\gamma_\delta$ is as in Lemma \ref{L:testfunctionsmoothH}. The inequalities in Lemma \ref{L:testfunctionsmoothH}\eqref{smoothpenalizing} imply that, for any $t \in (t_0 - h_\delta, t_0 + h_\delta)$, there exists a maximum point $(x_\eps,y_\eps) \in \oline{\Omega} \times \oline{\Omega}$ of
	\[
		(x,y) \mapsto u(x,t) - v(y,t) - \Phi_{\delta,\eps}(x,y,\zeta_t - \zeta_{t_0}, \zeta_t - \zeta_{t_0}) 
	\]
	such that, if $\delta$ is sufficiently small, then
	\begin{align*}
		&\sup_{x,y \in\oline{\Omega}, \; |x-y| > 1/2} \big\{ u(x,t) - v(y,t) - \Phi_{\delta,\eps}(x,y,\zeta_t - \zeta_{t_0}, \zeta_t - \zeta_{t_0}) \big\} \\
		&< u(x_\eps,t) - v(y_\eps,t) - \Phi_{\delta,\eps}(x_\eps,y_\eps,\zeta_t - \zeta_{t_0}, \zeta_t - \zeta_{t_0}),
	\end{align*}
	and there exists $r_\eps \xrightarrow{\eps \to 0} 0$, independent of $t \in (t_0 - h_\delta,t_0 + h_\delta)$, such that $\eps  \psi(x_\eps) + \eps \psi(y_\eps) ) \le r_\eps$.
	
	Lemma \ref{L:testfunctionsmoothH}\eqref{smoothepsto0} then implies that, for sufficiently small $\eps$ depending on $\delta$,
	\[
		[0,T] \ni t \mapsto \max_{x,y \in \oline{\Omega} } \left( u(x,t) - v(y,t) - \Phi_{\delta,\eps}(x,y,\zeta_t - \zeta_{t_0}, \zeta_t - \zeta_{t_0})\right) - \mu t - \frac{|t - t_0|^2}{2}
	\]
	achieves a maximum at some $t_\eps$ such that $\lim_{\eps \to 0} t_\eps = t_0$. Shrinking $\eps$ if necessary, again depending on $\delta$, we have $t_\eps \in (t_0 - h_\delta, t_0 + h_\delta)$. It is now a consequence of Lemmas \ref{L:doubled} and \ref{L:testfunctionsmoothH}\eqref{smoothtestfneqs},\eqref{smoothtestfnboundary} that $\mu + t_\eps - t_0 \le 0$. Sending $\eps \to 0$ gives the contradiction $\mu \le 0$.	
\end{proof}

\subsection{The comparison principle: second order equations}

We now turn to the comparison principle for the second order problem
\begin{equation}\label{E:smoothgeneraleq}
	\begin{dcases}
		du = F(D^2 u, Du, u, x,t)dt + \sum_{i=1}^m H^i(Du) \cdot d\zeta^i & \text{in } \Omega \times (0,T], \\
		Du \cdot n = 0 & \text{on } \del \Omega \times (0,T],
	\end{dcases}
\end{equation}
where $F$ satisfies \eqref{A:Fcts} - \eqref{A:F}.

\begin{theorem}\label{T:secondorderconvexdomain}
	Let $u \in BUSC(\oline{\Omega} \times [0,T])$ and $ v \in BLSC(\oline{\Omega} \times [0,T])$ be respectively a bounded sub- and super-solution of \eqref{E:smoothgeneraleq}. Then
	\[
		\max_{(x,t) \in \oline{\Omega} \times [0,T]} \left( u(x,t) - v(x,t)\right) \le \max_{x \in \oline{\Omega}} \left( u(x,0) - v(x,0)\right).
	\]
\end{theorem}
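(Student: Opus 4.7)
The plan is to argue by contradiction, mirroring the strategy of Theorem \ref{T:firstorderconvexdomain} while incorporating the additional machinery needed for the second-order term. Suppose the conclusion fails; then for some $\mu > 0$, the supremum of $u(x,t) - v(x,t) - \mu t$ over $\oline{\Omega} \times [0,T]$ is attained at some $t_0 \in (0,T]$. Following the first-order proof, I would double in space using the test function $\Phi_{\delta,\eps}$ from \eqref{testfnsmoothH}, localizing to the short time interval where $|\zeta_t - \zeta_{t_0}| < \gamma_\delta$. Lemma \ref{L:testfunctionsmoothH}\eqref{smoothpenalizing}--\eqref{smoothepsto0} then produces a maximum point $(x_\eps, y_\eps, t_\eps)$ with $t_\eps \to t_0$, $|x_\eps - y_\eps|^2 = o(\delta)$ in the iterated limit, and $\eps\psi(x_\eps) + \eps\psi(y_\eps) \to 0$ as $\eps \to 0$.

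To extract inequalities involving $F$, I further double the time variable via a penalty $|s - t|^2/(2\lambda)$, yielding a maximum at $(x_\lambda, y_\lambda, s_\lambda, t_\lambda)$ with $s_\lambda, t_\lambda \to t_\eps$ as $\lambda \to 0$. By Lemma \ref{L:testfunctionsmoothH}\eqref{smoothtestfneqs}, the partial function $(x,s) \mapsto \Phi_{\delta,\eps}(x, y_\lambda, \zeta_s - \zeta_{t_0}, \zeta_{t_\lambda} - \zeta_{t_0})$ is a smooth-in-space pathwise solution of \eqref{E:local}, and the analogous partial function serves as test function for $v$; Lemma \ref{L:testfunctionsmoothH}\eqref{smoothtestfnboundary} ensures the boundary condition does not activate. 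Subtracting the resulting sub- and super-solution inequalities from Definition \ref{D:smoothH} yields
\[
F\!\left(D^2_{xx}\Phi_{\delta,\eps}, D_x\Phi_{\delta,\eps}, u(x_\lambda, s_\lambda), x_\lambda, s_\lambda\right) - F\!\left(-D^2_{yy}\Phi_{\delta,\eps}, -D_y\Phi_{\delta,\eps}, v(y_\lambda, t_\lambda), y_\lambda, t_\lambda\right) \ge \mu + (s_\lambda + t_\lambda - 2 t_\eps),
\]
with the derivatives of $\Phi_{\delta,\eps}$ taken at $(x_\lambda, y_\lambda, \zeta_{s_\lambda} - \zeta_{t_0}, \zeta_{t_\lambda} - \zeta_{t_0})$.

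To bridge this with the structure condition \eqref{A:F}, I would invoke an Ishii-type argument exploiting the joint $C^2$-regularity of $\Phi_{\delta,\eps}$ in $(x,y)$, producing matrices $X, Y$ in appropriate rough parabolic semi-jets of $u$ and $v$ satisfying, for some small $\eta > 0$,
\[
\begin{pmatrix} X & 0 \\ 0 & -Y \end{pmatrix} \le D^2_{(x,y)}\Phi_{\delta,\eps} + \eta\left(D^2_{(x,y)}\Phi_{\delta,\eps}\right)^{\!2}.
\]
Taking $\eta = \delta$ and using that $D^2_{(x,y)}\Phi_{\delta,\eps}$ remains close to $D^2\psi_{\delta,\eps}$ on the short time interval, the right-hand side is dominated by $(3/\delta)\begin{pmatrix} I & -I \\ -I & I \end{pmatrix} + O(\eps)$, matching the form of \eqref{A:F}. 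Combining \eqref{A:F} with the monotonicity \eqref{A:Fmonotone} then bounds the $F$-difference by $\omega_R\big(|D_x\Phi_{\delta,\eps} + D_y\Phi_{\delta,\eps}| + (1 + |D_x\Phi_{\delta,\eps}| + |D_y\Phi_{\delta,\eps}|)|x_\lambda - y_\lambda|\big)$. Using Lemma \ref{L:testfunctionsmoothH}'s estimates, which give $|D_x\Phi_{\delta,\eps} + D_y\Phi_{\delta,\eps}| \to 0$ and $|D\Phi_{\delta,\eps}|\cdot|x_\lambda - y_\lambda| = O(|x_\lambda - y_\lambda|^2/\delta) \to 0$ in the iterated limit, sending $\lambda \to 0$, $\eps \to 0$, $\delta \to 0$ in sequence yields the contradiction $\mu \le 0$.

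The main obstacle is the rigorous derivation of this matrix inequality in the rough test-function framework. A direct application of Definition \ref{D:smoothH} with the joint test function $\Phi_{\delta,\eps}$ only produces the separate matrices $D^2_{xx}\Phi_{\delta,\eps}$ and $-D^2_{yy}\Phi_{\delta,\eps}$, whose block-diagonal combination fails the inequality in \eqref{A:F} already for the heat equation. Bridging this gap requires either invoking the equivalence with Definition \ref{D:nonsmoothH} (under which the sup/inf-convolutions $\oline{u}, \uline{v}$ are classical viscosity sub/super-solutions of modified equations, to which the classical parabolic Ishii's lemma directly applies), or a tailor-made rough-setting Ishii's lemma that extracts the full joint Hessian information of $\Phi_{\delta,\eps}$ in $(x,y)$.
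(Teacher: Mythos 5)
Your structural outline — contradiction via a positive maximum, the test function from \eqref{testfnsmoothH}, localizing in time via $\gamma_\delta$, quadratic breaking with $\eta = \delta$, and closing with \eqref{A:F} — matches the paper closely, and you correctly diagnose the real difficulty: Definition \ref{D:smoothH} applied separately in $x$ and $y$ produces only the block-diagonal pair $(D^2_{xx}\Phi_{\delta,\eps}, -D^2_{yy}\Phi_{\delta,\eps})$, which does not satisfy the matrix hypothesis in \eqref{A:F}. You name the remedy (a tailored rough-setting Ishii lemma) but stop there, and that lemma is precisely the load-bearing content of the proof, so the argument as written has a genuine gap.

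The paper fills it with Lemma \ref{L:thmofsums}, and its mechanism is worth internalizing because it is not a black-box appeal to Crandall--Ishii nor a direct switch to Definition \ref{D:nonsmoothH}. One first uses Lemma \ref{L:Youngtrick} to split $\phi = \Phi(\cdot,\cdot,t_0)$ into a quadratic in auxiliary variables $(\xi,\eta)$ plus separable ``kernels'' in $x - x_0 - \xi$ and $y - y_0 - \eta$; these kernels are propagated along the characteristics to give $\Phi_\pm$, and the sup/inf-convolutions $\oline u(\xi,t) = \sup_x (u - \Phi_+)$, $\uline v(\eta,t) = \inf_y (v + \Phi_-)$ are formed. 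The sub/super-solution properties of $u,v$ (including the boundary positivity from Lemma \ref{L:testfunctionsmoothH}\eqref{smoothtestfnboundary}) are used to show $\oline u, \uline v$ are semiconvex/semiconcave in space with one-sided bounds on the time derivative; this makes them admissible for the semi-regular Crandall--Ishii lemma (Lemma \ref{L:maxprinciple}), which supplies the full coupled matrix inequality. But those jets are for $\oline u, \uline v$, not for $u,v$; the proof therefore reconstructs, via an implicit-function argument, genuine local-in-time smooth-in-space solutions $\Phi_n^\pm$ of \eqref{E:local} touching $u$ and $v$ at approximate maximizers, carries the boundary condition through, and only then applies Definition \ref{D:smoothH} to read off the inequality involving $F$. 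Without this reconstruction step your matrix bound on $\begin{pmatrix} X & 0 \\ 0 & -Y \end{pmatrix}$ is asserted, not derived; and your proposed time-doubling by $|s-t|^2/(2\lambda)$, while useful in the consistency proof (Proposition \ref{P:nonsmoothdefconsistent}), is not by itself a substitute for Lemma \ref{L:thmofsums}.
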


An important ingredient in the proof of this comparison principle is a generalization of the so-called ``Theorem of Sums'' \cite[Theorem 3.2]{CIL} from the theory of viscosity solutions to the pathwise setting. This is Proposition \ref{P:thmofsums} in the Appendix below.

\begin{proof}[Proof of Theorem \ref{T:secondorderconvexdomain}]
	We assume without loss of generality that $u(\cdot,0) \le v(\cdot,0)$. The general case can always be reduced to this setting because, in view of \eqref{A:Fmonotone}, $\tilde u(\cdot,t) = u(\cdot,t) - \sup_x (u(x,0) - v(x,0))_+$ is a sub-solution of \eqref{E:general} and $\tilde u(\cdot,0) \le v(\cdot,0)$.

	We now assume by contradiction that the claim is false. Then, for sufficiently small $\mu > 0$ and $\delta > 0$,
	\[
		[0,T] \ni t \mapsto \sup_{x,y \in \oline{\Omega} } \left( u(x,t) - v(y,t) - \frac{1}{2\delta} |x-y|^2 \right) - \mu t
	\]
	attains a positive maximum at some $t_0 \in (0,T]$. Invoking Lemma \ref{L:testfunctionsmoothH} and arguing as in the proof of Theorem \ref{T:firstorderconvexdomain}, we have that, for sufficiently small $\eps > 0$,
	\[
		\oline{\Omega} \times \oline{\Omega} \times [0,T] \ni (x,y,t) \mapsto u(x,t) - v(y,t) - \Phi_{\delta,\eps}(x,y,\zeta_t - \zeta_{t_0}, \zeta_t - \zeta_{t_0}) - \mu t - \frac{|t - t_0|^2}{2}
	\]
	achieves a maximum at some $(x_\eps,y_\eps,t_\eps)$ such that $\lim_{\eps \to 0} t_\eps = t_0$, $u(x_\eps,t_\eps) > v(y_\eps,t_\eps)$, and $|x_\eps - y_\eps| \le 1$. Lemma \ref{L:testfunctionsmoothH}\eqref{smoothtestfnboundary} implies that if $x_\eps \in \del \Omega$, then 
	\[
		D_x\Phi_{\delta,\eps}(x_\eps,y_\eps,\zeta_{t_\eps} - \zeta_{t_0}, \zeta_{t_\eps} - \zeta_{t_0}) \cdot n(x_\eps) > 0,
	\]
	and if $y_\eps \in \del \Omega$, then
	\[
		D_y\Phi_{\delta,\eps}(x_\eps,y_\eps,\zeta_{t_\eps} - \zeta_{t_0}, \zeta_{t_\eps} - \zeta_{t_0}) \cdot n(y_\eps) > 0.
	\]
	For sufficiently small $\eps$ (depending on $\delta$) we also have $|t_\eps - t_0| \le h_\delta$, where $h_\delta$ is such that
	\[
		\max_{t \in (t_0 - h_\delta, t_0 + h_\delta)} \max_{i=1,2,\ldots,m} |\zeta^i_t - \zeta^i_{t_0}| < \gamma_\delta
	\]	
	and $\gamma_\delta$ is as in Lemma \ref{L:testfunctionsmoothH}. 

	By Lemma \ref{L:testfunctionsmoothH} and Proposition \ref{P:thmofsums}, there exist $X_{\delta,\eps},Y_{\delta,\eps} \in \mbb S^d$ such that
	\begin{align*}
		&- \left( \norm{D^2 \Phi_{\delta,\eps}(x_\eps,y_\eps,t_\eps)} + \frac{1}{\delta}\right)
		\begin{pmatrix}
			\Id & 0 \\
			0 & \Id
		\end{pmatrix}
		\\
		&\le
		\begin{pmatrix}
			X_{\delta,\eps} & 0 \\
			0 & - Y_{\delta,\eps}
		\end{pmatrix}
		\le
		D^2 \Phi_{\delta,\eps}(x_\eps,y_\eps,t_\eps) + \delta \left[ D^2 \Phi_{\delta,\eps}(x_\eps,y_\eps,t_\eps) \right]^2
	\end{align*}
	and
	\begin{align*}
		\mu + t_\eps - t_0 &\le F(X_{\delta,\eps}, D_x \Phi_{\delta,\eps}(x_\eps,y_\eps,t_\eps), u(x_\eps,t_\eps), x_\eps,t_\eps) \\
		&- F(Y_{\delta,\eps}, -D_y \Phi_{\delta,\eps}(x_\eps,y_\eps,t_\eps), v(y_\eps,t_\eps), y_\eps,t_\eps).
	\end{align*}
	Sending $\eps \to 0$ along some sub-sequence, we have $t_\eps \to t_0$ and $(X_{\delta,\eps}, Y_{\delta,\eps}) \xrightarrow{\eps \to 0} (X_\delta,Y_\delta)$, where $X_\delta,Y_\delta \in \mbb S^d$ satisfy
	\[
		- \frac{3}{\delta}
		\begin{pmatrix}
			\Id & 0 \\
			0 & \Id
		\end{pmatrix}
		\le
		\begin{pmatrix}
			X_{\delta} & 0 \\
			0 & - Y_{\delta}
		\end{pmatrix}
		\le \frac{3}{\delta}
		\begin{pmatrix}
			\Id & -\Id \\
			-\Id & \Id
		\end{pmatrix}.
	\]
	Then, for some modulus $\rho_\delta: [0,\oo) \to [0,\oo)$ depending on $\delta$, in view of \eqref{A:Fcts}, \eqref{A:Fmonotone}, and \eqref{A:F},
	\begin{align*}
		\mu &\le F(X_\delta, D_x \Phi_{\delta,\eps}(x_\eps,y_\eps,t_\eps),u(x_\eps,t_\eps), x_\eps,t_\eps)\\
		& \qquad - F(Y_\delta, -D_y \Phi_{\delta,\eps}(x_\eps,y_\eps,t_\eps), v(y_\eps,t_\eps),y_\eps,t_\eps) + \rho_\delta(\eps)\\
		&\le \omega_R\Big( \big| D_x \Phi_{\delta,\eps}(x_\eps,y_\eps,t_\eps) + D_y \Phi_{\delta,\eps}(x_\eps,y_\eps,t_\eps) \big| \\
		& \qquad + \big(1+ |D_x \Phi_{\delta,\eps}(x_\eps,y_\eps,t_\eps) | + |D_y \Phi_{\delta,\eps}(x_\eps,y_\eps,t_\eps)| \big)|x_\eps - y_\eps| \Big) + \rho_\delta(\eps),
	\end{align*}
	where $R > 0$ is a bound for $u$ and $v$. By the method of characteristics, there exist unique $\tilde x_\eps,\tilde y_\eps \in \RR^d$ such that
	\[
		D_x \Phi_{\delta,\eps}(x_\eps,y_\eps,t_\eps) = \frac{\tilde x_\eps - \tilde y_\eps}{\delta} + \eps D\psi(\tilde x_\eps) \quad \text{and} \quad
		D_y \Phi_{\delta,\eps}(x_\eps,y_\eps,t_\eps) = - \frac{\tilde x_\eps - \tilde y_\eps}{\delta} - \eps D\psi(\tilde y_\eps),
	\]
	and therefore, for fixed $\delta$, as $\eps \to 0$, $D_x \Phi_{\delta,\eps}(x_\eps,y_\eps,t_\eps) + D_y \Phi_{\delta,\eps}(x_\eps,y_\eps,t_\eps) \to 0$.
	
	Define
	\[
		A_\delta := \argmax_{\oline{\Omega} \times \oline{\Omega}} \left\{ (x,y) \mapsto u(x,t_0) - v(y,t_0) - \frac{1}{2\delta}|x-y|^2 \right\}.
	\]
	Standard arguments from the theory of viscosity solutions (see for instance \cite[Lemma 3.1]{CIL}) yield that $\lim_{\delta \to 0} \sup_{(x,y) \in A_\delta} \frac{|x-y|^2}{\delta} = 0$. Upon sending $\eps \to 0$, we reach the contradiction
	\[
		0 < \mu \le \sup_{x,y \in A_\delta} \omega_R\left( \frac{|x-y|^2}{\delta} + |x-y| \right) \xrightarrow{\delta \to 0} 0.
	\]
\end{proof}

\subsection{Existence and path stability}

We now establish the existence of a (unique) solution of either of the initial value problems for \eqref{E:smoothHJ} or \eqref{E:smoothgeneraleq}. 

\begin{theorem}\label{T:smoothexistence}
	Assume $\Omega$ satisfies \eqref{A:C1convexdomain}, $H$ satisfies \eqref{A:HC2}, $\zeta \in C([0,T],\RR^m)$, and $F$ satisfies \eqref{A:Fcts}, \eqref{A:Fmonotone}, and \eqref{A:F}. Then, for any $u_0 \in BUC(\oline{\Omega})$, there exists a unique solution of \eqref{E:smoothgeneraleq} with $u(\cdot,0) = u_0$ in the sense of Definition \ref{D:smoothH}.
\end{theorem}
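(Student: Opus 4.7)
The plan is to obtain the solution as a limit of classical viscosity solutions driven by smooth approximations of $\zeta$, using the stability result and the comparison principle (Theorem \ref{T:secondorderconvexdomain}) to close the argument. Fix $(\zeta^n)_{n \in \NN} \subset C^1([0,T],\RR^m)$ converging uniformly to $\zeta$ on $[0,T]$. For each $n$, equation \eqref{E:smoothgeneraleq} with path $\zeta^n$ is a classical viscosity Neumann problem, and standard results (Perron's method together with the classical comparison principle for second-order Neumann problems on convex domains, see e.g.\ \cite{Lions85,Ishii91,GS93,CIL}) yield a unique solution $u^n \in BUC(\oline\Omega \times [0,T])$ with $u^n(\cdot,0) = u_0$. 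By the equivalence of the classical and pathwise notions for $\zeta \in C^1$ (the analogue of Proposition \ref{P:nonsmoothdefconsistent} for Definition \ref{D:smoothH} under \eqref{A:HC2}, noted without proof in the paper), each $u^n$ is a pathwise solution in the sense of Definition \ref{D:smoothH}.

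Next I would derive uniform-in-$n$ estimates. Choose $C_0$ so that $|F(0,0,r,x,t)| \le C_0$ for $|r| \le \norm{u_0}_\infty$; the functions
\[
v^{n,\pm}(x,t) := \pm\norm{u_0}_{\oo,\oline\Omega} + C_0 t + \sum_{i=1}^m H^i(0)\bigl(\zeta^{n,i}_t - \zeta^{n,i}_0\bigr)
\]
are classical super/sub-solutions (zero gradient trivially satisfies the Neumann condition, and the interior inequality reduces to $\pm C_0 \ge F(0,0,\pm\norm{u_0}_\oo,x,t)$). Classical comparison gives $v^{n,-} \le u^n \le v^{n,+}$; since $\zeta^n \to \zeta$ uniformly, $\sup_n \norm{u^n}_\oo < \oo$.

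The uniform initial-layer estimate is where the convexity of $\Omega$ plays the essential role. For each $\eps>0$, uniform continuity of $u_0$ yields $M_\eps>0$ with $u_0(x) \le u_0(y) + \eps + M_\eps|x-y|^2$ for all $x,y \in \oline\Omega$. For fixed $y \in \oline\Omega$, set
\[
w^+_{y,\eps,n}(x,t) := u_0(y) + \eps + M_\eps |x-y|^2 + C_\eps t + \sum_{i=1}^m H^i(0)\bigl(\zeta^{n,i}_t - \zeta^{n,i}_0\bigr),
\]
with $C_\eps$ large enough to absorb the bounded Hessian $2M_\eps \Id$ and the gradient $2M_\eps(x-y)$ via \eqref{A:F} and the uniform bound on $u^n$. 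Convexity of $\Omega$ ensures $(x-y) \cdot n(x) \ge 0$ for $x \in \partial\Omega$ and $y \in \oline\Omega$, so $Dw^+_{y,\eps,n} \cdot n(x) \ge 0$ and $w^+_{y,\eps,n}$ is a classical Neumann super-solution. Classical comparison and choosing $y=x$ give
\[
u^n(x,t) - u_0(x) \le \eps + C_\eps t + \omega(t),\qquad \omega(t) := \sum_{i=1}^m |H^i(0)| \sup_n |\zeta^{n,i}_t - \zeta^{n,i}_0|,
\]
with $\omega(t) \to 0$ as $t \to 0$ by equicontinuity of $(\zeta^n)$ on $[0,T]$. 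A symmetric lower barrier yields the reverse inequality.

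Finally, define the half-relaxed limits $u^\star$ and $u_\star$ as in Proposition \ref{P:halfrelaxed}; the uniform bound makes them finite. By Proposition \ref{P:halfrelaxed} (applied through the equivalence of Definitions \ref{D:smoothH} and \ref{D:nonsmoothH} for smooth $H$), $u^\star$ is a sub- and $u_\star$ a super-solution of \eqref{E:smoothgeneraleq} driven by $\zeta$. The initial-layer estimate gives $u^\star(\cdot,0) \le u_0 \le u_\star(\cdot,0)$, and Theorem \ref{T:secondorderconvexdomain} then forces $u^\star \le u_\star$ on $\oline\Omega \times [0,T]$; combined with the trivial $u_\star \le u^\star$, equality holds, so $u := u^\star = u_\star$ lies in $UC(\oline\Omega \times [0,T])$, satisfies $u(\cdot,0) = u_0$, and $u^n \to u$ locally uniformly. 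Uniqueness is immediate from Theorem \ref{T:secondorderconvexdomain}. The main obstacle I expect is the uniform-in-$n$ initial trace estimate: it is exactly here that the convexity of $\Omega$ enters, making quadratic barriers automatically compatible with the homogeneous Neumann condition — the same structural assumption that drives the entire well-posedness framework of this paper.
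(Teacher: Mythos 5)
There is a genuine gap in the initial-trace estimate, and it is precisely the gap that the pathwise theory exists to close. Your barrier
\[
w^+_{y,\eps,n}(x,t) = u_0(y) + \eps + M_\eps |x-y|^2 + C_\eps t + \sum_{i=1}^m H^i(0)\bigl(\zeta^{n,i}_t - \zeta^{n,i}_0\bigr)
\]
has gradient $Dw^+_{y,\eps,n} = 2M_\eps(x-y)$, so the super-solution inequality you need is
\[
C_\eps \ \ge\ F\bigl(2M_\eps \Id,\, 2M_\eps(x-y),\, \cdot\,,x,t\bigr) + \sum_{i=1}^m \bigl[H^i\bigl(2M_\eps(x-y)\bigr) - H^i(0)\bigr]\,\dot\zeta^{n,i}(t).
\]
The bracketed coefficient is not zero away from $x=y$, and $\dot\zeta^{n,i}$ is unbounded as $n\to\infty$ (uniform convergence of $\zeta^n$ gives no control on $\dot\zeta^n$). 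Hence no choice of $C_\eps$ independent of $n$ makes $w^+_{y,\eps,n}$ a super-solution, and the resulting bound $u^n(x,t)-u_0(x)\le\eps + C_{\eps,n}t + \omega(t)$ with $C_{\eps,n}\to\infty$ is vacuous. This is not a technicality: it is the central obstruction in the Lions--Souganidis framework, and the paper's proof is specifically constructed to avoid it. There the barrier is taken to be $\Phi^n(x,t) + Ct$ where $\Phi^n$ solves $\partial_t\Phi^n = \sum_i H^i(D\Phi^n)\dot\zeta^{n,i}$ with a fixed initial datum $\phi\ge u_0$, so the Hamilton--Jacobi term cancels exactly and what remains can be absorbed by $Ct$ uniformly in $n$, because the method of characteristics keeps $D^2\Phi^n$ bounded uniformly in $n$ on a time interval $[0,\delta)$ depending only on $\norm{D^2\phi}_\oo$ and $\norm{D^2H}_\oo$.

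A second, smaller issue: for the HJ-flow barrier to satisfy the Neumann super-solution condition one needs $D\phi\cdot n>0$ \emph{strictly} on $\partial\Omega$ (as the paper imposes), since the characteristics perturb the gradient. Your quadratic barrier only gives $(x-y)\cdot n(x)\ge 0$ with equality generically (e.g.\ for $y\in\partial\Omega$ and $x$ near $y$), so even if you replaced it by its HJ evolution you would lose the boundary inequality. The rest of your argument — smooth approximation, equivalence of notions, $L^\oo$ bound with the zero-gradient barriers, half-relaxed limits, comparison — matches the paper and is sound; only the initial-trace step needs to be replaced by the paper's construction.
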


\begin{proof}
	Let $(\zeta^n)_{n \in \NN} \subset C^1([0,T],\RR^m)$ be a smooth approximation of $\zeta$ as $n \to \oo$, and, for $n \in \NN$, let $u^n$ be the unique viscosity solution of the boundary/initial-value problem
	\begin{equation}\label{E:smoothapprox}
		\begin{dcases}
			\frac{\partial u^n}{\partial t} = F(D^2 u^n, Du^n, u^n,x,t)  + H(Du^n) \cdot \dot \zeta^n_t & \text{in } \Omega \times (0,T],\\
			Du^n \cdot n = 0 & \text{on } \partial \Omega \times (0,T], \\
			u^n(\cdot,0) = u_0 & \text{on } \oline{\Omega} \times \{0\}.
		\end{dcases}
	\end{equation}
	By Proposition \ref{P:nonsmoothdefconsistent}, $u^n$ is a solution of \eqref{E:smoothapprox} in the sense of Definition \ref{D:smoothH}. The comparison principle Theorem \ref{T:secondorderconvexdomain} implies that, for some $C > 0$ and for all $n \in \NN$ and $(x,t) \in \oline{\Omega} \times [0,T]$,
	\[
		u^n(x,t) \le \norm{u_0}_{\oo,\oline\Omega} + Ct + \sum_{i=1}^m H^i(0) \zeta^{n,i}(t),
	\]
	because, in view of \eqref{A:Fcts}, the right-hand side is a super-solution of \eqref{E:smoothgeneraleq} for sufficiently large $C > 0$. Therefore,
	\[
		u^\star(x,t) := \limsup_{(x',t') \to (x,t), n \to \oo} u^n(x',t')
	\]
	is bounded and upper-semicontinuous, and, in view of Proposition \ref{P:halfrelaxed}, is a sub-solution of \eqref{E:smoothgeneraleq} in the sense of Definition \ref{D:smoothH}, and $u^\star(\cdot,0) \ge u_0$ on $\oline{\Omega}$. We claim that $u^\star(\cdot,0) = u_0$. To see this, let $\phi \in C^2_b(\oline{\Omega})$ be such that $u_0 \le \phi$ in $\oline{\Omega}$ and $D\phi \cdot n > 0$ on $\oline{\Omega}$. The method of characteristics yields $\delta > 0$, depending only on $\phi$ and not on $n$, and $\Phi^n \in C([0,T],C^2(\oline{\Omega}))$ such that
	\[
		\begin{dcases}
		\frac{\partial \Phi^n}{\partial t} = \sum_{i=1}^m H^i(D\Phi^n) \cdot \dot \zeta^{n,i} & \text{in } \oline{\Omega} \times [0,\delta),\\
		D\Phi^n \cdot n > 0 & \text{on } \del \Omega \times [0,\delta),\\
		\Phi^n(\cdot,0) = \phi & \text{on } \oline{\Omega} \times \{0\}.
		\end{dcases}
	\]
	Then, by \eqref{A:Fcts}, for some sufficiently large constant $C > 0$ independent of $n$, $\Phi^n(x,t) + Ct$ is a super-solution of \eqref{E:smoothapprox}, and, therefore,
	\[
		u^n(x,t) \le \Phi^n(x,t) + Ct \quad \text{in } \oline{\Omega} \times [0,\delta).
	\]
	Sending $n \to \oo$ and $(x',t') \to (x,0)$ along an appropriate sequence then yields $u^\star(x,0) \le \phi(x)$ for $x \in \oline{\Omega}$. Because $\phi$ is arbitrary, we conclude that $u^\star(\cdot,0) = u_0$ as desired.
	
	We similarly have that the lower half-relaxed limit $u_\star$ is a bounded super-solution of \eqref{E:smoothgeneraleq} with $u_\star(\cdot,0) = u_0$. The comparison principle now yields $u^\star \le u_\star$, and therefore, because $u_\star \le u^\star$ by definition, we conclude that $u_\star = u^\star =: u$ is a solution.
\end{proof}

\begin{remark}
	The equality of $u_\star$ and $u^\star$ in the above proof implies additionally that $u^n$ converges locally uniformly to $u$.
\end{remark}

With exactly the same argument, we achieve the following stability result:

\begin{theorem}
	For $(\zeta^n)_{n \in \NN} \subset C([0,T],\RR^m)$ and $(u_0^n)_{n \in \NN} \subset BUC(\oline{\Omega})$, let $(u^n)_{n \in \NN} \subset BUC(\oline{\Omega} \times [0,T])$ be the corresponding solutions of \eqref{E:smoothgeneraleq}. If, for some $\zeta \in C([0,T],\RR^m)$ and $u_0 \in BUC(\oline{\Omega})$,
	\[
		\lim_{n \to \oo} \left( \norm{\zeta^n - \zeta}_{\oo,[0,T]} + \norm{u_0^n - u_0}_{\oo,\oline{\Omega}} \right) = 0,
	\]
	then, as $n \to \oo$, $u^n$ converges locally uniformly to the solution $u$ of \eqref{E:smoothgeneraleq}.
\end{theorem}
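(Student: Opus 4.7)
The plan is to mimic the proof of Theorem \ref{T:smoothexistence} almost verbatim, now taking the sequence $(u^n)$ itself as the approximating family rather than classical solutions of smoothed problems. First I would establish a uniform bound $\sup_n \norm{u^n}_{\oo,\oline\Omega \times [0,T]} < \oo$. Since $\norm{u_0^n}_{\oo,\oline\Omega}$ and $\norm{\zeta^n}_{\oo,[0,T]}$ are uniformly bounded by hypothesis, the argument in Theorem \ref{T:smoothexistence} (comparison of each $u^n$ against the explicit super-solution $\norm{u_0^n}_{\oo,\oline\Omega} + Ct + \sum_{i=1}^m H^i(0)(\zeta^{n,i}(t) - \zeta^{n,i}(0))$, and an analogous sub-solution, justified by first approximating $\zeta^n$ by smooth signals and passing to the limit) provides a bound uniform in $n$, which is enough to define the bounded half-relaxed limits
\[
u^\star(x,t) := \limsup_{(y,s) \to (x,t),\, n \to \oo} u^n(y,s), \qquad u_\star(x,t) := \liminf_{(y,s) \to (x,t),\, n \to \oo} u^n(y,s).
\]

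Next I would apply Proposition \ref{P:halfrelaxed} to $(u^n)$. After subtracting $\zeta^n(0)$ from every signal (a harmless normalization, since $d\zeta^{n,i}$ is unchanged and $\zeta^n(0) \to \zeta(0)$ preserves uniform convergence) and using the equivalence between Definitions \ref{D:smoothH} and \ref{D:nonsmoothH} under \eqref{A:HC2}, the proposition yields that $u^\star$ is a bounded USC sub-solution and $u_\star$ a bounded LSC super-solution of \eqref{E:smoothgeneraleq} corresponding to the limit signal $\zeta$.

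Finally I would verify the initial traces $u^\star(\cdot,0) \le u_0 \le u_\star(\cdot,0)$ by the barrier argument from Theorem \ref{T:smoothexistence}. For any $\phi \in C^2_b(\oline\Omega)$ with $\phi \ge u_0$ and $D\phi \cdot n > 0$ on $\del\Omega$, the method of characteristics produces smooth-in-space solutions $\Phi^n$ of the first-order equation driven by $\zeta^n$ with $\Phi^n(\cdot,0) = \phi$ on a common time interval $[0,\delta)$ determined only by $\norm{D^2\phi}_\oo$ and $\norm{D^2H}_\oo$; comparison of $u^n$ against $\Phi^n + Ct$ (valid for $n$ large enough that $u_0^n \le \phi$, which holds since $u_0^n \to u_0$ uniformly and $\phi$ may be chosen slightly larger than $u_0$) and passage to the $\limsup$ yields $u^\star(x,0) \le \phi(x)$; infimizing over such $\phi$ gives $u^\star(\cdot,0) \le u_0$, and the reverse inequality for $u_\star$ is analogous. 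Theorem \ref{T:secondorderconvexdomain} applied at $\zeta$ then forces $u^\star \le u_\star$, so that $u^\star = u_\star$ is the unique solution $u$, and equality of the two half-relaxed limits is exactly the locally uniform convergence $u^n \to u$. Since every step is a direct transcription of what was done in the existence proof, I anticipate no substantive obstacle; the only minor technical point is justifying the uniform-in-$n$ barriers when the $\zeta^n$ are merely continuous, which is done by the same smooth-path approximation as in Theorem \ref{T:smoothexistence}.
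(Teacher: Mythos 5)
Your proposal is correct and follows exactly the route the paper intends: the paper prefaces the theorem with ``With exactly the same argument, we achieve the following stability result,'' referring to the proof of Theorem \ref{T:smoothexistence}, and your argument is a faithful transcription of that proof with the sequence $(u^n)$ playing the role of the smoothed approximations. You correctly identify the two technical adjustments needed beyond a verbatim copy: normalizing to $\zeta^n(0)=0$ so Proposition \ref{P:halfrelaxed} applies, and choosing the barrier $\phi$ strictly above $u_0$ so that $u_0^n \le \phi$ holds for all large $n$.
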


\section{Nonsmooth Hamiltonians: test functions}\label{sec:testfn}

The focus of this and the next section is the study of \eqref{E:neumann} when $H$ is not $C^2$. In this case, the method of characteristics is not available, and we make use of Lemma \ref{L:C11solutions} to construct test functions with the desired properties. The proofs of well-posedness results, using these test functions, appear in the next section.

It turns out to be very difficult to construct the desired type of penalizing test function under only the assumptions that the domain be convex and the Hamiltonian be, component by component, a difference of convex functions. Therefore, we consider the following three separate settings:
\begin{itemize}
\item $\Omega$ is a half space,
\item $\Omega$ has a quantified convexity assumption, or
\item $H$ is radial.
\end{itemize}
In the last case, we may take $H(p) = |p|$, which is used in the analysis of the geometric equations in Sections \ref{sec:geo} and \ref{sec:mcf}.

Throughout, for $H$ satisfying \eqref{A:DCH} and $i = 1,2,\ldots,m$, we fix convex $H^i_1,H^i_2$ such that $H^i = H^i_1 - H^i_2$. We will also impose the growth assumption
\begin{equation}\label{A:HLip}
	\esssup_{p \in \RR^d} \max_{i=1,2,\ldots,m} \frac{ |DH^i_1(p)| + |DH^i_2(p)| }{1 + |p|} < \oo.
\end{equation}
{
This implies that the convex functions $H^i_1$ and $H^i_2$ grow at most quadratically as $|p| \to \oo$, and are bounded below by affine functions. The same is then also true for the convex function $G$ from \eqref{Gfn}. } In particular, there exists $C > 0$ such that, for all $\theta = \theta_1,\theta_2,\ldots,\theta_m) \in [-1,1]^m$, there exists $\nu_\theta \in S^{d-1}$ such that
\begin{equation}\label{A:Hquadgrowth}
	C(\nu_\theta \cdot p -1) \le G(p) + \sum_{i=1}^m \theta_i H^i(p) \le C\left( 1 + \frac{|p|^2}{2} \right) \quad \text{for all } p \in \RR^d.% \text{ and } \theta_1,\theta_2,\ldots,\theta_m \in [-1,1].
\end{equation}

\begin{remark}\label{R:growth}
	\begin{enumerate}[(i)]
	\item The quadratic growth assumption for $H$ can, in principle, be generalized, in which case the quadratic term $|p|^2$ in \eqref{quadphieps=0} below is replaced with a different power. Some sort of growth assumption is required, however, in order to perform a quantitative study of the test functions that are defined in what follows.	
	\item If $H \in C^2(\RR^d)$ with $\norm{D^2H}_{\oo} < \oo$, then $H = H^1 - H^2$ with
	\[
		H^1(p) = H(p) + \frac{\norm{D^2H}_{\oo}}{2} |p|^2,
	\]
	and then $H^1$ and $H^2$ satisfy \eqref{A:HLip}.
	\end{enumerate}
\end{remark}

Throughout this section, we say a constant is \emph{universal} if it depends only on $d$, the constants appearing in \eqref{A:HLip} and \eqref{A:Hquadgrowth}, and the domain.

\subsection{The penalizing test function and properties}\label{sec:testfunctions}

%In this section, we construct a variety of test functions that are used to prove comparison principles and path-stability estimates. As in the classical viscosity solution setting (see for instance \cite{CIL} and references therein), such test functions must simultaneously penalize off of the diagonal $\{x=y\}$, where $x$ and $y$ are the doubled variables, and satisfy a strict normal-derivative inequality whenever $x$ or $y$ belongs to $\del \Omega$. 
%
%In view of Definitions \ref{D:smoothH} and \ref{D:nonsmoothH}, such test functions $\Phi: \RR^d \times \RR^d \times [0,T] \to \RR$ must also be solutions of the Hamilton-Jacobi part of the equation, more precisely, the doubled equation
%\begin{equation}\label{E:doubled}
%	d\Phi = \sum_{i=1}^m \left( H^i(D_x\Phi) - H^i(-D_y \Phi) \right) \cdot dW^i.
%\end{equation}
%A major difficulty in the analysis is ensuring that the penalization properties hold in a uniform sense on the interval of existence. When the domain is convex and the Hamiltonian is smooth, doing so is straightforward. We also outline various cases where the domain is nonconvex or the Hamiltonian is nonsmooth, for example, if $H(p) = |p|$ is the standard Hamiltonian from front-propagation applications.
%

For $\delta > 0$, let $\gamma_\delta > 0$ satisfy
\begin{equation}\label{intervalofexistence}
	0 < \gamma_\delta \le \delta \quad \text{for all } \delta \in (0,1),
\end{equation}
and let $\ell : \RR^d \to \RR$ be such that
\begin{equation}\label{pushfn}
	\ell \text{ is convex and } 0 \le \ell(p) \le |p| \text{ for all } p \in \RR^d.
\end{equation}
Both $\gamma_\delta$ and $\ell$ will be further specified, depending on the scenario.

For $z \in \RR^d$, and $\rho \in (-3\gamma_\delta,3\gamma_\delta)^m$, we define
\begin{equation}\label{quadphieps=0}
	\phi_\delta(z,\rho) := \sup_{p \in \RR^d} \left\{ p \cdot z - \delta \left( \frac{|p|^2}{2} + \ell(p)\right) +\sum_{i=1}^m \rho_i H^i(p) - 3\gamma_\delta G(p) \right\}.
\end{equation}

\begin{lemma}\label{L:quadphieps=0}
	The function $\phi_\delta$ belongs to $C^{1,1}(\RR^d \times (-3\gamma_\delta,3\gamma_\delta)^m)$ and, for all $i = 1,2,\ldots,m$,
	\[
		\frac{\partial \phi_\delta}{\partial \rho_i} = H^i(D_z \phi_\delta) \quad \text{in } \RR^d \times (-3\gamma_\delta,3\gamma_\delta)^m.
	\]
	Moreover, there exists a universal constant $C > 0$ such that, for all $\delta \in (0,1)$, $z \in \RR^d$, and $\rho \in (-3\gamma_\delta,3\gamma_\delta)^m$,
	\[
		\frac{[(|z| - \delta)_+]^2}{2(1+C)\delta} - C \delta \le \phi_\delta(z,\rho) \le \frac{1}{2\delta}|z|^2 + C\delta
	\]
	and
	\[
		\left| D\phi_\delta(z,\rho) \right| \le C\left( 1 + \frac{|z|}{\delta} \right).
	\]
\end{lemma}

\begin{proof}
	The regularity and the satisfaction of the equations are consequences of Lemma \ref{L:C11solutions}. 
	
	By \eqref{A:Hquadgrowth}, we have, for all $\rho \in (-3\gamma_\delta,3\gamma_\delta)^m$ and $p \in \RR^d$ and for some $C > 0$ and $\nu_\rho \in S^{d-1}$,
	\[
		 - C\gamma_\delta + C \gamma_\delta \nu_\rho \cdot p 
		 \le
		 - \sum_{i=1}^m \rho_i H^i(p)  + 3 \gamma_\delta G(p) 
		 \le C\gamma_\delta  + \frac{1}{2} C \gamma_\delta |p|^2.
	\]
	The upper bound is then proved by computing, with \eqref{intervalofexistence}, for some universal $C > 0$ that may change line to line,
	\begin{align*}
		\phi_\delta(z,\rho) &\le \sup_{p \in \RR^d} \left\{ p \cdot z - \frac{\delta}{2} |p|^2 + C\gamma_\delta \nu_\rho \cdot p \right\} + C \gamma_\delta \\
		&= \frac{1}{2\delta}|z + C \gamma_\delta \nu_\rho|^2 + C\gamma_\delta \le \frac{1}{2\delta}\left( |z| + C \gamma_\delta \right)^2 + C\gamma_\delta \le \frac{|z|^2}{2\delta} + C\delta,
	\end{align*}
	and, for the lower bound,
	\begin{align*}
		\phi_\delta(z,\rho)
		&\ge \sup_{p \in \RR^d} \left\{ p \cdot z - \frac{\delta + C\gamma_\delta}{2}|p|^2 - \delta |p| \right\} - C\gamma_\delta\\
		&\ge \frac{[(|z| - \delta)_+]^2}{2(\delta + C \gamma_\delta)} - C \gamma_\delta
		\ge \frac{[(|z| - \delta)_+]^2}{2(1+C)\delta} - C \delta.
	\end{align*}
	In view of \eqref{A:Hquadgrowth}, we note that, increasing $C$ if necessary, if $|p| > C(1 + |z|/\delta)$, then
	\[
		p \cdot z - \delta \left( \frac{|p|^2}{2} + \ell(p) \right) + \sum_{i=1}^m \rho_i H^i(p) - 3 \gamma_\delta G(p) < \phi_\delta(z,\rho),
	\]
	and so the maximum in the definition of $\phi_\delta$ must be attained for $|p| \le C(1 + |z|/\delta)$. This completes the proof of the bound for $D\phi(z,\rho)$, as the unique maximum $p = p(z,\rho) = D\phi_\delta(z,\rho)$.
\end{proof}

Now, for $\delta,\eps > 0$, $x,y \in \oline{\Omega}$, and $\sigma,\tau,\rho \in (-\gamma_\delta,\gamma_\delta)^m$, we define
\begin{equation}\label{multidtestfn}
	\begin{split}
	\Phi_{\delta,\eps}&(x,y,\sigma,\tau,\rho) \\
	&:= 
	\sup_{p,u,v \in \RR^d} \Bigg\{ (p+u) \cdot x - (p-v) \cdot y  - \delta \left( \frac{|p|^2}{2} + \ell(p) \right) \\
	&- \eps \psi^*\left( \frac{u}{\eps} \right) - \eps \psi^* \left( \frac{v}{\eps} \right) + \sum_{i=1}^m \left( \sigma_i H^i(p+u) - \tau_i H^i(p-v) + \rho_i H^i(p) \right)\\
	& - \gamma_\delta \left( G(p+u) + G(p-v) + G(p) \right) \Bigg\},
	\end{split}
\end{equation}
where $\psi$ is as in \eqref{psi}. 

Recall that $\phi^*= +\oo$ outside of a compact set $K$, and therefore, the supremum in \eqref{multidtestfn} may be restricted to $p,u,v \in \RR^d$ such that, for some universal constant, $|u| \le C\eps$ and $|v| \le C\eps$.

\begin{lemma}\label{L:testfnproperties}
	The function $\Phi_{\delta,\eps}$ defined in \eqref{multidtestfn} satisfies the following:
	\begin{enumerate}[(i)]
	\item\label{A:testfn} For all $\delta,\eps \in (0,1)$, $\Phi_{\delta,\eps} \subset C^{1,1}(\oline{\Omega} \times \oline{\Omega} \times (-\gamma_\delta,\gamma_\delta)^m \times (-\gamma_\delta,\gamma_\delta)^m \times (-\gamma_\delta,\gamma_\delta)^m)$, and, for all $i = 1,2,\ldots,m$,
	\[
		\left\{
		\begin{split}
		&\frac{\del \Phi_{\delta,\eps} }{\del \sigma_i} = H^i\left( D_x \Phi_{\delta,\eps}\right)
		\quad \text{and} \quad
		\frac{\del \Phi_{\delta,\eps}}{\del \tau_i} = - H^i\left( -D_y \Phi_{\delta,\eps} \right)\\
		&\text{in } \oline{\Omega} \times \oline{\Omega} \times (-\gamma_\delta,\gamma_\delta)^m \times (-\gamma_\delta,\gamma_\delta)^m \times (-\gamma_\delta,\gamma_\delta)^m.
		\end{split}
		\right.
	\]
	\item\label{A:xybounded} For some universal $C > 0$ and for all $x,y \in \oline{\Omega}$, $\sigma,\tau,\rho \in (-\gamma_\delta,\gamma_\delta)^m$, and $\delta,\eps \in (0,1)$,
	\[
		\Phi_{\delta,\eps}(x,y,\sigma,\tau,\rho) \ge \eps \psi(x) + \eps \psi(y) - C\gamma_\delta.
	\]
	\item\label{A:Phieps=0} If $r_\eps > 0$ is such that $\lim_{\eps \to 0} r_\eps = 0$ and $R > 0$, then, for all $\delta > 0$,
	\begin{align*}
		&\lim_{\eps \to 0} \sup \Big\{ |\Phi_{\delta,\eps}(x,y,\sigma,\tau,\rho) - \phi_\delta(x-y,\sigma-\tau + \rho)| \\
	&\qquad  : |x-y| \le R, \; \eps\psi(x) + \eps \psi(y) \le r_\eps, \;  \sigma,\tau,\rho \in (-\gamma_\delta,\gamma_\delta)^m\Big\}= 0.
	\end{align*}
	\item\label{A:PhiHessian} For $\delta,\eps \in (0,1)$, $\sigma,\tau,\rho \in (-\gamma_\delta,\gamma_\delta)^m$, and $\kappa > 0$ as in \eqref{psistar},
	\[
		D^2_{(x,y)} \Phi_{\delta,\eps}(\cdot,\cdot,\sigma,\tau,\rho) \le \frac{1}{\delta}
		\begin{pmatrix}
			\Id & -\Id \\
			-\Id & \Id
		\end{pmatrix}
		+ \frac{\eps}{\kappa}
		\begin{pmatrix}
			\Id & 0 \\
			0 & \Id
		\end{pmatrix}.
	\]
	\end{enumerate}
\end{lemma}

%\begin{remark}
%In Section \ref{sec:smoothH}, when $\Phi_{\delta,\eps}$ was built with the method of characteristics, the Hessian property in part \eqref{A:PhiHessian} was a consequence of the $C^2$-regularity and stability and the fact that $\Phi_{\delta,0}(x,y,\cdot) = \phi_\delta(x-y,\cdot)$ depends only on $x-y$. Here, because $\Phi_{\delta,\eps}$ is merely $C^{1,1}$, more work is required to prove \eqref{A:PhiHessian}, and we make use of Lemma \ref{L:C11solutions}.
%\end{remark}

\begin{proof}
	Part \eqref{A:testfn} is a consequence of Lemma \ref{L:C11solutions}, and part \eqref{A:xybounded} follows from the estimate, for all $x,y \in \RR^d$ and $\sigma,\tau,\rho \in (-\gamma_\delta,\gamma_\delta)^m$,
	\begin{align*}
		\Phi_{\delta,\eps}(x,y,\sigma,\tau,\rho) &\ge \sup_{u,v \in \eps K} \left\{ u \cdot x + v \cdot y - \eps \psi^*\left( \frac{u}{\eps} \right) - \eps \psi^*\left( \frac{v}{\eps} \right) \right\} - C \gamma_\delta \\
		&= \eps \psi(x) + \eps \psi(y) - C \gamma_\delta.
	\end{align*}
	To prove part \eqref{A:Phieps=0}, we first estimate from below (taking $u = v = 0$ and using $\psi^*(0) = 1$),
	\[
		\Phi_{\delta,\eps}(x,y,\sigma,\tau,\rho)
		\ge \phi_{\delta}(x-y,\sigma- \tau + \rho) - 2\eps.
	\]
	We use the sub-additivity of the supremum to obtain the upper bound
	\begin{equation}\label{Phiub}
		\begin{split}
		\Phi_{\delta,\eps}&(x,y,\sigma,\tau,\rho)
		\le 
		\sup_{p \in \RR^d, \; u,v \in \eps K}
		\Bigg\{ p \cdot (x-y)  - \delta \left( \frac{|p|^2}{2} + \ell(p) \right) \\
	&- \sum_{i=1}^m \left( \sigma_i H^i(p+u) - \tau_i H^i(p-v) + \rho_i H^i(p) \right) - \gamma_\delta \left( G(p+u) + G(p-v) + G(p) \right) \Bigg\} \\
	&+ \eps \psi(x) + \eps \psi(y).
	\end{split}
	\end{equation}
	By Young's inequality, \eqref{A:Hquadgrowth}, and \eqref{intervalofexistence}, if $|x-y| \le R$, $p \in \RR^d$ and $u,v \in \eps K$, then, for some universal $C > 0$,
	\begin{align*}
		&p \cdot (x-y)  - \delta \left( \frac{|p|^2}{2} + \ell(p) \right) 
	- \sum_{i=1}^m \left( \sigma_i H^i(p+u) - \tau_i H^i(p-v) + \rho_i H^i(p) \right) \\
	&- \gamma_\delta \left( G(p+u) + G(p-v) + G(p) \right) 
	\le R|p| - \frac{\delta}{4} |p|^2 + C\delta.
	\end{align*}
	It follows that, for some $C_R > 0$, the supremum on the right-hand side of \eqref{Phiub} may be restricted to $|p| \le C_R$. Therefore,
	\[
		\Phi_{\delta,\eps}(x,y,\sigma,\tau,\rho) \le \phi_\delta(x-y,\sigma-\tau+\rho) + C_R\eps + \eps \psi(x) + \eps \psi(y),
	\]
	and the result easily follows.
	
	Finally, to prove part \eqref{A:PhiHessian}, we rewrite, for some convex $L$,
	\begin{align*}
		\Phi_{\delta,\eps}&(x,y,\sigma,\tau,\rho)  \\
		&= \sup_{p,u,v} 
		\left\{ u \cdot x + v \cdot y - \delta \frac{|p|^2}{2} - \eps \psi^*\left( \frac{u - p}{\eps} \right) - \eps \psi^*\left( \frac{v + p}{\eps} \right) - L(p,u,v) \right\}.
	\end{align*}
	In view of \eqref{psistar}, the map
	\[
		\delta \frac{|p|^2}{2} + \eps \psi^*\left( \frac{u - p}{\eps} \right) + \eps \psi^*\left( \frac{v + p}{\eps} \right) + L(p,u,v)
	\]
	is uniformly convex, and its Hessian is bounded from below by the matrix
	\[
		B_{\delta,\eps} := \delta
		\begin{pmatrix}
			\Id & 0 & 0\\
			0 & 0 & 0\\
			0 & 0 & 0
		\end{pmatrix}
		+ 
		\frac{\kappa}{\eps}
		\left[
		\begin{pmatrix}
			\Id & -\Id & 0\\
			-\Id & \Id & 0\\
			0 & 0 & 0
		\end{pmatrix}
		+
		\begin{pmatrix}
			\Id & 0 & \Id\\
			0 & 0 & 0\\
			\Id & 0 & \Id
		\end{pmatrix}
		\right],
	\]
	whose inverse is
	\[
		C_{\delta,\eps} := B_{\delta,\eps}^{-1} = 
		\frac{1}{\delta}
		\begin{pmatrix}
			\Id & \Id & -\Id \\
			\Id & \Id & -\Id \\
			-\Id & -\Id & \Id
		\end{pmatrix}
		+ \frac{\eps}{\kappa}
		\begin{pmatrix}
			0 & 0 & 0\\
			0 & \Id & 0 \\
			0 & 0 & \Id
		\end{pmatrix}.
	\]
	It therefore follows as in Lemma \ref{L:C11solutions} that, for any $x,\hat x,y,\hat y \in \RR^d$,
	\begin{align*}
		&\left( D_x\Phi_{\delta,\eps}(x,y,\cdot) - D_x \Phi_{\delta,\eps}(\hat x, \hat y, \cdot) \right) \cdot (x - \hat x) 
		+ \left( D_y\Phi_{\delta,\eps}(x,y,\cdot) - D_x \Phi_{\delta,\eps}(\hat x, \hat y, \cdot) \right) \cdot (y - \hat y)\\
		&\le C_{\delta,\eps}
		\begin{pmatrix}
			0 \\
			x - \hat x \\
			y - \hat y
		\end{pmatrix}
		\cdot
		\begin{pmatrix}
			0 \\
			x - \hat x \\
			y - \hat y
		\end{pmatrix}
		= 
		A_{\delta,\eps}
		\begin{pmatrix}
			x - \hat x \\
			y - \hat y
		\end{pmatrix}
		\cdot
		\begin{pmatrix}
			x - \hat x \\
			y - \hat y
		\end{pmatrix},
	\end{align*}
	where the matrix $A_{\delta,\eps}$ is exactly as on the right-hand side of the inequality in part \eqref{A:PhiHessian}.
\end{proof}

%Next, as in Section \ref{sec:smoothH}, we introduce a parameter $\eps > 0$ and enforce certain strict inequalities at the boundary of the domain $\Omega$, to create a family of test functions $(\Phi_{\delta,\eps})_{0 < \delta,\eps < 1}$ such that
%As a consequence, after a change of variables in $\sigma$ and $\tau$, the function $\Phi_{\delta,\eps}$ may act as a test function in Definition \ref{D:nonsmoothH}. 

%where $\phi_\delta$ satisfies the following penalizing property:
%\begin{equation}\label{A:phidelta}
%	\left\{
%	\begin{split}
%	&\text{for some } C > 0 \text{ and all } \delta \in (0,1] \text{ and } (z,\rho) \in (\oline{\Omega} - \oline{\Omega} ) \times (-3\gamma_\delta,3\gamma_\delta)^m,\\
%	&\frac{ (|z| - C\delta)_+^2}{\delta} - C\gamma_\delta \le \phi_\delta(z,\rho) \le \frac{ (|z| + C\delta)^2}{\delta} + C \gamma_\delta.
%	\end{split}
%	\right.
%\end{equation}
We next prove, in the three aforementioned settings, that $\Phi_{\delta,\eps}$ satisfies the strict boundary inequalities
\begin{equation}\label{A:boundarycondition}
	\left\{
	\begin{split}
	&D_x \Phi_{\delta,\eps}(x,y,\sigma,\tau,\rho) \cdot n(x) > 0 \quad \text{if } x \in \del \Omega \text{ and}\\
	&D_y \Phi_{\delta,\eps}(x,y,\sigma,\tau,\rho) \cdot n(y) > 0 \quad \text{if } y \in \del \Omega
	\end{split}
	\right.
\end{equation}
if $\sigma,\tau,\rho$ (which represent the increments of paths) belong to sufficiently small intervals $(-\gamma_\delta,\gamma_\delta)^m$, and if $x$ and $y$ are sufficiently close. It makes sense to consider the latter restriction, because, in view of Lemma \ref{L:quadphieps=0}, in the proof of the comparison principle and related results, we will consider $x,y \in \oline{\Omega}$ satisfying $|x-y| = O(\delta^{1/2})$, with the proportionality constant depending on the bounds for the given sub- and super-solution (in fact we will have $|x-y| = o(\delta^{1/2})$, but it is hard to explicitly control the rate at which $\delta^{-1/2} |x_\delta-y_\delta|$ goes to $0$ for such points).

As we explain with the next lemma, the condition $|x-y|  = O(\delta^{1/2})$ implies some control on the size of the unique maximizer $p$ in \eqref{multidtestfn}.

\begin{lemma}\label{L:boundedp}
	Assume \eqref{A:DCH}, \eqref{A:HLip}, \eqref{intervalofexistence}, and \eqref{pushfn}. Then there exists universal $K > 0$ such that, for all $\delta,\eps \in (0,1)$, $\sigma,\tau,\rho \in (-\gamma_\delta,\gamma_\delta)^m$, and $x,y \in \oline{\Omega}$ satisfying, for some $M \ge 1$, $|x-y| \le M\delta^{1/2}$, if $(p,u,v)$ is the unique maximizer in the formula for $\Phi_{\delta,\eps}(x,y,\sigma,\tau,\rho)$, then $|p| \le KM \delta^{-1/2}$. 
\end{lemma}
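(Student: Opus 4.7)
My plan is to exploit the strict concavity of the objective, together with the quadratic penalty $-\delta|p|^2/2$, by comparing its value at the maximizer $(p,u,v)$ to its value at $(0,u,v)$.

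\textbf{Step 1: Preliminary reductions.} Write $\mcl O(p,u,v)$ for the bracketed expression inside the supremum in \eqref{multidtestfn}. By \eqref{psistar}, the effective domain of $\psi^*$ is the compact set $K$, so at the optimum $|u|, |v| \le C\eps$ for a universal $C$, and we may restrict to such $(u,v)$. The term $-\delta\ell(p)$ is nonpositive, so dropping it only raises $\mcl O$.

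\textbf{Step 2: Uniform bound on the Hamiltonian contribution.} The naive route via \eqref{A:HLip} would yield a term growing like $C\gamma_\delta |p|^2$, which would spoil the use of $-\delta|p|^2/2$ since $\gamma_\delta$ and $\delta$ are comparable. Instead, I group each $H^i$ with $G$ and invoke \eqref{A:Hquadgrowth}. Since $\sigma_i/\gamma_\delta,\; -\tau_i/\gamma_\delta,\; \rho_i/\gamma_\delta \in [-1,1]$, for any $q \in \RR^d$,
\[
	\sigma_i H^i(q) - \gamma_\delta G(q) \;=\; -\gamma_\delta\left[G(q) - (\sigma_i/\gamma_\delta)H^i(q)\right] \;\le\; C\gamma_\delta(1 + |q|),
\]
using the lower bound in \eqref{A:Hquadgrowth}. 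An analogous lower bound $\ge -C\gamma_\delta(1 + |q|^2)$ follows from the upper bound in \eqref{A:Hquadgrowth}. Summing over $i$ and over the three groups $(p+u,\sigma),(p-v,-\tau),(p,\rho)$ in \eqref{multidtestfn}, and using $|u|,|v| \le C\eps$, gives
\[
	\sum_{i=1}^m \sigma_i H^i(p+u) - \tau_i H^i(p-v) + \rho_i H^i(p) - \gamma_\delta\bigl(G(p+u) + G(p-v) + G(p)\bigr) \;\le\; C\gamma_\delta\bigl(1 + |p| + \eps\bigr).
\]

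\textbf{Step 3: The comparison inequality.} Applying Step 2 in the upper bound and, at the base point $(0,u,v)$, using the corresponding lower bound (with $q = 0,\pm u,\pm v$, all of size $O(\eps)$),
\[
	\mcl O(p,u,v) \le p\cdot(x-y) + u\cdot x + v\cdot y - \frac{\delta|p|^2}{2} - \eps\psi^*(u/\eps) - \eps\psi^*(v/\eps) + C\gamma_\delta(1 + |p| + \eps),
\]
\[
	\mcl O(0,u,v) \ge u\cdot x + v\cdot y - \eps\psi^*(u/\eps) - \eps\psi^*(v/\eps) - C\gamma_\delta.
\]
Since $\mcl O(p,u,v) \ge \mcl O(0,u,v)$ by optimality, the identical terms cancel and we obtain
\[
	\frac{\delta|p|^2}{2} \;\le\; |p|\,|x-y| + C\gamma_\delta(1 + |p| + \eps).
\]

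\textbf{Step 4: Solving the quadratic inequality.} Inserting $|x-y| \le M\delta^{1/2}$, $\gamma_\delta \le \delta$, and $\eps < 1$, and dividing by $\delta$, yields
\[
	|p|^2 \;\le\; 2M\delta^{-1/2}|p| + C'(1 + |p|),
\]
which, since $M \ge 1$ and $\delta \in (0,1)$, implies $|p|^2 \le C''M\delta^{-1/2}|p| + C''$ for a universal $C''$. Solving the quadratic gives $|p| \le KM\delta^{-1/2}$ for a universal $K$, as claimed.

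The main obstacle is the one addressed in Step 2: a direct Lipschitz estimate of $H^i,G$ would produce a quadratic-in-$p$ contribution of order $\gamma_\delta|p|^2$ that cannot be absorbed by $-\delta|p|^2/2$; the observation that $G + \sum \theta_i H^i$ is controlled linearly from below (by \eqref{A:Hquadgrowth}) is what permits the quadratic $-\delta|p|^2/2$ to dominate.
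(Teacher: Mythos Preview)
Your proof is correct and follows essentially the same approach as the paper: both arguments use the lower bound in \eqref{A:Hquadgrowth} to control the Hamiltonian terms linearly in $|p|$, so that the quadratic penalty $-\delta|p|^2/2$ dominates. The paper phrases it as a contradiction (assuming $|p| > KM\delta^{-1/2}$ and comparing the resulting upper bound for $\Phi_{\delta,\eps}$ to the lower bound $\eps\psi(x)+\eps\psi(y)-C\gamma_\delta$ of Lemma~\ref{L:testfnproperties}\eqref{A:xybounded}), while you compare the objective at the maximizer $(p,u,v)$ directly to its value at $(0,u,v)$ and solve the resulting quadratic inequality; these are the same idea packaged slightly differently.
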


\begin{proof}
If, for some $K \ge 2$, $|p| > KM\delta^{-1/2}$, then the lower bound in \eqref{A:Hquadgrowth} gives $C > 0$ such that
\begin{align*}
	\Phi_{\delta,\eps}(x,y,\sigma,\tau,\rho)
	&\le(p+u) \cdot x - (p - v) \cdot y - \frac{\delta}{2} |p|^2 - \eps \psi^*\left(\frac{u}{\eps}\right) - \eps \psi^*\left(\frac{v}{\eps}\right) + C \delta(1 + |p|)\\
	&\le M\delta^{1/2}|p|  - \frac{\delta}{4} |p|^2 + \eps \psi(x) + \eps \psi(y) + C\\
	&< M^2\left(K - \frac{K^2}{4} \right)+ \eps \psi(x) + \eps \psi(y) + C,
\end{align*}
which, in view of Lemma \ref{L:testfnproperties}\eqref{A:xybounded}, is a contradiction for sufficiently large $K$. 
\end{proof}

%In the rest of this section, we study specific examples of Hamiltonians and/or domains for which the family $(\Phi_{\delta,\eps})_{\delta,\eps > 0}$ satisfies \eqref{A:boundarycondition}. In each setting, it is necessary to define a specific relationship between $\gamma_\delta$ and $\delta$. We will be able to take $\gamma_\delta = c_0 M^{-1} \delta^r$ for some universal constants $c_0 \in (0,1)$ and $r \ge 1$, the exact values depending on the particular setting.

The next result gives some more detail on the maximizers $p,u,v$ in the definition of $\Phi_{\delta,\eps}$:

\begin{lemma}\label{L:puv}
	Assume \eqref{A:DCH}, \eqref{A:HLip}, \eqref{intervalofexistence}, and \eqref{pushfn}, and let $\delta,\eps \in (0,1)$, $\sigma,\tau,\rho \in (-\gamma_\delta,\gamma_\delta)^m$, $x,y \in \oline{\Omega}$, $M \ge 1$, and $|x-y| \le M \delta^{1/2}$. Then there exists a universal constant $C > 0$ and $w_1,w_2 \in \RR^d$ such that, if $(p,u,v) \in \RR^{3d}$ is the unique maximizer in \eqref{multidtestfn}, then
	\[
		u = \eps D\psi(x) + w_1, \quad v = \eps D\psi(y) + w_2, \quad \text{and} \quad |w_1| + |w_2| \le C \eps M\gamma_\delta \delta^{-1/2}.
	\]
	Moreover, if $\ell$ is differentiable at $p$, then there exists $w_3 \in \RR^d$ such that
	\[
		p = \frac{x-y}{\delta} - D\ell(p) + w_3 \quad \text{and} \quad |w_3| \le M \gamma_\delta \delta^{-3/2}.
	\]
	Finally, if $H^i_1$ and $H^i_2$ are globally Lipschitz for all $i = 1,2,\ldots,m$, then $w_1,w_2,w_3$ satisfy
	\[
		|w_1| + |w_2| \le C\eps\gamma_\delta \quad \text{and} \quad |w_3| \le C \gamma_\delta \delta^{-1}.
	\]
\end{lemma}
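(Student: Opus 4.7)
The plan is to derive the claimed identities from first-order (subdifferential) optimality conditions for the concave maximization problem \eqref{multidtestfn}, then use the a priori bounds on $(p,u,v)$ to control the ``error'' terms. The fact that the objective function is strictly concave (thanks to $G$ together with the $-\delta|p|^2/2$ and $-\eps\psi^*(\cdot/\eps)$ terms as in Lemma \ref{L:C11solutions}) guarantees the existence and uniqueness of the maximizer, so the optimality conditions are genuine identities.

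First, I collect the a priori size bounds. From the observation made right after \eqref{multidtestfn}, the supremum is attained at $(p,u,v)$ with $|u|\le C\eps$ and $|v|\le C\eps$. From Lemma \ref{L:boundedp}, $|p|\le KM\delta^{-1/2}$. Consequently $|p+u|\vee|p-v|\vee|p|\le C M\delta^{-1/2}$, so by \eqref{A:HLip} any element of $\partial H^i(p+u)$, $\partial H^i(p-v)$, $\partial H^i(p)$, $\partial G(p+u)$, $\partial G(p-v)$, or $\partial G(p)$ has norm at most $CM\delta^{-1/2}$, with $C$ universal.

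Next, the optimality condition in $u$ (the terms in the bracket of \eqref{multidtestfn} involving $u$ form a concave function of $u$) reads
\[
 0 \in x - D\psi^\ast(u/\eps) + \sum_{i=1}^m \sigma_i\,\partial H^i(p+u) - \gamma_\delta\,\partial G(p+u),
\]
so there exist $\xi_i\in \partial H^i(p+u)$ and $\eta\in\partial G(p+u)$ such that $D\psi^\ast(u/\eps)=x+w$, where $w:=\sum_i\sigma_i\xi_i-\gamma_\delta\eta$ satisfies $|w|\le CM\gamma_\delta\delta^{-1/2}$ by the bounds above and \eqref{intervalofexistence}. Since $\psi\in C^2$ with $\|D^2\psi\|_\infty<\oo$, the Legendre identity gives $u/\eps=D\psi(x+w)$, whence
\[
 u = \eps D\psi(x) + w_1, \qquad w_1 := \eps\bigl(D\psi(x+w)-D\psi(x)\bigr),\qquad |w_1|\le \eps\|D^2\psi\|_\infty|w|\le C\eps M\gamma_\delta\delta^{-1/2}.
\]
The same argument applied to the optimality condition in $v$ yields $v=\eps D\psi(y)+w_2$ with the same type of bound on $|w_2|$.

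For the $p$ identity, the optimality condition in $p$ (using that $\ell$ is differentiable at $p$, while $\delta|p|^2/2$ contributes $\delta p$) is
\[
 \delta p + \delta D\ell(p) = (x-y) + \tilde w_3,
\]
where $\tilde w_3$ is a sum of at most $3m+3$ terms, each of the form $\theta_i\,\zeta_i$ with $|\theta_i|\le \gamma_\delta$ and $\zeta_i$ lying in one of the subdifferentials controlled above; hence $|\tilde w_3|\le CM\gamma_\delta\delta^{-1/2}$. Dividing by $\delta$ gives $p=(x-y)/\delta-D\ell(p)+w_3$ with $|w_3|\le CM\gamma_\delta\delta^{-3/2}$, which is the required estimate. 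Finally, if every $H^i_1,H^i_2$ (and hence $G$) is globally Lipschitz, then all subgradients appearing above are bounded by a universal $L$ independent of $p$, so one may replace $CM\delta^{-1/2}$ by $L$ in the bound on $|w|$ and $|\tilde w_3|$, giving $|w_1|+|w_2|\le C\eps\gamma_\delta$ and $|w_3|\le C\gamma_\delta/\delta$.

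The main subtlety is simply the bookkeeping between subdifferential membership and the Legendre inverse: I must argue that the strict concavity of the inner objective (coming from Lemma \ref{L:C11solutions}, via $\gamma_\delta G$ combined with $-\delta|p|^2/2$ and $-\eps\psi^\ast$) makes the maximizer unique, so that the identities obtained from the subgradient conditions hold as genuine equalities rather than inclusions. Everything else is a direct consequence of the growth bound \eqref{A:HLip}, the size bounds on $(p,u,v)$, and the $C^2$ regularity of $\psi$.
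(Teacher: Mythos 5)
Your argument is correct and takes essentially the same approach as the paper: write the first-order (subdifferential) optimality conditions for the supremum in \eqref{multidtestfn}, bound the $H$- and $G$-contributions via \eqref{A:HLip} and Lemma \ref{L:boundedp}, invert the $\psi^*$-relation by Legendre duality (the inclusion $x+w\in\del\psi^*(u/\eps)$ is equivalent to $u/\eps=D\psi(x+w)$ because $\psi\in C^2$), and finish with the Lipschitz continuity of $D\psi$. Two small notational caveats: $\psi^*$ need not be differentiable, so one should state the inclusion $x+w\in\del\psi^*(u/\eps)$ rather than an equality $D\psi^*(u/\eps)=x+w$; and since each $H^i$ is only a difference of convex functions, $\partial H^i$ is not well-defined in the convex sense --- the paper sidesteps this by regarding the relevant terms as the single convex function $\gamma_\delta G(p+\cdot)-\sum_i\sigma_i H^i(p+\cdot)=\sum_i(\gamma_\delta-\sigma_i)H^i_1(p+\cdot)+\sum_i(\gamma_\delta+\sigma_i)H^i_2(p+\cdot)$, a nonnegative combination of convex functions, to which subdifferential additivity and the bound of \eqref{A:HLip} apply directly.
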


\begin{proof}
	We have
	\[
		x \in \del \left( \eps\psi^*\left( \frac{\cdot}{\eps} \right) - \sum_{i=1}^m \sigma_i H^i(p + \cdot) + \gamma_\delta G(p + \cdot) \right)(u).
	\]
	We use the additivity of the sub-differential (see for instance \cite{HL}) to deduce that
	\[
		x \in (\del \psi^*)\left( \frac{u}{\eps} \right) + \del \left( \gamma_\delta G( p + \cdot) - \sum_{i=1}^m \sigma_i H^i(p + \cdot) \right) (u),
	\]
	and so, by \eqref{A:HLip} and Lemma \ref{L:boundedp}, there exists $w \in \RR^d$ with $|w| \le CM \gamma_\delta \delta^{-1/2}$ such that $u = \eps D\psi(x + w)$. The result for $u$ now follows from the fact that $D\psi$ is Lipschitz. The arguments for $v$ and $p$ are similar, and the statement for globally Lipschitz Hamiltonians follows easily from the fact that the bound $p$ in Lemma \ref{L:boundedp} no longer needs to be taken into account above.
\end{proof}

We now prove the strict boundary inequalities for $\Phi_{\delta,\eps}$ in the cases where $\Omega$ is a half space, $\Omega$ is convex in a quantifiable way, or $H$ is radial. Throughout, we always assume \eqref{A:DCH}, \eqref{A:HLip}, \eqref{intervalofexistence}, and \eqref{pushfn}. The last two assumptions are further specified in the three different cases.

\subsection{The half space} \label{subsec:halfspace}

We set
\[
	\Omega = \left\{ x \in \RR^d : x \cdot e_1 > 0 \right\},
\]
where $e_1 = (1,0,0,\ldots,0)$; here $n(x) = -e_1$ for all $x \in \del \Omega = \{ x \in \RR^d : x_1 = 0 \}$. 

%We then take as the test function, for $\delta,\eps > 0$, $x,y \in \oline{\Omega}$, and $\sigma,\tau,\rho \in (-\gamma_\delta,\gamma_\delta)^m$, 
%\begin{equation}\label{halfspacetestfn}
%	\begin{split}
%	\Phi_{\delta,\eps}(x,y,\sigma,\tau,\rho)
%	&:=
%	\sup_{p \in \RR} \Bigg\{ (p - \eps e_1) \cdot x - (p+\eps e_1) \cdot y - \delta \left( \frac{|p|^2}{2} + |p| \right) \\
%	&- \sum_{i=1}^m  \left( \sigma_i H^i(p-\eps e_1) - \tau_i H^i(p - \eps e_1) + \rho_i H^i(p) \right)\\
%	& - \gamma_\delta \left( G(p-\eps e_1) + G(p +\eps e_1) + G(p) \right) \Bigg\}.
%	\end{split}
%\end{equation}

\begin{lemma}\label{L:halfspacetestfn}
	Let $M \ge 1$ and set $\ell(p) = |p \cdot e_1|$. Then there exists a universal constant $c_0 \in (0,1)$ such that, if $\gamma_\delta = c_0 M^{-1}\delta^{3/2}$, then, for all $\delta,\eps \in (0,1)$, $\sigma, \tau,\rho \in (-\gamma_\delta,\gamma_\delta)^m$, and $|x-y| \le M \delta^{1/2}$, then \eqref{A:boundarycondition} holds. If $H^i_1$ and $H^i_2$ are globally Lipschitz for $i = 1,2,\ldots,m$, then the same is true if $\gamma_\delta = c_0 \delta$.
\end{lemma}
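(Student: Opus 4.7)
The plan is to read off the inequalities \eqref{A:boundarycondition} from the formula \eqref{multidtestfn} using the envelope theorem, and then exploit the specific choice $\ell(p) = |p \cdot e_1|$ to pin down the sign of $p_1$ at the boundary. Let $(p,u,v)$ denote the unique maximizer in \eqref{multidtestfn}; the envelope theorem yields $D_x\Phi_{\delta,\eps} = p+u$ and $D_y\Phi_{\delta,\eps} = -p+v$. Since $n(x) = -e_1$ on $\partial\Omega$, \eqref{A:boundarycondition} becomes the pair of scalar inequalities
\[
	p_1 + u_1 < 0 \text{ when } x \in \partial\Omega, \qquad p_1 - v_1 > 0 \text{ when } y \in \partial\Omega.
\]

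Take $c_0$ small (so that the universal constant $C$ appearing below satisfies $Cc_0 \le 1/2$) and set $\gamma_\delta = c_0 M^{-1}\delta^{3/2}$. I will apply Lemma \ref{L:puv}, but in its subgradient form: the proof only invokes the additivity of the subdifferential of convex functions, so the same argument produces $u = \eps D\psi(x) + w_1$, $v = \eps D\psi(y) + w_2$, and an element $s \in \partial \ell(p) \subset [-1,1] e_1$ with $p = (x-y)/\delta - s + w_3$, where the error bounds $|w_1| + |w_2| \le C\eps M\gamma_\delta\delta^{-1/2}$ and $|w_3| \le CM\gamma_\delta\delta^{-3/2}$ specialize, under our choice of $\gamma_\delta$, to $|w_1|+|w_2| \le Cc_0\eps$ and $|w_3| \le Cc_0$, uniformly in $\delta$, $\eps$, and $M$. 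Combining with \eqref{psi}, which on $\partial\Omega$ reads $D\psi(\cdot)\cdot e_1 = -D\psi(\cdot)\cdot n(\cdot) \le -1$, I obtain $u_1 \le -\eps(1 - Cc_0) \le -\eps/2$ when $x \in \partial\Omega$, and likewise $v_1 \le -\eps/2$ when $y \in \partial\Omega$.

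The sign of $p_1$ is then forced by a short contradiction argument. Suppose $x \in \partial\Omega$, so $x_1 = 0$, and $y \in \oline{\Omega}$, so $y_1 \ge 0$. If $p_1 > 0$, then $\partial \ell(p) = \{e_1\}$, forcing $s_1 = 1$; but then $p_1 = -y_1/\delta - 1 + (w_3)_1 \le -1 + Cc_0 < 0$, contradicting $p_1 > 0$. Hence $p_1 \le 0$, and so $D_x\Phi_{\delta,\eps}\cdot n(x) = -(p_1 + u_1) \ge \eps/2 > 0$. The other half is symmetric: when $y \in \partial\Omega$ and $p_1 < 0$, then $s_1 = -1$ and $p_1 = x_1/\delta + 1 + (w_3)_1 \ge 1 - Cc_0 > 0$, a contradiction; therefore $p_1 \ge 0$ and $D_y\Phi_{\delta,\eps}\cdot n(y) = p_1 - v_1 \ge \eps/2 > 0$. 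The globally Lipschitz case runs identically after substituting the improved bounds $|w_3| \le C\gamma_\delta\delta^{-1}$ and $|w_1|+|w_2| \le C\eps\gamma_\delta$ from Lemma \ref{L:puv}; the choice $\gamma_\delta = c_0\delta$ again makes the error terms $O(c_0)$ and $O(c_0\eps)$.

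The main obstacle is the non-differentiability of $\ell$ at $p_1 = 0$, which prevents a literal application of Lemma \ref{L:puv}. This is entirely cosmetic: the proof of Lemma \ref{L:puv} only uses the additivity of the convex subdifferential, which produces the displayed identity with an arbitrary selection $s \in \partial\ell(p)$ in place of $D\ell(p)$, and the error bound on $w_3$ is unchanged. The rest of the argument is then a direct sign check, the point being that $|\partial\ell(p)|_{\infty} \le 1$, so the subgradient contribution $-s$ in $p = (x-y)/\delta - s + w_3$ is strictly bigger in magnitude than the rough error $|(w_3)_1| \le Cc_0 < 1$, which is precisely what allows one to rule out the "wrong" sign of $p_1$.
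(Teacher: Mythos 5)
Your proof is correct and takes essentially the same route as the paper: use Lemma \ref{L:puv} to isolate $u$ (resp.\ $v$) as $\eps D\psi(x)$ up to a small error, giving the strictly positive contribution to the normal derivative, and then rule out the ``wrong'' sign of $p_1$ by inspecting the optimality condition for $p$, where the term $-\sgn(p_1)e_1$ coming from $\ell(p)=|p\cdot e_1|$ dominates the $O(c_0)$ error. The only cosmetic difference is that you phrase the optimality condition for $p$ in subgradient form so as to cover $p_1=0$ uniformly; the paper sidesteps this by noting that $p_1=0$ is already harmless and only invoking the differentiable form of Lemma \ref{L:puv} when $p_1\ne 0$ — since your contradiction argument likewise only applies the formula when $p_1>0$ (or $p_1<0$), the extension isn't actually load-bearing, but it is harmless and correct.
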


\begin{proof}
We only prove the first inequality in \eqref{A:boundarycondition}, as the argument for the second is similar.

Fix $x \in \del \Omega$ and $y \in \oline{\Omega}$ be such that $|x-y| \le M\delta^{1/2}$, and fix $\sigma,\tau,\rho \in (-\gamma_\delta,\gamma_\delta)^m$. Let $p,u,v$ uniquely attain the maximum in \eqref{multidtestfn}, so that $D_x \Phi_{\delta,\eps}(x,y,\sigma,\tau,\rho) = p + u$. 

By Lemma \ref{L:puv}, for some $C > 0$,
\[
	u \cdot n(x) \ge \eps D\psi(x) \cdot n(x) - C\eps M \gamma_\delta \delta^{-1/2} \ge \eps(1 - Cc_0 \delta^{1/2}).
\]
Taking $c_0$ sufficiently small yields $u \cdot n(x) > 0$, and so it suffices to prove that $p \cdot n(x) = - p \cdot e_1 \ge 0$. We are done if $p \cdot e_1 = 0$. On the other hand, if $p\cdot e_1 \neq 0$, then Lemma \ref{L:puv} gives
\[
	p = \frac{x-y}{\delta} - \sgn (p \cdot e_1) e_1 + O(M\gamma_\delta \delta^{-1/2}).
\]
Assuming by contradiction that $p \cdot n(x) = -p \cdot e_1 < 0$, we find that
\[
	p = \frac{x-y}{\delta} - e_1 + O(M\gamma_\delta \delta^{-3/2}),
\]
and so, for some $C > 0$, $p \cdot e_1 \le -1 + Cc_0$. This is negative upon shrinking $c_0$, which is a contradiction. If the $H^i_1,H^i_2$ are globally Lipschitz, the argument follows in the same way.
%Then $\partial_x \Phi_{\delta,\eps}(0,y,\sigma,\tau,\rho) =  p - \eps$, and so \eqref{A:boundarycondition} is established upon showing that $p < \eps$. If $p \le 0$, we are done. Otherwise, suppose that $p > 0$. Then, for some $C > 0$ depending only on the bounds in \eqref{A:HLip}, 
%	\[
%		p \le - \frac{y}{\delta} - \sgn p + Cc_0(1 + |p|) \le C c_0(1 + p) - 1.
%	\]
%	Taking $c_0 > 0$ sufficiently small and rearranging terms gives $p < 0$, which is a contradiction, and we conclude. The argument for the second inequality of \eqref{A:boundarycondition} is proved similarly.
\end{proof}
\begin{remark}
	When $d =1$ (that is, when $\Omega$ is the half-line), we may also take $\gamma_\delta = c_0 \delta$ in general. Indeed, in that case, $\del \Omega = \{0\}$ and $n(0) = -1$, and the maximizer $p$ satisfies $p \cdot n(x) = -p$, and thus, if $p \cdot n(x) < 0$, then
	\[
		|p| = p = \frac{x-y}{\delta} - \sgn p + O(\gamma_\delta \delta^{-1}(1 + |p|)) \le -1 + Cc_0 (1 + |p|).
	\]
	Rearranging terms and shrinking $c_0$ gives the contradiction in this case.
\end{remark}

\subsection{Quantified convexity assumption}

{
We now assume some control on the convexity of $\Omega$, namely,}
\begin{equation}\label{A:uniformlyconvexdomain}
	\left\{
	\begin{split}
	&\text{there exist } \theta > 0 \text{ and } q \ge 2 \text{ such that, for all $x \in \partial \Omega$ and $y \in \Omega$}, \\
	&n(x) \cdot (x-y) \ge \theta |x-y|^q.
	\end{split}
	\right.
\end{equation}
%\begin{equation}\label{A:uniformlyconvex}
%	\Omega \subset \RR^d \quad \text{is smooth, bounded, and uniformly convex.}
%\end{equation}
%Note that this implies that
%\begin{equation}\label{normalcondition}
%	\left\{
%	\begin{split}
%	&\text{there exist $\nu_0 > 0$ and $\theta > 0$ such that}\\
%	&(x-y) \cdot n(x) \ge \theta |x-y|^2 \quad \text{for all } x \in \del \Omega \text{ and } y \in \oline{\Omega}\text{ with } |x-y| < \nu_0.
%	\end{split}
%	\right.
%\end{equation} We are then able to take $\psi$ such that\begin{equation}\label{domainpsi}
%	\left\{
%	\begin{split}
%	&\psi \in C^1(\RR^d) \text{ is convex,}\\
%	&D\psi(x)\cdot n(x) \ge 1 \text{ for } x \in \del \Omega, \quad \lim_{|x| \to \oo} \frac{\psi(x)}{|x|} = +\oo, \text{ and}\\
%	&D^2 \psi^* \ge c_0 \Id \text{ in the sense of distributions, for some }c_0 > 0.
%	\end{split}
%	\right.
%\end{equation}
\begin{lemma}\label{L:unifconvtestfn}
	Assume $\Omega$ satisfies \eqref{A:uniformlyconvexdomain}, $\ell(p) = |p|$, and $M \ge 1$. Then there exists universal $c_0 \in (0,1)$ depending on the parameters $\theta$ and $q$, such that, if $\gamma_\delta = c_0 M^{-1} \delta^{q + \frac{1}{2}}$, then, for all $\delta,\eps \in (0,1)$, $\sigma, \tau,\rho \in (-\gamma_\delta,\gamma_\delta)^m$, and $|x-y| \le M \delta^{1/2}$, \eqref{A:boundarycondition} holds. If $H^i_1,H^i_2$ are globally Lipschitz for all $i = 1,2,\ldots,m$, then the same is true with $\gamma_\delta = c_0 \delta^q$.
\end{lemma}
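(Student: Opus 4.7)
The plan is to adapt the half-space argument (Lemma~\ref{L:halfspacetestfn}) by using Lemmas~\ref{L:boundedp} and~\ref{L:puv} to control the unique maximizer $(p,u,v)$ in \eqref{multidtestfn}, and then to exploit the quantified inequality \eqref{A:uniformlyconvexdomain} to close the estimate. I will only prove the first inequality in \eqref{A:boundarycondition}, as the second is entirely symmetric. So I fix $x \in \partial \Omega$ and $y \in \oline{\Omega}$ with $|x-y| \le M\delta^{1/2}$, fix $\sigma,\tau,\rho \in (-\gamma_\delta,\gamma_\delta)^m$, let $(p,u,v)$ be the unique maximizer, and must show $(p+u)\cdot n(x) = D_x\Phi_{\delta,\eps}\cdot n(x) > 0$.

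By Lemma~\ref{L:puv} and the property $D\psi\cdot n \ge 1$ on $\partial\Omega$ from \eqref{psi}, I have
\[
u \cdot n(x) \ge \eps - C\eps M\gamma_\delta \delta^{-1/2} = \eps(1 - Cc_0\delta^{q}) \ge \eps/2
\]
once $c_0$ is chosen small. Thus it suffices to show $p \cdot n(x) \ge 0$. For the term $p$, I will use that when $\ell(p)=|p|$ is differentiable at $p$ (equivalently $p \ne 0$), Lemma~\ref{L:puv} yields $p = (x-y)/\delta - p/|p| + w_3$, with $|w_3| \le M\gamma_\delta \delta^{-3/2} = c_0\delta^{q-1}$. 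Writing $z := (x-y)/\delta + w_3$, this rearranges to $p(1+1/|p|)=z$, forcing $|p|=|z|-1$ and $p = (1 - 1/|z|)z$ (valid only if $|z|>1$). The subdifferential condition at $p=0$ reads $z \in \partial\ell(0) = \oline{B_1}$, i.e.\ $|z| \le 1$. So there is a natural dichotomy.

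In the case $|z| \le 1$, one has $p=0$ and $(p+u)\cdot n(x) = u \cdot n(x) \ge \eps/2 > 0$. In the case $|z|>1$, since $|w_3| \le c_0\delta^{q-1} \le 1/2$ (using $q \ge 2$, $\delta \le 1$, and $c_0 \le 1/2$), I get $|x-y|/\delta \ge |z| - |w_3| \ge 1/2$, hence $|x-y| \ge \delta/2$. Then the uniform convexity \eqref{A:uniformlyconvexdomain} gives
\[
(x-y) \cdot n(x) \ge \theta |x-y|^q \ge \theta (\delta/2)^q,
\]
and consequently
\[
z \cdot n(x) \ge \theta 2^{-q}\delta^{q-1} - c_0 \delta^{q-1} = (\theta 2^{-q} - c_0)\delta^{q-1} > 0
\]
provided $c_0 < \theta 2^{-q}$. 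Since $p = (1-1/|z|)z$ with $|z|>1$, $p \cdot n(x)$ has the same sign as $z\cdot n(x)$, so $p \cdot n(x) > 0$ and $(p+u)\cdot n(x) > 0$, completing the proof in this regime. The globally Lipschitz case is identical, after substituting the sharper bounds from Lemma~\ref{L:puv}, namely $|w_3| \le C\gamma_\delta\delta^{-1}$ and $|w_1| \le C\eps\gamma_\delta$, which under the choice $\gamma_\delta = c_0\delta^q$ reduce to $|w_3| \le Cc_0 \delta^{q-1}$ and $|w_1| \le Cc_0\eps$, and the same case-split argument applies.

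The main obstacle is the balance between the exponent $q$ coming from the uniform convexity of $\Omega$ and the errors $w_1, w_3$ induced by the Hamiltonian corrections in \eqref{multidtestfn}. The choice $\gamma_\delta = c_0 M^{-1}\delta^{q+1/2}$ is calibrated precisely so that the error $|w_3|$ is of order $c_0 \delta^{q-1}$, matching the order of the uniform-convexity gain $\theta 2^{-q}\delta^{q-1}$ and allowing $c_0$ to absorb the constant $\theta 2^{-q}$. Any weaker scaling of $\gamma_\delta$ would break this balance when $|x-y|$ is close to the critical threshold $\delta/2$, which is where the case $|z|>1$ barely holds.
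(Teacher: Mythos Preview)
Your proof is correct and follows essentially the same approach as the paper: control $u$ and $p$ via Lemma~\ref{L:puv}, deduce $|x-y|\ge \delta/2$ from the optimality condition when $p\ne 0$, and then use the uniform convexity inequality \eqref{A:uniformlyconvexdomain} to force $p\cdot n(x)\ge 0$. The only cosmetic difference is that you solve $p+p/|p|=z$ explicitly (so $p$ is a nonnegative multiple of $z$, and $p\cdot n(x)$ inherits the sign of $z\cdot n(x)$), whereas the paper frames this step as a contradiction argument, assuming $p\cdot n(x)<0$ and using that sign to discard the $-(p/|p|)\cdot n(x)$ term.
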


\begin{proof}
	Once again, we only prove the first inequality in \eqref{A:boundarycondition}, and we do so under the more general growth assumption \eqref{A:HLip}, because the argument runs similarly if $H^i_1,H^i_2$ are globally Lipschitz.
	
	Fix $x \in \partial \Omega$ and $y \in \oline{\Omega}$ satisfying $|x-y| \le M\delta^{1/2}$, and $\sigma,\tau,\rho \in (-\gamma_\delta,\gamma_\delta)^m$, and let $p$, $u$, and $v$ be the unique optimizers in the definition of $\Phi_{\delta,\eps}$. We want to prove that $(p+u) \cdot n(x) >0$. 
	
	Invoking Lemma \ref{L:puv} again gives
	\[
		u = \eps \left[ D\psi(x) + O(M\gamma_\delta \delta^{-1/2}) \right] \quad \text{and} \quad v = \eps \left[ D\psi(y) + O(M\gamma_\delta \delta^{-1/2}) \right].
	\]
	In particular, taking $c_0$ sufficiently small yields $u \cdot n(x) > 0$. Thus, if $p = 0$, we are done.
	
	Assume then that $p\neq 0$, and suppose for the sake of contradiction that $p \cdot n(x) < 0$. Then
	\[
		p = \frac{x-y}{\delta} -  \frac{p}{|p|} +O\left(\frac{M\gamma_\delta}{\delta^{3/2}}\right),
	\]
	and taking the scalar product with $\frac{p}{|p|}$ implies that $\frac{|x-y|}{\delta} \geq 1 - O\left(\frac{M\gamma_\delta}{\delta^{3/2}}\right)$, so, shrinking $c_0$ if necessary, $|x-y| \ge \delta/2$.

	On the other hand, by \eqref{A:uniformlyconvexdomain},
	\begin{align*}
		p\cdot n(x) &\geq  \frac{(x-y)\cdot n(x)}{\delta}  - \frac{p}{|p|} \cdot n(x) + O\left(\frac{M\gamma_\delta}{\delta^{3/2}}\right) \\
		&\geq \theta \frac{|x-y|^q}{\delta} + O\left(\frac{M\gamma_\delta}{\delta^{3/2}}\right) 
		\geq \frac{\theta}{2^q} \delta^{q-1} - O\left(\frac{M\gamma_\delta}{\delta^{3/2}}\right).
	\end{align*}
	The right-hand side is positive if $c_0$ is sufficiently small, and a contradiction is reached. 
\end{proof}

%\begin{remark}\label{R:unifconvextestfn}
%	If $H^i_1$ and $H^i_2$ are globally Lipschitz, then we can take $\gamma_\delta = c_0 \delta^q$ for $c_0  \in (0,1)$ sufficiently small.
%\end{remark}

\subsection{Radial Hamiltonians}
We return to the setting where $\Omega$ is $C^1$ and convex, but we make the assumption that
\begin{equation}\label{A:Hradial}
	H(p) = h(|p|) \quad \text{for some } h\in C^2([0,\oo),\RR^m).
\end{equation}
Then $H$ as in \eqref{A:Hradial} satisfies \eqref{A:DCH} and \eqref{A:HLip}. Indeed, for $i = 1,2,\ldots,m$, we write
\[
	H^i_1(p) = h^i_1(|p|) := h^i(|p|) + C(|p| + |p|^2)  \quad \text{and} \quad H^i_2(p) = h^i_2(|p|) := C(|p| + |p|^2).
\]
The function $H^i_2$ is convex, and, for sufficiently large $C$ depending on $\norm{h^i}_{C^2}$,
\[
	D^2 H^i_1(p) = \left[ (h^i)''(|p|) + 2C \right] \frac{p}{|p|} \otimes \frac{p}{|p|} + \left[ \frac{ (h^i)'(|p|) + C}{|p|} + 2C \right] \left( \Id - \frac{p}{|p|} \otimes \frac{p}{|p|} \right) \ge 0.
\]
Let us write $G(p) = g(|p|)$, that is, for $r \ge 0$, $g(r) = \sum_{i=1}^m \left( h^i_1(r) + h^i_2(r) \right)$.

We once more consider the test function \eqref{multidtestfn} with $\ell(p)=|p|$, which we write as
\begin{equation}\label{radialHtestfn}
	\begin{split}
	\Phi_{\delta,\eps}&(x,y,\sigma,\tau,\rho) \\
	&:= 
	\sup_{p,u,v} \Bigg\{ (p+u) \cdot x - (p-v) \cdot y  - \delta \left( \frac{|p|^2}{2} + |p| \right) - \eps \psi^*\left( \frac{u}{\eps} \right) - \eps \psi^* \left( \frac{v}{\eps} \right) \\
	&+\sum_{i=1}^m \left( \sigma_i h^i(|p+u|) - \tau_i h^i(|p-v|) + \rho_i h^i(|p|) \right)\\
	& - \gamma_\delta \left( g(|p+u|) + g(|p-v|) + g(|p|) \right) \Bigg\}.
	\end{split}
\end{equation}

\begin{lemma}\label{L:radialHtestfn}
	Assume $H$ satisfies \eqref{A:Hradial}. Then there exist universal constants $\delta_0$, $\eps_0$, $c_0$, $\nu_0 \in (0,1)$ such that, if $\gamma_\delta = c_0 M^{-1} \delta^{3/2}$, then, for all $\delta \in (0,\delta_0)$, $\eps \in (0,\eps_0)$, $\sigma,\tau,\rho \in (-\gamma_\delta, \gamma_\delta)^m$, and $x,y \in \oline{\Omega}$ such that $|x-y| \le \min\left\{ M \delta^{1/2}, \nu_0 \right\}$, \eqref{A:boundarycondition} holds. If $h^i_1,h^i_2$ are globally Lipschitz for $i = 1,2,\ldots,m$, then the same is true with $\gamma_\delta = c_0 \delta$.
\end{lemma}
 
\begin{proof}	
Fix $x,y,\sigma,\tau,\rho$ as in the statement of the lemma, and let $p,u,v$ be the unique maximizers in the definition of $\Phi_{\delta,\eps}$. We need to show that $(p + u) \cdot n(x) > 0$.

Lemma \ref{L:puv} gives
\[
	u = \eps \left[ D\psi(x) + O(\gamma_\delta M\delta^{-1/2}) \right] \quad \text{and} \quad v = \eps \left[ D\psi(y) + O(\gamma_\delta M \delta^{-1/2}) \right],
\]
so that taking $c_0$ sufficiently small yields $u \cdot n(x) > \frac{\eps}{2}$, and, thus, for some universal $c > 0$,
\[
	\frac{u}{|u|} \cdot n(x) > c.
\]
If $\nu_0$ is sufficiently small (depending on $\norm{D^2\psi}_\oo$), then also $(u + v) \cdot n(x) > 0$. Therefore, if $p = 0$ or $p = v$, we are done, so assume otherwise.

Let us next rule out the case that $p = -u$. In view of Lemma \ref{L:puv} and the fact that $p \ne 0$,
\begin{equation}\label{pexpression}
	p = \frac{x-y}{\delta} - \frac{p}{|p|} + O\left( \frac{M\gamma_\delta}{\delta^{3/2}} \right),
\end{equation}
and so plugging in $p = -u$ gives
\[
	-u = \frac{x-y}{\delta} + \frac{u}{|u|} + O\left( \frac{M\gamma_\delta}{\delta^{3/2}} \right).
\]
Taking the scalar product with $n(x)$ yields
\[
	-\frac{\eps}{2} \ge -u \cdot n(x) \ge \frac{x-y}{\delta} \cdot n(x) + \frac{u}{|u|} \cdot n(x) + O\left( \frac{M\gamma_\delta}{\delta^{3/2}} \right) \ge c - O\left( \frac{M\gamma_\delta}{\delta^{3/2}} \right),
\]
which is a contradiction if $c_0$ and $\eps_0$ are sufficiently small. Therefore, $p \ne -u$.

Using now that $p \ne 0$, $p \ne -u$, and $p \ne v$, we may write
\begin{align*}
	p &= \frac{x-y}{\delta} - \frac{p}{|p|} - \left( \frac{\gamma_\delta}{\delta} g'(|p + u|) - \frac{1}{\delta} \sum_{i=1}^m \sigma_i (h^i)'(|p+u|) \right) \frac{p+u}{|p+u|} \\
	&- \left( \frac{\gamma_\delta}{\delta} g'(|p - v|) + \frac{1}{\delta} \sum_{i=1}^m \tau_i (h^i)'(|p - v|) \right) \frac{p - v}{|p - v|} \\
	&- \left( \frac{\gamma_\delta}{\delta} g'(|p|) - \frac{1}{\delta} \sum_{i=1}^m \rho_i (h^i)'(|p|) \right) \frac{p}{|p|}.
\end{align*}
Note that the three terms in parentheses are all nonnegative.

Assume for the sake of contradiction that $(p+u)\cdot n(x) \leq 0$. This implies that $p\cdot n(x) < -c\eps$. Moreover,
\[
	|v - u| \le \eps |D\psi(y) - D\psi(x)| + O(\eps M\gamma_\delta \delta^{-1/2}) \le O(\eps(\nu_0 + c_0 \delta)),
\]
and so, once more shrinking $\nu_0$ and $c_0$, we also have $(p + v) \cdot n(x) \le 0$. We conclude that
\[
	(p+u) \cdot n(x) \geq \epsilon (1-O(M\gamma_\delta \delta^{-1/2})), 
\]
which is a contradiction for sufficiently small $c_0$. We conclude that $(p+u) \cdot n(x) > 0$. The argument is similar for the other inequality.

\end{proof}

%\begin{remark}\label{R:radialtestfn}
%	As above, if $h^i_1$ and $h^i_2$ are globally Lipschitz, then we may take $\gamma_\delta = c_0 \delta$ for small $c_0 > 0$.
%\end{remark}

We now strengthen the assumption \eqref{A:DCH} for the radial Hamiltonians, and assume
\begin{equation}\label{A:convexH}
	H^i \text{ is convex for all } i = 1,2,\ldots,m.
\end{equation}
In this case, we will be able to lengthen some time intervals, from one side, for which the above properties of $\Phi_{\delta,\eps}$ hold. We shall use this observation to prove monotonicity in the path variable for \eqref{E:neumann} when $H$ is radial and convex.

We first note that, if each $H^i$ is convex, we may take $H^i_1 = H^i$ and $H^i_2 = 0$, so that $G = \sum_{i=1}^m H^i$. Then \eqref{A:Hquadgrowth} can be refined to, for some universal $C > 0$,
	\begin{equation}\label{newGHestimate}
		\left\{
		\begin{split}
		&0 \le G(p) + \sum_{i=1}^m \theta_i H^i(p) \le C|1 + \theta|(|p| + |p|^2) \\
		&\text{for all } p \in \RR^d \text{ and } \theta = (\theta_1, \theta_2, \ldots, \theta_m) \in [-1,\oo)^m.
		\end{split}
		\right.
	\end{equation}
The notation $1 + \theta$ means $(1 + \theta_1,1 + \theta_2, \ldots, 1 + \theta_m)$.
	
It is immediate that \eqref{newGHestimate} can be used to generalize Lemma \ref{L:quadphieps=0} and obtain the following result:

\begin{lemma}\label{L:phideltaconvex}
	The function $\phi_\delta$ belongs to $C^{1,1}(\RR^d \times (-\oo,3\gamma_\delta)^m$, and satisfies the same equations as in Lemma \ref{L:quadphieps=0} in that domain. Moreover, there exists a universal $C > 0$ such that, for all $\delta \in (0,1)$, $z \in \RR^d$, $\nu \in (0,1)$, and $\rho \in (- \nu,3\gamma_\delta)^m$,
	\[
		\frac{ \left[ (|z| - C(\delta + \nu))_+ \right]^2}{2(1 + C)(\delta + \nu)} 
		\le \phi_\delta(z,\rho)
		\le \frac{|z|^2}{2\delta}.
	\]
\end{lemma}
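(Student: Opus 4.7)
The plan is to revisit the proof of Lemma \ref{L:quadphieps=0} and observe that under \eqref{A:convexH} the decomposition becomes $H^i_1 = H^i$, $H^i_2 = 0$ and $G = \sum_i H^i$, so that
\[
	3\gamma_\delta G(p) - \sum_{i=1}^m \rho_i H^i(p) = \sum_{i=1}^m (3\gamma_\delta - \rho_i) H^i(p)
\]
is a nonnegative combination of convex functions whenever $\rho_i < 3\gamma_\delta$, \emph{no matter how negative} $\rho_i$ is. This is exactly what opens up the extended domain $(-\oo, 3\gamma_\delta)^m$: the map $p \mapsto \delta\bigl(|p|^2/2 + \ell(p)\bigr) + \sum_i (3\gamma_\delta - \rho_i) H^i(p)$ is still uniformly convex with Hessian $\geq \delta \Id$ coming from the quadratic term alone.

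For the regularity claim, I would simply re-run the argument of Lemma \ref{L:C11solutions} locally in $\rho$, centering at any $\rho^0 \in (-\oo,3\gamma_\delta)^m$ and working on a small neighborhood where the above uniform convexity holds with a uniform modulus. This yields uniqueness of the maximizer $p(z,\rho)$, its local Lipschitz dependence on $(z,\rho)$, and the envelope identities $\partial_{\rho_i} \phi_\delta = H^i(p) = H^i(D_z \phi_\delta)$, so $\phi_\delta \in C^{1,1}_{\loc}$ on $\RR^d \times (-\oo,3\gamma_\delta)^m$.

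The upper bound is where the improvement over Lemma \ref{L:quadphieps=0} appears. Applying \eqref{newGHestimate} with $\theta_i = -\rho_i/(3\gamma_\delta)$, the constraint $\rho_i < 3\gamma_\delta$ gives $\theta_i > -1$, so $\theta \in [-1,\oo)^m$ is admissible and the left-hand inequality in \eqref{newGHestimate} yields $\sum_i \rho_i H^i(p) - 3\gamma_\delta G(p) \leq 0$ pointwise in $p$. Combined with $-\delta\ell(p) \leq 0$, this at once gives
\[
	\phi_\delta(z,\rho) \leq \sup_{p \in \RR^d} \left\{ p \cdot z - \frac{\delta}{2} |p|^2 \right\} = \frac{|z|^2}{2\delta}.
\]
For the lower bound, I would plug the same $\theta$ into the right-hand inequality of \eqref{newGHestimate} and use that, for $\rho_i \in (-\nu, 3\gamma_\delta)$, one has $|1 - \rho_i/(3\gamma_\delta)| \leq 1 + \nu/(3\gamma_\delta)$. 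Together with $\gamma_\delta \leq \delta$ this produces the bound $3\gamma_\delta G(p) - \sum_i \rho_i H^i(p) \leq C(\delta+\nu)(|p|+|p|^2)$ with a universal $C$. Testing the supremum defining $\phi_\delta$ against $p = t\,(z/|z|)$ for $z \neq 0$ then gives, after combining with $\delta\ell(p) \leq \delta|p|$,
\[
	\phi_\delta(z,\rho) \geq t|z| - C_1(\delta+\nu) t - \frac{C_2(\delta+\nu)}{2} t^2,
\]
and optimizing $t \geq 0$ yields the stated lower bound $[(|z| - C(\delta+\nu))_+]^2 / [2(1+C)(\delta+\nu)]$.

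There is no genuine analytic obstacle here; the proof is a refinement of the earlier lemma, and the only care required is bookkeeping the universal constants so that a single $C$ suffices both in the prefactor and inside the $(\cdot)_+$. The essential new ingredient is recognizing that convexity of the individual $H^i$ removes any lower constraint on $\rho_i$ in the admissibility condition for \eqref{newGHestimate}, which in turn is precisely what permits later monotonicity applications in which $\rho$ is allowed to drift arbitrarily negative while remaining below $3\gamma_\delta$.
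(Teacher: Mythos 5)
Your proof is correct and takes the approach the paper itself suggests (it states that Lemma \ref{L:phideltaconvex} follows immediately from \eqref{newGHestimate} but gives no details): you use the convexity to take $H^i_1 = H^i$, $H^i_2 = 0$, so $3\gamma_\delta G - \sum\rho_i H^i = \sum(3\gamma_\delta-\rho_i)H^i$ stays convex for all $\rho_i < 3\gamma_\delta$, which extends the domain and, via the left inequality in \eqref{newGHestimate}, removes the lower error term in the upper bound; the lower bound then comes from the right inequality in \eqref{newGHestimate} with $|1+\theta| = O((3\gamma_\delta+\nu)/(3\gamma_\delta))$ together with $\gamma_\delta \le \delta$. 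The only (minor) elision is that the Lipschitz dependence of the maximizer $p(z,\rho)$ on $\rho$ also requires observing that $p$ remains in a bounded set uniformly over $\rho \in (-\oo,3\gamma_\delta)^m$ — which holds since $\phi_\delta \ge 0$ and the coercive term $\delta|p|^2/2$ forces $|p| \le 2|z|/\delta$ — but this does not affect the correctness of the argument.
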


We similarly analyze the test function $\Phi_{\delta,\eps}$ from \eqref{radialHtestfn}, which now takes the form
\begin{equation}\label{convexradialHtestfn}
	\begin{split}
	\Phi_{\delta,\eps}(x,y,\sigma,\tau,\rho) 
	&:= 
	\sup_{p,u,v} \Bigg\{ (p+u) \cdot x - (p-v) \cdot y  - \delta \left( \frac{|p|^2}{2} + |p| \right) - \eps \psi^*\left( \frac{u}{\eps} \right) - \eps \psi^* \left( \frac{v}{\eps} \right) \\
	&-\sum_{i=1}^m \Big((\gamma_\delta - \sigma_i) h^i(|p+u|) + (\gamma_\delta + \tau_i) h^i(|p-v|) + (\gamma_\delta -  \rho_i) h^i(|p|) \Big) \Bigg\}.
	\end{split}
\end{equation}

\begin{lemma}\label{L:convexH}
	Assume that $H$ satisfies \eqref{A:Hradial} and \eqref{A:convexH}. Then the results of Lemma \ref{L:testfnproperties} continue to hold if, for any fixed $\nu \in (0,1)$, the variable $\rho$ is everywhere allowed to belong to $(-\nu,\gamma_\delta)^m$.
\end{lemma}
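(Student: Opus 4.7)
The proof proceeds by directly adapting each of the four parts of Lemma \ref{L:testfnproperties} to the formulation \eqref{convexradialHtestfn}. The key observation is that, when each $H^i$ is convex and radial, the function $p \mapsto h^i(|p|)$ is convex on $\RR^d$, and the coefficients $\gamma_\delta - \sigma_i$, $\gamma_\delta + \tau_i$, and $\gamma_\delta - \rho_i$ of the $h^i$-terms in \eqref{convexradialHtestfn} are all nonnegative for $\sigma, \tau \in (-\gamma_\delta, \gamma_\delta)^m$ and $\rho \in (-\nu, \gamma_\delta)^m$. Consequently the expression being maximized in \eqref{convexradialHtestfn} remains a concave function of $(p,u,v)$, with uniform strong concavity coming only from the $-\delta |p|^2/2$ and $-\eps \psi^*$ contributions, exactly as in the original setting; in particular, the maximizer $(p^*,u^*,v^*)$ is unique and Lipschitz in all parameters.

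For parts (i) and (iv), the Lagrange-multiplier / envelope-theorem argument of Lemma \ref{L:C11solutions} then goes through verbatim, yielding the $C^{1,1}$-regularity in $(x,y)$ together with the identities $\partial_{\sigma_i} \Phi_{\delta,\eps} = h^i(|p^* + u^*|) = H^i(D_x \Phi_{\delta,\eps})$ and $\partial_{\tau_i} \Phi_{\delta,\eps} = -H^i(-D_y \Phi_{\delta,\eps})$, and the Hessian bound of part (iv). The latter follows since the matrix providing the lower bound for the Hessian of the maximand in $(p,u,v)$ is unchanged; the additional convex $h^i$-terms only sharpen the matrix inequality. For part (ii), I would test the sup at $p = 0$, $u = \eps D\psi(x)$, $v = \eps D\psi(y)$, which makes the $\psi^*$-terms contribute $\eps \psi(x) + \eps \psi(y)$ while the $h^i$-terms contribute at worst $-C(\nu + \gamma_\delta)$, since the coefficients are $O(\gamma_\delta)$ or $O(\gamma_\delta + \nu)$ and the arguments of $h^i$ are bounded; this yields $\Phi_{\delta,\eps} \ge \eps \psi(x) + \eps \psi(y) - C_\nu$, with $C_\nu$ depending on $\nu$ but not on $\delta$ or $\eps$.

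For part (iii), I would mimic the two-step strategy of the original proof. The lower bound $\Phi_{\delta,\eps}(x,y,\sigma,\tau,\rho) \ge \phi_\delta(x-y,\sigma-\tau+\rho) - 2\eps$ is immediate from taking $u = v = 0$; note that $\sigma - \tau + \rho \in (-2\gamma_\delta - \nu, 3\gamma_\delta)^m$, which lies in the domain where $\phi_\delta$ is well-defined by Lemma \ref{L:phideltaconvex}. For the matching upper bound, sub-additivity of the supremum splits $\Phi_{\delta,\eps}$ into the sup over $(u,v)$ of $u \cdot x + v \cdot y - \eps \psi^*(u/\eps) - \eps \psi^*(v/\eps) = \eps \psi(x) + \eps \psi(y)$, plus a residual sup over $(p,u,v)$ with $u, v \in \eps K$; a uniform bound on the optimal $p$, obtained as in Lemma \ref{L:boundedp} using the quadratic growth \eqref{newGHestimate}, lets us replace $h^i(|p \pm w|)$ by $h^i(|p|) + O(\eps)$, giving $\Phi_{\delta,\eps} \le \phi_\delta(x-y,\sigma-\tau+\rho) + \eps \psi(x) + \eps \psi(y) + C_{R,\delta}\, \eps$, and combining with the hypothesis $\eps \psi(x) + \eps \psi(y) \le r_\eps \to 0$ concludes part (iii). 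The main technical point, which requires no new idea beyond \eqref{newGHestimate} and the argument of Lemma \ref{L:boundedp}, is precisely this quantitative control on the optimal $p$; the enlarged range of $\rho$ itself introduces no obstacle, since convexity of the $H^i$ ensures that each coefficient of $h^i$ remains positive and bounded.
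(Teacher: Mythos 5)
Your proof is correct and takes essentially the same route as the paper's (which is quite terse: invoke \eqref{newGHestimate} and re-run the arguments of Lemmas~\ref{L:C11solutions} and~\ref{L:testfnproperties}). The key observation — that the coefficients $\gamma_\delta - \sigma_i$, $\gamma_\delta + \tau_i$, $\gamma_\delta - \rho_i$ remain nonnegative on the enlarged range, so the maximand in \eqref{convexradialHtestfn} stays uniformly concave with the same Hessian lower bound for the cost function — is exactly what makes parts (i) and (iv) go through, and your part (iii) argument matches the paper's two-sided estimate. One small difference worth flagging: in part (ii) you obtain the lower bound $\Phi_{\delta,\eps}\ge\eps\psi(x)+\eps\psi(y)-C_\nu$, whereas the lemma asserts $-C\gamma_\delta$; the paper closes this gap by explicitly using $H(0)=0$ (equivalently $h^i(0)=0$), which in fact forces the $(\gamma_\delta-\rho_i)h^i(|p|)$-contribution at $p=0$ to vanish and restores the $\gamma_\delta$-order constant. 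Note that $H(0)=0$ is not an extra hypothesis here but already follows from \eqref{newGHestimate} evaluated at $p=0$. Since your weaker $\nu$-dependent constant still suffices for the applications in Proposition~\ref{P:keyestimateradial} and Theorem~\ref{T:firstordermonotonicity} (where $\nu$ is fixed), this is a cosmetic rather than a substantive discrepancy.
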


\begin{proof}
	In view of \eqref{newGHestimate}, arguing just as in Lemma \ref{L:C11solutions}, we have $\Phi_{\delta,\eps} \in C^{1,1}(\RR^d \times \RR^d \times (-\gamma_\delta,\gamma_\delta)^m \times (-\gamma_\delta,\gamma_\delta)^m \times (-\nu, \gamma_\delta)^m)$, with the equation in Lemma \ref{L:testfnproperties}\eqref{A:testfn} being satisfied in the new, larger domain. The estimate \eqref{newGHestimate} also leads easily to the bounds and limits in Lemma \ref{L:testfnproperties}\eqref{A:xybounded}, \eqref{A:Phieps=0} (note that we use $H(0) = 0$), and \eqref{A:PhiHessian}, again all satisfied on the new domains.
\end{proof}

Finally, we show that the boundary behavior from Lemma \ref{L:radialHtestfn} can also be established on a longer interval.

\begin{lemma}\label{L:convradialboundary}
	Assume that $H$ satisfies \eqref{A:Hradial} and \eqref{A:convexH}. Then the conclusions of Lemma \ref{L:radialHtestfn} continue to hold if we impose $|x-y| \le \nu_0$, $\gamma_\delta = c_0 \delta^2$, and $\rho$ is allowed to belong to $(-\oo,\gamma_\delta)$.
\end{lemma}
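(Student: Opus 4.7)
My plan is to adapt the proof of Lemma \ref{L:radialHtestfn}, isolating the two novelties: the enlarged range $\rho \in (-\oo, \gamma_\delta)^m$ and the weaker hypothesis $|x-y| \le \nu_0$ (without any power of $\delta$). Two structural facts underlie the argument: by \eqref{A:convexH}, each $H^i(p) = h^i(|p|)$ is convex, which forces $h^i$ to be convex and nondecreasing on $[0,\oo)$, hence $h^{i'} \ge 0$ wherever defined; and for every $\rho_i < \gamma_\delta$, the coefficient $\gamma_\delta - \rho_i$ remains nonnegative. Together with the nonnegativity of $\gamma_\delta - \sigma_i$ and $\gamma_\delta + \tau_i$, these keep the functional defining \eqref{convexradialHtestfn} uniformly concave in $(p,u,v)$, so a unique maximizer exists.

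Set $M := \max(1, \nu_0 \delta^{-1/2})$, so that $|x-y| \le M \delta^{1/2}$ always and $\gamma_\delta = c_0 \delta^2 \le c_0 M^{-1} \delta^{3/2}$ for $\nu_0 \le 1$. Using Lemma \ref{L:convexH} in place of Lemma \ref{L:testfnproperties}, the proofs of Lemmas \ref{L:boundedp} and \ref{L:puv} extend to the enlarged $\rho$-range, yielding $|p| \le K \nu_0/\delta$ and
\[
u = \eps D\psi(x) + w_1, \quad v = \eps D\psi(y) + w_2, \quad |w_1| + |w_2| = O(c_0 \nu_0 \eps \delta),
\]
together with (whenever $p \ne 0, -u, v$ and $h^i$ is differentiable at $|p|, |p+u|, |p-v|$) the first-order relation
\[
\delta\left(p + \tfrac{p}{|p|}\right) = (x-y) - \delta A - \delta B,
\]
where $A$ collects the contributions $(\gamma_\delta - \sigma_i) h^{i'}(|p+u|)(p+u)/(\delta|p+u|)$ and $(\gamma_\delta + \tau_i) h^{i'}(|p-v|)(p-v)/(\delta|p-v|)$, and $B := \delta^{-1} \sum_i (\gamma_\delta - \rho_i) h^{i'}(|p|) p/|p|$.

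I would prove the first boundary inequality (the second is symmetric). Writing $\eta := n(x)$, the bound on $w_1$ and the property $D\psi(x) \cdot \eta \ge 1$ give $u \cdot \eta \ge \eps/2$ for $c_0$ small, and $|D\psi(y) - D\psi(x)| \le \|D^2\psi\|_\oo \nu_0$ gives $v \cdot \eta \ge \eps/4$ for $\nu_0$ small. The cases $p = 0$ and $p = v$ are immediate, since $(p+u)\cdot \eta \ge u \cdot \eta$ and $(v+u)\cdot \eta \ge 3\eps/4$ respectively; the case $p = -u$ is ruled out exactly as in Lemma \ref{L:radialHtestfn}, using that the first-order relation at $p = -u$ and the lower bound $(u/|u|) \cdot \eta \ge c$ (universal, from $u \approx \eps D\psi(x)$ and $D\psi(x) \cdot \eta \ge 1$) are incompatible. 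For the generic case, assume for contradiction $(p+u) \cdot \eta \le 0$; then $p \cdot \eta \le -\eps/2$ and $(p-v) \cdot \eta \le -3\eps/4$, so all three of $p, p+u, p-v$ have strictly negative inner product with $\eta$. Dotting the first-order relation with $\eta$, the right-hand side is $\ge 0$: $(x-y) \cdot \eta \ge 0$ by convexity of $\Omega$; each summand of $-\delta A \cdot \eta$ is a product of a nonnegative coefficient, $h^{i'} \ge 0$, and a nonnegative factor $-((p+u)\cdot \eta)/|p+u|$ or $-((p-v)\cdot \eta)/|p-v|$; and, crucially, each summand of $-\delta B \cdot \eta = -\sum_i (\gamma_\delta - \rho_i) h^{i'}(|p|)(p \cdot \eta)/|p|$ is nonnegative because $\gamma_\delta - \rho_i \ge 0$ throughout the enlarged range, $h^{i'} \ge 0$, and $p \cdot \eta < 0$. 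This contradicts the strict negativity of the left-hand side $\delta(1 + 1/|p|) p \cdot \eta$. The globally Lipschitz case, with $\gamma_\delta = c_0 \delta$, proceeds identically using the simpler error bounds from the last part of Lemma \ref{L:puv}.

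The main technical obstacle is the $B$ term: since $\gamma_\delta - \rho_i$ may be arbitrarily large, any argument treating $B$ as a small perturbation must fail. It is precisely the combination of the convexity of $H^i$ (giving $h^{i'} \ge 0$) and the one-sided restriction $\rho_i < \gamma_\delta$ (giving $\gamma_\delta - \rho_i \ge 0$) that forces $B$ to be a nonnegative multiple of $p/|p|$, and hence to contribute with the correct sign in the contradiction step.
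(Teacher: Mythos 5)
Your proposal is correct and follows essentially the same approach as the paper's proof: both hinge on the observation that the $\rho$-dependent coefficient $\sum_i (\gamma_\delta - \rho_i)(h^i)'(|p|)$ of $p/|p|$ in the first-order relation remains nonnegative for all $\rho \in (-\oo,\gamma_\delta)^m$, because $\gamma_\delta - \rho_i > 0$ and $(h^i)' \ge 0$ by convexity, so that this possibly-large term contributes with the favorable sign when dotted with $n(x)$. The paper simply cites the relevant steps of Lemmas \ref{L:boundedp}, \ref{L:puv}, and \ref{L:radialHtestfn} after noting this nonnegativity, whereas you spell them out explicitly via the decomposition into $A$ and $B$ terms and the auxiliary reparameterization $M = \max(1, \nu_0\delta^{-1/2})$; both are sound.
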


\begin{proof}
	First, for $x,y,\sigma,\tau,\rho$ as in the statement of the lemma, we note that, if $p,u,v$ are the unique maximizers in the definition of $\Phi_{\delta,\eps}(x,y,\sigma,\tau,\rho)$, then we have $|p| \le C/\delta$ for some universal $K > 0$, the argument following exactly as in Lemma \ref{L:boundedp} and using the lower bound in \eqref{newGHestimate}.
	
	We next note that the conclusions of Lemma \ref{L:puv} regarding $u$ and $v$ continue to hold, and, therefore, 
	\[
		u = \eps \left[ D\psi(x) + O(\gamma_\delta \delta^{-1}) \right] \quad \text{and} \quad v = \eps \left[ D\psi(y) + O(\gamma_\delta \delta^{-1}) \right].
	\]
	Arguing just as in Lemma \ref{L:radialHtestfn}, we thus have, for sufficiently small $c_0$ and $\nu_0$, $u \cdot n(x) \ge c\eps$, $\frac{u}{|u|} \cdot n(x) \ge c$, and $(u+v) \cdot n(x) > 0$, from which we can assume without loss of generality that $p \ne 0$ and $p \ne v$. Then, with a similar argument as for Lemma \ref{L:puv},
\[
	p = \frac{x-y}{\delta} - \left( 1 + \sum_{i=1}^m (\gamma_\delta - \rho_i) (h^i)'(|p|) \right) \frac{p}{|p|} + O\left( \frac{\gamma_\delta}{\delta^{2}} \right).
\]
Note that
\[
	 1 + \sum_{i=1}^m (\gamma_\delta - \rho_i) (h^i)'(|p|) \ge 0,
\]
so that we may argue as before that, if $\eps_0$ is sufficiently small, then $p \ne -u$. The rest of the argument follows in exactly the same way as the rest of the proof of Lemma \ref{L:radialHtestfn}.
\end{proof}

\section{Nonsmooth Hamiltonians: well-posedness}\label{sec:nonsmoothwellposed}

Making use of the test functions analyzed in the previous section, we now prove well-posedness results for the Neumann problem
\begin{equation}\label{E:general}
	\begin{dcases}
		du = F(D^2 u ,Du,u,x,t)dt + \sum_{i=1}^m H^i(Du) \cdot d\zeta^i & \text{in } \Omega \times (0,T], \\
		Du \cdot n = 0 & \text{on } \del \Omega \times (0,T],
	\end{dcases}
\end{equation}
with a non-$C^2$ Hamiltonian, making use of the test functions analyzed in the previous section. As mentioned there, the results here apply when $\Omega$ is a half-space, $\Omega$ has a quantified convexity assumption, or $H$ is radial.

Just as in the previous section, a constant $C$ is called universal if it depends only on $\Omega$, $F$, or $H$.

\subsection{A general estimate}

We first prove a general estimate for the difference of a sub- and super-solution, with two different driving paths and with a doubling in the space variable, with penalization provided by the function $\phi_\delta$ from \eqref{quadphieps=0}. This will be used to prove existence, uniqueness, and precise stability estimates in the path variable, as well as a uniform modulus of continuity in space for the solutions that does not depend on the path.

%
%We first study the first order equation
%\begin{equation}\label{E:HJ}
%	\begin{dcases}
%		du = \sum_{i=1}^m H^i(Du) \cdot d\zeta^i & \text{in } \Omega \times (0,T], \\
%		Du \cdot n = 0 & \text{on } \del \Omega \times (0,T].
%	\end{dcases}
%\end{equation}
%and prove the following.

\begin{proposition}\label{P:keyestimate}
	Assume that $\Omega$ satisfies \eqref{A:C1convexdomain}, $H$ satisfies \eqref{A:DCH} and \eqref{A:HLip}, and $F$ satisfies \eqref{A:Fcts}, \eqref{A:Fmonotone}, and \eqref{A:F}. Let $\zeta,\eta \in C_0([0,T],\RR^m)$, and assume $u \in BUSC(\oline{\Omega} \times [0,T])$ and $v \in BLSC(\oline{\Omega} \times [0,T])$ are respectively a bounded sub- and super-solution of \eqref{E:general} corresponding to $\zeta$ and $\eta$. Then there exist universal constants $0 < c_0 \le 1 \le C_0$ such that, if
	\[
		M := C_0\left( \norm{u}_{\oo} + \norm{v}_{\oo} + 1 \right)^{1/2};
	\]
	if one of the following holds:
	\begin{enumerate}[(i)]
	\item\label{case:halfspace} $\Omega$ is a half space, $\delta \in (0,1)$, and $\gamma_\delta = c_0 M^{-1}\delta^{3/2}$;
	\item\label{case:unifconv} $\Omega$ satisfies the uniform convexity condition \eqref{A:uniformlyconvexdomain}, $\delta \in (0,1)$, and $\gamma_\delta = c_0M^{-1} \delta^{q + \frac{1}{2}}$, or
	\item\label{case:radial} $H$ satisfies the radial condition \eqref{A:Hradial}, $\delta \in (0, c_0 M^{-2})$, and $\gamma_\delta = c_0 M^{-1} \delta^{3/2}$;
	\end{enumerate}
	and if $T > 0$ is such that
	\begin{equation}\label{closepaths}
		\max_{t \in [0,T]} \max_{i=1,2,\ldots,m} |\zeta^i_t - \eta^i_t| < \gamma_\delta,
	\end{equation}
	then, for some $\omega_\delta > 0$ depending on $u$ and $v$ such that $\lim_{\delta \to 0} \omega_\delta = 0$,
	\begin{align*}
		\sup_{(x,y,t) \in \oline{\Omega} \times \oline{\Omega} \times [0,T]} &\Big\{ \big( u(x,t) - v(y,t) - \phi_\delta(x- y, \zeta_t - \eta_t) \big)_+ \Big\}\\
		& \le \sup_{x,y \in \oline{\Omega}} \Big\{ \big( u(x,0) - v(y,0)  - \phi_\delta(x-y,0) \big)_+ \Big\} + \omega_\delta t.
	\end{align*}
\end{proposition}

{
The proof of Proposition \ref{P:keyestimate} has many similarities with that of Proposition \ref{P:thmofsums} in the Appendix, but with some key differences due to the fact that the test-functions are not necessarily smooth.

%\begin{lemma}\label{L:standardcomparison}
%	Let $\mcl F,\mcl G: \mbb S^d \times \RR^d \times \RR \times \RR^d \times [0,T] \to \RR$ be continuous and degenerate elliptic and nonincreasing in the $\RR$-variable, assume that $u \in USC(\RR^d \times [0,T])$ and $v \in LSC(\RR^d \times [0,T])$ satisfy
%	\[
%		u_t \le \mcl F(D^2 u, Du,u,\xi,t) \quad \text{and} \quad v_t \ge \mcl G(D^2 v, Dv,v,\eta,t)
%	\]
%	in the viscosity sense, let $\Phi: \RR^d \times \RR^d \to \RR$ be smooth, and, for $\mu > 0$, assume that
%	\[
%		(\xi,\eta,t) \mapsto u(\xi,t) - v(\eta,t) - \Phi(\xi,\eta) - \mu t
%	\]
%	attains a maximum at $(\xi_0,\eta_0,t_0)$ with $t_0 > 0$. Then, for any $\delta > 0$, there exist symmetric matrices $X$ and $Y$ such that
%	\[
%		- \left( \frac{1}{\delta} + \norm{D^2 \Phi(\xi_0,\eta_0)} \right)
%		\begin{pmatrix}
%			\Id & 0 \\
%			0 & \Id
%		\end{pmatrix}
%		\le 
%		\begin{pmatrix}
%			X & 0 \\
%			0 & -Y
%		\end{pmatrix}
%		\le
%		D^2 \Phi(\xi_0,\eta_0) + \delta D^2 \Phi(\xi_0,\eta_0)^2
%	\]
%	and
%	\[
%		\mu \le \mcl F(X,D_\xi \Phi(\xi_0,\eta_0),u(\xi_0,t_0),\xi_0,t_0) - \mcl G(Y, - D_\eta \Phi(\xi_0,v(\eta_0,t_0),\eta_0),\eta_0,t_0).
%	\]
%\end{lemma}

\begin{proof}[Proof of Proposition \ref{P:keyestimate}]
	Without loss of generality, it suffices to consider the case where $u(x,0) - v(y,0) - \phi_\delta(x-y,0) \le 0$. One can always reduce to this case in general by defining $\tilde u(\cdot,t) = u(\cdot,t) - \sup_{x,y} (u(x,0) - v(y,0) - \phi_\delta(x-y,0))_+$ and noting that $\tilde u$ is again a sub-solution in view of the monotonicity of $F$ \eqref{A:Fmonotone}, while the lower bound in Lemma \ref{L:quadphieps=0} implies $\norm{\tilde u}_\oo \le 2 \norm{u}_\oo + \norm{v}_\oo + C$.

	Assume for the sake of contradiction that, for some $\mu > 0$,
	\[
		[0,T] \ni t \mapsto \sup_{x,y \in \oline{\Omega}} \left( \big(u(x,t) - v(y,t) - \phi_{\delta}(x-y,\zeta_t - \eta_t)\big)_+ \right) - \mu t
	\]
	attains a strict maximum at $\hat t > 0$. In particular, $u(x,t) - v(y,t) - \phi_\delta(x-y,\zeta_t - \eta_t)$ is not everywhere nonpositive and, for $t$ sufficiently close to $\hat t$,
	\begin{align*}
		\sup_{x,y \in \oline{\Omega}}& \left( \big( u(x,t) - v(y,t)) - \phi_{\delta}(x-y,\zeta_t - \eta_t)  \big)_+ \right)\\
		&= \sup_{x,y \in \oline{\Omega}} \left( u(x,t) - v(y,t) - \phi_{\delta}(x-y,\zeta_t - \eta_t) \right).
	\end{align*}
	Moreover, the upper and lower bound on $\phi_\delta$ imply that there exists $r_\delta > 0$ such that $r_\delta^2/\delta \xrightarrow{\delta \to 0} 0$ and, for all $t \in [0,T]$, the supremum may be restricted to $|x-y| < r_\delta$.

	Let $\Phi_{\delta,\eps}$ be the test function defined in \eqref{multidtestfn} with $\ell(p) = |p\cdot e_1|$ if case \eqref{case:halfspace} holds and $\ell(p) = |p|$ if \eqref{case:unifconv} or \eqref{case:radial} hold. Then Lemma \ref{L:quadphieps=0} and Lemma \ref{L:testfnproperties}\eqref{A:xybounded},\eqref{A:Phieps=0} imply that, for sufficiently small $\eps$, depending on $\delta$,
	\[
		\oline{\Omega} \times \oline{\Omega} \times [0,T] \ni (x,y,t) \mapsto \big( u(x,t) - v(y,t) - \Phi_{\delta,\eps}(x,y,\zeta_t - \zeta_{\hat t}, \eta_t - \eta_{\hat t}, \zeta_{\hat t} - \eta_{\hat t}) \big)_+ - \mu t
	\]
	attains a maximum at $(x_\eps,y_\eps,t_\eps) \in \oline{\Omega} \times \oline{\Omega} \times (0,T]$ with $\lim_{\eps \to 0} t_\eps = \hat t$, so that we can ensure that, for all sufficiently small $\eps$ depending on $\delta$,
	\begin{equation}\label{pospart}
		u(x_\eps,t_\eps) - v(y_\eps,t_\eps) - \Phi_{\delta,\eps}(x_\eps,y_\eps,\zeta_{t_\eps} - \zeta_{\hat t}, \eta_{t_\eps} - \eta_{\hat t}, \zeta_{\hat t} - \eta_{\hat t}) > 0
	\end{equation}
	and
	\[
		\max_{i=1,2,\ldots,m} |\zeta^i_{t_\eps} - \zeta^i_{\hat t}| \vee |\eta^i_{t_\eps} - \eta^i_{\hat t}| < \gamma_\delta.
	\]
	Moreover, 
	\begin{equation}\label{bigmaxball}
		\lim_{\eps \to 0} (\eps \psi(x_\eps) + \eps \psi(y_\eps) )= 0,
	\end{equation}
	and, for sufficiently small $\eps$, $C_0 \ge 1$ may be chosen (so as to determine $M$) such that $|x_\eps - y_\eps| \le M \delta^{1/2}$.
	
	Set
	\[
		p_\eps := D_x \Phi_{\delta,\eps}(x_\eps,y_\eps, \zeta_{t_\eps} - \zeta_{\hat t}, \eta_{t_\eps} - \eta_{\hat t}, \zeta_{\hat t} - \eta_{\hat t}),
	\]
	\[
		q_\eps := D_y \Phi_{\delta,\eps}(x_\eps,y_\eps,  \zeta_{t_\eps} - \zeta_{\hat t}, \eta_{t_\eps} - \eta_{\hat t}, \zeta_{\hat t} - \eta_{\hat t}),
	\]
	and
	\[
		A_{\delta,\eps} := 
		\frac{1}{\delta}
		\begin{pmatrix}
			\Id & -\Id \\
			-\Id & \Id
		\end{pmatrix}
		+ \eps \left(1 + \frac{1}{\kappa} \right)
		\begin{pmatrix}
			\Id & 0 \\
			0 & \Id
		\end{pmatrix}.
	\]
	Then, by Lemma \ref{L:testfnproperties}\eqref{A:PhiHessian}, for all $x,y \in \oline{\Omega}$,
	\begin{equation}\label{Taylor}
	\begin{split}
		\Phi_{\delta,\eps}(x,y, \zeta_{t_\eps} - \zeta_{\hat t}, \eta_{t_\eps} - \eta_{\hat t}, \zeta_{\hat t} - \eta_{\hat t})
		&\le \Phi_{\delta,\eps}(x_\eps, y_\eps,  \zeta_{t_\eps} - \zeta_{\hat t}, \eta_{t_\eps} - \eta_{\hat t}, \zeta_{\hat t} - \eta_{\hat t}) \\
		&+ p_\eps \cdot (x - x_\eps)
		+ q_\eps \cdot (y - y_\eps)\\
		&+ 
		\frac{1}{2}  A_{\delta,\eps} 
		\begin{pmatrix}
			x - x_\eps \\
			y - y_\eps
		\end{pmatrix}
		\cdot
		\begin{pmatrix}
			x - x_\eps \\
			y - y_\eps
		\end{pmatrix},
	\end{split}
	\end{equation}
	with equality if and only if $x = x_\eps$ and $y = y_\eps$. 
	
	We next claim, shrinking $c_0$ and $\eps$ if necessary, that
	\begin{equation}\label{boundaryholds?}
		p_\eps \cdot n(x_\eps) > 0 \quad \text{if } x_\eps \in \del \Omega
	\quad \text{and} \quad
		q_\eps \cdot n(y_\eps) > 0 \quad \text{if } y_\eps \in \del \Omega.
	\end{equation}
	Indeed, in either case \eqref{case:halfspace} or \eqref{case:unifconv}, the claim is a consequence of respectively Lemma \ref{L:halfspacetestfn} or Lemma \ref{L:unifconvtestfn}. In the radial case, that is, if \eqref{case:radial} holds, then the restriction on $\delta$ implies also that $|x-y| \le c_0^{1/2}$. Thus, further shrinking $c_0$, we have both $\delta \in (0,\delta_0)$ and $|x-y| \le \nu_0$, where $\delta_0$ and $\nu_0$ are as in Lemma \ref{L:radialHtestfn}. The claim is then proved by further shrinking $\eps$ if necessary.
	
	Appealing to Lemma \ref{L:Youngtrick}, we further develop the inequality \eqref{Taylor} with extra variables $\xi$ and $\eta$:
	\begin{equation}\label{moreTaylor}
	\begin{split}
		\Phi_{\delta,\eps}(x,y,\zeta_{t_\eps} - \zeta_{\hat t}, \eta_{t_\eps} - \eta_{\hat t}, \zeta_{\hat t} - \eta_{\hat t})
		&\le \Phi_{\delta,\eps}(x_\eps, y_\eps, \zeta_{t_\eps} - \zeta_{\hat t}, \eta_{t_\eps} - \eta_{\hat t}, \zeta_{\hat t} - \eta_{\hat t})\\
		&+ p_\eps \cdot (x - x_\eps - \xi) + q_\eps \cdot (y - y_\eps - \eta)\\
		&+ \left( \frac{1}{\delta} + \norm{ A_{\delta,\eps} }\right) \left( |x - x_\eps - \xi|^2 + |y - y_\eps - \eta|^2 \right) \\
		&+p_\eps \cdot \xi + q_\eps \cdot \eta + \frac{1}{2} ( A_{\delta,\eps} +  \delta A_{\delta,\eps}^2)
		\begin{pmatrix}
			\xi\\
			\eta
		\end{pmatrix}
		\cdot 
		\begin{pmatrix}
			\xi\\
			\eta
		\end{pmatrix},
	\end{split}
	\end{equation}
	with equality at $(x,y,\xi,\eta) = (x_\eps,y_\eps,0,0)$. 
	
	Let $S_\zeta(\cdot,t_\eps)$ and $S_\zeta(\cdot,t_\eps)$ denote the solution operators for the Hamilton-Jacobi part of the equation on $\RR^d$, driven by respectively $\zeta$ and $\eta$. Then, for any function $\psi(x,y) = f(x) - g(y)$, the solution of
	\[
		d\Psi = \sum_{i=1}^m \left( H^i(D_x\Psi) \cdot d\zeta^i - H^i(-D_y \Psi) \cdot d\eta^i \right), \quad \Psi(\cdot,\cdot,t_\eps) = \psi
	\]
	satisfies $\Psi(x,y,t) = S_\zeta(t,t_\eps)f(x) - S_\eta(t,t_\eps)g(y)$. Using the monotonicity of the solution operators, we have, for all $t$ close enough to $\hat t$ (including $t_\eps$),
	\begin{align*}
		\Phi_{\delta,\eps}&(x,y, \zeta_t - \zeta_{\hat t}, \eta_t - \eta_{\hat t}, \zeta_{\hat t} - \eta_{\hat t}) - \Phi_{\delta,\eps}(x_\eps,y_\eps, \zeta_{t_\eps} - \zeta_{\hat t}, \eta_{t_\eps} - \eta_{\hat t}, \zeta_{\hat t} - \eta_{\hat t})\\
		&\le S_\zeta(t_\eps,t) \left( p_\eps \cdot (\cdot - x_\eps) + \left( \frac{1}{\delta} + \norm{\tilde A_{\delta,\eps}} \right) |\cdot - x_\eps|^2 \right)(x - \xi)\\
		&- S_\eta(t_\eps,t) \left( -q_\eps \cdot (\cdot - y_\eps) - \left( \frac{1}{\delta} + \norm{\tilde A_{\delta,\eps}} \right)|\cdot - y_\eps|^2 \right)(y - \eta) \\
		&+ p_\eps \cdot \xi + q_\eps \cdot \eta + \frac{1}{2} B_{\delta,\eps}
		\begin{pmatrix}
			\xi\\
			\eta
		\end{pmatrix}
		\cdot 
		\begin{pmatrix}
			\xi\\
			\eta
		\end{pmatrix},
	\end{align*}
	where
	\[
		B_{\delta,\eps} :=  A_{\delta,\eps} +  \delta A_{\delta,\eps}^2 + 
		\eps
		\begin{pmatrix}
			\Id & 0\\
			0 & \Id
		\end{pmatrix},
	\]
	with equality only if $(x,y,t,\xi,\eta) = (x_\eps,y_\eps,t_\eps, 0,0)$. 
	
	We now set
	\[
		\Psi_+(x,t) = S_\zeta(t_\eps,t) \left( p_\eps \cdot (\cdot - x_\eps) + \left( \frac{1}{\delta} + \norm{ A_{\delta,\eps}} \right) |\cdot - x_\eps|^2 \right)(x),
	\]
	\[
		\Psi_-(y,t) = S_\eta(t_\eps,t) \left( -q_\eps \cdot (\cdot - y_\eps) - \left( \frac{1}{\delta} + \norm{ A_{\delta,\eps}} \right)|\cdot - y_\eps|^2 \right)(y),
	\]
	\[
		\oline{u}(\xi,t) := \max_{x \in \oline{\Omega}} \left\{ u(x,t) - \Psi_+(x - \xi,t) \right\},
	\]
	and
	\[
		\uline{v}(\eta,t) := \min_{y \in \oline{\Omega}} \left\{ v(y,t) - \Psi_-(y-\eta,t)  \right\}.
	\]
	Then
	\[
		\oline{u}(\xi,t) - \uline{v}(\eta,t) - p_\eps \cdot \xi - q_\eps \cdot \eta - \frac{1}{2} B_{\delta,\eps}
		\begin{pmatrix}
			\xi\\
			\eta
		\end{pmatrix}
		\cdot
		\begin{pmatrix}
			\xi\\
			\eta
		\end{pmatrix}
		 - \mu t
	\]
	attains a strict maximum at $(\xi,\eta,t) = (0,0,t_\eps)$. 
	
	Note that we have ensured that, in the definition of $\oline{u}(0,t_\eps)$, the maximum is reached only at $x = x_\eps$. Therefore, for $(\xi,t)$ in a neighborhood of $(0,t_\eps)$, the $\argmax$ defining $\oline{u}(\xi,t)$ contains only points close to $x_\eps$. We make the same comments about $y_\eps$ and the $\argmin$ of $\uline{v}(\eta,t)$ for $(\eta,t)$ close to $(0,t_\eps)$. Moreover, $p_\eps \cdot n(x) > 0$ and $q_\eps \cdot n(y)$ whenever $x_\eps$ or $y_\eps$ belong to the boundary of $\Omega$ and whenever $x$ is close to $x_\eps$ and $y$ is close to $y_\eps$.
	
	It follows from Definition \ref{D:nonsmoothH} that, in a neighborhood of $(0,t_\eps)$, $\oline{u}$ is a subsolution of $\del_t \oline{u} \le \mcl F(D^2 \oline{u}, D \oline{u}, \oline{u}, \xi,t)$ $\uline{v}$ is a supersolution of $\del_t \uline{v} \ge \mcl G(D^2 \uline{v}, D \uline{v}, \uline{v}, \eta,t)$, where
	\[
		\mcl F(X,p,r,\xi,t)
		= \inf_{x \in \argmax_{u(\cdot,t) - \Psi_+(\cdot - \xi,t) } } F(X,p,r + \Psi_+(x-\xi,t), x,t)
	\]
	and
	\[
		\mcl G(Y,q,s,\eta,t)
		= \sup_{y \in \argmin_{v(\cdot,t) - \Psi_-(\cdot - \eta,t) } } F(Y,q,s + \Psi_-(y - \eta,t),y,t).
	\]
	In particular, for some $C > 0$, $\del \oline{u}/dt \le C$ and $\del \uline{v}/dt \ge -C$ in a neighborhood of $(0,t_\eps$).
	
	We now appeal directly to \cite[Theorem 7]{CImax} and find that there exist $a,b \in \RR$ and $X_\eps,Y_\eps \in \mbb S^d$ such that $(a,p_\eps, X_\eps) \in \oline{\mcl P}^+ \oline{u}(0,t_\eps)$, $(b, q_\eps Y_\eps) \in \oline{\mcl P}^- \uline{v}(0,t_\eps)$ (the definitions of the limiting sub- and superjets being recalled in the Appendix below), $a - b = \mu$, and, for some universal constant $C > 0$,
	\[
		-\frac{C}{\delta}
		\begin{pmatrix}
			\Id & 0\\
			0 & \Id
		\end{pmatrix}
		\le
		\begin{pmatrix}
			X_\eps & 0 \\
			0 & -Y_\eps
		\end{pmatrix}
		\le
		B_{\delta,\eps} + B_{\delta,\eps}^2
		\le \frac{C}{\delta}
		\begin{pmatrix}
			\Id & - \Id \\
			- \Id & \Id
		\end{pmatrix}
		+
		C\eps
		\begin{pmatrix}
			\Id & 0 \\
			0 & \Id
		\end{pmatrix}.
	\]
	By the continuity of $F$, we thus have
	\begin{align*}
		\mu &\le F(X_\eps,p_\eps, u(x_\eps,t_\eps),  x_\eps,t_\eps) - F(Y_\eps,-q_\eps,v(y_\eps,t_\eps), y_\eps,t_\eps)\\
		&\le F(X_\eps,p_\eps, v(y_\eps,t_\eps) + \Phi_{\delta,\eps}(x_\eps,y_\eps,\zeta_{t_\eps} - \zeta_{\hat t}, \eta_{t_\eps} - \eta_{\hat t}, \zeta_{\hat t} - \eta_{\hat t}) ,x_\eps,t_\eps) \\
		& \qquad - F(Y_\eps,-q_\eps,v(y_\eps,t_\eps),y_\eps,t_\eps),
	\end{align*}
	where the last inequality follows from \eqref{A:Fmonotone} and \eqref{pospart}. Note that, in view of Lemma \ref{L:quadphieps=0}, Lemma \ref{L:testfnproperties}\eqref{A:Phieps=0}, and \eqref{bigmaxball}, 
	\[
		\Phi_{\delta,\eps}(x_\eps,y_\eps,\zeta_{t_\eps} - \zeta_{\hat t}, \eta_{t_\eps} - \eta_{\hat t}, \zeta_{\hat t} - \eta_{\hat t})
		\le \frac{r_\delta^2}{2\delta} + C\delta + \tilde \omega_{\delta,\eps},
	\]
	where, for fixed $\delta$, $\lim_{\eps \to 0} \tilde \omega_{\delta,\eps} = 0$. It then follows from assumption \eqref{A:F} on $F$ that, for some $R,C > 0$ independent of $\delta$ and $\eps$, for all sufficiently small $\eps$,
	\[
		\mu \le \omega_{M,C}\left(\frac{ r_\delta^2 + |p_\eps - q_\eps|^2}{\delta} + (1 + |p_\eps| + |q_\eps|)r_\delta + \frac{r_\delta^2}{2\delta} + C\delta + \tilde \omega_{\delta,\eps} + C\eps \right).
	\]	
	By Lemma \ref{L:quadphieps=0},  along a subsequence as $\eps \to 0$, $p_\eps$ and $- q_\eps$ converge to some $\hat p$ satisfying $|\hat p| \le C(1 + r_\delta/\delta)$. Therefore, sending $\eps \to 0$, we obtain, redefining $C$ if necessary,
	\[
		\mu \le \omega_{M,C}\left( Cr_\delta + C\frac{r_\delta^2}{\delta} + C\delta\right) =: \omega_\delta.
	\]
	We thus reach a contradiction for any $\mu > \omega_\delta$.
\end{proof}

\begin{remark}\label{R:omegadelta=0}
	If $F$ is independent of $u$ and $x$, then we may take $\omega_\delta = 0$ in the statement of Proposition \ref{P:keyestimate}. Indeed, after the use of \cite[Theorem 7]{CImax} in the above proof, we have $\mu \le F(X_\eps,p_\eps,t_\eps) - F(Y_\eps,q_\eps,t_\eps)$, and, upon sending $\eps \to 0$ along some subsequence, we have $X_\eps \to X$ and $Y_\eps \to Y$ where $X \le Y$. This gives the contradiction $\mu \le 0$.
\end{remark}
}

\subsection{Wellposedness}

We now use Proposition \ref{P:keyestimate} to prove the existence, uniqueness, and stability of solutions to \eqref{E:general}, assuming that $H$ satisfies \eqref{A:DCH} and \eqref{A:HLip}, $F$ satisfies \eqref{A:Fcts}, \eqref{A:Fmonotone}, and \eqref{A:F}, and $\Omega$ satisfies \eqref{A:C1convexdomain}, as well as one of the following: $\Omega$ is a half space, $\Omega$ satisfies \eqref{A:uniformlyconvexdomain}, or $H$ satisfies \eqref{A:Hradial}.

\subsubsection{The comparison principle}

The first use of Proposition \ref{P:keyestimate} is to prove the comparison principle.

\begin{theorem}\label{T:firstordercomparison}
	 Fix $\zeta \in C([0,T],\RR^m)$, and let $u \in BUSC(\oline{\Omega} \times [0,T])$ and $v \in BLSC(\oline{\Omega} \times [0,T])$ be respectively a bounded sub- and super-solution of \eqref{E:general}. Then
	\[
		\sup_{(x,t) \in \oline{\Omega} \times [0,T]} \Big\{ \big(u(x,t) - v(x,t)\big)_+\Big\} \le \sup_{x \in \oline{\Omega}} \Big\{\big( u(x,0) - v(x,0) \big)_+\Big\}.
	\]
\end{theorem}

\begin{proof}
Setting $\zeta = \eta$, we note that \eqref{closepaths} is satisfied. Then, for the values of $\delta$ and $\gamma_\delta$ specified by Proposition \ref{P:keyestimate}, we have
\begin{align*}
	\sup_{(x,y,t) \in \oline{\Omega} \times \oline{\Omega} \times [0,T]} &\Big\{ \big( u(x,t) - v(y,t) - \phi_\delta(x-y,0) \big)_+ \Big\}\\
	&le
	\sup_{x,y\in \oline{\Omega}} \left\{ \big( u(x,0) - v(y,0) - \phi_\delta(x-y,0) \big)_+ \right\} + \omega_\delta t.
\end{align*}
Sending $\delta \to 0$ and invoking Lemma \ref{L:quadphieps=0} immediately yields the result.
\end{proof}

\subsubsection{Existence and stability}

Just as in Section \ref{sec:smoothH}, the comparison principle implies the existence of a unique solution of the the initial value problem, as well the continuity of the solution map $\zeta \mapsto u$.

\begin{corollary}\label{C:firstorderwellposed}
	Under the same conditions as Theorem \ref{T:firstordercomparison}, for any $u_0 \in BUC(\oline{\Omega})$ and $\zeta \in C([0,T],\RR^m)$, there exists a unique solution of \eqref{E:general} with $u(\cdot,0) = u_0$. Moreover, if, for $n \in \NN$, $\zeta^n \in C([0,T],\RR^m)$, $u_0^n \in BUC(\oline{\Omega})$, and, as $n \to \oo$, $\zeta^n$ converges to $\zeta$ and $u_0^n$ converges to $u_0$ uniformly, then, if $u^n$ is the corresponding solution of \eqref{E:neumann}, then, as $n \to \oo$, $u^n$ converges locally uniformly to $u$.
\end{corollary}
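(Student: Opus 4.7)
Uniqueness is an immediate consequence of the comparison principle in Theorem \ref{T:firstordercomparison}, so the substance of the corollary is existence and the path-stability claim. The plan is to mimic the approach of Theorem \ref{T:smoothexistence}: approximate $\zeta$ by a sequence $(\zeta^n)_{n \in \NN} \subset C^1([0,T],\RR^m)$ converging to $\zeta$ uniformly on $[0,T]$, solve each regularized Cauchy-Neumann problem classically, and then invoke Proposition \ref{P:keyestimate} as a quantitative contraction to pass to the limit. For each fixed $n$, the equation driven by the smooth signal $\zeta^n$ is a standard fully nonlinear degenerate parabolic equation with continuous nonlinearity and a homogeneous Neumann condition on the convex $C^1$ domain $\Omega$; Perron's method, together with classical comparison in the spirit of Lions and Ishii, yields a unique viscosity solution $u^n \in BUC(\oline{\Omega} \times [0,T])$ with $u^n(\cdot,0) = u_0$. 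Since $\zeta^n \in C^1$, Proposition \ref{P:nonsmoothdefconsistent} ensures that $u^n$ is also a solution in the sense of Definition \ref{D:nonsmoothH}. Finally, the explicit sub- and super-solutions $\pm\|u_0\|_{\oo} + Ct + \sum_{i=1}^m H^i(0)\zeta^{n,i}_t$ provide an upper bound for $\|u^n\|_{\oo}$ that is uniform in $n$, depending only on $\|u_0\|_{\oo}$, $\sup_n \|\zeta^n\|_{\oo}$, and $F$.

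The core of the proof is a Cauchy estimate for $(u^n)$. Fix $\delta \in (0,1)$ and choose $\gamma_\delta$ exactly as in the relevant case of Proposition \ref{P:keyestimate}, using the uniform bound on $\|u^n\|_{\oo}$ to fix a single $M$ that works for all $n$. Uniform convergence of $(\zeta^n)$ guarantees that for $n,m$ large enough the closeness condition $\|\zeta^n - \zeta^m\|_{\oo,[0,T]} < \gamma_\delta$ is satisfied. Applying Proposition \ref{P:keyestimate} with sub-solution $u^n$ and super-solution $u^m$, evaluating at $y = x$, and invoking the lower bound $\phi_\delta(0,\rho) \ge -C\delta$ from Lemma \ref{L:quadphieps=0}, yields
\[
\sup_{(x,t) \in \oline{\Omega} \times [0,T]} \bigl( u^n(x,t) - u^m(x,t) \bigr) \le \sup_{x,y \in \oline{\Omega}} \bigl( u_0(x) - u_0(y) - \phi_\delta(x-y,0) \bigr) + C\delta.
\]
The right-hand side tends to $0$ as $\delta \to 0$: using the lower bound $\phi_\delta(z,0) \ge \tfrac{[(|z|-\delta)_+]^2}{2(1+C)\delta} - C\delta$ from Lemma \ref{L:quadphieps=0}, the supremum over $x,y$ effectively restricts to $|x-y|$ of order $\delta^{1/2}$, on which $u_0(x) - u_0(y)$ is controlled by the modulus of continuity of $u_0$. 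Exchanging the roles of $n$ and $m$ yields the symmetric bound, so $(u^n)$ is uniformly Cauchy on $\oline{\Omega} \times [0,T]$; its limit $u \in BUC(\oline{\Omega} \times [0,T])$ satisfies $u(\cdot,0) = u_0$. Applying Proposition \ref{P:halfrelaxed} separately to $(u^n)$ and $(-u^n)$ then shows that $u$ is a solution of \eqref{E:general}.

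The path-stability claim follows by rerunning exactly the same argument, with $u^n$ now the solution associated to $(\zeta^n, u_0^n)$ (whose existence is guaranteed by the first part of the corollary) and $u$ the solution associated to $(\zeta, u_0)$. Proposition \ref{P:keyestimate} applied to the pair $(u^n, u)$, together with a triangle inequality to absorb the difference $u_0^n - u_0$, delivers the desired local uniform convergence. The main technical point throughout is the uniformity of constants: one must verify that the universal parameters in Proposition \ref{P:keyestimate}, in particular $M$ and therefore $\gamma_\delta$, can be chosen once and for all depending only on the a priori $L^{\oo}$-bound for the family of solutions under consideration. This is guaranteed by the barrier construction above, which provides such a uniform bound for the entire approximating sequence together with the limit.
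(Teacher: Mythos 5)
Your proof is correct, but the route differs from the paper's own proof of Corollary \ref{C:firstorderwellposed} in an interesting way. The paper follows the template of Theorem \ref{T:smoothexistence}: it forms the half-relaxed limits $u^\star, u_\star$ of the regularized solutions $u^n$, shows via Proposition \ref{P:halfrelaxed} that these are a sub- and super-solution, then establishes $u^\star(\cdot,0) \le u_0 \le u_\star(\cdot,0)$ by an explicit barrier construction (using Lemma \ref{L:C11solutions} to build, for each convex $C^2$ majorant $\phi \ge u_0$, a $C^{1,1}$ super-solution $\Phi^n(x,t) + \tilde{C}t$ on a short time interval with a uniform-in-$n$ Hessian bound), and finally invokes the qualitative comparison principle of Theorem \ref{T:firstordercomparison} to conclude $u^\star \le u_\star$. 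You instead apply the quantitative estimate of Proposition \ref{P:keyestimate} directly to the pair $(u^n, u^m)$ and, together with the lower bound for $\phi_\delta$ from Lemma \ref{L:quadphieps=0}, deduce that $(u^n)$ is uniformly Cauchy. This bypasses the barrier argument entirely, since uniform convergence trivially gives $u(\cdot,0) = u_0$, and it yields global uniform rather than local uniform convergence. In effect you have folded the content of the paper's later stability result (Theorem \ref{T:firstorderstabilityestimate}) into the existence proof; the paper keeps the qualitative existence argument and the quantitative rate separate. Both approaches ultimately rest on Proposition \ref{P:keyestimate} (Theorem \ref{T:firstordercomparison} is itself a direct corollary of it), so they are close relatives, but your argument is more self-contained for this particular corollary. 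Two small remarks: (1) the phrase ``applying Proposition \ref{P:halfrelaxed} separately to $(u^n)$ and $(-u^n)$'' is slightly off --- Proposition \ref{P:halfrelaxed} already handles the super-solution claim via the lower half-relaxed limit $u_\star$, which coincides with $u$ by uniform convergence; (2) you should normalize $\zeta^n(0)=0$, since Proposition \ref{P:keyestimate} is stated for paths in $C_0([0,T],\RR^m)$, but this costs nothing as only increments enter.
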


\begin{proof}
	As in the proof of Theorem \ref{T:smoothexistence}, the argument rests on showing that the half-relaxed limit $u^\star$, which is a sub-solution, satisfies $u^\star(\cdot,0) \le u_0$ (and a similar statement for the lower half-relaxed limit $u_\star$). 
	
	Fix $x_0 \in \Omega$ and let $\phi \in C^2(\RR^d)$ be convex and such that $u_0 \le \phi$ and $u_0(x_0) = \phi(x_0)$. Then, by Lemma \ref{L:C11solutions}, there exists $\delta >0$, independent of $n$, and a family $(\Phi^n)_{n \in \NN} \subset C([0,\delta], C^{1,1}(\RR^d))$ of solutions of
	\[
		d\Phi^n = \sum_{i=1}^m H^i(D\Phi^n) \cdot d\zeta^{i,n} \quad \text{in } \RR^d \times [0,\delta]
		\quad \text{and} \quad \Phi^n(\cdot,0) = \phi.
	\]
	Moreover, there exists $C > 0$ independent of $n$ such that $0 \le D^2 \Phi^n \le C$. Therefore, for some $\tilde C > 0$ independent of $n$, $\Phi^n(x,t) + \tilde C t$ is a super-solution of \eqref{E:general} corresponding to the path $\zeta^n$. We conclude as in the proof of Theorem \ref{T:smoothexistence} that $u^\star(\cdot,0) \le \phi$, and, in particular, $u^\star(x_0,0) \le \phi(x_0) = u_0(x_0)$. The result now follows because $x_0$ was arbitrary.
\end{proof}

\subsection{Further results for homogenous $F$} We next prove some further properties of solutions under the additional assumption that $F$ depends only on $D^2 u$ and $Du$, that is, 
\begin{equation}\label{A:Fnonsmooth}
	F \in C(\mbb S^d \times \RR^d) \quad \text{is non-decreasing in } X \in \mbb S^d.
\end{equation}
We first record some simple bounds for the unique solution.

\begin{lemma}\label{L:solutionbound}
	Let $u_0 \in BUC(\oline\Omega)$ and $\zeta \in C_0([0,T],\RR^m)$, and let $u$ be the unique solution of \eqref{E:general} with $u(\cdot,0) = u_0$. Then
	\[
		\sup_{(x,t) \in \oline{\Omega} \times [0,T]} \left| u(x,t) - \sum_{i=1}^m H^i(0) \zeta^i(t) - F(0,0) t\right| \le \norm{u_0}_\oo.
	\]
\end{lemma}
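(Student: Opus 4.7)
The plan is to compare $u$ against the explicit spatially-constant function
\[
v(x,t) := \norm{u_0}_\oo + \sum_{i=1}^m H^i(0)\zeta^i(t) + F(0,0)\, t
\]
and against its sign-reversed analogue $\tilde v(x,t) := -\norm{u_0}_\oo + \sum_{i=1}^m H^i(0)\zeta^i(t) + F(0,0)\,t$. Heuristically, since $Dv \equiv 0$ and $D^2 v \equiv 0$, the interior PDE reduces to the identity $\partial_t v = F(0,0) + \sum_i H^i(0)\,\dot\zeta^i$, the Neumann condition is trivial, and $v(\cdot,0) = \norm{u_0}_\oo \ge u_0 \ge -\norm{u_0}_\oo = \tilde v(\cdot,0)$.

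To implement this rigorously in the pathwise setting, I would approximate $\zeta$ by a sequence $(\zeta^n)_{n \in \NN} \subset C^1([0,T],\RR^m)$ converging to $\zeta$ uniformly. For each $n$, the spatially-constant function $v^n(x,t) := \norm{u_0}_\oo + \sum_i H^i(0)\,\zeta^{n,i}(t) + F(0,0)\,t$ is, by direct computation, a classical viscosity solution of \eqref{E:general} driven by $\zeta^n$ with initial datum $\norm{u_0}_\oo$; hence, by Proposition \ref{P:nonsmoothdefconsistent}, it is also a pathwise viscosity solution in the sense of Definition \ref{D:nonsmoothH}. Letting $u^n$ denote the unique solution corresponding to $(u_0,\zeta^n)$ furnished by Corollary \ref{C:firstorderwellposed}, the comparison principle Theorem \ref{T:firstordercomparison} yields $u^n \le v^n$ on $\oline{\Omega} \times [0,T]$.

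To conclude, I would invoke the stability part of Corollary \ref{C:firstorderwellposed}: $u^n \to u$ locally uniformly while $v^n \to v$ uniformly, so passing to the limit gives $u \le v$; the parallel argument using $\tilde v^n$ as a sub-solution dominated by $u^n$ produces $\tilde v \le u$, and the two bounds combine to give the claimed inequality. I do not expect any substantive obstacle: the only point worth flagging is why the detour through smooth paths and Proposition \ref{P:nonsmoothdefconsistent} is needed at all — trying to check Definition \ref{D:nonsmoothH} directly for $v$ would require controlling $\sup_{x\in\oline{\Omega}}\bigl(-\Phi(x-\xi,t)\bigr)$ for test functions $\Phi$ that need not be bounded below, which the smooth approximation neatly bypasses.
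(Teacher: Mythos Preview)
Your proposal is correct and follows essentially the same approach as the paper: compare $u$ against the spatially-constant solutions $(x,t)\mapsto \pm\norm{u_0}_\oo + \sum_i H^i(0)\zeta^i(t) + F(0,0)t$ via Theorem~\ref{T:firstordercomparison}. The paper simply asserts these are solutions and applies comparison in one line; your detour through smooth approximations and Proposition~\ref{P:nonsmoothdefconsistent} is a legitimate way to justify that assertion, and indeed the stability part of Corollary~\ref{C:firstorderwellposed} (applied with constant initial datum $\pm\norm{u_0}_\oo$) is exactly what underlies the paper's implicit claim.
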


\begin{proof}
	This is a consequence of the comparison principle Theorem \ref{T:firstordercomparison}, as well as the fact that $(x,t) \mapsto \pm \norm{u_0}_\oo + \sum_{i=1}^m H^i(0) \zeta^i(t) + F(0,0) t$ are solutions of \eqref{E:general}.
\end{proof}

\subsubsection{Path stability}

The existence result in the previous sub-section was derived from the stability of the solution operator for \eqref{E:general} in the path variable. In this section, we make this more quantitative when $F$ satisfies \eqref{A:Fnonsmooth}.
%, under the assumption that
%\begin{equation}\label{A:lineargammadelta}
%	\text{\eqref{A:boundarycondition} is satisfied if }\gamma_\delta \text{ from \eqref{intervalofexistence} satisfies } \gamma_\delta = c_0 \delta \text{ for some } c_0 \in (0,1).
%\end{equation}
%As observed throughout Section \ref{sec:testfn}, \eqref{A:lineargammadelta} holds if $H$ is globally Lipschitz and $\Omega$ is either a half space, a uniformly convex domain satisfying \eqref{A:uniformlyconvexdomain} with $q = 1$, or $H$ is radial.
%
\begin{theorem}\label{T:firstorderstabilityestimate}
	%Assume \eqref{A:lineargammadelta}. 
	Fix $u_0 \in BUC(\RR^d)$. Then there exists a modulus of continuity $\omega : [0,\oo) \to [0,\oo)$ such that, if $\zeta^1,\zeta^2 \in C_0([0,T],\RR^m)$ and if $u^1,u^2$ are the corresponding solutions of \eqref{E:general}, then
	\[
		\norm{u^1 - u^2}_{\oo,\oline{\Omega} \times [0,T]} \le \omega\left( \norm{\zeta^1 - \zeta^2}_{\oo,[0,T]} \right).
	\]
\end{theorem}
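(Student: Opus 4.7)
The plan is to apply Proposition \ref{P:keyestimate} twice with $u=u^1$, $v=u^2$ (and then with roles reversed), taking the driving paths to be $\zeta=\zeta^1$ and $\eta=\zeta^2$. Setting $\alpha := \norm{\zeta^1 - \zeta^2}_{\oo,[0,T]}$, we first note that Lemma \ref{L:solutionbound} gives an \emph{a priori} bound $\norm{u^i}_\oo \le C_*$ depending only on $\norm{u_0}_\oo$, the Hamiltonian, $F(0,0)$, $T$, and $\norm{\zeta^1}_\oo + \norm{\zeta^2}_\oo$. In particular, the constant $M$ from Proposition \ref{P:keyestimate} may be taken as a universal function of $\norm{u_0}_\oo$ and of a bound for the paths, so that for any $\delta\in(0,1)$ (or $\delta < c_0 M^{-2}$ in the radial case) we have an explicit $\gamma_\delta$ of the form $c_0 M^{-1}\delta^{\beta}$, with $\beta = \frac{3}{2}$ or $q + \frac{1}{2}$ depending on the case. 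Then, provided we choose $\delta$ so that $\gamma_\delta > \alpha$, the hypothesis \eqref{closepaths} is verified.

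With such a choice of $\delta$, Proposition \ref{P:keyestimate} applied at $x=y$ yields, for each $(x,t)\in \oline{\Omega}\times[0,T]$,
\[
	u^1(x,t) - u^2(x,t) \le \phi_\delta\bigl(0, \zeta^1_t - \zeta^2_t\bigr) + \sup_{x',y'\in\oline{\Omega}} \bigl\{ u_0(x') - u_0(y') - \phi_\delta(x'-y',0)\bigr\}.
\]
The first term is controlled directly by Lemma \ref{L:quadphieps=0}, giving $\phi_\delta(0,\cdot) \le C\delta$. For the second, let $\omega_0$ be a modulus of continuity for $u_0$; then the lower bound in Lemma \ref{L:quadphieps=0} yields
\[
	u_0(x') - u_0(y') - \phi_\delta(x'-y',0) \le \omega_0(|x'-y'|) - \frac{[(|x'-y'| - \delta)_+]^2}{2(1+C)\delta} + C\delta.
\]
A standard computation (the quadratic penalty dominates the sublinear modulus) shows that the supremum over $|x'-y'|\ge 0$ defines a quantity $\tilde\omega(\delta)$ with $\tilde\omega(\delta) \to 0$ as $\delta \to 0^+$, depending only on $\omega_0$ and the universal constant.

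To conclude, choose $\delta = \delta(\alpha)$ minimally so that $c_0 M^{-1} \delta^\beta > \alpha$, i.e.\ $\delta \simeq (M\alpha/c_0)^{1/\beta}$ (respecting also $\delta < c_0 M^{-2}$ in the radial case, which forces a further restriction $\alpha < \alpha_0$ but is harmless after adjusting the modulus on $[\alpha_0,\oo)$ by the trivial bound $2C_*$). This yields
\[
	u^1(x,t) - u^2(x,t) \le C\delta(\alpha) + \tilde\omega(\delta(\alpha)) =: \omega(\alpha),
\]
with $\omega$ a modulus of continuity. Exchanging the roles of $\zeta^1$ and $\zeta^2$ gives the opposite inequality, proving the estimate. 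The main subtlety is that $M$ and hence $\omega$ depends on a bound for $\norm{\zeta^1}_\oo+\norm{\zeta^2}_\oo$ through Lemma \ref{L:solutionbound}, but the triangle inequality $\norm{\zeta^i}_\oo \le \norm{\zeta^j}_\oo + \alpha$ allows one to express the final modulus in the advertised form (or alternatively, to restrict attention to paths in a fixed bounded set, which is how the estimate is applied to prove the local uniform stability used in Corollary \ref{C:firstorderwellposed}). The principal technical point is ensuring the quantitative interplay between the exponent $\beta$ in $\gamma_\delta$ and the decay of $\tilde\omega(\delta)$, which is where the geometry of $\Omega$ (half-space vs.\ uniformly convex of order $q$ vs.\ radial $H$) enters the final rate in $\alpha$.
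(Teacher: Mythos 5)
Your overall strategy — apply Proposition \ref{P:keyestimate} with $(\zeta,\eta)=(\zeta^1,\zeta^2)$, take $x=y$, control $\phi_\delta(0,\zeta^1_t-\zeta^2_t)$ via Lemma \ref{L:quadphieps=0}, and choose $\delta = \delta(\alpha)$ so that $\gamma_\delta>\alpha$ — matches the paper's proof, and your treatment of the initial-layer term via a general modulus $\omega_0$ is a fine alternative to the paper's reduction to Lipschitz data.

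However, there is a genuine gap in the way you handle the constant $M$ from Proposition \ref{P:keyestimate}. You correctly observe that $M$ is determined by $\norm{u^1}_\oo+\norm{u^2}_\oo$, and that Lemma \ref{L:solutionbound} only bounds $\norm{u^i}_\oo$ by $\norm{u_0}_\oo + C\norm{\zeta^i}_\oo + |F(0,0)|T$. As written, your $\omega$ therefore depends on $T$ and on a bound for $\norm{\zeta^1}_\oo+\norm{\zeta^2}_\oo$ — which is \emph{not} what the theorem asserts. Your proposed fix does not close the gap: the triangle inequality $\norm{\zeta^1}_\oo\le\norm{\zeta^2}_\oo+\alpha$ relates the two path norms to each other, but neither is bounded in terms of the modulus argument $\alpha$ alone, so it does not remove the spurious dependence; and restricting to paths in a fixed bounded set changes the statement of the theorem. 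The missing step is the \emph{normalization} at the start of the paper's proof: replace $u^j$ by $\tilde u^j(x,t):= u^j(x,t) - \sum_i H^i(0)\zeta^{j,i}(t) - F(0,0)t$, which solves the equation with $H$ replaced by $H-H(0)$ and $F$ by $F-F(0,0)$. After this shift, Lemma \ref{L:solutionbound} gives $\norm{\tilde u^j}_\oo \le \norm{u_0}_\oo$ \emph{uniformly} in $T$ and $\zeta^j$, so $M$ depends only on $\norm{u_0}_\oo$; and the difference $u^1-u^2$ equals $\tilde u^1-\tilde u^2$ up to the additive term $\sum_i H^i(0)(\zeta^{1,i}_t-\zeta^{2,i}_t)$, which is $\le C\alpha$ and folds harmlessly into $\omega$. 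Once that reduction is in place, the rest of your argument (including the fallback $2C_*$ for $\alpha \ge \alpha_0$, which now becomes the path-independent $4\norm{u_0}_\oo$, or alternatively the iteration for far-apart paths used in the paper) goes through.
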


\begin{proof}
	We first note that we may assume that $H(0) = 0$ and $F(0,0) = 0$. Otherwise, we may replace $u^j$, $j = 1,2$, with
	\[
		\tilde u^j(x,t) = u^j(x,t) - \sum_{i=1}^m H^i(0) \zeta^{j,i}(t) - F(0,0)t.
	\]
	We also prove the result for $u_0 \in C^{0,1}(\oline{\Omega})$.
%	in which case $\omega(r) := C_L \gamma^{-1}(r)$, where $C_L > 0$ depends only on 
%	\[
%		L := \norm{Du^1_0}_\oo \vee \norm{Du_0^2}_\oo
%	\]
%and
%\[ \gamma^{-1}(r) = \inf \{\delta>0  : \gamma_{\delta} > r \}.\]
	The general result then follows by replacing $u_0$ by Lipschitz approximations and using the contractive property of the solution operator implied by the comparison principle.
	
	By Lemma \ref{L:solutionbound}, 
	\[
		\norm{u^1}_{\oo,\oline{\Omega} \times [0,T]} \vee \norm{u^2}_{\oo, \oline{\Omega} \times [0,T]} \le \norm{u_0}_{\oo,\oline\Omega},
	\]
	and so the value $M$ in Proposition \ref{P:keyestimate} depends only on $\norm{u_0}_\oo$.
	
	Let $\delta_0 >0$ be such that the conclusion of Proposition \ref{P:keyestimate} holds for $0 < \delta < \delta_0$. We first assume that $\zeta^1$ and $\zeta^2$ satisfy $\norm{\zeta^1 - \zeta^2}_{\oo,[0,T]} <  \gamma_{\delta_0}$, with $\gamma_\delta = c_0M^{-1} \delta^r$, the exact value of $r$ being specified in Proposition \ref{P:keyestimate}. %, in which case we can find $\delta \in (0,\delta_0)$ such that $\norm{\zeta^1 - \zeta^2}_{\oo,[0,T]} = c_0 \delta$. 
By Lemma \ref{L:quadphieps=0} and Proposition \ref{P:keyestimate} (recall that $\omega_\delta = 0$ by Remark \ref{R:omegadelta=0}), we then have, for all $(x,t) \in \oline{\Omega} \times [0,T]$, for all $\delta \in (0,\delta_0)$ with $\gamma_{\delta} > \norm{\zeta^1 - \zeta^2}_{\oo,[0,T]}$,
	\begin{align*}
		u^1(x,t) - u^2(x,t) 
		&\le \phi_\delta(0,\zeta^1_t - \zeta^2_t) + \sup_{x,y \in \oline{\Omega}} \left\{ u^1(x,0) - u^2(y,0) - \phi_\delta(x-y,0) \right\}\\
		&\le C\delta + \sup_{x,y \in \oline{\Omega}} \left\{ L |x-y| - \frac{ (|x-y| - C\delta)_+^2}{\delta} \right\} \\
		&\le C_L \delta.
%		= C_L \norm{\zeta^1 - \zeta^2}_{\oo,[0,T]}+ \norm{u^1_0 - u^2_0}_{\oo,\oline{\Omega}}.
	\end{align*}
	We conclude that, for some constant $C > 0$ depending only on $u_0$, and after applying the same argument with the roles of $u^1$ and $u^2$ reversed,
	\[
		\norm{u^1 - u^2}_{\oo,[0,T]} \le C \norm{\zeta^1 -\zeta^2}_{\oo,[0,T]}^{1/r}
	\]
	whenever $\norm{\zeta^1 - \zeta^2}_{\oo,[0,T]} \le \gamma_{\delta_0}$. 
	
	{
	Suppose now that, for some $N = 2,3,\ldots$, $(N-1)\gamma_{\delta_0} < \norm{\zeta^1 - \zeta^2}_{\oo,[0,T]} \le N\gamma_{\delta_0}$. Then iterating the above result gives
	\begin{align*}
		\norm{u^1 - u^2}_{\oo,[0,T]} &\le CN\left( \frac{ \norm{\zeta^1 -\zeta^2}_{\oo,[0,T]} }{N} \right)^{1/r}\\
		&\le CN^{1 - \frac{1}{r}}  \norm{\zeta^1 -\zeta^2}_{\oo,[0,T]}^{1/r}.		
	\end{align*}
	We conclude since
	\[
		N \le 1 + \frac{ \norm{\zeta^1 - \zeta^2}_{\oo,[0,T]}}{c_0 M^{-1} \delta_0^r}.
	\]
	}
\end{proof}

\subsubsection{Continuity estimates}

Another consequence of the homogeneity assumption \eqref{A:Fnonsmooth} is the continuity of solutions of \eqref{E:neumann} in the space variable, depending only on the initial datum $u_0$, and not on the path $\zeta$ or time $t > 0$.

\begin{theorem}\label{T:firstordercontinuity}
	Fix $u_0 \in BUC(\oline{\Omega})$. Then there exists a modulus of continuity $\omega: [0,\oo) \to [0,\oo)$ such that, for any $\zeta \in C([0,T],\RR^m)$, if $u$ is the unique solution of \eqref{E:neumann} with $u(\cdot,0) = u_0$, then, for all $t \in [0,T]$, $u(\cdot,t)$ is $\omega$-continuous.
\end{theorem}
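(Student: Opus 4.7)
The plan is to apply Proposition \ref{P:keyestimate} with $u$ playing the roles of both sub- and super-solution (so $v=u$) and with equal paths $\zeta=\eta$, which makes condition \eqref{closepaths} trivial, then optimize in $\delta$.

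First I would reduce to the case $H^i(0)=0$, $F(0,0)=0$, and $\zeta(0)=0$ by setting
\[
  \tilde u(x,t) := u(x,t) - \sum_{i=1}^m H^i(0)\bigl(\zeta^i(t)-\zeta^i(0)\bigr) - F(0,0)\,t.
\]
Since $\tilde u$ differs from $u$ by a function of $t$ alone, its spatial modulus at each time coincides with that of $u$, its boundary condition and initial datum are unchanged, and it solves \eqref{E:general} with $\tilde H^i(p)=H^i(p)-H^i(0)$ and $\tilde F(X,p)=F(X,p)-F(0,0)$, which still satisfy all the standing assumptions. Crucially, Lemma \ref{L:solutionbound} then gives $\|\tilde u\|_\infty \le \|u_0\|_\infty$, so the quantity $M$ appearing in Proposition \ref{P:keyestimate} is bounded purely in terms of $u_0$, independently of $\zeta$. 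This is precisely what is needed in the radial case, where the admissible $\delta$ satisfies $\delta < c_0 M^{-2}$; without this reduction the allowable range of $\delta$ would shrink with $\|\zeta\|_\infty$, and that will be the main obstacle to obtain a $\zeta$-independent modulus.

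Applying Proposition \ref{P:keyestimate} then gives, for every admissible $\delta>0$ and every $(x,y,t)\in\oline\Omega\times\oline\Omega\times[0,T]$,
\[
  u(x,t)-u(y,t) \le \phi_\delta(x-y,0) + N_\delta,\qquad
  N_\delta := \sup_{a,b\in\oline\Omega}\bigl\{u_0(a)-u_0(b)-\phi_\delta(a-b,0)\bigr\},
\]
where $N_\delta$ depends only on $u_0$ and $\delta$. Combining the upper bound $\phi_\delta(z,0)\le \tfrac{|z|^2}{2\delta}+C\delta$ with the lower bound $\phi_\delta(z,0)\ge \tfrac{[(|z|-\delta)_+]^2}{2(1+C)\delta}-C\delta$ from Lemma \ref{L:quadphieps=0} and letting $\omega_0$ denote a bounded modulus of continuity for $u_0$, one checks that
\[
  N_\delta \le \sup_{r\ge 0}\Bigl\{\omega_0(r)-\tfrac{[(r-\delta)_+]^2}{2(1+C)\delta}\Bigr\} + C\delta.
\]
Since $\omega_0$ is bounded, the quadratic penalty forces the supremum to be attained on a bounded interval of $r$'s, which yields $N_\delta\to 0$ as $\delta\to 0$.

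Finally, I would set
\[
  \omega(\rho) := \inf_{0<\delta<\delta_0}\Bigl\{\tfrac{\rho^2}{2\delta}+C\delta+N_\delta\Bigr\},
\]
where $\delta_0$ depends on $\|u_0\|_\infty$ (in the radial case) but not on $\zeta$. A standard diagonal argument, e.g. choosing $\delta(\rho)\to 0$ slower than $\rho$, shows $\omega(\rho)\to 0$ as $\rho\to 0$, and interchanging the roles of $x$ and $y$ upgrades the one-sided bound to $|u(x,t)-u(y,t)|\le \omega(|x-y|)$. The quantitative size of $\omega$ depends only on $u_0$ through $\|u_0\|_\infty$ and its modulus $\omega_0$, as required. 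All steps other than the initial normalization are routine manipulations of Lemma \ref{L:quadphieps=0}.
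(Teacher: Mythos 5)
Your proof is correct and follows essentially the same route as the paper: normalize so that $H^i(0)=0$ and $F(0,0)=0$ (making Lemma \ref{L:solutionbound} give a $\zeta$-independent bound on $\|u\|_\infty$, hence on $M$), apply Proposition \ref{P:keyestimate} with $u=v$ and $\zeta=\eta$, and then use the two-sided bounds on $\phi_\delta$ from Lemma \ref{L:quadphieps=0} and optimize in $\delta$. The only difference is cosmetic: the paper first reduces to Lipschitz $u_0$, derives a Lipschitz bound on $u(\cdot,t)$, and then passes to general $u_0\in BUC$ by approximation and the contraction property, whereas you work directly with the modulus $\omega_0$ of $u_0$ and build $\omega$ by an infimum over $\delta$ — a slightly more explicit but equivalent bookkeeping.
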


\begin{proof}
	We may assume without loss of generality that $u_0$ is Lipschitz with constant $L > 0$. The general result follows from approximating $u_0$ and using the contractive property of the equation. As in the proof of Theorem \ref{T:firstorderstabilityestimate}, we may also assume that $H(0) = 0$ and $F(0,0) = 0$.
	
	In view of Lemma \ref{L:quadphieps=0}, taking $u = v$ and $\zeta = \eta$ in Proposition \ref{P:keyestimate}, and noting $\omega_\delta = 0$ in view of Remark \ref{R:omegadelta=0}, yields, for all $t \in [0,T]$ and $\delta$ sufficiently small, depending possibly on $\norm{u_0}_\oo$,
	\begin{align*}
		u(x,t) - u(y,t) &\le \phi_\delta(x-y,0) + \sup_{x,y \in \oline{\Omega} } \left\{ u_0(x)  - u_0(y) - \phi_\delta(x-y,0) \right\}\\
		&\le C \frac{|x-y|^2}{\delta} + C \delta + \max_{r \ge 0} \left\{Lr - \frac{ C r^2}{2\delta} \right\}\\
		&= C \frac{|x-y|^2}{\delta}  + C\delta.
	\end{align*}
	If $x$ and $y$ are sufficiently close, we may take $\delta \approx |x-y|$ to conclude that $u$ is locally Lipschitz, and thus globally Lipschitz because the constant does not depend on $x$ or $y$.
\end{proof}

\subsubsection{Monotonicity in the path variable}

The comparison principle, Theorem \ref{T:firstordercomparison}, implies that the solution operator for \eqref{E:neumann} is order-preserving in the initial data. Under the additional assumption that $F$ satisfy \eqref{A:Fnonsmooth} and $H$ be radial and convex, it turns out that the same is true in the path variable.

\begin{theorem}\label{T:firstordermonotonicity}
	Assume that $H$ satisfies \eqref{A:Hradial} and \eqref{A:convexH}. Let $u_0 \in BUC(\oline{\Omega})$ and $\zeta^1, \zeta^2 \in C_0([0,T],\RR^m)$ satisfy $\zeta^{1,i} \le \zeta^{2,i}$ for all $i = 1,2,\ldots,m$, and let $u^1,u^2 \in BUC(\oline{\Omega} \times [0,T])$ be the corresponding solutions of \eqref{E:general}. Then $u^1 \le u^2$ on $\oline{\Omega} \times [0,T]$.
\end{theorem}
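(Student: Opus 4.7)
The plan is to adapt the proof of Proposition \ref{P:keyestimate} to the case $u = u^1$, $v = u^2$ driven by $\zeta = \zeta^1$ and $\eta = \zeta^2$, replacing the symmetry $\zeta = \eta$ with the order hypothesis $\zeta^1 \le \zeta^2$. The one-sided extensions of the path-variable domain in the convex radial case, provided by Lemmas \ref{L:convexH} and \ref{L:convradialboundary}, are precisely what make this work for arbitrary (not necessarily close) paths. As preliminary reductions, one may subtract $\sum_i H^i(0) \zeta^{j,i}(t) + F(0,0)\,t$ from $u^j$ to arrange $H^i(0) = 0$ and $F(0,0) = 0$; writing $H^i(p) = h^i(|p|)$, the convexity of $H^i$ on $\RR^d$ together with $h^i(0) = 0$ forces $h^i$ to be convex and non-decreasing on $[0, \infty)$ (by testing convexity at $p$ and $-p$), so in particular each $h^i \ge 0$.

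Next, I would mimic the proof of Proposition \ref{P:keyestimate} verbatim with $(u, v, \zeta, \eta) = (u^1, u^2, \zeta^1, \zeta^2)$ and the test function $\Phi_{\delta,\eps}$ from \eqref{convexradialHtestfn}, except that at the presumed maximum point $\hat t$ the third input takes the value $\rho = \zeta^1_{\hat t} - \zeta^2_{\hat t} \in (-\infty, 0]^m$. Lemmas \ref{L:convexH} and \ref{L:convradialboundary} guarantee that $\Phi_{\delta, \eps}$ retains its regularity, satisfies the correct transport equations in $\sigma$ and $\tau$, obeys the Hessian bound, and enjoys the strict boundary inequalities \eqref{A:boundarycondition} on this enlarged $\rho$ domain, so that the closeness hypothesis \eqref{closepaths} is no longer required. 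The outcome is the inequality
\[
\sup_{(x,y,t) \in \oline{\Omega} \times \oline{\Omega} \times [0,T]} \Big\{ u^1(x,t) - u^2(y,t) - \phi_\delta\big(x-y,\, \zeta^1_t - \zeta^2_t\big) \Big\} \le \sup_{x,y \in \oline{\Omega}} \Big\{ u_0(x) - u_0(y) - \phi_\delta(x-y, 0) \Big\}.
\]

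To conclude, I specialize to $x = y$ on the left and evaluate the explicit formula \eqref{quadphieps=0} with $\ell(p) = |p|$ and $G = \sum_i H^i$:
\[
\phi_\delta(0, \rho) = \sup_{r \ge 0} \Big\{ - \tfrac{\delta}{2} r^2 - \delta r + \sum_i (\rho_i - 3 \gamma_\delta)\, h^i(r) \Big\}.
\]
When $\rho_i \le 0$ for all $i$, every summand in braces is non-positive (since $h^i \ge 0$ and $\rho_i - 3 \gamma_\delta < 0$) and vanishes at $r = 0$, so $\phi_\delta(0, \rho) = 0$; meanwhile, the right-hand side of the preceding display tends to $0$ as $\delta \to 0$ by the uniform continuity of $u_0$ and Lemma \ref{L:phideltaconvex}. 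Sending $\delta \to 0$ yields $u^1 \le u^2$ on $\oline{\Omega} \times [0, T]$.

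The principal technical obstacle is verifying that the doubling-variable argument of Proposition \ref{P:keyestimate} truly goes through with the extended test function for $\rho_i$ of arbitrary non-positive size; this is exactly the purpose of Lemmas \ref{L:convexH} and \ref{L:convradialboundary}. Once this is in place, the key identity $\phi_\delta(0, \rho) = 0$ for $\rho \le 0$, which rests on the non-negativity of each $h^i$ under radial convexity, closes the argument cleanly.
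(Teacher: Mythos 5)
Your argument follows the paper's approach in spirit, and the two key observations---that the lemmas extend the $\rho$-domain to negative values, and that $\phi_\delta(0,\rho)=0$ whenever $\rho\le 0$ by the sign of $h^i$---are both correct and are exactly what the paper uses. However, there is a genuine gap in the step where you assert that the closeness hypothesis \eqref{closepaths} can be dropped entirely and claim the estimate
\[
\sup_{(x,y,t)}\big\{ u^1(x,t)-u^2(y,t)-\phi_\delta(x-y,\zeta^1_t-\zeta^2_t)\big\}\le\sup_{x,y}\big\{u_0(x)-u_0(y)-\phi_\delta(x-y,0)\big\}
\]
for arbitrary $\zeta^1\le\zeta^2$.

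The problem is that Lemma \ref{L:convradialboundary}, while valid for all $\rho\in(-\infty,\gamma_\delta)^m$, still carries the constraint $|x-y|\le\nu_0$, and it is precisely this constraint that is no longer automatic once $\rho$ is very negative. In the doubling-variables argument one must check that the maximizers $x_\eps,y_\eps$ satisfy $|x_\eps-y_\eps|\le\nu_0$; this is deduced from the coercivity of $\phi_\delta$, but the lower bound in Lemma \ref{L:phideltaconvex} degrades with $\nu$ (the size of the negative excursion of $\rho$), yielding only $|x_\eps-y_\eps|\lesssim M(\delta+\nu)^{1/2}$. When $\nu$ is large this exceeds $\nu_0$ and the boundary lemma cannot be applied. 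The paper's Proposition \ref{P:keyestimateradial} thus enforces a quantitative one-sided closeness condition $\eta_t - \theta_0 M^{-2}\le\zeta_t$ (with $\theta_0$ universal and small), and the monotonicity theorem is then obtained by \emph{iterating} the one-step estimate over successive intervals on which the two paths differ by at most $\theta_0 M^{-2}$. Your proof becomes correct once you restrict to the case $-\theta_0 M^{-2}\le\zeta^1-\zeta^2\le 0$, derive the estimate there, send $\delta\to 0$ using $\phi_\delta(0,\rho)=0$ to conclude $u^1\le u^2$ on that range, and then note that the general case follows by replacing $\zeta^2$ with intermediate paths $\zeta^1\le\zeta^{(1)}\le\cdots\le\zeta^{(N)}=\zeta^2$, each consecutive pair within distance $\theta_0 M^{-2}$, and chaining the inequalities.
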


To prove Theorem \ref{T:firstordermonotonicity}, we need the following variation of Proposition \ref{P:keyestimate}:

\begin{proposition}\label{P:keyestimateradial}
	If $H$ satisfies \eqref{A:Hradial} and \eqref{A:convexH}, then the conclusions of Proposition \ref{P:keyestimate} continue to hold if the condition on $\gamma_\delta$ in \eqref{case:radial} is replaced with $\gamma_\delta = c_0 \delta^2$, and, for some sufficiently small universal $\theta_0 \in (0,1)$, \eqref{closepaths} is replaced with 	
	\begin{equation}\label{closeorderedpaths}
		\eta_t - \theta_0M^{-2} \le \zeta_t \le \eta_t + \gamma_\delta \quad \text{for all } t \in [0,T].
	\end{equation}
\end{proposition}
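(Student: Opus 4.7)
The plan is to retrace the proof of Proposition \ref{P:keyestimate}, substituting the convex-radial strengthenings of the key test-function estimates: the enlarged range of the frozen path variable from Lemma \ref{L:convexH} and the extended boundary property from Lemma \ref{L:convradialboundary}. We argue by contradiction: suppose that
\[
[0,T] \ni t \mapsto \sup_{x,y \in \oline\Omega} \bigl\{ u(x,t) - v(y,t) - \phi_\delta(x-y, \zeta_t - \eta_t) \bigr\} - \mu t
\]
attains a strict positive maximum at some $\hat t > 0$, and double variables using $\Phi_{\delta,\eps}$ from \eqref{convexradialHtestfn} evaluated at the path triple $(\zeta_t - \zeta_{\hat t}, \eta_t - \eta_{\hat t}, \zeta_{\hat t} - \eta_{\hat t})$. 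Adding the usual quadratic breaking $|t - \hat t|^2/2$, for small $\eps$ depending on $\delta$ this yields a maximum at some $(x_\eps, y_\eps, t_\eps)$ with $t_\eps \to \hat t$ and $\eps \psi(x_\eps) + \eps \psi(y_\eps) \to 0$.

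The new feature is that the third argument $\rho_{\hat t} = \zeta_{\hat t} - \eta_{\hat t}$ now only satisfies $\rho_{\hat t} \in [-\theta_0 M^{-2}, \gamma_\delta]^m$, rather than $(-\gamma_\delta, \gamma_\delta)^m$. Provided $\theta_0 M^{-2} \in (0,1)$ (which holds for small $\theta_0$), Lemma \ref{L:convexH} supplies, on this enlarged domain for $\rho$, all the regularity, Hamilton-Jacobi identities, $\psi$-penalization from below, locally uniform convergence to $\phi_\delta$, and the Hessian bound \eqref{A:PhiHessian} of Lemma \ref{L:testfnproperties}. The other two arguments $\sigma = \zeta_{\cdot} - \zeta_{\hat t}$ and $\tau = \eta_{\cdot} - \eta_{\hat t}$ are kept in $(-\gamma_\delta, \gamma_\delta)^m$ by shrinking $\eps$ so that $|t_\eps - \hat t|$ is sufficiently small.

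The main obstacle, and the reason for the tighter choices $\gamma_\delta = c_0 \delta^2$ and $\delta < c_0 M^{-2}$, is to verify the hypothesis $|x_\eps - y_\eps| \le \nu_0$ required by Lemma \ref{L:convradialboundary}. Applying the lower bound of Lemma \ref{L:phideltaconvex} with $\nu = \theta_0 M^{-2}$, together with $\|u\|_\oo + \|v\|_\oo \le M^2/C_0^2$ coming from the definition of $M$, gives
\[
\bigl[(|x_\eps - y_\eps| - C(\delta + \theta_0 M^{-2}))_+\bigr]^2 \le 2(1+C)(\delta + \theta_0 M^{-2})\bigl(\|u\|_\oo + \|v\|_\oo + o_\eps(1)\bigr).
\]
Since $\delta < c_0 M^{-2}$ and $M \ge 1$, the right hand side is a universal multiple of $c_0 + \theta_0$, so that choosing $c_0$ and $\theta_0$ small enough (in terms of the universal $\nu_0$ from Lemma \ref{L:convradialboundary} and the constant $C$ above) yields $|x_\eps - y_\eps| \le \nu_0$. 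Lemma \ref{L:convradialboundary} then furnishes the strict inequalities $D_x \Phi_{\delta,\eps}(x_\eps, y_\eps, \cdot) \cdot n(x_\eps) > 0$ on $\partial \Omega$ and analogously for $D_y \Phi_{\delta,\eps}$.

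From here the argument follows the proof of Proposition \ref{P:keyestimate} essentially verbatim: a Taylor expansion of $\Phi_{\delta,\eps}$ with auxiliary variables, the introduction of half-sided propagated test functions via the solution operators $S_\zeta, S_\eta$ acting one-sidedly in time, passage to $\sup$- and $\inf$-convolutions $\oline u$, $\uline v$, extraction of matrices $X_\eps, Y_\eps$ via Lemma \ref{L:standardcomparison}, and, upon sending $\eps \to 0$, use of the refined form of Lemma \ref{L:puv} to infer $p_\eps + q_\eps \to 0$. The monotonicity of $F$ in its matrix argument \eqref{A:Fnonsmooth} together with continuity then deliver $\mu \le F(X,p) - F(Y,-p) \le 0$, which contradicts $\mu > 0$.
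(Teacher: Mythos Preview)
Your proposal is correct and follows essentially the same approach as the paper: both argue that the proof of Proposition \ref{P:keyestimate} goes through unchanged once Lemmas \ref{L:convexH} and \ref{L:phideltaconvex} are substituted for their non-convex counterparts, with the only genuine new step being the verification that $|x_\eps - y_\eps| \le \nu_0$ via the lower bound of Lemma \ref{L:phideltaconvex} (using $\delta < c_0 M^{-2}$ and $\eta_t - \zeta_t \le \theta_0 M^{-2}$), after which Lemma \ref{L:convradialboundary} supplies the strict boundary inequalities. Your added penalization $|t - \hat t|^2/2$ is harmless but unnecessary, and the convergence $p_\eps + q_\eps \to 0$ follows directly from Lemma \ref{L:testfnproperties}\eqref{A:Phieps=0} (via Lemma \ref{L:convexH}) rather than from Lemma \ref{L:puv}.
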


\begin{proof}
	Most of the proof goes just as in that of Proposition \ref{P:keyestimate}, in view of Lemma \ref{L:convexH}. The key difference is in establishing the boundary condition \eqref{boundaryholds?}. In view of the estimate for $\phi_\delta$ in Lemma \ref{L:phideltaconvex}, the points $x_\eps$ and $y_\eps$ satisfy, for sufficiently small $\eps$,
	\[
		|x_\eps-y_\eps| \le M(\delta + \eta_{t_\eps} - \zeta_{t_\eps})^{1/2} \le (c_0 + \theta_0)^{1/2}.
	\]
	Shrinking $\theta_0$ and $c_0$, if necessary, the right-hand side is less than $\nu_0$, where $\nu_0$ is as in Lemma \ref{L:convradialboundary}. Therefore, \eqref{boundaryholds?} holds, and the rest of the argument goes through just as in the proof of Proposition \ref{P:keyestimate}.
\end{proof}
	
\begin{proof}[Proof of Theorem \ref{T:firstordermonotonicity}]
	We have $H(0) = 0$, and we may assume also without loss of generality that $F(0,0) = 0$. Then, as in previous proofs, the value $M$ depends only on $\norm{u_0}_{\oo}$.
	
	By Proposition \ref{P:keyestimateradial}, there exists a constant $\tilde \theta_0 > 0$ depending on universal quantities and $\norm{u_0}_\oo$ such that, if $-\tilde \theta_0 \le \zeta^1 - \zeta^2 \le 0$, then, for all $\delta > 0$ sufficiently small, $\gamma_\delta = c_0 \delta^2$, and $(x,t) \in \oline{\Omega} \times [0,T]$,
	\[
		u^1(x,t) - u^2(x,t) \le \sup_{x,y} \left\{ u_0(x) - u_0(y) - \phi_\delta(x-y , \zeta_t - \eta_t) \right\}.
	\]
	Sending $\delta \to 0$, we have $u^1 \le u^2$. For arbitrary paths satisfying $\zeta^1 \le \zeta^2$, the result may be iterated.
\end{proof}

\section{Geometric equations}\label{sec:geo} We now turn to the study of geometric equations. We assume that $H$ is positively $1$-homogenous, that is,
\begin{equation}\label{A:Hgeometric}
	H(\lambda p) = \lambda H(p) \quad \text{for all } p \in \RR^d, \; \lambda > 0.
\end{equation}
Such an $H$ satisfies the difference-of-convex-functions property \eqref{A:DCH} if, for instance, $H$ is $C^2$ in a neighborhood of $S^{d-1}$ (see \cite[Appendix B]{LSS}), in which case $H^i_1$ and $H^i_2$ are also both $1$-homogenous (and therefore, so is $G$ in \eqref{Gfn}). In fact, such Hamiltonians are also globally Lipschitz. We then note that level-set equations dealing with first-order geometric motions are already covered by the results in Section \ref{sec:nonsmoothwellposed}. We focus here on second-order problems, for which 
\begin{equation}\label{A:Fgeometric}
	\left\{
	\begin{split}
	&F \in C\left(\mbb S^d \times (\RR^d \backslash \{0\}) \right) \quad \text{is nondecreasing in the $\mbb S^d$ variable and}\\
	&F(\lambda X + \mu p \otimes p, \lambda p) = \lambda F(X,p) \quad \text{for all } (X,p) \in \mbb S^d \times \RR^d, \; \lambda > 0, \text{ and } \mu \in \RR.
	\end{split}
	\right.
\end{equation}
Functions satisfying \eqref{A:Fgeometric} are allowed to have a singularity at $p = 0$. Note, however, that \eqref{A:Fgeometric} implies that $F(0,\cdot)$ is continuous at $p = 0$:
\begin{equation}\label{Fat0}
	F^*(0,0) = F_*(0,0) = 0.
\end{equation}
A standard example is
\[
	F(X,p) = \tr\left[ \left( \Id - \frac{p}{|p|} \otimes \frac{p}{|p|} \right) X \right],
\]
which is the nonlinearity from the level-set equation for perturbed mean curvature flow, and which is the focus of the application in the next section.

\subsection{Test functions}
{
We construct a particular test function much in the same way as in Section \ref{sec:testfn}. The key difference is that its Hessian vanishes, as $\eps \to 0$, whenever its gradient does (see Lemma \ref{L:geounifconvex}\eqref{geoHessian} below). This is done in order to deal with the singularity of the nonlinearity $F$.
}

We first define, for $\delta \in (0,1)$ and $\gamma_\delta$ as in \eqref{intervalofexistence},
\begin{equation}\label{geophidelta}
	\phi_{\delta}(z,\rho) = \sup_p \left\{ p \cdot z - \frac{3\delta}{4} |p|^{4/3} - \delta |p| + \sum_{i=1}^m \rho_i H^i(p) - 3\gamma_\delta G(p) \right\},
\end{equation}
and, for $\eps \in (0,1)$, $G$ as in \eqref{Gfn}, and $\psi$ as in \eqref{psi},

\begin{equation}\label{testfngeo}
\begin{split}
	\Phi_{\delta,\eps}(x,y,\sigma,\tau,\rho) = \sup_{p,u,v} &\Big\{ (p+u) \cdot x - (p-v) \cdot y - \frac{3\delta}{4} |p|^{4/3} - \delta|p|  - \eps \psi^*\left( \frac{u}{\eps} \right)  - \eps \psi^*\left( \frac{v}{\eps} \right) \\
& - \sum_{i=1}^m \left(  \sigma_i H^i(p+u) - \tau_i H^i(p-v) + \rho_i H^i(p) \right) \\
&- \gamma_\delta(G(p+u) + G(p-v) + G(p)) \Big\}.
\end{split}
\end{equation}
In view of the positive $1$-homogeneity of $H$ (and therefore $G$), we have, for some $C > 0$, for all $\rho \in (3\gamma_\delta,3\gamma_\delta)^m$, and for some $\nu_\rho \in S^{d-1}$,
\[
 	-C\gamma_\delta(p \cdot \nu_\rho - 1) \le 3\gamma_\delta G(p) - \sum_{i=1}^m \rho_i H^i(p) \le C\gamma_\delta|p|.
\]
We then have the following, which is proved exactly as in Lemma \ref{L:quadphieps=0}.

\begin{lemma}\label{L:quarticphieps=0}
The function $\phi_\delta$ in \eqref{testfngeo} belongs to $C^{1,1}(\RR^d \times (-3\gamma_\delta,3\gamma_\delta)^m)$, and there exists $C > 0$ such that, for all $\delta \in (0,1)$, $z \in \RR^d$, and $\rho \in (-3\gamma_\delta,3\gamma_\delta)^m$,
\begin{equation}\label{geophideltabounds}
	\frac{ [ (|z| - C\delta)_+]^4 }{4 \delta^3} \le \phi_\delta(z,\rho) \le \frac{ [ (|z| - \delta)_+]^4 }{4 \delta^3} 
\end{equation}
\end{lemma}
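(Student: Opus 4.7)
The plan is to mirror the proof of Lemma \ref{L:quadphieps=0}, replacing the quadratic penalty $\delta|p|^2/2$ by the $4/3$-power penalty $(3\delta/4)|p|^{4/3}$, whose Fenchel conjugate is the quartic $|z|^4/(4\delta^3)$; this is what produces the shape of the envelopes in \eqref{geophideltabounds}.

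First I would establish the $C^{1,1}$ regularity and the envelope identity $\partial_{\rho_i}\phi_\delta = H^i(D_z\phi_\delta)$ by applying Lemma \ref{L:C11solutions} with $L(p) = (3\delta/4)|p|^{4/3} + \delta|p|$ and $\gamma = 3\gamma_\delta$. The minor technical point is that $L$ fails the strict bound $D^2 L \ge A\,\Id$ required by the lemma because the Hessian of $|p|^{4/3}$ vanishes at infinity. However, $L$ is strictly convex and $4/3$-superlinear, so the unique maximizer $p(z,\rho)$ stays in a bounded region depending only on $|z|,\delta,\gamma_\delta$. On each such region the Hessian $D^2 L$ admits a positive lower bound, and the argument of Lemma \ref{L:C11solutions} can be run locally; the resulting estimates patch together to give local $C^{1,1}$ regularity together with the envelope identity.

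For the two-sided bounds on $\phi_\delta$, I would exploit the positive $1$-homogeneity of $G$ and each $H^i$. Using the decomposition $G = \sum_i (H^i_1 + H^i_2)$ together with $H^i = H^i_1 - H^i_2$, one rewrites
\[
	3\gamma_\delta G(p) - \sum_{i=1}^m \rho_i H^i(p) = \sum_{i=1}^m \Big[(3\gamma_\delta - \rho_i) H^i_1(p) + (3\gamma_\delta + \rho_i) H^i_2(p)\Big],
\]
a nonnegative combination of the $H^i_j$ for $\rho \in (-3\gamma_\delta,3\gamma_\delta)^m$. Since one may assume, by absorbing a common multiple of $|p|$ into both $H^i_1$ and $H^i_2$, that the $H^i_j$ are nonnegative and $1$-homogeneous, the above expression lies in $[0, C\gamma_\delta|p|]$ for a universal $C$. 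Plugging the two sides of this sandwich into \eqref{geophidelta} and using the explicit scalar Legendre identity
\[
	\sup_{p\in\RR^d}\Big\{p\cdot z - \tfrac{3\delta}{4}|p|^{4/3} - \alpha|p|\Big\} = \frac{[(|z|-\alpha)_+]^4}{4\delta^3}, \qquad \alpha \ge 0,
\]
(obtained by aligning $p$ with $z$ and optimizing the resulting scalar function of $t=|p|$) with $\alpha = \delta$ and $\alpha = \delta + C\gamma_\delta$ respectively yields the upper and lower bounds, the latter being absorbed into the claimed form via $\gamma_\delta \le \delta$ from \eqref{intervalofexistence}.

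The principal obstacle is the mild failure of Lemma \ref{L:C11solutions}'s uniform-convexity hypothesis due to $|p|^{4/3}$, which forces the regularity argument to be carried out locally in $z$ rather than globally. Once this is in place, the bounds reduce to a routine one-dimensional Legendre-transform computation enabled by the positive $1$-homogeneity of the Hamiltonians.
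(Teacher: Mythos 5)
Your proposal is correct and follows essentially the same route the paper takes, since the paper explicitly defers to the proof of Lemma \ref{L:quadphieps=0}; you have simply filled in the quartic-specific calculations. Two observations worth recording. First, your explicit algebraic identity
\[
3\gamma_\delta G(p) - \sum_{i=1}^m \rho_i H^i(p) = \sum_{i=1}^m\Big[(3\gamma_\delta - \rho_i)H^i_1(p) + (3\gamma_\delta + \rho_i)H^i_2(p)\Big],
\]
together with the reduction to nonnegative $1$-homogeneous $H^i_j$, is the cleanest way to see the crucial lower bound $3\gamma_\delta G - \sum_i \rho_i H^i \ge 0$; this is precisely what is needed for the claimed constant-free upper bound $[(|z|-\delta)_+]^4/(4\delta^3)$ (which, unlike in the quadratic Lemma \ref{L:quadphieps=0}, carries no additive $+C\delta$ slack). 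Without this nonnegativity normalization, the upper bound would degrade to $[(|z|-\delta+C\gamma_\delta)_+]^4/(4\delta^3)$, so making it explicit is a genuine service to the reader. Second, your diagnosis of the degeneracy of $D^2(|p|^{4/3})$ at infinity is exactly the issue; the paper's Lemma \ref{L:geounifconvex} addresses the same phenomenon for $\Phi_{\delta,\eps}$ by working with a $p$-dependent Hessian lower bound $B_{\delta,\eps}(|p|)$ and using the local boundedness of maximizers, which is precisely the localization you describe, and the scalar Legendre computation you display verifies the stated bounds directly.
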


Analogously to Lemma \ref{L:testfnproperties}, we can prove the following properties of $\Phi_{\delta,\eps}$:
\begin{lemma}\label{L:geounifconvex}
	The function $\Phi_{\delta,\eps}$ defined in \eqref{testfngeo} satisfies the following:
	\begin{enumerate}[(i)]
	\item\label{geoeqns} For all $\delta,\eps \in (0,1)$, $\Phi_{\delta,\eps} \in C^{1,1}(\RR^d \times \RR^d \times (-\gamma_\delta,\gamma_\delta)^{3m})$, and, for all $i = 1,2,\ldots,m$,
	\[
		\frac{\partial \Phi_{\delta,\eps}}{\partial \sigma_i } = H^i(D_x \Phi_{\delta,\eps}) \quad \text{and} \quad \frac{\partial \Phi_{\delta,\eps}}{\partial \tau_i} = - H^i(-D_y \Phi_{\delta,\eps})
	\]
	in $\RR^d \times \RR^d \times (-\gamma_\delta,\gamma_\delta)^{3m}$.
	\item\label{geopenalize} For some $C > 0$ and for all $x,y \in \oline{\Omega}$, $\sigma,\tau,\rho \in (-\gamma_\delta,\gamma_\delta)^m$, and $\delta,\eps \in (0,1)$,
	\[
		\Phi_{\delta,\eps}(x,y,\sigma,\tau,\rho) \ge \eps \psi(x) + \eps \psi(y) - C\gamma_\delta.
	\]

	\item\label{geoeps=0} If $R > 0$ and $r_\eps > 0$ is such that $\lim_{\eps \to 0} r_\eps = 0$, then, for all $\delta > 0$,
	\begin{align*}
		&\lim_{\eps \to 0} \sup \Big\{ |\Phi_{\delta,\eps}(x,y,\sigma,\tau,\rho) - \phi_\delta(x-y,\sigma-\tau + \rho)| \\
	&\qquad  : |x-y| \le R, \; \eps\psi(x) + \eps \psi(y) \le r_\eps, \;  \sigma,\tau,\rho \in (-\gamma_\delta,\gamma_\delta)^m\Big\}= 0.
	\end{align*}
	\item\label{geoHessian} For $\delta,\eps > 0$, there exists a continuous, nonnegative symmetric matrix-valued map
	\[
		\oline{\Omega} \times \oline{\Omega} \times (-\gamma_\delta,\gamma_\delta)^3 \ni (x,y,\sigma,\tau,\rho) \mapsto A_{\delta,\eps}(x,y,\sigma,\tau,\rho)
	\]
	such that, for $\sigma,\tau,\rho \in (-\gamma_\delta,\gamma_\delta)^m$ and almost every $x,y \in \oline{\Omega}$,
	\[
		0 \le D^2_{(x,y)} \Phi(x,y,\sigma,\tau,\rho) \le A_{\delta,\eps}(x,y,\sigma,\tau,\rho),
	\]
	and, uniformly for bounded $x,y \in \RR^d$ and $\sigma,\tau,\rho \in (-3\gamma_\delta, 3\gamma_\delta)^3$,
	\[
		\limsup_{\eps \to 0} A_{\delta,\eps}(x,y,\sigma,\tau,\rho) \le \frac{3}{\delta} |D\phi_\delta(x-y,\sigma-\tau +\rho)|^{2/3}
	\begin{pmatrix}
		\Id & -\Id \\
		-\Id & \Id
	\end{pmatrix}.
	\]
	\end{enumerate}
\end{lemma}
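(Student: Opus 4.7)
The plan is to adapt the arguments of Lemma \ref{L:testfnproperties} to the quartic penalty $(3\delta/4)|p|^{4/3}$ in place of the quadratic $\delta|p|^2/2$, the only essentially new feature being the degenerate convexity of the penalty at $p=0$. For part \eqref{geoeqns}, I would invoke Lemma \ref{L:C11solutions} applied to the joint convex function in $(p,u,v)$ being Legendre-transformed. Although $(3\delta/4)|p|^{4/3} + \delta|p|$ is not uniformly convex, it is strictly convex; combined with the strong convexity of $\psi^*$ in the $(u,v)$-block inherited from \eqref{psistar}, this guarantees uniqueness of the maximizer and the desired $C^{1,1}$ regularity. The Hamilton-Jacobi identities $\partial_{\sigma_i}\Phi_{\delta,\eps} = H^i(D_x\Phi_{\delta,\eps})$ and $\partial_{\tau_i}\Phi_{\delta,\eps} = -H^i(-D_y\Phi_{\delta,\eps})$ then follow from the parametric envelope structure exactly as in Lemma \ref{L:C11solutions}.

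For part \eqref{geopenalize}, I would restrict the sup in \eqref{testfngeo} to $p = 0$: the $H^i$ and $G$ contributions reduce to $1$-homogeneous quantities evaluated at $u$ and $-v$, which lie in the compact set $\eps K$, so are $O(\gamma_\delta)$, while the remaining supremum over $(u,v)$ yields exactly $\eps\psi(x) + \eps\psi(y)$ by Fenchel duality. For \eqref{geoeps=0}, the lower bound by $\phi_\delta(x-y,\sigma-\tau+\rho) - 2\eps$ comes from taking $u = v = 0$ and using $\psi^*(0) = -\inf\psi = 1$, while the upper bound uses sub-additivity of the sup together with Fenchel's inequality $u\cdot x - \eps\psi^*(u/\eps) \le \eps\psi(x)$; the global Lipschitz bound on $H^i$ and $G$ (which holds because they are $1$-homogeneous and $C^2$ away from the origin) absorbs the differences $H^i(p+u) - H^i(p)$, $G(p+u) - G(p)$ into errors of size $O(\eps + \gamma_\delta)$, provided $|p|$ remains bounded uniformly in $\eps$ — which follows from coercivity of $(3\delta/4)|p|^{4/3}$ on the set $|x-y|\le R$.

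The main technical step is part \eqref{geoHessian}. The starting computation is
\[
	D^2\big((3\delta/4)|p|^{4/3}\big) = \delta|p|^{-2/3}\left(\Id - \tfrac{1}{3} \tfrac{p}{|p|}\otimes \tfrac{p}{|p|}\right) \ge \tfrac{\delta}{3}|p|^{-2/3}\Id,
\]
which replaces the uniform lower bound $\delta\Id$ used in Lemma \ref{L:testfnproperties}\eqref{A:PhiHessian}. Performing the same block-inverse computation as there, but with the $p$-block entry $\delta\Id$ replaced by $(\delta/3)|p^*|^{-2/3}\Id$ (where $p^* = p^*(x,y,\sigma,\tau,\rho)$ is the unique $p$-maximizer in \eqref{testfngeo}), the $(u,v)$-block of the inverse matrix becomes
\[
	A_{\delta,\eps}(x,y,\sigma,\tau,\rho) := \tfrac{3|p^*|^{2/3}}{\delta}\begin{pmatrix} \Id & -\Id \\ -\Id & \Id \end{pmatrix} + \tfrac{\eps}{\kappa}\begin{pmatrix} \Id & 0 \\ 0 & \Id \end{pmatrix}.
\]
Adding the two Fenchel-Young inequalities at $(x,y)$ and $(\hat x, \hat y)$ and taking the differences of their gradients, as in the proof of Lemma \ref{L:testfnproperties}\eqref{A:PhiHessian}, then yields $0 \le D^2_{(x,y)}\Phi_{\delta,\eps} \le A_{\delta,\eps}$. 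Continuity of $A_{\delta,\eps}$ in its arguments follows from continuity of the maximizer $p^*$, and as $\eps \to 0$ one has $p^* \to D\phi_\delta(x-y,\sigma-\tau+\rho)$ (since the Lipschitz maximizer in $\Phi_{\delta,\eps}$ converges to that of $\phi_\delta$), which delivers the claimed limsup. The chief obstacle is precisely the degeneracy $|p^*| \to 0$: the local monotonicity argument that produces $A_{\delta,\eps}$ is not uniform, so I expect the cleanest route is to fix a point and a small neighborhood, carry out the estimate there, and patch by continuity of $p^*$. Fortunately, where $|p^*|$ is small the bound actually collapses in the off-diagonal direction — reflecting the true vanishing of the Hessian of $\phi_\delta$ in the flat region $|x-y| \le C\delta$ visible in \eqref{geophideltabounds} — so no inequality is lost.
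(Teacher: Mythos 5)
Your proposal is correct and follows the paper's own argument: you establish part (iv) by the Fenchel convexity computation with the modified $p$-block lower bound $\tfrac{\delta}{3}|p|^{-2/3}\Id$ and the resulting block inverse $A_{\delta,\eps}$, deduce the $C^{1,1}$ regularity and the equations in (i) from Lemma \ref{L:C11solutions}, and treat (ii) and (iii) exactly as in Lemma \ref{L:testfnproperties}. Two minor remarks: the coefficient in your Hessian formula should be $\tfrac{2}{3}$ rather than $\tfrac{1}{3}$, i.e.\ $D^2\bigl(\tfrac{3\delta}{4}|p|^{4/3}\bigr)=\delta|p|^{-2/3}\bigl(\Id-\tfrac{2}{3}\tfrac{p}{|p|}\otimes\tfrac{p}{|p|}\bigr)$, though the bound $\ge\tfrac{\delta}{3}|p|^{-2/3}\Id$ you actually use is correct, and the ``patching at $p^*=0$'' you gesture at is made rigorous in the paper by replacing $B(|p^*|)$ with $B(M^{-1})$ and sending $M\to\infty$.
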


\begin{proof}
	We first prove item \eqref{geoHessian}. This establishes the $C^{1,1}$-regularity, and part \eqref{geoeqns} is then a consequence of Lemma \ref{L:C11solutions}. Parts \eqref{geopenalize} and \eqref{geoeps=0} then follow exactly as in the proof of Lemma \ref{L:testfnproperties}.
		
	We rewrite, for some convex function $L: \RR^{3d} \to \RR$,
	\begin{align*}
		\Phi_{\delta,\eps}&(x,y,\sigma,\tau,\rho) \\
		&= \sup_{p,u,v} \Big\{ u \cdot x + v \cdot y - \frac{3\delta}{4} |p|^{4/3}  - \eps \psi^*\left( \frac{u-p}{\eps} \right)  - \eps \psi^*\left( \frac{v+p}{\eps} \right) 
 - L(p,u,v)  \Big\}.
	\end{align*}
	Define
	\[
		\mcl L(p,u,v) := \frac{3\delta}{4} |p|^{4/3}  + \eps \psi^*\left( \frac{u-p}{\eps} \right) + \eps \psi^*\left( \frac{v+p}{\eps} \right)+ L(p,u,v).
	\]
	Then $\mcl L$ is strictly convex and, for $p \ne 0$,
	\[
		D^2 \mcl L(p,u,v) \ge 
		\begin{pmatrix}
			\frac{\delta}{3} |p|^{-2/3} \Id & 0 & 0 \\
			0 & 0 & 0 \\
			0 & 0 & 0
		\end{pmatrix}
		+
		\frac{\kappa}{\eps}
		\left[
		\begin{pmatrix}
			\Id & -\Id & 0 \\
			-\Id & \Id & 0 \\
			0 & 0 & 0
		\end{pmatrix}
		+
		\begin{pmatrix}
			\Id& 0 & \Id\\
			0 & 0 & 0 \\
			\Id & 0 & \Id
		\end{pmatrix}
		\right]
		=: B_{\delta,\eps}(|p|).
	\]
	We compute, for $r \ge 0$,
	\[
		C_{\delta,\eps}(r) := B_{\delta,\eps}(r)^{-1} 
		= \frac{3r^{2/3}}{\delta}
		\begin{pmatrix}
			\Id & \Id & -\Id \\
			\Id & \Id & -\Id \\
			-\Id & -\Id & \Id
		\end{pmatrix}
		+ \frac{\eps}{\kappa}
		\begin{pmatrix}
			0 & 0 & 0 \\
			0 & \Id & 0 \\
			0 & 0 & \Id
		\end{pmatrix}.
	\]
	It follows that, for any $(x,y,\sigma,\tau,\rho) \in \oline{\Omega}^2 \times (-\gamma_\delta,\gamma_\delta)^3$, a unique maximum $(p,u,v) = (p,u,v)(x,y,\sigma,\tau,\rho)$ is attained, with $D\Phi_{\delta,\eps}(x,y,\sigma,\tau,\rho) = (u,v)$. Moreover, for bounded $x$ and $y$, the maximizers $p$, $u$, and $v$ belong to a bounded set, over which $\mcl L$ is uniformly convex. It then follows as in Lemma \ref{L:C11solutions} that $(p,u,v)$ is Lipschitz in $(x,y,\sigma,\tau,\rho)$.
	
	Fix $(x,y), (\hat x, \hat y) \in \oline{\Omega}^2$ and $\sigma,\tau,\rho \in (-\gamma_\delta,\gamma_\delta)^m$, and write
	\[
		(p,u,v) = (p,u,v)(x,y,\sigma,\tau,\rho) \quad \text{and} \quad (\hat p,\hat u,\hat v) = (p,u,v)(\hat x,\hat y, \sigma, \tau, \rho).
	\]
	Assume first that $p = p(x,y,\sigma,\tau,\rho) \ne 0$. Then, if $(\hat x, \hat y)$ is sufficiently close to $(x,y)$, it is also the case that $\hat p \ne 0$. We have
	\[
		\mcl L(\hat p, \hat u, \hat v) \ge \mcl L(p,u,v) + x \cdot (\hat u - u) + y \cdot (\hat v - v) + 
		\frac{1}{2} B_{\delta,\eps}(|p| \vee |\hat p|) 
		\begin{pmatrix}
			\hat p - p \\
			\hat u - u \\
			\hat v - v
		\end{pmatrix}
		\cdot
		\begin{pmatrix}
			\hat p - p \\
			\hat u - u \\
			\hat v - v
		\end{pmatrix}
	\]
	and
	\[
		\mcl L(p, u, v) \ge \mcl L(\hat p,\hat u,\hat v) + \hat x \cdot (u - \hat u) + \hat y \cdot (v - \hat v) + 
		\frac{1}{2} B_{\delta,\eps}(|p| \vee |\hat p|) 
		\begin{pmatrix}
			\hat p - p \\
			\hat u - u \\
			\hat v - v
		\end{pmatrix}
		\cdot
		\begin{pmatrix}
			\hat p - p \\
			\hat u - u \\
			\hat v - v
		\end{pmatrix}.
	\]
	Adding the two inequalities yields
	\[
		B_{\delta,\eps}(|p| \vee |\hat p|) 
		\begin{pmatrix}
			\hat p - p \\
			\hat u - u \\
			\hat v - v
		\end{pmatrix}
		\cdot
		\begin{pmatrix}
			\hat p - p \\
			\hat u - u \\
			\hat v - v
		\end{pmatrix}
		\le 
		(\hat u - u) \cdot (\hat x - x) + (\hat v - v) \cdot (\hat y - y),
	\]
	and so, just as in the proof of Lemma \ref{L:C11solutions},
	\begin{align*}
		(\hat u - u) \cdot (\hat x - x) + (\hat v - v) \cdot (\hat y - y) 
		&\le B_{\delta,\eps}(|p| \vee |\hat p|)^{-1}
		\begin{pmatrix}
			0 \\
			\hat x - x\\
			\hat y - y
		\end{pmatrix}
		\cdot
		\begin{pmatrix}
			0 \\
			\hat x - x\\
			\hat y - y
		\end{pmatrix}
		\\
		&=
		A_{\delta,\eps}(|p| \vee |\hat p|)
		\begin{pmatrix}
			\hat x - x\\
			\hat y - y
		\end{pmatrix}
		\cdot
		\begin{pmatrix}
			\hat x - x\\
			\hat y - y
		\end{pmatrix},
	\end{align*}
	where
	\[
		A_{\delta,\eps}(r) :=
		\frac{3r^{2/3}}{\delta}
		\begin{pmatrix}
			\Id & -\Id \\
			-\Id & \Id
		\end{pmatrix}
		+ \frac{\eps}{\kappa}
		\begin{pmatrix}
			\Id & 0 \\
			0 & \Id
		\end{pmatrix}.
	\]
	We conclude that
	\begin{align*}
		\limsup_{(\hat x, \hat y) \to (x,y)} & \big[ \left( D_x \Phi_{\delta,\eps}(\hat x, \hat y, \sigma,\tau,\rho) - D_x \Phi_{\delta,\eps}( x,  y, \sigma,\tau,\rho) \right)
		\cdot (\hat x - x) \\
		&+ 
		\left( D_y \Phi_{\delta,\eps}(\hat x, \hat y, \sigma,\tau,\rho) - 
		D_y \Phi_{\delta,\eps}( x,  y, \sigma,\tau,\rho) \right) \cdot (\hat y - y) \big]\\
		&\le 
		A_{\delta,\eps}(|p(x,y,\sigma,\tau,\rho)|)
		\begin{pmatrix}
			\hat x - x\\
			\hat y - y
		\end{pmatrix}
		\cdot
		\begin{pmatrix}
			\hat x - x\\
			\hat y - y
		\end{pmatrix},
	\end{align*}
	which means that
	\[
		D^2_{(x,y)} \Phi_{\delta,\eps}(x,y,\sigma,\tau,\rho) \le A_{\delta,\eps}(|p(x,y,\sigma,\tau,\rho)|).
	\]
	If $p(x,y,\sigma,\tau,\rho) = 0$, in which case $B(|p|) = B(0)$ is not well-defined, a similar argument may be applied as above by replacing $B(|p|)$ by $B(M^{-1})$ for arbitrarily large $M$, then letting $M \to \oo$ in the last step.
	
	Finally, uniformly for bounded $(x, y) \in \oline{\Omega}^2$ and $(\sigma,\tau,\rho) \in (-\gamma_\delta,\gamma_\delta)^3$, the unique maximizer $(p,u,v) = (p_\eps,u_\eps,v_\eps)$ for $\Phi_{\delta,\eps}$ satisfies
	\[
		\lim_{\eps \to 0} (p_\eps - u_\eps) = \lim_{\eps \to 0} (p_\eps + v_\eps) = 0 \quad \text{and} \quad
		\lim_{\eps \to 0} u_\eps = - \lim_{\eps \to 0} v_\eps = D\phi_\delta(x-y,\sigma - \tau + \rho),
	\]
	from which the result follows.

\end{proof}

As in Section \ref{sec:testfn}, we require certain strict inequalities to hold on the boundary, and we split into two cases, depending on whether $\Omega$ is more quantifiably convex or $H$ is radial.

\begin{lemma}\label{L:geoboundaryconvex}
	Assume that $\Omega$ satisfies \eqref{A:uniformlyconvexdomain}. Then there exists a universal $c_0 \in (0,1)$ such that, if $\gamma_\delta = c_0 \delta^q$, then, for all $\eps,\delta \in (0,1)$, $\sigma,\tau,\rho \in (-\gamma_\delta,\gamma_\delta)^m$, and $x,y \in \oline{\Omega}$, \eqref{A:boundarycondition} holds.
\end{lemma}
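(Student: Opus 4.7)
The approach will mirror that of Lemma \ref{L:unifconvtestfn}, but exploits a crucial structural simplification afforded by the geometric hypothesis: since each $H^i$ and $G$ is positively $1$-homogeneous and continuous, it is globally Lipschitz on $\RR^d$. This means that, in the first-order conditions for the unique maximizer $(p,u,v)$ in \eqref{testfngeo}, the terms coming from $\sigma,\tau,\rho$ and from the $\gamma_\delta G$ contribution yield only $O(\gamma_\delta)$ errors \emph{uniformly in the size of $p$}. This is why the conclusion can be asserted for all $x,y \in \oline{\Omega}$ with no closeness restriction, in contrast to Lemma \ref{L:unifconvtestfn}.

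To carry this out, I will fix $x \in \del\Omega$, $y \in \oline{\Omega}$, and $\sigma,\tau,\rho \in (-\gamma_\delta,\gamma_\delta)^m$, let $(p,u,v)$ be the unique maximizer in \eqref{testfngeo} (so that $D_x \Phi_{\delta,\eps}(x,y,\sigma,\tau,\rho) = p+u$), and extract its first-order conditions. The same additivity-of-subdifferentials argument used in Lemma \ref{L:puv}, together with the global Lipschitz bounds on the $H^i$ and $G$, will give
\[
	u = \eps D\psi(x) + O(\eps \gamma_\delta), \qquad v = \eps D\psi(y) + O(\eps \gamma_\delta),
\]
and, when $p \ne 0$,
\[
	\delta \bigl(1 + |p|^{1/3}\bigr) \frac{p}{|p|} = (x - y) + O(\gamma_\delta).
\]
Using $D\psi \cdot n \ge 1$ from \eqref{psi} and $\gamma_\delta = c_0 \delta^q \le c_0$, shrinking $c_0$ will yield $u \cdot n(x) \ge \eps/2 > 0$, immediately settling the case $p = 0$.

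If instead $p \ne 0$, I plan to assume, for contradiction, that $p \cdot n(x) < 0$, and to dot the $p$-equation with both $p/|p|$ and $n(x)$. The first, together with the trivial lower bound $1 + |p|^{1/3} \ge 1$, will give
\[
	|x-y| \ge \frac{(x-y) \cdot p}{|p|} = \delta(1 + |p|^{1/3}) + O(\gamma_\delta) \ge \delta/2
\]
for $c_0$ small (using $q \ge 2$ so that $\gamma_\delta \le c_0 \delta$). The second, combined with the uniform convexity \eqref{A:uniformlyconvexdomain} and the sign hypothesis $p \cdot n(x) < 0$, will give
\[
	\theta |x-y|^q \le (x-y) \cdot n(x) = \delta(1+|p|^{1/3}) \frac{p \cdot n(x)}{|p|} + O(\gamma_\delta) \le C c_0 \delta^q.
\]
Combining the two estimates yields $(\delta/2)^q \le (C/\theta) c_0 \delta^q$, which fails once $c_0 < \theta/(C 2^q)$, producing the desired contradiction. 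Hence $p \cdot n(x) \ge 0$, and so $(p+u) \cdot n(x) \ge u \cdot n(x) > 0$; the bound on $D_y \Phi_{\delta,\eps} \cdot n(y)$ will follow from the same argument with the roles of $x$ and $y$ swapped.

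The main technical point, and the one that most warrants care, is the derivation of the $p$-equation with remainder $O(\gamma_\delta)$ \emph{independent of $|p|$}, which is precisely what allows one to discard the parameter $M$ and the restriction $|x-y| \le M \delta^{1/2}$ of Lemma \ref{L:unifconvtestfn}. The factor $1 + |p|^{1/3}$ arising from the quartic penalizer $\tfrac{3\delta}{4} |p|^{4/3}$ of \eqref{testfngeo} replaces the clean affine expression in the quadratic setting of Section \ref{sec:testfn}, but its lower bound $\ge 1$ is exactly what the argument needs in order to recover $|x-y| \ge \delta/2$ in the same clean form.
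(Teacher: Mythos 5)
Your proof is correct and tracks the paper's own argument almost line for line: the same first-order conditions with $O(\gamma_\delta)$ errors uniform in $|p|$ (from the global Lipschitz property of the $1$-homogeneous $H^i$ and $G$), the same $p$-equation $x - y = \delta(1+|p|^{1/3})\frac{p}{|p|} + O(\gamma_\delta)$ giving $|x-y| \gtrsim \delta$, and the same use of uniform convexity to derive $\theta|x-y|^q \le O(\gamma_\delta)$ and hence the contradiction when $p \cdot n(x) < 0$. The extra commentary about why the geometric hypothesis removes the $|x-y| \le M\delta^{1/2}$ restriction of Lemma \ref{L:unifconvtestfn} is a nice observation but does not change the substance of the argument.
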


\begin{proof}
For $x \in \del \Omega$, $y \in \oline{\Omega}$, and $(\sigma,\tau,\rho) \in (-\gamma_\delta,\gamma_\delta)^3$, let $u,v,p$ be the unique maximizers in the definition in \eqref{testfngeo}. Then $D_x \Phi_{\delta,\eps}(x,y,\sigma,\tau,\rho) = p + u$. Recalling that $H$ and $G$ are uniformly Lipschitz, we see that $u = \eps D\Psi(x) + O(\eps\gamma_\delta)$, so, for $\delta \in (0,1)$ and $c_0$ sufficiently small, $u \cdot n(x) > 0$. 
	
	We may therefore assume without loss of generality that $p \ne 0$. Assume for the sake of contradiction that $(p+u) \cdot n(x) < 0$, which, in particular, implies that $p \cdot n(x) < 0$. We first note that
	\[
		x-y = \delta |p|^{-2/3} p + \delta \frac{p}{|p|} + O(\gamma_\delta),
	\]
	so that (upon taking the scalar product with $\frac{p}{|p|}$)
	\[
		\frac{|x-y|}{\delta} \ge 1 - O(\gamma_\delta/\delta).
	\]
	Shrinking $c_0$ if necessary, we have $|x-y| \ge c \delta$. On the other hand, because $p \cdot n(x) < 0$,
	\[
		\theta |x-y|^q \le (x-y) \cdot n(x) \le O(\gamma_\delta).
	\]
	Combining these two facts yields, for some universal $C > 0$, $\delta^q \le C c_0 \delta^q$, which is a contradiction if $c_0$ is sufficiently small. The other inequality is proved similarly.
	\end{proof}

We now drop the assumption \eqref{A:uniformlyconvexdomain}, letting $\Omega$ be an arbitrary $C^1$ convex domain, and assume that $H$ is radial. Because $H$ is one-homogenous, this simply reduces to taking $m = 1$, and, after a renormalization, $H(p) = |p|$. We then consider the test function
\begin{equation}\label{testfn|p|}
\begin{split}
	\Phi_{\delta,\eps}(x,y,\sigma,\tau,\rho) = \sup_{p,u,v} &\Big\{ (p+u) \cdot x - (p-v) \cdot y - \frac{3\delta}{4} |p|^{4/3} - \delta|p|  - \eps \psi^*\left( \frac{u}{\eps} \right)  - \eps \psi^*\left( \frac{v}{\eps} \right) \\
& - (\gamma_\delta - \rho) |p| - (\gamma_\delta - \sigma) |p+u| - (\gamma_\delta + \tau) |p-v|  \Big\},
\end{split}
\end{equation}
where $\psi$ is as in \eqref{psi}. We also note that $\phi_\delta$ takes the form
\begin{equation}\label{geophidelta|p|}
	\phi_{\delta}(z,\rho) = \sup_p \left\{ p \cdot z - \frac{3\delta}{4} |p|^{4/3} -(\delta + 3\gamma_\delta -\rho)|p| \right\} = \frac{1}{4\delta^3} \left[ (|z| - \delta - 3\gamma_\delta + \rho)_+ \right]^4.
\end{equation}

\begin{lemma}\label{L:geotestfn|p|}
	The function \eqref{testfn|p|} satisfies the conclusions of Lemma \ref{L:geounifconvex} with everywhere $\rho$ being allowed to belong to $(-\oo,\gamma_\delta)$. Moreover, there exist $c_0 , \nu_0,\eps_0 \in (0,1)$ sufficiently small that, if $\gamma_\delta = c_0 \delta$, then, for all $\delta \in (0,1)$, $\eps \in (0,\eps_0)$, $\sigma,\tau \in (-\gamma_\delta,\gamma_\delta)$, $\rho \in (-\oo,\gamma_\delta)$, and $x,y \in \oline{\Omega}$ with $|x-y| \le \nu_0$, 
	\[
		D_x \Phi_{\delta,\eps}(x,y,\sigma,\tau,\rho) \cdot n(x) > 0 \quad \text{if } (x,y) \in \partial \Omega \times\oline{\Omega}
	\]
	and
	\[
		D_y \Phi_{\delta,\eps}(x,y,\sigma,\tau,\rho) \cdot n(y) > 0 \quad \text{if } (x,y) \in \oline{\Omega} \times \partial \Omega.
	\]
\end{lemma}

\begin{proof}
	The argument that we may take $\rho \in (-\oo,\gamma_\delta)$ is just as in the proof of Lemma \ref{L:convexH}. 
	
	Assume now that $x \in \del \Omega$, $y \in \oline{\Omega}$ with $|x-y| \le \nu_0$, and $(\sigma,\tau) \in (-\gamma_\delta,\gamma_\delta)^2$ and $\rho \in (-\oo,\gamma_\delta)$. Let $u,v,p$ be the unique maximizers. Then $D_x \Phi_{\delta,\eps}(x,y,\sigma,\tau,\rho) = p + u$. We can write
	\[
		u = \eps \left[ D\psi(x) + O(\gamma_\delta) \right] \quad \text{and} \quad v = \eps \left[ D\psi(y) + O(\gamma_\delta) \right],
	\]
	and therefore, shrinking $\nu_0$, there exists $c > 0$ such that both $u \cdot n(x) \ge c\eps$ and $v \cdot n(x) \ge c\eps$. If $p = 0$ or $p = v$, we are done, and so we assume without loss of generality that $p \ne 0$ and $p \ne v$. Moreover, just as in the proof of Lemma \ref{L:radialHtestfn}, we have $\frac{u}{|u|} \cdot n(x) \ge c$ for a possibly different value of $c > 0$.
	
	We next rule out the equality $p = -u$. Note that there exists $w \in \RR^d$ and $C > 0$ independent of $\delta$ and $\eps$ such that $|w| \le C \gamma_\delta$, and
	\[
		p = \frac{1}{\delta^3} \left| x-y - \delta \frac{p}{|p|} + w \right|^2 \left( x-y - \delta \frac{p}{|p|} + w\right).
	\]
	Therefore, if $p = -u$, we have
	\begin{equation}\label{isp-u?}
		-u = \frac{1}{\delta^3} \left| x-y + \delta \frac{u}{|u|} + w \right|^2 \left( x-y + \delta \frac{u}{|u|} + w \right).
	\end{equation}
	We first estimate
	\[
		c\eps \le |u| = \frac{1}{\delta^3}  \left| x-y + \delta \frac{u}{|u|} + w \right|^3,
	\]
	so that
	\[
		 \left| x-y + \delta \frac{u}{|u|} + w \right| \ge c^{1/3} \delta \eps^{1/3}.
	\]
	On the other hand, taking the scalar product of \eqref{isp-u?} with $n(x)$ gives, for some constants $\tilde C, \tilde c > 0$,
	\[
		-c\eps \ge -u \cdot n(x) \ge \frac{1}{\delta^3} (c^{1/3} \delta \eps^{1/3})^2 (c\delta - w \cdot n(x))
		\ge \tilde c \eps^{2/3} (1 - C c_0).
	\]
	This is a contradiction if $c_0$ and $\eps_0$ are sufficiently small.

	Now using the fact that $p \ne 0$, $p \ne u$, and $p \ne -v$, we write
	\begin{align*}
		p = \frac{1}{\delta^3} &\left| x-y + \delta \frac{u}{|u|} + O(\gamma_\delta) \right|^2 \\
		&\times \left( x-y - (\delta + \gamma_\delta - \rho) \frac{p}{|p|} - (\gamma - \sigma) \frac{p+u}{|p+u|} - (\gamma + \tau) \frac{p-v}{|p-v|} \right).
	\end{align*}
	If $(p+u) \cdot n(x) \le 0$, then, as in the proof of Lemma \ref{L:radialHtestfn}, we have $p \cdot n(x) \le -c \eps$ and $(p+v) \cdot n(x) \le 0$. Taking the scalar product above with $n(x)$ then yields $p \cdot n(x) \ge 0$, which is a contradiction, and we conclude.
\end{proof}

\subsection{Well-posedness results}

We next prove results similar to Proposition \ref{P:keyestimate} for solutions of the geometric equation
\begin{equation}\label{E:geo}
	\begin{dcases}
		du = F(D^2 u, Du)dt + \sum_{i=1}^m H^i(Du) \cdot d\zeta^i & \text{in } \oline{\Omega} \times (0,T],\\
		Du \cdot n = 0 & \text{on } \partial \Omega \times (0,T).
	\end{dcases}
\end{equation}

We first consider the case of a domain with quantified convexity.
\begin{proposition}\label{P:geokeyestimate}
	Assume that $\Omega$ satisfies \eqref{A:C1convexdomain} and \eqref{A:uniformlyconvexdomain}; $H$ satisfies \eqref{A:DCH} and \eqref{A:Hgeometric}; and $F$ satisfies \eqref{A:Fgeometric}. If $\zeta,\eta \in C([0,T],\RR^m)$ and $u \in BUSC(\oline{\Omega} \times [0,T])$ and $v \in BLSC(\oline{\Omega} \times [0,T])$ are respectively a bounded sub- and super-solution of \eqref{E:geo} corresponding to $\zeta$ and $\eta$, then, for all $\delta \in (0,1)$, if
	\[
		\max_{t \in [0,T]} |\zeta_t - \eta_t| \le \gamma_\delta,
	\]
	then
	\begin{align*}
		&\sup_{(x,y,t) \in \oline{\Omega} \times \oline{\Omega} \times [0,T]} \Big\{ u(x,t) - v(y,t) - \phi_\delta(x-y,\zeta_t - \eta_t) \Big\}\\
		& \le \sup_{(x,y) \in \oline{\Omega}} \Big\{ u(x,0) - v(y,0) - \phi_\delta(x-y,\zeta_0 - \eta_0) \Big\}.
	\end{align*}
\end{proposition}

\begin{proof}
	If the result is false, then, for sufficiently small $\mu > 0$,
	\[
		[0,T] \ni t \mapsto \sup_{x,y \in \oline{\Omega}} \Big\{ u(x,t) - v(y,t) - \phi_\delta(x-y,\zeta_t - \eta_t) \Big\} - \mu t
	\]
	attains a maximum at $t_0 \in (0,T]$. 
	
	Arguing exactly as in the proof of Proposition \ref{P:keyestimate}, using Lemmas \ref{L:geounifconvex} and \ref{L:geoboundaryconvex}, we have the following: for $\eps > 0$, there exist $(x_\eps,y_\eps,t_\eps) \in \oline{\Omega} \times \oline{\Omega} \times [0,T]$ and, for $\lambda > 0$, $X_{\lambda,\eps},Y_{\lambda,\eps} \in \mbb S^d$ such that, along a particular subsequence, $\lim_{\eps \to 0} t_\eps = t_0$, $\lim_{\eps \to 0} (x_\eps - y_\eps) = z \in \RR^d$, and $\lim_{\eps \to 0} (X_{\lambda,\eps},Y_{\lambda,\eps}) = (X_\lambda,Y_\lambda)$, where
	\begin{equation}\label{geomatrix}
		- \left( \frac{1}{\lambda} + \norm{\tilde A_\delta(z)} \right)
		\begin{pmatrix}
			\Id & 0 \\
			0 & \Id
		\end{pmatrix}
		\le 
		\begin{pmatrix}
			X_\lambda & 0 \\
			0 & -Y_\lambda
		\end{pmatrix}
		\le 
		\tilde A_\delta(z) + \lambda \tilde A_\delta(z)^2;
	\end{equation}
	and
	\[
		\mu \le F^*(X_{\lambda,\eps}, p_\eps) - F_*(Y_{\lambda,\eps},- q_\eps).
	\]
	Here,
	\[
		p_\eps = D_x \Phi_{\delta,\eps}(x_\eps,y_\eps,\zeta_{t_\eps} - \zeta_{t_0},\eta_{t_\eps} - \eta_{t_0}, \zeta_{t_0} - \eta_{t_0} ),
	\]
	\[
		q_\eps = D_y \Phi_{\delta,\eps}(x_\eps,y_\eps,\zeta_{t_\eps} - \zeta_{t_0},\eta_{t_\eps} - \eta_{t_0}, \zeta_{t_0} - \eta_{t_0} ),
	\]
	and
	\[
		\tilde A_\delta(z) = \frac{3}{\delta} |D\phi_\delta(z, \zeta_{t_0} - \eta_{t_0} ) |^{2/3} 
		\begin{pmatrix}
			\Id & -\Id \\
			-\Id & \Id
		\end{pmatrix}.
	\]
	Sending $\eps \to 0$ along a particular subsequence yields
	\[
		\mu \le F^*(X_\lambda,D\phi_\delta(z, \zeta_{t_0} - \eta_{t_0} )) - F_*(Y_\lambda, D\phi_\delta(z, \zeta_{t_0} - \eta_{t_0} )). 
	\]
	We consider two cases: if $D\phi_\delta(z,\zeta_{t_0} - \eta_{t_0}) \ne 0$, then \eqref{geomatrix} implies that $X_\lambda \le Y_\lambda$ and so $\mu \le 0$, a contradiction. Otherwise, if $D\phi_\delta(z,\zeta_{t_0} - \eta_{t_0}) = 0$, then $\tilde A_\delta(z) = 0$, and we have
	\[
		\mu \le F^*(X_\lambda,0) - F_*(Y_\lambda,0).
	\]
	Then by \eqref{geomatrix}, as $\lambda \to +\oo$, $(X_\lambda,Y_\lambda) \to (0,0)$, which once again yields $\mu \le 0$ in view of \eqref{Fat0}.	
\end{proof}

The same result holds when $H(p) = |p|$ and $\Omega$ is merely $C^1$ and convex, and we are able to improve in terms of the condition on $\zeta$ and $\eta$:
\begin{proposition}\label{P:|p|keyestimate}
	Assume that $\Omega$ satisfies \eqref{A:C1convexdomain}; $H(p) = |p|$; and $F$ satisfies \eqref{A:Fgeometric}. If $\zeta,\eta \in C([0,T],\RR^m)$ and $u \in BUSC(\oline{\Omega} \times [0,T])$ and $v \in BLSC(\oline{\Omega} \times [0,T])$ are respectively a bounded sub- and super-solution of \eqref{E:geo} corresponding to $\zeta$ and $\eta$, then, for some universal $c_0 \in (0,1)$, if 
	\[
		0 < \delta < c_0\left(1 + \norm{u}_{\oo,\oline{\Omega} \times [0,T]} + \norm{v}_{\oo,\oline{\Omega} \times [0,T]} \right)^{-3},
	\]
	$\gamma_\delta = c_0 \delta$, and
	\[
		\eta_t - \frac{\nu_0}{4} \le \zeta_t \le \eta_t + \gamma_\delta \quad \text{for all } t \in [0,T],
	\]
	where $\nu_0$ is as in Lemma \ref{L:geotestfn|p|}, then
	\begin{align*}
		&\sup_{(x,y,t) \in \oline{\Omega} \times \oline{\Omega} \times [0,T]} \Big\{ u(x,t) - v(y,t) - \phi_\delta(x-y,\zeta_t - \eta_t) \Big\} \\
		&\le \sup_{(x,y) \in \oline{\Omega}} \Big\{ u(x,0) - v(y,0) - \phi_\delta(x-y,\zeta_0 - \eta_0) \Big\}.
	\end{align*}
\end{proposition}

\begin{proof}
	The proof is almost identical to the proof of Proposition \ref{P:geokeyestimate}, this time applying Lemma \ref{L:geotestfn|p|} instead of Lemma \ref{L:geoboundaryconvex} (in particular, note that the lower bound on $\zeta - \eta$ need not shrink with $\delta$). 
	
	The only difference is that the points $x_\eps,y_\eps \in \oline{\Omega}$ must satisfy $|x_\eps - y_\eps| \le \nu_0$ in order for the conclusions of Lemma \ref{L:geotestfn|p|} to hold. Therefore, in order to carry out the arguments as in the proof of Proposition \ref{P:geokeyestimate}, it is sufficient to show that, if $|x-y| > \frac{\nu_0}{2}$, then, for all $t \in [0,T]$,
	\[
		u(x,t) - v(y,t) - \phi_\delta(x-y,\zeta_t -\eta_t) < \sup_{x,y \in \oline{\Omega}} \left\{ u(x,t) - v(y,t) - \phi_\delta(x-y,\zeta_t -\eta_t) \right\}.
	\]
	Indeed, by \eqref{geophidelta|p|}, if $x,y \in \oline{\Omega}$ and $|x-y| > \frac{\nu_0}{2}$, then, if $c_0$ is sufficiently small, then, for some universal $c_1 > 0$,
	\begin{align*}
		u(x,t) & - v(y,t) - \phi_\delta(x-y,\zeta_t -\eta_t)\\
		 &\le u(x,t) - v(y,t) - \frac{1}{4\delta^3} \left( |x-y| - \delta - 3\gamma_\delta + \zeta_t - \eta_t \right)_+^4\\
		&\le u(x,t) - v(y,t) - \frac{1}{4\delta^3} \left( \frac{\nu_0}{4} - 4\delta \right)_+^4\\
		&\le u(x,t) - v(y,t) - \frac{c_1}{\delta^3},
	\end{align*}
	while
	\[
		\sup_{x,y \in \oline{\Omega}} \left\{ u(x,t) - v(y,t) - \phi_\delta(x-y,\zeta_t -\eta_t) \right\}
		\ge u(x,t) - v(x,t).
	\]
	The inequality thus holds if $c_0$ is sufficiently small.
\end{proof}

Just as in Section \ref{sec:nonsmoothwellposed}, Propositions \ref{P:geokeyestimate} and \ref{P:|p|keyestimate} lead to the well-posedness of the level-set equation \eqref{E:geo}.

\begin{theorem}\label{T:geowellposed}
	Assume that $\Omega$ satisfies \eqref{A:C1convexdomain}, $H$ satisfies \eqref{A:DCH} and \eqref{A:Hgeometric}, $F$ satisfies \eqref{A:Fgeometric}, and one of the following: $\Omega$ satisfies \eqref{A:uniformlyconvexdomain}, or $H$ is radial.
	
	\begin{enumerate}[(i)]
	\item\label{geocomparison} If $\zeta \in C([0,T],\RR^m)$ and $u \in BUSC(\oline{\Omega} \times [0,T])$ and $v \in BLSC(\oline{\Omega} \times [0,T])$ are respectively a bounded sub- and super-solution of \eqref{E:geo}, then
	\[
		\sup_{(x,t) \in [0,T]} \Big\{ u(x,t) - v(x,t) \Big\} \le \sup_{x \in \oline{\Omega}} \Big\{ u(x,0) - v(x,0) \Big\}.
	\]
	\item\label{geosolution} If $u_0 \in BUC(\oline{\Omega})$ and $\zeta \in C([0,T],\RR^m)$, then there exists a unique solution $u \in BUC(\oline{\Omega} \times [0,T])$ of \eqref{E:geo} with $u(\cdot,0) = u_0$. Moreover, $\norm{u}_{\oo,\oline{\Omega} \times [0,T]} \le \norm{u_0}_{\oo,\oline{\Omega}}$, and the modulus of continuity of $u(\cdot,t)$ is independent of $t > 0$ and $\zeta$.
	\item\label{geostable} For any $u_0 \in BUC(\oline{\Omega})$, there exists a modulus of continuity $\omega: [0,\oo) \to [0,\oo)$ such that, for any $\zeta^1,\zeta^2 \in C_0([0,T],\RR^m)$, if $u^1,u^2 \in BUC(\oline{\Omega} \times [0,T])$ are the corresponding solutions of \eqref{E:geo}, then
	\[
		\norm{u^1 - u^2}_{\oo,\oline{\Omega} \times [0,T]} \le \omega\left( \norm{\zeta^1 - \zeta^2}_{\oo,[0,T]} \right).
	\]
	\end{enumerate}
\end{theorem}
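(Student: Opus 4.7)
The plan is to mimic the blueprint of Section \ref{sec:nonsmoothwellposed}, with Proposition \ref{P:keyestimate} replaced by its geometric analogue, namely Proposition \ref{P:geokeyestimate} in the uniformly convex case or Proposition \ref{P:|p|keyestimate} in the radial case, and with $\phi_\delta$ now given by \eqref{geophidelta}. For part \eqref{geocomparison}, I will apply the relevant key estimate with $\eta = \zeta$, so that the path hypothesis is trivially satisfied; since \eqref{geophideltabounds} forces $\phi_\delta(0,0) = 0$, setting $y = x$ on the left gives
\[ u(x,t) - v(x,t) \le \sup_{x,y \in \oline{\Omega}} \left\{ u(x,0) - v(y,0) - \phi_\delta(x-y, 0) \right\}. \]
Sending $\delta \to 0$ forces the approximate maximizers to collapse onto the diagonal (since $\phi_\delta$ diverges off it, while $u,v$ are bounded), after which the upper semi-continuity of $u(\cdot,0)$ and lower semi-continuity of $v(\cdot,0)$ deliver the desired inequality.

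For part \eqref{geosolution}, I will approximate $\zeta$ by smooth paths $\zeta^n \in C^1([0,T],\RR^m)$. For each $n$, existence of a unique solution $u^n \in BUC(\oline{\Omega} \times [0,T])$ is classical for second-order geometric equations with perpendicular Neumann conditions on convex domains (cf.\ \cite{IS04, GS93, Bar99}); since $F^*(0,0) = F_*(0,0) = 0$ by \eqref{Fat0} and $H(0) = 0$ by \eqref{A:Hgeometric}, the constants $\pm \norm{u_0}_\oo$ are solutions, giving $\norm{u^n}_\oo \le \norm{u_0}_\oo$ uniformly in $n$ by standard comparison. I then form the half-relaxed limits $u^\star, u_\star$; by an extension of Proposition \ref{P:halfrelaxed} to discontinuous $F$ (the proof is identical), these are respectively a bounded sub- and super-solution of \eqref{E:geo} in the sense of Definition \ref{D:nonsmoothH}. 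To check $u^\star(\cdot,0) = u_0$, I will reproduce the barrier argument from Corollary \ref{C:firstorderwellposed}: given strictly convex $\phi \in C^2(\oline{\Omega})$ with $\phi \ge u_0$, $\phi(x_0) = u_0(x_0)$, and $D\phi \cdot n > 0$ on $\partial \Omega$, Lemma \ref{L:C11solutions} yields local-in-time $C^{1,1}$-in-space solutions $\Phi^n$ of the HJ part starting from $\phi$ with $n$-uniform Hessian bound; then $\Phi^n + Ct$ is a super-solution of \eqref{E:geo} for sufficiently large $C$, and passing to the limit gives $u^\star(x_0, 0) \le \phi(x_0) = u_0(x_0)$. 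Part \eqref{geocomparison} then forces $u^\star = u_\star =: u$. The $(\zeta,t)$-independent modulus of continuity of $u(\cdot,t)$ follows by applying the key estimate with $u = v$ and $\zeta = \eta$, then optimizing in $\delta$ exactly as in the proof of Theorem \ref{T:firstordercontinuity}.

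For part \eqref{geostable}, I apply the key estimate to $u^1, u^2$ driven by $\zeta^1, \zeta^2$ for any $\delta$ such that $\gamma_\delta > \norm{\zeta^1 - \zeta^2}_{\oo, [0,T]}$. Setting $x = y$ and using that $\phi_\delta(0, \rho) = 0$ for $|\rho| < 3\gamma_\delta$ (from \eqref{geophideltabounds}) together with $u^1(\cdot,0) = u^2(\cdot,0) = u_0$,
\[ u^1(x,t) - u^2(x,t) \le \sup_{x,y \in \oline{\Omega}} \left\{ u_0(x) - u_0(y) - \phi_\delta(x-y, 0) \right\}. \]
A Young-type optimization using the lower bound in \eqref{geophideltabounds} together with the modulus of continuity of $u_0$ (or Lipschitz constant after a density argument) shows that the right-hand side tends to $0$ as $\delta \to 0$ at a rate depending only on $u_0$. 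Choosing $\delta$ so that $\gamma_\delta \asymp \norm{\zeta^1 - \zeta^2}_{\oo, [0,T]}$ (i.e., $\delta$ an appropriate power of $\norm{\zeta^1 - \zeta^2}_{\oo, [0,T]}$), reversing the roles of $u^1$ and $u^2$, and iterating on short sub-intervals when $\norm{\zeta^1 - \zeta^2}_{\oo, [0,T]}$ exceeds the admissible range, will yield the modulus $\omega$.

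The main obstacle will be the initial-condition barrier in part \eqref{geosolution}: verifying that $\Phi^n + Ct$ is genuinely a super-solution of the geometric equation requires $F^*$ to be finite and controlled at the relevant points, which is delicate at $D\Phi^n = 0$. Choosing $\phi$ strictly convex ensures $D\phi \ne 0$ away from its unique minimum, so restricting the barrier argument to points $x_0$ where $D\phi(x_0) \ne 0$ (a dense set, hence sufficient) keeps $D\Phi^n$ bounded away from zero over a short time interval, and $F^*$ continuous along the trajectory; the strict boundary inequality $D\Phi^n \cdot n > 0$ is inherited from $\phi$ for short times by continuity.
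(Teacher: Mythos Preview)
Your plan for parts \eqref{geocomparison} and \eqref{geostable} is correct and is exactly what the paper intends (the paper gives no proof for Theorem \ref{T:geowellposed}, simply stating that Propositions \ref{P:geokeyestimate} and \ref{P:|p|keyestimate} yield well-posedness ``just as in Section \ref{sec:nonsmoothwellposed}'').

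There is, however, a gap in your treatment of the initial condition in part \eqref{geosolution}. The barrier $\Phi^n + Ct$ must be a viscosity super-solution on all of $\oline\Omega \times [0,\delta]$, not merely near $x_0$, for the comparison $u^n \le \Phi^n + Ct$ to hold. Since $\phi$ is convex with $\phi \ge u_0$ and $\phi(x_0) = u_0(x_0)$, whenever $u_0$ has a local maximum at $x_0$ the minimum of $\phi$ is forced to sit at $x_0$, so $D\phi(x_0) = 0$ regardless of how you choose $\phi$; your proposed restriction to ``$x_0$ with $D\phi(x_0)\ne 0$'' therefore excludes a nontrivial set of points. More seriously, wherever $D\Phi^n$ vanishes, a smooth $\psi$ touching $\Phi^n + Ct$ from below satisfies only $D^2\psi \le C_0\Id$ with no lower bound, and under the bare hypothesis \eqref{A:Fgeometric} there is no reason for $\sup\{F(X, p) : X \le C_0\Id,\; 0 < |p| \le 1\}$ to be finite, so no finite $C$ makes $\Phi^n + Ct$ a super-solution at such points. (For mean-curvature flow one has $F(X,p) \le (d-1)C_0$ whenever $X \le C_0\Id$, so your argument does go through in that example.)

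The clean fix is to bypass barriers and instead establish the a priori version of \eqref{geostable} first. By Proposition \ref{P:nonsmoothdefconsistent}, the smooth-path solutions $u^n, u^m$ are sub/super-solutions in the sense of Definition \ref{D:nonsmoothH}, and $\norm{u^n}_\oo \le \norm{u_0}_\oo$ uniformly in $n$. The key estimate (Proposition \ref{P:geokeyestimate} or \ref{P:|p|keyestimate}) then gives, exactly as in your argument for \eqref{geostable}, $\norm{u^n - u^m}_\oo \le C_{u_0}\,\delta$ whenever $\norm{\zeta^n - \zeta^m}_\oo < \gamma_\delta$. Hence $(u^n)$ is Cauchy in $BUC(\oline\Omega\times[0,T])$, its uniform limit $u$ satisfies $u(\cdot,0) = u_0$ automatically, and $u$ is a solution by Proposition \ref{P:halfrelaxed} (whose proof already uses $F^*, F_*$ and so covers discontinuous $F$).
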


%When $H(p) = |p|$, in which case, by Lemma \ref{L:geotestfn|p|}, we may take $\gamma_\delta = c_0 \delta$, we also have the following stability estimate:
%
%\begin{theorem}\label{T:geostability}
%	Assume that $\Omega$ satisfies 
%	Under the same assumptions as Proposition \ref{P:|p|keyestimate}, let $u_0^1,u_0^2 \in BUC(\oline{\Omega})$. Then there exists a modulus $\omega: [0,\oo) \to [0,\oo)$, depending only on $u_0^1,u_0^2$ such that, if $\zeta^1,\zeta^2 \in C([0,T],\RR^m)$, and $u^1,u^2 \in BUC(\oline{\Omega} \times [0,T])$ are the corresponding solutions of \eqref{E:geo}, then
%	\[
%		\norm{u^1 - u^2}_{\oo,\oline{\Omega} \times [0,T]} \le \norm{u_0^1 - u_0^2}_{\oo,\oline{\Omega}} + \omega\left( \norm{\zeta^1 -\zeta^2}_{\oo,[0,T]} \right).
%	\]
%	If $u_0^1,u_0^2 \in C^{0,1}(\oline{\Omega})$ with Lipschitz constants bounded by $L > 0$, then $\omega(r) = Cr$ for some $C = C_L > 0$.
%\end{theorem}

We finish with a monotonicity property for solutions of \eqref{E:geo}, when $H(p) = |p|$.

\begin{theorem}\label{T:geomonotonicity}
	Assume $H(p) = |p|$, fix $u_0 \in BUC(\oline{\Omega})$ and $\zeta, \eta \in C([0,T],\RR)$, and assume that $\zeta_\cdot - \zeta_0 \le \eta_\cdot - \eta_0 $. Then, if $u$ and $v$ are the corresponding solutions of \eqref{E:geo}, we have $u \le v$ in $\oline{\Omega} \times [0,T]$.
\end{theorem}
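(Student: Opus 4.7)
The plan is to mimic the proof of Theorem \ref{T:firstordermonotonicity}, taking advantage of the fact that Proposition \ref{P:|p|keyestimate} already permits an \emph{asymmetric} constraint on the two driving paths, with the lower bound $\zeta_t \ge \eta_t - \nu_0/4$ that does not shrink with $\delta$. Since the pathwise viscosity notion depends only on the increments of the driving signal, we may subtract $\zeta_0$ and $\eta_0$ and thereby reduce to the case where $\zeta, \eta \in C_0([0,T], \RR)$ and $\zeta \le \eta$ on $[0, T]$.

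To bypass the size restriction $\|\eta - \zeta\|_\oo \le \nu_0/4$, I would interpolate linearly between the two paths: fix $N \in \NN$ large enough that $\|\eta - \zeta\|_{\oo,[0,T]}/N \le \nu_0/4$, and set $\zeta^n := \zeta + \tfrac{n}{N}(\eta - \zeta)$ for $n = 0, 1, \ldots, N$. Then $\zeta^0 = \zeta$, $\zeta^N = \eta$, each $\zeta^n$ lies in $C_0([0,T], \RR)$, and consecutive paths satisfy both $\zeta^n \le \zeta^{n+1}$ and $\|\zeta^{n+1} - \zeta^n\|_{\oo,[0,T]} \le \nu_0/4$. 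Let $u^n \in BUC(\oline{\Omega} \times [0,T])$ denote the unique solution of \eqref{E:geo} driven by $\zeta^n$ with $u^n(\cdot, 0) = u_0$, which exists by Theorem \ref{T:geowellposed}. The same theorem gives $\|u^n\|_\oo \le \|u_0\|_\oo$ uniformly in $n$, so the smallness condition on $\delta$ in Proposition \ref{P:|p|keyestimate} can be met uniformly.

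For each $n$, I would then apply Proposition \ref{P:|p|keyestimate} to the pair $(u^n, u^{n+1})$ driven by $(\zeta^n, \zeta^{n+1})$: the two path inequalities required are satisfied by construction. This yields, for all $\delta$ in the admissible range with $\gamma_\delta = c_0 \delta$,
\[
	\sup_{(x, y, t)} \Big\{ u^n(x, t) - u^{n+1}(y, t) - \phi_\delta\bigl(x - y, \zeta^n_t - \zeta^{n+1}_t\bigr) \Big\} \le \sup_{x, y \in \oline{\Omega}} \Big\{ u_0(x) - u_0(y) - \phi_\delta(x - y, 0) \Big\}.
\]
Setting $x = y$ and using the explicit formula \eqref{geophidelta|p|}, which gives $\phi_\delta(0, \rho) = 0$ whenever $\rho \le 0$ (since then $-\delta - 3\gamma_\delta + \rho < 0$), the bound becomes
\[
	u^n(x, t) - u^{n+1}(x, t) \le \sup_{x, y \in \oline{\Omega}} \Big\{ u_0(x) - u_0(y) - \phi_\delta(x - y, 0) \Big\}.
\]
The right-hand side tends to $0$ as $\delta \to 0$: uniform continuity of $u_0$ handles small $|x - y|$, while the lower bound in \eqref{geophideltabounds} shows that $\phi_\delta(x - y, 0) \to \infty$ for any $|x - y|$ bounded away from $0$. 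Hence $u^n \le u^{n+1}$ on $\oline{\Omega} \times [0, T]$, and chaining yields $u = u^0 \le u^N = v$.

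There is no serious obstacle, as the heavy lifting has already been done in Proposition \ref{P:|p|keyestimate} and Lemma \ref{L:geotestfn|p|}; the crucial feature enabling the argument is precisely the one-sided character of the path-separation constraint in the radial setting, which stems from the observation (Lemma \ref{L:geotestfn|p|}) that the test function $\Phi_{\delta,\eps}$ in \eqref{testfn|p|} remains well-behaved when the variable $\rho$ is allowed to be arbitrarily negative.
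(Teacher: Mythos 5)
Your proof is correct and matches the paper's argument: the paper likewise reduces to $C_0$ paths, first proves the case $-\nu_0/4 \le \zeta - \eta \le 0$ by applying Proposition \ref{P:|p|keyestimate} on the diagonal $x=y$ (where $\phi_\delta(0,\rho)=0$ for $\rho\le 0$) and sending $\delta\to 0$, and then "iterates" to remove the smallness restriction. Your linear interpolation $\zeta^n = \zeta + (n/N)(\eta - \zeta)$ is precisely a concrete implementation of that iteration step, which the paper leaves implicit.
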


\begin{proof}
	Upon adding a constant to $\zeta$ or $\eta$, we may assume without loss of generality that $\zeta_0 = \eta_0$. It also suffices to prove the result when $\zeta$ and $\eta$ satisfy $- \frac{\nu_0}{4} \le \zeta - \eta \le 0$, where $\nu_0$ is as in Proposition \ref{P:|p|keyestimate}, since the result can be iterated for $\zeta$ and $\eta$ an arbitrarily large distance away.
	
	In that case, Proposition \ref{P:|p|keyestimate} gives, for all sufficiently small $\delta$,
	
%	First, note that by Theorem \ref{T:geostability}, for any $\xi \in C([0,T],\RR)$ satisfying $\norm{\xi}_{\oo,[0,T]} \le \norm{\zeta}_{\oo,[0,T]} \vee \norm{\eta}_{\oo,[0,T]}$, the corresponding solutions $u_\xi$ are uniformly bounded in $BUC(\oline{\Omega} \times [0,T])$. Therefore, there is a fixed $\delta_0 > 0$ such that the conclusions of Proposition \ref{P:|p|keyestimate} hold for all such solutions.
	
%	Choose $n \in \NN$ such that $n \ge 4\nu_0^{-1}\norm{\zeta - \eta}_{\oo,[0,T]}$, and, for $k = 0,1,2,\ldots, n$, define $\zeta^k := \zeta + \frac{k}{n} (\eta - \zeta)$ and let $u^k \in BUC(\oline{\Omega} \times [0,T])$ be the corresponding solution of \eqref{E:geo}. Then, for all $k = 0,1,2,\ldots,n-1$,
%	\[
%		\zeta^k \le \zeta^{k+1} \le \zeta^k + \frac{\nu_0}{4},
%	\]
%	and so, by Proposition \ref{P:|p|keyestimate}, for all $\delta \in (0,\delta_0)$ and $(x,t) \in \oline{\Omega} \times [0,T]$,
	\[
		u(x,t) - v(x,t) \le \sup_{x,y \in \oline{\Omega}} \left( u_0(x) - u_0(y) - \phi_\delta(x-y,0 ) \right).
	\]
	Sending $\delta \to 0$ gives $u \le v$.
\end{proof}

As a limiting case of the above result, we can obtain an inequality relating solutions of \eqref{E:geo} with solutions obtained from solving the equations involving $F$ and $H$ separately. This will be useful in Section \ref{sec:mcf} below.

Let $S_F(t)$, $S_{\pm H}(t)$, $t \ge 0$, denote the viscosity-solution semigroups associated to the Neumann problems for the nonlinearities $F$ and $\pm H$ on $\oline{\Omega}$.

\begin{corollary} \label{cor:geomonotonicity}
Assume $\Omega$ satisfies \eqref{A:C1convexdomain}, $F$ satisfies \eqref{A:Fgeometric}, and $H(p) = |p|$. Fix $u_0 \in BUC(\oline{\Omega})$, $\xi \in C([0,T],\RR)$, and let $u \in BUC(\oline{\Omega} \times [0,T])$ be the solution of \eqref{E:geo}. Then, for all $t \geq 0$,
\begin{align*}
	&S_{H}\left(\xi(t)- \min_{[0,t]} \xi\right)\circ S_F(t) \circ S_{-H}\left(-\min_{[0,t]} \xi\right) u_0 \\
	&\leq u(t,\cdot) \leq  S_{-H}\left(\max_{[0,t]} \xi - \xi(t)\right) \circ S_F(t)  \circ S_{H}\left(\max_{[0,t]} \xi\right) u_0.
\end{align*}
\end{corollary}

\begin{proof}
The first inequality follows from applying (formally) Theorem \ref{T:geomonotonicity} with $\zeta$ given by
\begin{equation*}
\zeta(s) = \begin{cases}
\xi(0), & s=0 \\ \min_{[0,t]} \xi, &s \in (0,t), \\ \xi(t), & s=t.
\end{cases}
\end{equation*} 
Since $\zeta$ is not continuous, one cannot apply this result directly, but taking smooth approximations $(\zeta^n)_{n \in \NN}$ such that $\zeta \le \zeta^n \le \xi$ and, as $n \to \oo$, $\zeta^n$ decreases pointwise to $\zeta$. If, for $n \in \NN$, $v^n$ is the solution of \eqref{E:geo} corresponding to $\zeta^n$, then $v^n \le u$ by Theorem \ref{T:geomonotonicity}. The convergence as $n \to \infty$ of $v^n(t, \cdot)$ to the left-hand-side of the above inequality can then be obtained by the stability of viscosity solutions and a simple time-reparametrization argument (as in \cite[Proposition 4.9]{GG19}).
\end{proof}

\section{Long time behaviour of perturbed mean curvature flow} \label{sec:mcf}

Throughout this section we fix $\Omega = D \times \RR$, where $D$ is a bounded, $C^1$, convex domain in $\RR^{d-1}$. We will write elements of $\Omega$ as $x = (x',x_d)$, $x' \in D$ and $x_d \in \RR$. 

We consider the level set equation for perturbed mean curvature flow in $\Omega$ with right angle boundary equation, namely, the initial value problem
\begin{equation}\label{E:mcf}
\begin{dcases}
	d u =   \tr\left[ D^2 u \left( \Id - \frac{Du \otimes Du}{\left|Du\right|^2}\right) \right] dt + | Du |\cdot d\xi(t) \mbox{ on } \Omega \times [0,+\oo),\\
	u(x,0) = u_0(x) \mbox{ on } \Omega,\\
	\partial_n u = 0 \mbox{ on } \times \partial \Omega \times [0,+\oo),
\end{dcases}
\end{equation}
where $u_0$ is a continuous function on $\Omega$, such that 
\begin{equation*}
\alpha \leq u_0 \leq \beta, \;\;\;\;u_0(x',x_d) = \alpha \mbox{ for } x_d \leq a, \;\;\;u_0(x',x_d) = \beta \mbox{ for } x_d \geq b
\end{equation*}
where $\alpha< \beta$ and $a<b$ are fixed. 

By Theorem \ref{T:geowellposed}, there exists a unique solution of \eqref{E:mcf} which satisfies, for $t \ge 0$,
\begin{equation} \label{eq:alphabeta}
u(\cdot,t) = \alpha \mbox{ on } D \times (-\infty, a - \xi(t)], \;\;\;u(\cdot,t) = \beta \mbox{ on } D \times [b-\xi(t),+\infty).
\end{equation}

%Note that due to \eqref{eq:alphabeta} the domain where the equation has to be solved is effectively bounded (even though $\Omega$ is not), so that all the results of previous sections go through.

Our main result in this section is then the following.

\begin{theorem}
Let $\xi=B(\omega) \in C([0,\infty),\RR)$ be a Brownian motion sample path. Then, almost surely, there exists a non-decreasing continuous function $\hat{v}: \RR \to \RR$, such that, letting
\[v(x,t) = \hat{v}(x_d+ B(t)), \]
it holds that
\begin{equation} \label{eq:ConvMCFfunction}
\lim_{t \to \infty} \left\| u(\cdot,t) - v(\cdot,t) \right\|_{\infty} = 0,
\end{equation}

In addition, for each $c \in (\alpha,\beta)$, it holds that
\begin{equation} \label{eq:ConvMCFlevelset}
\lim_{t \to \infty} \dist_{H}\left(\Gamma^c_{u(\cdot,t)}, \Gamma^c_{v(\cdot,t)}\right)  = 0,
\end{equation}
where $\Gamma^c_w := \left\{ x \in \Omega, \;\; w(x) = c\right\}$ are the level sets and $\dist_{H}$ is the Hausdorff distance.
\end{theorem}

In the deterministic case ($B \equiv 0$), a similar result was obtained by \cite{GOS99}, and in fact our proof heavily relies on their result. Due to the noise term, our conclusions are slightly stronger:  we obtain that the limit is monotone in $x_d$, so that in particular almost every level set will converge to a hyperplane perpendicular to the boundary of $D$. In addition, we obtain convergence of all level sets in Hausdorff distance, whereas in \cite{GOS99}, this was only proven for level sets such that the limit is a (finite union of) hyperplane(s). 

The idea of proof is as follows:

\begin{itemize}
\item First, on arbitrary long intervals, the noise will be small. Hence we can directly appeal to the deterministic result (and the continuity of the solution map) to obtain that $u$ is close to a stationary solution after these intervals (and remains so at later times), and this stationary solution is of the form $v=v(x_d)$.
\item Second, due to the $dB$ term, any stationary solution will be non-decreasing for large times. This is due to the fact that large negative excursions of $B$ will ``fill up'' any hole in the sub-level sets.

\item Finally, we need to prove the convergence of level sets in Hausdorff distance. One direction is clear from convergence of $u$, and for the other direction we need to check that the ``fat'' level sets cannot have holes (of a non-negligible size) for large times. { Such holes are unstable in the sense that if the driving noise has a large increment over a small interval, the hole becomes large and invades the level set (see Lemma \ref{lem:hole} below for a precise statement). Using the strong Markov property and the full support of Brownian paths, by a Borel-Cantelli argument such events will happen almost surely for arbitrary large times, so that holes cannot survive in the large time limit.} In this part of the argument we need to use the monotonicity result (Corollary \ref{cor:geomonotonicity}) which allows us to compare the level sets of solutions to \eqref{E:mcf} with level sets following successively the deterministic and stochastic parts of the equation.   \end{itemize}

{ It is natural to ask to which extent the Brownian structure is actually needed, and if the results would still hold, for example, with $B$ a sample path of fractional Brownian motion (fBm). The convergence of the function $u$ \eqref{eq:ConvMCFfunction}, only requires the first two steps above (more precisely, this requires the finiteness of the sequences $t_n$, $t'_n$ defined in \eqref{eq:deftn}-\eqref{eq:deft2n}), and will hold for example for (almost) any fBm path with arbitrary regularity. The proof of the convergence in Hausdorff distance \eqref{eq:ConvMCFlevelset}, however, is more probabilistic and requires the strong Markov property, so that it is not clear if it could be extended to fBm paths.
}

We now pass to the details of the proof.

\begin{lemma} Let 
\[ \mathcal{K}(u_0) = \left\{ u(\cdot,\cdot - \xi(t),t), \; u \mbox{ is the solution of \eqref{E:mcf} for some } \xi \in C([0,t]) \right\}.\]
Then $\mathcal{K}(u_0)$ is compact in $C(\Omega)$.
\end{lemma}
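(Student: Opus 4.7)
The approach is to prove pre-compactness of $\mathcal{K}(u_0)$ via Arzelà--Ascoli on $C(\Omega)$: I will verify uniform boundedness, uniform equicontinuity, and uniform tail behavior outside a fixed bounded slab. The role of the shift by $-\xi(t) e_n$ is precisely to bring the transition region of the solution back into a common bounded set independent of $(\xi, t)$, which is what makes compactness possible in the unbounded $x_n$ direction.

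\textbf{Uniform bounds and tails.} The constants $\alpha$ and $\beta$ are stationary viscosity solutions of \eqref{E:mcf}: their gradient and Hessian vanish, $F^\ast(0,0) = F_\ast(0,0) = 0$ by \eqref{Fat0}, and the Neumann boundary condition $Du \cdot n = 0$ is trivially met. Since $\alpha \le u_0 \le \beta$, the comparison principle in Theorem \ref{T:geowellposed}(i) gives $\alpha \le u(\cdot,t) \le \beta$ for every driver $\xi$ and every $t \ge 0$. Combining this with \eqref{eq:alphabeta} and translating in $x_n$ by $-\xi(t)$, any $w \in \mathcal{K}(u_0)$ is bounded, equals $\alpha$ on $\overline{D} \times (-\infty, a]$, and equals $\beta$ on $\overline{D} \times [b, +\infty)$. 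In particular, all members of $\mathcal{K}(u_0)$ agree off the bounded slab $\overline{D} \times [a, b]$.

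\textbf{Equicontinuity and pre-compactness.} Theorem \ref{T:geowellposed}(ii) provides a spatial modulus $\omega$ for $u(\cdot, t)$ depending only on $u_0$, uniform in both $t \ge 0$ and $\xi$; a spatial translation preserves $\omega$, so every element of $\mathcal{K}(u_0)$ is $\omega$-continuous. Arzelà--Ascoli applied on the compact set $\overline{D} \times [a, b]$ now yields pre-compactness of $\{ w|_{\overline{D}\times[a,b]} : w \in \mathcal{K}(u_0) \}$ in the sup norm, and the uniform tail behavior upgrades this to pre-compactness of $\mathcal{K}(u_0)$ in $C(\Omega)$.

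\textbf{Closedness and main obstacle.} For actual compactness, given $w^n = u^{\xi^n}(\cdot,\cdot - \xi^n(t_n), t_n) \in \mathcal{K}(u_0)$ with $w^n \to w$ uniformly, one must show $w \in \mathcal{K}(u_0)$. Since \eqref{E:mcf} is invariant under constant shifts in the driver, I may assume $\xi^n(t_n) = 0$, whence $w^n(x) = u^{\xi^n}(x, t_n)$. The main obstacle is that no a priori bound or equicontinuity is available for $\xi^n$ on $[0, t_n]$, and $t_n$ may tend to $\infty$, so one cannot directly apply Theorem \ref{T:geowellposed}(iii) to extract a limit driver. I would bypass this by exploiting the semigroup invariance of $\mathcal{K}(u_0)$---namely, concatenating $\xi^n$ with any new driver $\eta \in C([0,s])$ and re-shifting by the final value of the concatenated driver yields another element of $\mathcal{K}(u_0)$---combined with the pre-compactness above to identify the limit $w$ as an actual element of $\mathcal{K}(u_0)$.
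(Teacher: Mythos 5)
Your pre-compactness argument is precisely the paper's: elements of $\mathcal{K}(u_0)$ take values in $[\alpha,\beta]$ (comparison with the constant solutions), are constant on $\oline D\times(-\infty,a]$ and $\oline D\times[b,+\infty)$ once the shift by $-\xi(t)$ is performed (this is \eqref{eq:alphabeta} read at the shifted point), and share a modulus of continuity uniform in $t$ and $\xi$ by Theorem \ref{T:geowellposed}(ii), a modulus which spatial translation clearly preserves; Arzel\`a--Ascoli on the slab $\oline D\times[a,b]$ then gives relative compactness. That is the whole of the paper's proof, which stops there.

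What you add is the worry about closedness, and this is a legitimate point the paper silently passes over. But I would not try to prove closedness --- it is almost certainly false as stated. Already for $\xi\equiv 0$, the stationary profile to which $u(\cdot,t)$ converges as $t\to\infty$ by \cite{GOS99} is an accumulation point of $\{u(\cdot,t):t\ge 0\}\subset\mathcal{K}(u_0)$ that is in general not attained at any finite time, so it lies in $\oline{\mathcal{K}(u_0)}\setminus\mathcal{K}(u_0)$. Your ``semigroup invariance'' sketch is too vague to assess, but I do not see how it would avoid this obstruction. The cheap fix is simply to work with the closure $\oline{\mathcal{K}(u_0)}$: it is genuinely compact, its elements are still $[\alpha,\beta]$-valued and constant off $\oline D\times[a,b]$, hence admissible initial data for the deterministic flow, and this is all that the one downstream application (the uniform choice of $T$ and $\eta$ in Lemma \ref{lem:lt1}) actually requires.
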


\begin{proof}
Note that the elements of $\mathcal{K}(u_0)$ are constant outside of $D \times [a,b]$ and take values in $[\alpha, \beta]$. Compactness is then a consequence of Theorem \ref{T:geowellposed}, since
%Recall from the proof of comparison that any solution $v$ satisfies 
%\begin{equation*}
%v(t,x)-v(t,y) \leq \phi^\delta(x,y) + \max_{x',y'} \left( u_0(x')-u_0(y') - \phi^{\delta}(x',y')\right),
%\end{equation*}
%where $\phi^{\delta}(x,y)= \ldots$ (CHECK). Then the statement is the lemma is clear since 
elements of $\mathcal{K}(u_0)$ share the same modulus of continuity.
%\[ \omega(r) := \inf_{\delta >0} \left(  \phi^\delta(r) + \max_{x',y'} \left( u_0(x')-u_0(y') - \phi^{\delta}(x',y')\right) \right).\]
\end{proof}

\begin{lemma} \label{lem:lt1}
For each $\varepsilon>0$, there exists $T>0$ sufficiently large and $\eta>0$ sufficiently small such that for each $u \in \mathcal{K}(u_0)$, there exists $v(x)=\hat{v}(x_d)$ with $ \| S^{T,\xi}(u) - v\| \leq \varepsilon$ for each $\|\xi\|\leq \eta$.
\end{lemma}

\begin{proof}
For a fixed $u$, and $\xi=0$, this is the main result in \cite{GOS99}. The existence of $T$ and $\eta$ which are uniform over elements of $\mathcal{K}(u_0)$ follows from the compactness of that set and continuity of $S_F$.
\end{proof}

We also have the following result.

\begin{lemma} \label{lem:lt2}
Let $v_0(x) = \hat{v}_0(x_d)$, where $\hat{v}_0$ is continuous with values in $[\alpha,\beta]$, $\hat{v}_0 = \alpha$ on $(-\infty, a]$ and $= \beta$ on $[b,+\infty)$.

Let $v$ be the solution to \eqref{E:mcf} starting from $v_0$. Then $v(x,t) = \hat{v}(x_d,t)$ where $\hat{v}=\hat{v}(r,t)$ solves
$$d\hat{v} = \left| \partial_{r} \hat{v} \right| \cdot d{\xi}(t) \mbox{ on }\RR \times (0,\infty), \;\; \hat{v}(\cdot,0)=\hat{v}_0.$$
In addition, let $\hat{t} = \inf\left\{ t , \;\; \max_{[0,t]} \xi - \min_{[0,t]} \xi  = \frac{b-a}{2}\right\}$. Then
\begin{equation*}
\hat{v}(\cdot,\hat{t}) \mbox{ is non-decreasing}
\end{equation*} 
and 
\begin{equation} \label{eq:afterthat}
\forall t \geq \hat{t}, \;\;\;\;\hat{v}(r,t) = \hat{v}( r + \xi(t)-\xi(\hat{t}),\hat t).
\end{equation}

\end{lemma}

\begin{proof}
First, one checks that $\hat{v}(x_d,t)$ satisfies \eqref{E:mcf} (this is classical if $\xi$ is smooth, and the general case follows by stability), and it follows that it is equal to $v$.

Let us now prove monotonicity of $\hat{v}(\cdot,\hat{t})$. Assume first that $\xi$ attains its maximum at time $\hat{t}$, and let $t_{\ast}$ in $[0,\hat{t}]$ such that $\xi(t_{\ast}) = \min_{[0,\hat{t}]} \xi$. It follows from Corollary 2.8 in \cite{GGLS} and the representation of the deterministic semigroup for $H(p)=|p|$ that 
\[ \hat{v}(r,\hat{t})  = \max_{ |s-r| \leq \frac{b-a}{2} } \hat{v}(s,t_{\ast}) .\]
Using the fact that $\hat{v}(\cdot,t_{\ast})$ is equal to $\alpha$ or $\beta$  outside  of an interval of length $(b-a)$, one checks that the right-hand side above is nondecreasing in $r$. The case where $\xi$ attains its minimum at time $\hat{t}$ is similar, replacing the maximum by a minimum in the formula above.

Finally, \eqref{eq:afterthat} follows by noting that the right-hand side is a nondecreasing solution to $d\hat{v} = (\partial_r \hat{v}) \cdot d{\xi}(t) = |\partial_r \hat{v}| \cdot d{\xi}(t) $. \end{proof}

We can now proceed with the proof of \eqref{eq:ConvMCFfunction}. Let $T_n$, $\eta_n$ be obtained from applying Lemma \ref{lem:lt1} to a fixed sequence $\varepsilon_n \to 0$. We then let 
\begin{equation} t_n = \inf \left\{ t \geq T_n : \osc_{[t-T_n,t]} B \leq \eta_n \right\}, \label{eq:deftn} \end{equation}
%\todo[inline]{should it be 
%\[
%	t_n = \inf \left\{ t \geq T_n : \osc_{[t-T_n,t]} B \leq \eta_n \right\}
%\]
%?
%}
and
\begin{equation} t'_n = \inf \left\{ t \geq t_n : \osc_{[t_n,t]} B \geq (b-a)/2 \right\}. \label{eq:deft2n} \end{equation}
and note that $t_n, t'_n$ are finite a.s. by an easy consequence of the Borel-Cantelli lemma. Now Lemma \ref{lem:lt1} guarantees that at time $t_n$, $u$ is $\varepsilon_n$-close of a function of $x_d$, and by Lemma \ref{lem:lt2} at time $t'_n$ it is $\varepsilon_n$-close to a function $\hat{v}_n$ which is a nondecreasing function of $x_d$.

%***********BELOW NOT YET WORKING***********
%
%IT COULD WORK WITH AN ESTIMATE OF THE FORM :
Now we turn to the proof of \eqref{eq:ConvMCFlevelset}. 
%ASSUME THAT WE HAVE AN ESTIMATE OF THE FORM :
%\begin{theorem}
%Let $du=F dt + H d\xi$, $dv = F dt + H d\zeta$, then 
%\[u_0 \leq v_0, \; \xi_{0,\cdot} \leq \zeta_{0,\cdot} \Rightarrow u \leq v \]
%\end{theorem}

We let $S_{\pm H}$, $S_F$ be the viscosity semigroups (with homogeneous Neumann boundary conditions) associated to $\pm H(p)= \pm |p|$ and $F(p,A) =\tr\left[ A \left( \Id - \frac{p \otimes p}{\left|p\right|^2}\right) \right]$. In view of Corollary \ref{cor:geomonotonicity}, we have the following: if $u$ is the solution to \eqref{E:mcf}, then

\begin{equation} \label{eq:mono}
u(\cdot,t) \geq S_{H}\left(\xi(t)- \min_{[0,t]} \xi\right) S_F(t) S_{-H}\left(-\min_{[0,t]} \xi\right) u _0
\end{equation}

Further note that $S_{\pm H}$ admit the following simple expression
\begin{equation} \label{eq:SH}S_{H}(t) 
u(x) = \sup_{|y-x| \leq t, y \in \Omega} u(y), \;\;\;\;\;S_{-H}(t) u(x) = \inf_{|y-x| \leq t, y \in \Omega} u(y).
\end{equation}
This follows from the representation of $S_{\pm H}$ in terms of control problems with reflecting trajectories (e.g. \cite{Lions85}), and the convexity of $\Omega$ ensures that in fact reflection is never optimal.

%\begin{lemma} \label{lem:H}
%For each $\delta>0$, there exists $\eta=\eta(D',\beta-\alpha)$ s.t. 
%$$ S^H(\eta) u (x',x_n) \geq \sup_{y' \in D', y_n \geq x_n - \eta - \delta } u(y',y_n) $$
%\end{lemma}
%
%\begin{proof}
%Clear from boundedness of the domain and the representation of $H$. (draw a picture..)
%\end{proof}
%
%Now : for each $\varepsilon >0$, there exists $h>0$ and $U$ open in $C([0,h])$ s.t. 
%\begin{equation*}
% \sup_{\xi \in U} \sup_{u \in \mathcal{K}} \left| S^{H}(\eta) u - S^{\xi}(u) \right| \leq \varepsilon.
%\end{equation*}
\begin{lemma} \label{lem:hole}
For all $r >0$, there exists $h , \varepsilon >0$, such that if for some $y \in \Omega$, $c \in (\alpha,\beta)$,
\[ u(\cdot,t) > c \mbox{ on } B(y,r) \cap \Omega , \;\;\;  \min_{[t,t+h]} B_{t,\cdot} \geq - \varepsilon , \;\;\;\;B_{t,t+h}\geq\varepsilon^{-1},\]
then
\[ u(\cdot,t+h) > c \mbox{ on } \left\{ (x',x_d) \in \Omega , \;\;\; x_d - B_{t,t+h} \geq y_d - \frac{r}{2} \right\} .\]
\end{lemma}

\begin{proof}
The estimate \eqref{eq:mono} together with \eqref{eq:SH} implies that
$$ u(\cdot,t) > c \mbox{ on } B(y,r) \Rightarrow  u(\cdot,t+h) > c \mbox{ on } B(y,R)$$
with
$$ R = B_{t,t+h}-\min_{[t,t+h]} B_{t,\cdot} + \phi(r + \min_{[t,t+h]} B_{t,\cdot}, h) $$
where $\phi$ is the solution to $\partial_1 \phi = - \frac{d}{\phi}$ with $\phi(0,r)=r$ (which is how the radius of a sphere evolves with mean curvature flow), under the condition that $\phi(r +\min_{[t,t+h]} B_{t,\cdot}, h) \geq 0$.
Then it is clear that for $h$ and $\min_{[t,t+h]} B_{t,\cdot}$ small enough, $R \geq \frac{r}{2} + B_{t,t+h} $, and then that, since $D$ is bounded, for  $R$ large enough, 
\begin{equation*}
 \left\{x_d \geq y_d + R - \frac{r}{2} \right\} \subset B(y,R)  \;\; \cup  \left( D \times [\beta - B(t+h),+\infty) \right), 
\end{equation*}
and the claim follows.
\end{proof}

We now resume the proof of \eqref{eq:ConvMCFlevelset}. Recall that 
\begin{equation} \label{eq:distH}
\dist_{H}\left(\Gamma^c_{u(\cdot,t)}, \Gamma^c_{v(\cdot,t)}\right) = \sup_{x \in \Gamma^c_{u(\cdot,t)} } \dist\left(x, \Gamma^c_{v(\cdot,t)}\right) + \sup_{ x \in \Gamma^c_{v(\cdot,t)}} \dist\left(x, \Gamma^c_{u(\cdot,t)}\right)
\end{equation} 
where $\dist(x,A) = \inf_{y \in A} |x-y|$. The convergence of the first term to $0$ for arbitrary $c$ is a simple consequence of the uniform convergence of $u$ to $v$ and the fact that $v$ is a continuous, monotonous function of $x_d$. In order to show that the second term also converges to $0$, we set
$$m^+(t,c) = \sup \left\{ x_d \in \RR : \exists x' \in D, \; u(x',x_d,t)=c \right\},$$
and $$m^-(t,c) = \inf\left\{ x_d \in \RR : \exists x' \in D, \; u(x',x_d,t)=c \right\}.$$

Note that for each $c$, $m^+(t,c) - m^{-}(t,c)$ is nonincreasing in $t$ and converges as $t \to \infty$. In addition, one has 
\[\Gamma^{c}_{v(\cdot,t)} \subset D \times [m^{-}(t,c) , m^{+}(t,c)]. \]
Indeed, assume for instance that $c = v(x,t) = \hat{v}(x_d + B(t))$ but $m^+(t,x) < x_d$. This means that $\inf_{ y \in D} u(y,x_d,t) = c + \delta$ for some $\delta >0$, and by comparison with stationary solutions this yields
\[\forall s \geq t, \quad u(x - B_{t,s},s) \geq c + \delta , \;\;\;\; v(x - B_{t,s},s) = c, \]
which contradicts uniform convergence of $u$ to $v$.

In order to prove that the second term of \eqref{eq:distH} converges to $0$, it will therefore be sufficient to prove that
\begin{equation} \label{eq:78}
	\begin{split}
	&\forall r>0, \exists T(r) ,\; \forall t \geq T(r), \forall y=(y',y_d) \\
	&\mbox{ with } m^{-}(t,c) \leq y_d \leq m^{+}(t,c), \;\; \; B(y,r) \cap \Gamma^c_{u(\cdot,t)} \neq \emptyset.
	\end{split}
\end{equation}
%\todo[inline]{Should it be $\Gamma^c_{u(t,\cdot)}$ at the end?}

In order to prove \eqref{eq:78} we proceed by contradiction, and assume that there exists a sequence of stopping times $\tau_k \to \infty$ and $c_k \in [\alpha,\beta]$ s.t. 
$$ \exists y^{(k)}=(y'^{(k)},y_d^{(k)}) \mbox{ with } m^{-}(\tau_k,c_k) \leq y_d^{(k)} \leq m^{+}(\tau_k,c_k), \;\; \; B(y^{(k)},r) \cap \Gamma^{c_k}_{u(\cdot,\tau_k)} = \emptyset.$$
We also assume (w.l.o.g. by symmetry) that $u(\cdot,\tau_k) > c_k$ on $B(y^{(k)},r)$. 
We then fix $h,\varepsilon>0$ given by Lemma \ref{lem:hole}, and let
$$A_k = \left\{\min_{[\tau_k,\tau_k+h]} B_{\tau_k,\cdot} \geq - \varepsilon , \;\;\;\; B_{\tau_k,\tau_k+h}\geq \varepsilon^{-1} \right\}.$$
By the strong Markov property and the Borel-Cantelli lemma applied to the events $A_k$, almost surely, there exists a subsequence $k'$ s.t. 
\[ u(\cdot,\tau_{k'}+h) > c_{k'} \mbox{ on } \left\{ (x',x_d), x_d - B_{\tau_{k'},\tau_{k'}+h} \geq y_d - \frac{r}{2} \right\} \]
which further implies that
\begin{equation} \label{eq:mtau}
(m^+ - m^-)(\tau_{k'}+h, c_{k'}) \leq (m^+ - m^-)(\tau_{k'}, c_{k'}) - \frac{r}{2}.
\end{equation}

Further note that if $r> m^+(t,c) - m^{-}(t,c)$, then $ \forall y=(y',y_d) \mbox{ with } m^{-}(t,c) \leq y_d \leq m^{+}(t,c), \;\; \; B(y,r) \cap \Gamma^c_{u(t,\cdot)} \neq \emptyset$ 
(this follows from a simple continuity argument since in that case $u(y',y_d+r,t) > c > u(y',y_d-r,t)$). In addition, the fact that $u$ converges uniformly to a monotonous function of $x_d$ implies that 
\[ \left\{ c :  \lim_{t \to \infty} m^+(t,c) - m^{-}(t,c) >r \right\} \mbox{ is finite,} \]
so that taking a further subsequence if necessary we also have $c_{k'} \equiv c$. But then \eqref{eq:mtau} is in contradiction with the fact that $(m^+ - m^-)(t,c)$ converges as $t\to \infty$. This concludes the proof of \eqref{eq:78}.

\section{Possible extensions and open questions} \label{sec:cl}

{
We discuss in this section possible relaxations of the assumptions we made throughout the paper, and outline the 
%possible 
difficulties that would arise.}

\subsection{On equations with $x$-dependent Hamiltonians}
We focus in this paper on spatially homogeneous Hamiltonians $H$. The case of $x$-dependent $H$ is much more demanding technically, even without boundary conditions, and requires some care on the assumptions, see for instance \cite{FGLS,Shomog,Snotes,GGLS}. Unlike in the $x$-independent case, where we can deal with any continuous signal $\zeta$, the regularity of the signal plays a role in the $x$-dependent case (this is not surprising, since this is already the case for ODEs, as made very explicit in Lyons' rough path theory \cite{Ly98}). We therefore expect that extending the results of this paper to $x$-dependent Hamiltonians would be a highly nontrivial task.

In relation to this question, an important remark concerns the case of linear transport equations, for which specific difficulties arise in the Neumann case. Indeed, recall that the characteristics corresponding to these boundary conditions are solutions to reflected ODEs. It was shown recently by the first author \cite{Gas21} that such equations are ill-posed (i.e. may have multiple solutions) in the case of rough driving signals (less regular than Brownian motion). At the PDE level, the example from that paper shows that solutions $u=u(x,y,t)$ to
\begin{align*}
du = - u_x dt + (x u_y + y u_x) d\zeta \mbox{ on }  \RR_+ \times \RR \times (0,T), \\
u(\cdot,\cdot,0)= u_0, \;\;\;\; u_x = 0 \mbox{ on }  \{0\} \times \RR \times (0,T)
\end{align*}
can develop discontinuities immediately (if $\zeta$ is rough enough), even if $u_0$ is smooth. While the case of transport equations is rather special (the Hamiltonian $H$ is far from coercive), this example may nevertheless hint at specific difficulties that could arise when trying to extend the results in this paper to $x$-dependent equations, {which could not be limited to the technicalities already present in the full space or periodic case. (Note that in the deterministic viscosity theory, such discontinuities only happen in the case of Dirichlet boundary conditions, for which comparison theorems are known to be much more delicate to establish than for the Neumann case).}

On the positive side, we nevertheless remark that it is possible to obtain the continuity of the solution map $\zeta \mapsto u$ in some particular nonlinear $x$-dependent examples. In the case of one-dimensional $\zeta$, and convex $H$, it was shown in \cite{GGLS} (in the full space case) using the control problem associated to the PDE, that the map $\zeta \mapsto u$ is uniformly continuous (see also \cite[Appendix A]{Shomog2}). This gives in particular a simple way of obtaining the continuity of the map $\zeta\mapsto u$ (under more general assumptions than what is usually needed for the PDE theory). One could obtain similar results in the case of Neumann boundary conditions (the control problem now corresponding to so-called reflected ODEs), for some rather special choices of Hamiltonians (namely, those for which the optimal trajectories are in fact never reflected). We refrain from giving more details here, but it would apply for instance to Hamiltonians of the form $H(x,Du) = a(x) |Du |$, which arise in front propagation.

{
\subsection{On more general boundary conditions}

In the deterministic case, it is known that comparison holds for much more general Neumann boundary conditions than the homogeneous case considered here. These can take the form
\begin{equation} \label{eq:genBC} G(x, Du) = 0 \;\;\; on \;\;\; \partial\Omega, \end{equation}
where $G$ only needs to satisfy a condition of the type $D_p G \cdot n \geq c > 0$ on $\partial \Omega$. This includes for instance the case of so-called capillary boundary conditions, where
\begin{equation} \label{eq:capBC}
 G(x, p) = p \cdot n(x) - \theta \left| p \right|,
 \end{equation}
with $\theta \in (0,1)$, which are relevant when considering the motion of hypersurfaces which intersect the boundary of the domain at a prescribed angle $\arccos(\theta)$. (We refer to e.g. \cite{Bar93,Bar99, IS04} for precise statements).

In this paper, we only consider the case of homogeneous linear Neumann conditions. The test functions that we construct are indeed restricted to this case, and their definition would need significant modifications to treat \eqref{eq:genBC} (or even linear but not homogeneous Neumann conditions). Again, we note that this is a delicate issue due to the $x$-dependence implicit in the fact that we work in a domain.

An exception would be the case of the half-space (already considered in subsection \ref{subsec:halfspace} above). In that case, since $n$ is constant on the boundary, it should be rather straightforward to combine our construction with classical arguments (such as \cite{Bar93}) to obtain well-posedness under boundary conditions of the form \eqref{eq:capBC} (note that $x$-dependence then disappears).

\subsection{More general domains}

In this text, we only consider convex domains, and in the case where the Hamiltonian is non-smooth (and not radial), we actually restrict ourselves to either the half-space or strictly convex domains. It seems likely that the main results should still hold for arbitrary $C^1$ convex domains, possibly with a suitable modification of the test-functions, but we have not been able to find a proof of this fact.

Another important question is whether convexity of the domain is actually necessary. As explained in the introduction, our proof completely breaks down if this assumption is dropped. It is not clear whether this is purely a technical point, or if in fact there are non-convex domains in which comparison does not hold for the kind of problems we consider here.

}
%%%%%%%%%%%%%%%%%%%%%%%%%%%%%%%%%%%%%%%%%%%%%%
%% Single Appendix:                         %%
%%%%%%%%%%%%%%%%%%%%%%%%%%%%%%%%%%%%%%%%%%%%%%
\begin{appendix}
\section*{Properties of pathwise viscosity solutions}%% if no title is needed, leave empty \section*{}.

In this appendix, we isolate and prove some of the general properties and results concerning pathwise viscosity solutions used throughout the paper. The proofs of the statements here contain idiosyncrasies particular to the setting of the Neumann problem; however, ignoring these particular points, they are also valid in the full-space setting, and so this appendix acts additionally as a source for detailed explanations of some of the unique aspects of the pathwise viscosity solution theory, whose statements are found in many other works on the subject.

It turns out that, if $H$ is smooth and satisfies \eqref{A:HC2} (and, therefore, also \eqref{A:DCH}), then Definitions \ref{D:smoothH} and \ref{D:nonsmoothH} are equivalent. We omit the proof of this fact, since it is similar to the proof of Proposition \ref{P:nonsmoothdefconsistent} below. Therefore, throughout this secton, we work only with Definition \ref{D:nonsmoothH}.

We often use the following lemma, which is used throughout the literature on viscosity solutiosn of second-order equations in order to linearize certain quadratic expressions.

\begin{lemma}\label{L:Youngtrick}
	Let $n \in \NN$, $\mcl A \in \mbb S^n$, $X,\Xi \in \RR^n$, and $\delta > 0$. Then
	\[
		\frac{1}{2} \mcl A X \cdot X  \le \frac{1}{2} (\mcl A + \delta \mcl A^2)\Xi \cdot \Xi + \frac{1}{2} \left( \norm{\mcl A} + \frac{1}{\delta} \right)|X - \Xi|^2.
	\]
\end{lemma}

{
\begin{proof}
	A Taylor expansion around $\Xi$ yields
	\[
		\frac{1}{2} \mcl A X \cdot X \le \frac{1}{2} \mcl A \Xi \cdot \Xi + \mcl A \Xi \cdot (X - \Xi) + \frac{1}{2} \norm{\mcl A} |X - \Xi|^2.
	\]
	By Young's inequality,
	\[
		\mcl A \Xi \cdot (X - \Xi) \le \frac{\delta}{2} \norm{\mcl A \Xi}^2 + \frac{1}{2\delta} |X - \Xi|^2,
	\]
	and the result follows from the fact that $\norm{\mcl A \Xi}^2 = \mcl A^2\Xi \cdot \Xi$.
\end{proof}
}

{
The next proposition that the definition of pathwise viscosity solutions (with Neumann boundary conditions) is consistent with the classical one when the driving path is smooth. 
%Its proof is deferred to the Appendix.
}
\begin{proposition}\label{P:nonsmoothdefconsistent}
	Assume $H$ satisfies \eqref{A:DCH}, $\zeta \in C^1([0,T],\RR^m)$, and $u \in (USC,LSC)$ $UC(\RR^d \times [0,T])$. Then $u$ is a (sub-, super-) solution of \eqref{E:neumann} in the sense of Definition \ref{D:nonsmoothH} if and only if $u$ is a classical viscosity (sub-, super-) solution of \eqref{E:neumann}.
\end{proposition}

\begin{proof}
	We prove the equivalence for sub-solutions only; the other statements have similar proofs.
	
	Assume first that $u$ is a classical viscosity sub-solution, let $\Phi \in UC(\RR^d \times [0,T])$ be a pathwise viscosity solution of \eqref{E:pathwiseHJnonsmooth} (because $\zeta \in C^1([0,T],\RR^m)$, this is equivalent to being a classical viscosity solution; see \cite{Snotes}), and define $\oline{u}$ as in Definition \ref{D:nonsmoothH}. Let $\phi: \RR^d \times [0,T] \to \RR$ be $C^2$ in space and $C^1$ in time, and assume that $\oline{u}(\xi,t) - \phi(\xi,t)$ attains a local maximum at $(\xi_0,t_0) \in \RR^d \times [0,T]$. Using standard arguments from the theory of viscosity solutions, we may assume, without loss of generality, that the maximum is strict and $t_0 < T$.
	
	Fix $x_0 \in A^+_{\xi_0,t_0}$ and assume either that $x_0 \notin \partial \Omega$, or that $x_0 \in \partial \Omega$ and
	\begin{equation}\label{strictboundarycase}
	D\phi(\xi_0,t_0) \cdot n(x_0) > 0.
	\end{equation}
	Fix $\eps > 0$ and $\delta > 0$. Then
	\[
		(x,\xi,s,t) \mapsto u(x,s) - \Phi(x - \xi,t) - \phi\left( \xi, \frac{s+t}{2} \right) - \frac{\eps}{2} |x - x_0|^2 - \frac{|s-t|^2}{2\delta}
	\]
	attains a local maximum at some $(x_\delta,\xi_\delta,s_\delta,t_\delta) \in \oline{\Omega} \times \RR^d \times [0,T] \times [0,T]$ such that, for any fixed $\eps > 0$,
	\begin{equation}\label{maxconverges}
		\lim_{\delta \to 0} (x_\delta,\xi_\delta,s_\delta,t_\delta) = (x_0,\xi_0,t_0,t_0).
	\end{equation}
	In particular, $u(x,s) - \psi_\delta(x,s)$ attains a local maximum at $(x_\delta,s_\delta)$, where
	\[
		\psi_\delta(x,s) := \phi\left(x - x_\delta + \xi_\delta, \frac{s+t_\delta}{2} \right) + \frac{\eps}{2} |x - x_0|^2 + \frac{|s - t_\delta|^2}{2\delta}.
	\]
	If $x_0 \notin \partial \Omega$, then, for sufficiently small $\delta$, $x_\delta \notin \partial \Omega$. Otherwise, if $x_0 \in \partial \Omega$ and $x_\delta \in \partial \Omega$, then, in view of \eqref{strictboundarycase} and \eqref{maxconverges}, for sufficiently small $\delta$ and $0 < \eps < 1$,
	\[
		D\psi_\delta(x_\delta,s_\delta) \cdot n(x_\delta) = \left( D\phi\left(\xi_\delta, \frac{s_\delta + t_\delta}{2} \right) + \eps(x_\delta - x_0) \right) \cdot n(x_\delta) > 0,
	\]
	and, therefore,the definition of viscosity solutions yields
	\begin{align*}
		\frac{\partial \psi_\delta(x_\delta,s_\delta)}{\partial s} &\le F^*(D^2 \psi_\delta(x_\delta,s_\delta), D\psi_\delta(x_\delta,s_\delta), , u(x_\delta,s_\delta),x_\delta,t_\delta) \\
		&+ \sum_{i=1}^m H^i(D\psi_\delta(x_\delta,s_\delta)) \dot \zeta(s_\delta),
	\end{align*}
	that is,
	\begin{equation}\label{usubineq}
		\begin{split}
		&\frac{s_\delta - t_\delta}{\delta} + \frac{1}{2} \frac{\partial \phi}{\partial t}\left( \xi_\delta, \frac{s_\delta + t_\delta}{2} \right) \\
		&\le F^*\left( D^2\phi\left( \xi_\delta, \frac{s_\delta + t_\delta}{2} \right) + \eps \Id, D\phi\left( \xi_\delta, \frac{s_\delta + t_\delta}{2} \right) + \eps(x_\delta - x_0), u(x_\delta,s_\delta), x_\delta,s_\delta\right)\\
		&+ \sum_{i=1}^m H^i\left(D\phi\left( \xi_\delta, \frac{s_\delta + t_\delta}{2} \right) + \eps(x_\delta - x_0) \right)\dot \zeta^i(s_\delta).
		\end{split}
	\end{equation}
	We also have that
	\[
		(\eta,t) \mapsto \Phi(\eta,t) + \phi\left( x_\delta - \eta, \frac{s_\delta + t}{2} \right) + \frac{|s_\delta - t|^2}{2\delta}
	\]
	attains a minimum at $(\eta,t) = (x_\delta - \xi_\delta,t_\delta)$, which means that
	\begin{equation}\label{Phisupineq}
		\frac{s_\delta - t_\delta}{\delta} - \frac{1}{2} \frac{\partial \phi}{\partial t}\left( \xi_\delta, \frac{s_\delta+t_\delta}{2} \right) \ge \sum_{i=1}^m H^i\left(D\phi\left( \xi_\delta, \frac{s_\delta + t_\delta}{2} \right) \right)\dot \zeta^i(t_\delta).
	\end{equation}
	Subtracting \eqref{Phisupineq} from \eqref{usubineq}, sending first $\delta \to 0$ and then $\eps \to 0$, we obtain the deisred inequality
	\[
		 \frac{\partial \phi}{\partial t}( \xi_0, t_0) \le F^*(D^2 \phi(\xi_0,t_0), D\phi(\xi_0,t_0), u(x_0,t_0),x_0,t_0).
	\]
	Conversely, assume that $u$ satisfies the sub-solution inequality from Definition \ref{D:nonsmoothH}. Let $\phi: \RR^d \times [0,T] \to \RR$ be $C^2$ in $x$ and $C^1$ in $t$, and assume that $u(x,t) - \phi(x,t)$ attains a local maximum in $\oline{\Omega} \times [0,T]$ at $(x_0,t_0)$. As is standard from the viscosity solution theory, we may assume without loss of generality that $\phi$ takes the form, for some $p \in \RR^d$, $A \in \mbb S^d$, and $a \in \RR$,
	\[
		\phi(x,t) = p \cdot (x - x_0) + \frac{1}{2} A(x - x_0) \cdot (x - x_0) + a(t - t_0),
	\]
	and the maximum is strict over $\oline{\Omega} \times [0,t_0]$. We shall also assume that $x_0 \in \partial \Omega$ (otherwise the argument below is identical) and suppose that $p \cdot n(x_0) > 0$.
	
	Lemma \ref{L:Youngtrick} yields, for any $\eps > 0$ and $\xi \in \RR^d$,
	\begin{align*}
		\frac{1}{2} A(x - x_0) \cdot (x - x_0) 
		&\le \frac{1}{2} (A + \eps A^2) \xi \cdot \xi + \frac{\lambda_\eps}{2}|x - x_0 - \xi|^2,
	\end{align*}
	where $\lambda_\eps := \norm{A} + \eps^{-1}$. Defining also $A_\eps := A + \eps A^2 + \eps \Id$, we find that
	\[
		(x,\xi,t) \mapsto u(x,t) - p \cdot (x - x_0 - \xi) - \frac{\lambda_\eps}{2} |x - x_0 - \xi|^2 - p \cdot \xi - \frac{1}{2} A_\eps \xi \cdot \xi - a(t - t_0)
	\]
	attains a strict maximum in $\oline{\Omega} \times \RR^d \times [0,t_0]$ at $(x_0,0,t_0)$.
	
	Define
	\[
		\psi_\eps(x) := p \cdot (x - x_0) + \frac{\lambda_\eps}{2} |x - x_0|^2,
	\]
	which is convex, and, for $\gamma > 0$, define
	\[
		\psi_{\eps,\gamma}(x) := (\psi_\eps^* + \gamma G)^*(x) = \sup_{q \in \RR^d} \left\{ q \cdot x - \psi_\eps^*(q) - \gamma G(q) \right\},
	\]
	where $G$ is as in \eqref{Gfn}. Then, for fixed $\eps > 0$, as $\gamma \to 0$, $\psi_{\eps,\gamma}$ converges locally uniformly to $\psi_{\eps}$, and so
	\[
		(x,\xi,t) \mapsto u(x,t) - \psi_{\eps,\gamma}(x - \xi) - p \cdot \xi - \frac{1}{2} A_\eps \xi \cdot \xi - a(t - t_0)
	\]
	attains a maximum at some $(x_\gamma, \xi_\gamma,t_\gamma) \in \oline{\Omega} \times \RR^d \times [0,t_0]$ such that $\lim_{\gamma \to 0} (x_\gamma, \xi_\gamma,t_\gamma) = (x_0,0,t_0)$.
	
	Let $h_\gamma > 0$ be such that $\sup_{|s - t| \le h_\gamma} \max_{i=1,2,\ldots,m} |\zeta^i_s - \zeta^i_t| < \gamma$, fix $0 < h < h_\gamma$, and define
	\[
		\Phi(x,t) := \sup_{q \in \RR^d} \left\{ q\cdot x - \psi_\eps^*(q) - \gamma G(q) + \sum_{i=1}^m H^i(q) (\zeta^i_t - \zeta^i_{t_\gamma - h} )\right\}.
	\]
	Then Lemma \ref{L:C11solutions} implies that $\Phi \in C^{1,1}(\RR^d \times [t_\gamma - h, t_\gamma])$ and
	\[
		\frac{\partial \Phi}{\partial t} = \sum_{i=1}^m H^i(D\Phi)\dot \zeta^i(t) \quad \text{in } \RR^d \times [t_\gamma - h, t_\gamma].
	\]
	Moreover, as $\gamma \to 0$, $\Phi$ converges locally uniformly to $\psi_\eps$.
	
	Let $\mu  \in C^1([t_\gamma - h, t_\gamma])$ and suppose that
	\[
		u(x,t) - \Phi(x - \xi,t) - p \cdot \xi - \frac{1}{2} A_\eps\xi \cdot \xi - \mu(t)
	\]
	attains a maximum in $\oline{\Omega} \times \RR^d \times [t_\gamma- h , t_\gamma]$ at some $(\tilde x_\gamma, \tilde \xi_\gamma, \tilde t_\gamma)$ such that $\tilde t_\gamma > t_\gamma - h$. Note first that, if $\tilde x_\gamma \in \partial \Omega$ and $\gamma$ is sufficiently small, then the gradient in $\xi$ satisfies $(p + A_\eps \tilde \xi_\gamma) \cdot n(\tilde x_\gamma) > 0$. Therefore, by Definition \ref{D:nonsmoothH}, 
	\[
		\mu'(\tilde t_\gamma) \le F^*(A_\eps, p + A_\eps \tilde \xi_\gamma, u(\tilde x_\gamma,\tilde t_\gamma),\tilde x_\gamma,\tilde t_\gamma) \le \sup_{(x,\xi) \in a(\tilde t_\gamma)} F^*(A_\eps, p + A_\eps \xi, u(x,\tilde t_\gamma), x,\tilde t_\gamma),
	\]
	where, for $t \in [t_\gamma - h , t_\gamma]$,
	\[
		a(t) := \argmax_{(x,\xi)} \left\{ u(x,t) - \Phi(x - \xi,t) - p \cdot \xi - \frac{1}{2} A_\eps\xi \cdot \xi \right\}.
	\]
	This is a contradiction if 
	\[
		\mu(t) > \int_{t_\gamma - h}^t \sup_{(x,\xi) \in a(s)} F^*(A_\eps, p + A_\eps \xi, u(x,s), x,s)ds,
	\]
	and therefore we find that
	\begin{align*}
		u(x_\gamma,t_\gamma) &- \Phi(x_\gamma - \xi_\gamma,t_\gamma) - p \cdot \xi_\gamma - \frac{1}{2} A_\eps \xi_\gamma \cdot \xi_\gamma\\
		&\le \sup_{(x,\xi) \in \oline{\Omega} \times \RR^d} \left\{ u(x,t_\gamma- h) - \psi_{\eps,\gamma}(x-\xi) - p \cdot \xi - \frac{1}{2} A_\eps \xi \cdot \xi \right\} \\
		&+ \int_{t_\gamma - h}^{t_\gamma}\sup_{(x,\xi) \in a(s)} F^*(A_\eps, p + A_\eps \xi,u(x,s), x,s)ds.
	\end{align*}
	We also have, for all $(x,\xi) \in \RR^d \times \oline{\Omega}$,
	\[
		u(x,t_\gamma - h) - \psi_{\eps,\gamma}(x - \xi) - p \cdot \xi - \frac{1}{2} A_\eps\xi \cdot \xi + ah
		\le u(x_\gamma,t_\gamma) - \psi_{\eps,\gamma} (x_\gamma - \xi_\gamma) - \frac{1}{2} A_\eps \xi_\gamma \cdot \xi_\gamma.
	\]
	Combining the last two inequalities and rearranging terms yields
	\[
		ah \le \int_{t_\gamma - h}^{t_\gamma}\sup_{(x,\xi) \in a(s)} F^*(A_\eps, p + A_\eps \xi, u(x,s),x,s)ds
		+ \Phi(x_\gamma - \xi_\gamma,t_\gamma) - \psi_{\eps,\gamma}(x_\gamma - \xi_\gamma).
	\]
	Dividing by $h$ and sending $h \to 0$ gives
	\[
		a \le \sup_{(x,\xi) \in a(t_\gamma)} F^*(A_\eps, p + A_\eps,\xi, u(x,t_\gamma), x,t_\gamma) + \sum_{i=1}^m H^i(D\psi_{\eps,\gamma}(x_\gamma - \xi_\gamma,t_\gamma)) \dot \zeta^i(t_\gamma).
	\]
	We conclude, upon sending $\gamma \to 0$ and then $\eps \to 0$, that $a \le F^*(A,p, u(x_0,t_0),x_0,t_0) +\sum_{i=1}^m H^i(p)\dot \zeta^i(t_0)$, as desired.
\end{proof}

We next discuss the stability of sub- and super-solutions under half-relaxed limits. A consequence of the following result is the local-uniform stability of solutions.

{
Given a sequence $(u^n)_{n \in \NN} : \RR^d \times [0,T] \to \RR$, we define the upper- and lower- half-relaxed limits
\begin{align*}
	&u^\star(x,t) := \lim_{N \to \oo} \sup\left\{ u^n(y,s) : n \ge N, \; |y-x| + |s-t| \le \frac{1}{N} \right\}
	 \quad \text{and} \\ 
	&u_\star(x,t) := \lim_{N \to \oo} \inf\left\{ u^n(y,s) : n \ge N, \; |y-x| + |s-t| \le \frac{1}{N} \right\}.
\end{align*}
Note the difference between $u^\star$ and $u^*$ (resp. $u_\star$ and $u_*$), where the latter is the upper (lower) semi-continuous envelope of the fixed function $u$. Roughly speaking, in the half-relaxed limits above, the $\limsup$- and $\liminf$- operations are performed simultaneously in $\RR^d \times [0,T]$ and as $n \to \oo$ (see \cite[Section 6]{CIL}).
}

We then have the following stability result.
%(the proof of which is again found in the Appendix).

\begin{proposition}\label{P:halfrelaxed}
	Assume $H$ satisfies \eqref{A:DCH}, $\left\{(\zeta^n)_{n \in \NN},\zeta \right\}\subset C_0([0,T],\RR^m)$,  and, as $n \to \oo$, $\zeta^n$ converges uniformly to $\zeta$. Let $(u^n)_{n \in \NN} \subset USC(\RR^d \times [0,T])$ (resp. $LSC(\RR^d \times [0,T])$) be a sequence of sub- (resp. super-)solutions of \eqref{E:neumann} corresponding to the sequence $(\zeta^n)_{n \in \NN}$. Then $u^\star$ (resp. $u_\star$) belongs to $USC(\RR^d \times [0,T])$ (resp. $LSC(\RR^d \times [0,T])$) if it is finite, and is a sub- (resp. super-)solution of \eqref{E:neumann} corresponding to $\zeta$.
\end{proposition}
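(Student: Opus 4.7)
My plan is to reduce the statement to the classical stability of viscosity sub-solutions under half-relaxed limits, applied to the auxiliary functions $\overline{u}$ from Definition \ref{D:nonsmoothH}. I will treat only the sub-solution case, since the super-solution case is entirely analogous.

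First, I will fix an arbitrary pathwise viscosity solution $\Phi \in UC(\RR^d \times [0,T])$ of \eqref{E:pathwiseHJnonsmooth} driven by $\zeta$, and let $\Phi^n$ be the pathwise solution driven by $\zeta^n$ with the same initial data $\Phi(\cdot, 0)$. For $H$ satisfying \eqref{A:DCH}, the pathwise HJ solution map is continuous with respect to uniform convergence of the driver (a standard consequence of the Lions--Souganidis theory), so $\Phi^n \to \Phi$ locally uniformly on $\RR^d \times [0,T]$. Setting
\[
	\overline{u}^n(\xi, t) := \max_{x \in \overline{\Omega}} \left(u^n(x,t) - \Phi^n(x - \xi, t)\right), \quad \overline{u^\star}(\xi, t) := \max_{x \in \overline{\Omega}} \left(u^\star(x,t) - \Phi(x - \xi, t)\right),
\]
I will then verify the identity $(\overline{u}^n)^\star = \overline{u^\star}$, combining the local uniform convergence $\Phi^n \to \Phi$, the definition of the upper half-relaxed limit $u^\star$, and a compactness argument for the arg max sets.

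Next I will invoke the hypothesis that each $\overline{u}^n$ is a classical viscosity sub-solution of the PDE in Definition \ref{D:nonsmoothH} and appeal to the standard half-relaxed stability of viscosity sub-solutions. Given a smooth test function $\psi$ such that $\overline{u^\star} - \psi$ has a strict local max at $(\xi_0, t_0)$, one obtains, up to a subsequence, points $(\xi_n, t_n) \to (\xi_0, t_0)$ at which $\overline{u}^n - \psi$ attains a local max and $\overline{u}^n(\xi_n, t_n) \to \overline{u^\star}(\xi_0, t_0)$. For each $x_n \in A^{+, n}_{\xi_n, t_n}$, the sub-solution property gives
\[
	\min\left\{\psi_t(\xi_n, t_n) - F^*\left(D^2\psi(\xi_n,t_n), D\psi(\xi_n, t_n), x_n, t_n\right),\ D\psi(\xi_n, t_n) \cdot n(x_n)\right\} \le 0.
\]
To conclude, I must establish the analogous inequality at $(x_0, \xi_0, t_0)$ for every $x_0 \in A^+_{\xi_0, t_0}$. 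I will achieve this by a localization argument: the function $(x, \xi, t) \mapsto u^\star(x,t) - \Phi(x - \xi, t) - \psi(\xi, t) - \eta|x - x_0|^2$ has a strict local max at $(x_0, \xi_0, t_0)$, so its $u^n, \Phi^n$-version is maximized at points converging to $(x_0, \xi_0, t_0)$. To phrase this within the $\overline{u}$-framework, I will replace $\Phi^n$ by a perturbed pathwise solution $\tilde{\Phi}^n_\eta$ driven by $\zeta^n$ whose initial data absorbs the quadratic penalty (existence and $C^{1,1}$ regularity follow from a Hopf-type construction as in Lemma \ref{L:C11solutions}, applied after truncating the penalty to preserve uniform continuity). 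Sending $n \to \infty$ via upper semi-continuity of $F^*$ and continuity of $n(\cdot)$, and then $\eta \to 0$, will yield the desired inequality at $x_0$.

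The main obstacle will be this final selection step: an arbitrary $x_0 \in A^+_{\xi_0, t_0}$ need not arise as a limit of arg max points $x_n \in A^{+,n}_{\xi_n, t_n}$, so the naive limit of the sub-solution inequality for $\overline{u}^n$ only recovers the condition at a subset of $A^+_{\xi_0, t_0}$. The quadratic penalty $\eta|x - x_0|^2$, incorporated by perturbing the initial data of the test function $\Phi^n$, is what forces the arg max to localize near any prescribed $x_0$ while remaining within the pathwise HJ framework; the correction vanishes as $\eta \to 0$.
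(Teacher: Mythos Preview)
Your overall strategy---perturb the test function $\Phi$ by a penalty centred at the chosen $x_0$, solve the pathwise HJ equation with the perturbed data, and pass to the limit---is essentially the paper's. But several steps in your execution are either unnecessary or do not quite work as stated.

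First, the intermediate identity $(\overline{u}^n)^\star = \overline{u^\star}$ is both superfluous and fragile. It requires that the $\argmax$ over $x \in \oline\Omega$ is attained and stays in a compact set uniformly in $n$; when $\Omega$ is unbounded and $\Phi$ has no growth at infinity, there is no compactness and the identity can fail. The paper bypasses this entirely by working directly with the three-variable function $(x,\xi,t)\mapsto u^n(x,t)-\Phi^n(x-\xi,t)-\phi(\xi,t)$, and it \emph{builds in} the needed growth: the perturbed solution $\tilde\Phi$ is prescribed at time $t_0$ to equal $\Phi(\cdot,t_0)+M|\cdot-(x_0-\xi_0)|$. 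The cone term is uniformly continuous (no truncation needed), makes the maximum strict at $(x_0,\xi_0,t_0)$, and, via the estimates in \cite[Theorem~7.2]{Snotes}, forces $\tilde\Phi$ and the approximations $\Phi^n$ to blow up at spatial infinity uniformly in $n$ and $t$. Your truncated quadratic $\eta|x-x_0|^2$ does not provide this uniform growth, so the existence and convergence of the maximisers $(x_n,\xi_n,t_n)$ is not secured.

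Second, prescribing the penalty in the initial data at time $0$ is the wrong place. After evolving by the nonlinear HJ flow to time $t_0$, the perturbation $\tilde\Phi_\eta(\cdot,t_0)-\Phi(\cdot,t_0)$ is no longer $\eta|\cdot-(x_0-\xi_0)|^2$ (or anything explicit), so there is no reason the maximum in $x$ should occur at $x_0$. The paper avoids this by taking $\tilde\Phi$ to be the pathwise solution with data specified at $t_0$; then the penalty acts exactly where it is needed.

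Third, the appeal to Lemma~\ref{L:C11solutions} is misplaced. Definition~\ref{D:nonsmoothH} only requires $\Phi\in UC$ to be a pathwise viscosity solution; no $C^{1,1}$ regularity is used in the stability argument. Lemma~\ref{L:C11solutions} applies only to Hopf-type data built from uniformly convex Legendre transforms, not to an arbitrary $\Phi(\cdot,0)+\eta|\cdot-x_0|^2$, so it would not give you $C^{1,1}$ anyway.

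In short: drop the $(\overline{u}^n)^\star=\overline{u^\star}$ layer, replace the truncated quadratic by the cone $M|\cdot-(x_0-\xi_0)|$ prescribed at time $t_0$, and argue directly with the three-variable maximum. That is exactly what the paper does, and it closes all three gaps at once.
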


\begin{proof}
	We prove only the sub-solution statement, as the super-solution one is similar. Let $\Phi$ be as in Definition \ref{D:nonsmoothH}, define
	\[
		\oline{u}(\xi,t) := \sup_{x \in \oline{\Omega}} \left\{ u^\star(x,t) - \Phi(x-\xi,t) \right\}, \quad (\xi,t) \in \RR^d \times [0,T],
	\]
	and assume that, for some smooth $\phi$, $\oline{u}(\xi,t) - \phi(\xi,t)$ attains a strict maximum at $(\xi_0,t_0)$. 
	
	Choose $x_0 \in A^+_{\xi_0,t_0}$ and $M > 0$, and let $\tilde \Phi$ be the solution of \eqref{E:pathwiseHJnonsmooth} such that $\tilde \Phi(\cdot,t_0) = \Phi(\cdot,t_0) + M|\cdot - x_0|$. Then $\tilde \Phi \ge \Phi$, and if $M$ is suffiiently large, then
	\[
		\lim_{|x| \to \oo}\inf_{t \in [0,T]} \tilde \Phi(x,t) = +\oo,
	\]
	which is a consequence of the estimates in \cite[Theorem 7.2]{Snotes}. We then have that
	\[
		u^\star(x,t) - \tilde \Phi(x-\xi,t) - \phi(\xi,t)
	\]
	attains a strict maximum at $(x_0,\xi_0,t_0)$.
	
	For $n \in \NN$, let $\Phi^n$ solve
	\[
		d\Phi^n = \sum_{i=1}^m H^i(D\Phi^n) d \zeta^{n,i} \quad \text{in } \RR^d \times [0,T], \quad \Phi^n(\cdot,t_0) = \tilde \Phi(\cdot,t_0).
	\]
	Then
	\[
		\lim_{|x| \to +\oo} \inf_{n \in \NN} \inf_{t \in [0,T]} \Phi^n(x,t) = +\oo,
	\]
	and $\lim_{n \to \oo} \Phi^n = \tilde \Phi$ locally uniformly in $\RR^d \times [0,T]$.
	
	Let $(x_n,\xi_n,t_n)$ be a maximum point of $u^n(x,t) - \Phi^n(x-\xi,t) - \phi(\xi,t)$, and let $(y,\eta,s) \in \oline{\Omega} \times \RR^d \times [0,T]$ be a limit point as $n \to \oo$ (the set of limit points is nonempty in view of the uniform-in-$n$ growth of $\Phi^n$ as $|x| \to \oo$). Let $(\tilde x_n, \tilde t_n)$ be any sequence such that
	\[
		\lim_{n \to \oo} (\tilde x_n,\tilde t_n) = (x_0,t_0) \quad \text{and} \quad \lim_{n \to \oo} u^n(\tilde x_n,\tilde t_n) = u^\star(x_0,t_0).
	\]
	Sending $n \to \oo$ in the inequality
	\[
		u^n(\tilde x_n,\tilde t_n) - \Phi^n(\tilde x_n - \xi_0,\tilde t_n) - \phi(\xi_0,\tilde t_n)
		\le u^n(x_n,t_n) - \Phi^n(x_n - \xi_n, t_n) - \phi(\xi_n,t_n)
	\]
	gives
	\[
		u^\star(x_0,t_0) - \Phi(x_0 - \xi_0,t_0) - \phi(\xi_0,t_0)
		\le u^\star(y,s) - \Phi(y - \xi_0,s) - \phi(\xi_0,s),
	\]
	which, in view of the strictness of the maximum, means that $(y,s) = (x_0,t_0)$, and thus the full sequence $(x_n,t_n)$ converges, as $n \to \oo$, to $(x_0,t_0)$. 
	
	Applying the sub-solution property to each $u^n$, noting that, for sufficiently large $n$, $D\phi(\xi_n,t_n) \cdot n(x_n) > 0$, we find that
	\[
		\phi_t(\xi_n,t_n) \le F^*(D^2 \phi(\xi_n,t_n), D\phi(\xi_n,t_n), u^n(x_n,t_n),x_n,t_n),
	\]
	and so sending $n \to \oo$ gives the desired solution inequality
	\[
		\phi_t(\xi_0,t_0) \le F^*(D^2 \phi(\xi_0,t_0), D\phi(\xi_0,t_0) , u^\star(x_0,t_0),x_0,t_0).
	\]
\end{proof}

{
The final result of the Appendix is a doubling variables lemma, which is the second-order analogue of Lemma \ref{L:doubled}. Before we state it, we introduce the notions of parabolic sub- and super-jets, as well as their limiting counterparts (see \cite{CIL} for example): for $(x,t) \in \oline\Omega \times (0,T)$ and $u \in USC(\oline\Omega \times (0,T))$,
\begin{align*}
	\mcl P^+ u(x,t) := &\Big\{ (a,p,X) \in \RR \times \RR^d \times \mbb S^d \\
	&: u(x',t') \le u(x,t) + p \cdot (x' - x) + \frac{1}{2} X(x' - x) \cdot (x' - x) \\
	& \quad + a(t' - t) + o( |x' - x|^2 + |t' - t|) \text{ as } (x',t') \to (x,t) \Big\};
\end{align*}
for $u \in LSC(\oline\Omega \times (0,T))$,
\begin{align*}
	\mcl P^- u(x,t) := &\Big\{ (a,p,X) \in \RR \times \RR^d \times \mbb S^d \\
	&: u(x',t') \ge u(x,t) + p \cdot (x' - x) + \frac{1}{2} X(x' - x) \cdot (x' - x) \\
	&  \quad+ a(t' - t) + o( |x' - x|^2 + |t' - t|) \text{ as } (x',t') \to (x,t) \Big\};
\end{align*}
and for $u \in USC(\oline\Omega \times (0,T))$ (resp. $LSC(\oline\Omega \times (0,T))$),
\begin{align*}
	\oline{\mcl P}^\pm u(x,t) &:= \Big\{ (a,p,X) \in \RR \times \RR^d \times \mbb S^d : \exists  \big( (x_n,t_n) \big)_{n \in \NN} \subset \Omega \times (0,T) \text{ and, for $n \in \NN$, } \\
	&(a_n,p_n,X_n) \in \mcl P^\pm u(x_n,t_n) \text{ such that } \lim_{n \to \oo} (x_n,t_n,a_n,p_n,X_n) = (x,t,a,p,X) \Big\}.
\end{align*}

We will make use of the following version of the Ishii lemma from the theory of viscosity solutions:

\begin{lemma}\label{L:maxprinciple}
	Let $u \in USC(\RR^d \times [0,T])$ and $v \in LSC(\RR^d \times [0,T])$, assume there exists $C > 0$ such that, in the sense of distributions,
	\[
		D^2 u \ge -C \Id, \quad D^2 v \le C \Id, \quad \frac{\partial u}{\partial t} \le C, \quad \text{and} \quad \frac{\partial v}{\partial t} \ge -C.
	\]
	Define $w(x,y,t) = u(x,t) - v(y,t)$, and assume that, for some $(x_0,y_0,t_0) \in \RR^d \times \RR^d \times [0,T]$, $a \in \RR$, $p,q \in \RR^d$, and $A \in \mbb S^{2d}$, we have $\big( a, (p,q), A \big) \in \mcl P^+ w(x_0,y_0,t_0)$. Then there exist $X,Y \in \mbb S^d$ and $\alpha,\beta \in \RR$ such that $(\alpha,p,X) \in \oline{\mcl P}^+ u(x_0,t_0)$, $(\beta,-q,Y) \in \oline{\mcl P}^-v(y_0,t_0)$, $\alpha - \beta = a$, and
	\[
		-C
		\begin{pmatrix}
			 \Id & 0 \\
			0 & \Id
		\end{pmatrix}
		\le
		\begin{pmatrix}
			X & 0 \\
			0 & -Y
		\end{pmatrix}
		\le
		A.
	\]
\end{lemma}

We omit the proof of Lemma \ref{L:maxprinciple}, because it follows exactly as in \cite[Theorem 7]{CImax}. The only difference is that $u$ and $-v$ are already assumed to be semi-convex, and it is not required to regularize $u$ or $v$ in the space variable via $\sup$- (resp. $\inf$-) convolution. This accounts for the upper and lower bounds for the matrix in the statement of the lemma.

\begin{proposition}\label{P:thmofsums}
	Assume that $u \in USC(\oline{\Omega} \times [0,T])$ and $v \in LSC(\oline{\Omega} \times [0,T])$ are respectively a sub- and super-solution of \eqref{E:smoothgeneraleq}. For $0 \le a < b \le T$, let $\Phi \in C([a,b],C^2(\oline{\Omega}))$ be a solution of \eqref{E:doubled} and $\psi \in C^1([a,b])$, and assume that
	\[
		\oline{\Omega} \times \oline{\Omega} \times [a,b] \ni (x,y,t) \mapsto u(x,t) - v(y,t) - \Phi(x,y,t) -\psi(t)
	\]
	attains a maximum at $(x_0,y_0,t_0)$ with $t_0 > a$,
	\begin{equation}\label{A:D>0}
		\left\{
		\begin{split}
		&D_x \Phi(x_0,y_0,t_0) \cdot n(x) > 0 \quad \text{if } x_0 \in \del \Omega, \quad \text{and}\\
		&D_y \Phi(x_0,y_0,t_0) \cdot n(y) > 0 \quad \text{if } y_0 \in \del \Omega.
		\end{split}
		\right.
	\end{equation}
	Set $\phi := \Phi(\cdot,\cdot,t_0)$. Then, for every $\delta >0$, there exist $X_\delta,Y_\delta \in \mathbb{S}^d$ such that
	\begin{equation}\label{E:lemmamatrix}
		\begin{split}
		- \left( \norm{D^2\phi(x_0,y_0)} + \frac{1}{\delta} \right)
		\begin{pmatrix}
			\Id & 0 \\
			0 & \Id
		\end{pmatrix}
		&\le
		\begin{pmatrix}
			X_\delta & 0 \\
			0 & -Y_\delta
		\end{pmatrix}\\
		&\le
		D^2\phi(x_0,y_0) + \delta (D^2 \phi(x_0,y_0))^2 
		\end{split}
	\end{equation}
	and
	\[
		\psi'(t_0) \le F(X_\delta,D_x \phi(x_0,y_0), u(x_0,t_0), x_0, t_0) - F(Y_\delta, -D_y \phi(x_0,y_0), v(y_0,t_0), y_0, t_0).
	\]
\end{proposition}

The proof of Proposition \ref{P:thmofsums} is similar to the uniqueness proof in \cite{LSunique} (see also \cite{Snotes}), but we have succeeded in simplifying many of the arguments. Additional technicalities also arise in order to deal with the boundary condition.

\begin{proof}[Proof of Proposition \ref{P:thmofsums}]
%	We assume the maximum is strict, without loss of generality. Otherwise, we replace $\Phi$ by the solution of \eqref{E:doubled} that, at $t = t_0$, is equal to 
%	\[
%		(x,y) \mapsto \phi(x,y) + \nu \beta(|x - x_0|) + \nu \beta( |y - y_0|),
%	\]
%	where $\beta \in C^2(\RR_+,\RR_+)$ grows linearly to infinity and $\beta'(0) = \beta''(0) = 0$, and $\nu > 0$ is taken sufficiently small that \eqref{A:D>0} is still satisfied. The comparison principle for smooth solutions of \eqref{E:doubled} implies that the new solution is larger than $\Phi$. 
	
	By subtracting a constant, we may also assume without loss of generality that $\phi(x_0,y_0) = 0$.  Set $p := D_x \phi(x_0,y_0)$, $q := D_y \phi(x_0,y_0)$, $A := D^2 \phi(x_0,y_0)$, and $a := \psi'(t_0)$, and fix $\eps > 0$. Then, for some $r>0$ and for all $(x,y) \in B_r(x_0) \times B_r(y_0)$,
	\[
		\phi(x,y) \le p\cdot (x - x_0) + q \cdot (y - y_0) + \frac{1}{2} (A+ \eps I) (x - x_0,y-y_0) \cdot (x-x_0,y-y_0).
	\]
	In view of \eqref{A:D>0}, if $x_0 \in \del \Omega$, then $p \cdot n(x_0) > 0$, and if $y_0 \in \del \Omega$, then $q \cdot n(y_0) > 0$.
	
	We use the inequality in Lemma \ref{L:Youngtrick} arising from Young's inequality, with $n = 2d$, $\mcl A = A + \eps I := A_\eps$, $X = (x-x_0,y-y_0)$, and $\Xi = (\xi,\eta)$. Setting $A_{\delta,\eps} = A_\eps + \delta A_\eps^2$, this yields, for all $(x,y,\xi,\eta) \in B_r(x_0) \times B_r(y_0) \times \RR^d \times \RR^d$,
	\begin{align*}
		\phi(x,y) 
		&\le p\cdot (x -x_0 - \xi) + q \cdot ( y - y_0- \eta) \\
		&+ \frac{1}{2} \left(\norm{A_\eps} + \frac{1}{\delta} \right)( |x - x_0-\xi|^2 + |y -y_0 - \eta|^2) \\
		&+ p\cdot \xi + q \cdot \eta + \frac{1}{2} A_{\delta,\eps} (\xi,\eta) \cdot (\xi, \eta).
	\end{align*}	
	We introduce the shorthand
	\[
		S_{\pm}(t,t_0) := \prod_{i=1}^m S^i_{\pm}(\zeta^i_t - \zeta^i_{t_0}) \quad \text{and} \quad
		S_d(t,t_0) := \prod_{i=1}^m S^i_d(\zeta^i_t - \zeta^i_{t_0}).
	\]
	If $f,g \in C^2(\RR^d)$ and $\psi(x,y) := f(x) + g(y)$, then, for all $(x,y) \in \RR^d \times \RR^d$ and $t$ sufficiently close to $t_0$, depending on $\norm{D^2f}_\oo$ and $\norm{D^2 g}_\oo$,
	\[
		S_d(t,t_0)\tilde \psi(x,y) = S_+(t,t_0)f(x) + S_-(t,t_0)g(y).
	\]
	Then, if $h$ is sufficiently small, we can define, for $(x,y, \xi, \eta,t) \in B_r(x_0) \times B_r(y_0) \times \RR^d \times \RR^d \times (t_0 - h, t_0 + h)$,
	\[
		\left\{
		\begin{split}
		&\Phi_+(x,\xi,t) := S_+(t,t_0) \left(p \cdot (\cdot - x_0 - \xi) + \frac{1}{2} \left( \norm{A_\eps} + \frac{1}{\delta} \right) |\cdot - x_0 - \xi|^2 \right)(x),\\
		&\Phi_-(y,\eta,t) := S_-(t,t_0) \left( q \cdot (\cdot - y_0 - \eta) + \frac{1}{2} \left( \norm{A_\eps} + \frac{1}{\delta} \right)|\cdot - y_0 - \eta|^2 \right)(y),\\
		&\oline{u}(\xi,t) := \sup_{x \in B_r(x_0)} \left( u(x,t) - \Phi_+(x,\xi,t) \right), \quad \text{and} \\
		&\uline{v}(\eta,t) := \inf_{y \in B_r(y_0)} \left( v(y,t) + \Phi_-(y,\eta,t) \right).
		\end{split}
		\right.
	\]	
	It then follows that
	\[
		\oline{u}(\xi,t) - \uline{v}(\eta,t) - p\cdot \xi - q\cdot \eta - \frac{1}{2}  A_{\delta,\eps} (\xi,\eta) \cdot (\xi,\eta) - a(t- t_0) - \frac{\eps}{2} |t-t_0|^2
	\]
	attains a local maximum at $(\xi,\eta,t) = (0,0,t_0)$ in $\RR^d \times \RR^d \times (t_0 - h,t_0 + h)$.
	
		If $x_0 \in \del \Omega$ (so that $p \cdot n(x_0) > 0$), then, shrinking $h$ and $r$ if necessary, we have, for $(x,\xi, t) \in B_r(x_0) \times B_r(0) \times (t_0 - h,t_0 + h)$, the strict inequality $D_x \Phi_+(x,\xi,t) \cdot n(x) > 0$, with a similar comment for when $y_0 \in \del \Omega$.
	
	Both $D^2\Phi_+$ and $D^2\Phi_-$ are continuous on $\RR^d \times \RR^d \times (t_0 - h, t_0 +h)$, and, therefore,
	\begin{align*}
		C_h := &\sup_{(x,\xi,t) \in B_r(x_0) \times \RR^d \times (t_0 - h, t_0 + h)} \norm{D_\xi^2 \Phi_+(x,\xi,t)}\\
		&\vee \sup_{(y,\eta) \in B_r(y_0) \times \RR^d \times (t_0 - h, t_0 + h) } \norm{D_\eta^2 \Phi_-(y,\eta,t)}
		< \oo,
	\end{align*}
	with $\lim_{h \to 0} C_h = \norm{A_\eps} + \frac{1}{\delta}$. Then $\oline{u}$ and $\uline{v}$ are respectively semiconvex and semiconcave in the spatial variable, and, in the distributional sense, $D_\xi^2 \overline{u} \ge - C_h \Id$ and $D_\eta^2 \underline{v} \le C_h \Id$.
	
	Next, observe that, if $h$ is sufficiently small, then, for all $t \in (t_0 - h, t_0 + h)$, the supremum and infimum in the definitions of respectively $\oline{u}$ and $\uline{v}$ are achieved for some $x(t) \in B_r(x_0)$ and $y(t) \in B_r(y_0)$. This is because, for $t = t_0$, the extrema are attained uniquely at respectively $x = x_0$ and $y = y_0$ because of the addition of $\eps \Id$ to $A$. 
	
%	Set
%	\[
%		\left\{
%		\begin{split}
%		&K := \sup_{x \in B_r(x_0)} \sup_{\xi \in \RR^d} \sup_{t \in (t_0 - h, t_0 + h)} \left\{ \left|D^2_x \Phi_+( x,  \xi, t)\right| + |D_x \Phi_+( x,  \xi,  t)| \right\},\\
%		&M := \sup_{x \in \RR^d} \sup_{t \in (t_0 - h, t_0 + h)} u(x,t), \quad \text{and} \\
%		&\alpha_0 := \sup_{|X| + |p| \le K} \sup_{r \le M} \sup_{x \in \RR^d} \sup_{t \in (t_0 - h, t_0 + h)} F(X,p,r,x,t),
%		\end{split}
%		\right.
%	\]
%	
	Fix $s \in (t_0 - h,t_0+h)$ and $ \xi \in B_r(0)$, and assume, for some $\alpha > 0$, that $\oline{u}( \xi,t) - \alpha t$ attains a maximum at some $\bar t \in (s,t_0+h]$. Then, for some $\bar x \in B_r(x_0)$,
	\[
		u(x,t) - \Phi(x, \xi,t) - \alpha t
	\]
	attains a local maximum at $(\bar x, \bar t)$. Note that, if $x_0 \in \del \Omega$ and $\bar x \in \del \Omega$, then we have ensured that $D_x \Phi(\bar x, \xi, \bar t) \cdot n(\bar x) > 0$, and, therefore, Definition \ref{D:smoothH} yields
	\[
		\alpha \le F(D^2_x \Phi(\bar x,  \xi,\bar t), D_x \Phi(\bar x,  \xi, \bar t), u(\bar x, \bar t), \bar x, \bar t).
	\]
	In view of \eqref{A:Fcts}, this is a contradiction for sufficiently large $\alpha$, depending on bounds for $u$, $D\Phi$, and $D^2 \Phi$. It follows that, for some $\alpha_0 > 0$, $\del\oline{u}/\del t \le \alpha_0$ as a distribution on $B_r(0) \times (t_0 - h, t_0 + h)$, and, similarly, there exists $\beta_0>0$ such that $\del \uline{v}/\del t \ge -\beta_0$ on $B_r(0) \times (t_0 - h, t_0 + h)$.
	
	It is now a consequence of Lemma \ref{L:maxprinciple} that there exist $X_{\delta,\eps},Y_{\delta,\eps} \in \mbb S^d$ and $\alpha,\beta \in \RR$ such that $(\alpha,p,X_{\delta,\eps}) \in \oline{\mcl P}^+ u(0,t_0)$, $(\beta,-q,Y_{\delta,\eps}) \in \oline{\mcl P}^- v(0,t_0)$, $\alpha - \beta = a$, and
	\begin{equation} \label{E:matrix}
		-C_h
		\begin{pmatrix}
			\Id & 0 \\
			0 & \Id
		\end{pmatrix}
		\le
		\begin{pmatrix}
			X_{\delta,\eps} & 0 \\
			0 & -Y_{\delta,\eps}
		\end{pmatrix}
		\le
		A_{\delta,\eps}.
	\end{equation}
	Then, for all $n \in \NN$, there exist $(\xi_n,\eta_n,s_n,t_n) \in \RR^d \times \RR^d \times (t_0 - h, t_0 + h) \times (t_0 - h, t_0 + h)$, $(\alpha_n,p_n, X_{\delta,\eps,n}) \in \mcl P^+ \oline{u}(\xi_n,s_n)$, and $(\beta_n,q_n, Y_{\delta,\eps,n}) \in \mcl P^-\uline{v}(\eta_n,t_n)$ such that
	\[
		\lim_{n \to \oo} (X_{\delta,\eps,n},Y_{\delta,\eps,n},p_n, q_n, \alpha_n, \beta_n, \xi_n, \eta_n, s_n,t_n) = (X_{\delta,\eps},Y_{\delta,\eps},p,q,\alpha,\beta,0,0,t_0,t_0).
	\]
%	and, for all $(\xi,\eta,t) \in B_{r_n}(\xi_n) \times B_{r_n}(\eta_n) \times (t_n - r_n, t_n]$,
%	\[
%		\left\{
%		\begin{split}
%		\oline{u}(\xi,t) &\le \oline{u}(\xi_n,t_n) + (\alpha_n - \theta)(t - t_n) + p_n \cdot( \xi - \xi_n) + \frac{1}{2} (X_n + \theta \Id) (\xi - \xi_n)\cdot( \xi - \xi_n) \quad \text{and}\\
%		\underline{v}(\eta,t) &\ge \underline{v}(\eta_n,t_n) + (\beta_n + \theta)(t - t_n) - q_n \cdot (\eta - \eta_n) + \frac{1}{2} (Y_n - \theta \Id) (\eta - \eta_n) \cdot ( \eta - \eta_n).
%		\end{split}
%		\right.
%	\]
%	Thus, if
%	\[
%		\left\{
%		\begin{split}
%		&J_n^+(\xi) := p_n \cdot ( \xi - \xi_n) + \frac{1}{2}  (X_n + \theta \Id) (\xi - \xi_n) \cdot (\xi - \xi_n),\\
%		&J_n^-(\eta) := - q_n \cdot ( \eta - \eta_n) + \frac{1}{2}  (Y_n - \theta \Id) (\eta - \eta_n) \cdot ( \eta - \eta_n),\\
%		&g^+(t) := (\alpha_n - \theta)t, \quad \text{and}\\
%		&g^-(t) := (\beta_n + \theta)t,
%		\end{split}
%		\right.
%	\]	
%	it follows that, for some $x_n \in \oline{ B_r(x_0)}$ and $y_n \in \oline{B_r(y_0)}$,
%	\begin{equation} \label{E:maximum}
%		u(x,t) - \Phi_+(x,\xi,t) - J_n^+(\xi) - g^+(t)
%	\end{equation}
%	attains a maximum in $\oline{B_r(x_0)} \times B_{r_n}(\xi_n) \times (t_n - r_n, t_n]$ at $(x_n,\xi_n,t_n)$, and
%	\begin{equation}\label{E:minimum}
%		v(y,t) + \Phi_-(y,\eta,t) - J_n^-(\eta) - g^-(t) 
%	\end{equation}
%	attains a minimum in $\oline{B_r(y_0)} \times B_{r_n}(\eta_n) \times (t_n - r_n,t_n]$ at $(y_n,\eta_n,t_n)$.
%	
	Let
	\[
		x_n \in \argmax_{x \in \oline{B_r(x_0)}} \left( u(x,s_n) - \Phi_+(x,\xi_n,s_n) \right)
	\quad \text{and} \quad
		y_n \in \argmin_{y \in \oline{B_r(y_0)}} \left( v(y,t_n) + \Phi_-(y,\eta_n,t_n) \right).
	\]
	Observe that $(x_n,y_n) \to (0,0)$ as $n \to \oo$. Indeed, if $(\oline{x}, \oline{y})$ is a limit point of $((x_n,y_n))_{n \in \NN}$, then
	\[
		u(x, t_0) - v(y,t_0) - p \cdot x - q \cdot y - \frac{1}{2}\left(\norm{A_\eps} + \frac{1}{\delta} \right)(|x|^2 + |y|^2)
	\]
	attains a maximum in $\oline{B_r(x_0)} \times \oline{B_r(y_0)}$ at $(\oline{x}, \oline{y})$, and, therefore, $(\oline{x}, \oline{y}) = (0,0)$. Thus, if $n$ is large enough, $x_n \in B_r(x_0)$ and $y_n \in B_r(y_0)$. Moreover, if $x_0 \in \del \Omega$, then, for $n$ sufficiently large, if also $x_n \in \del \Omega$, then $D_x \Phi_n^+(x_n,s_n) \cdot n(x_n) > 0$, and a similar remark holds if $y_0 \in \del \Omega$ and $y_n \in \del \Omega$ for some sufficiently large $n$.
	
	Definition \ref{D:nonsmoothH} now yields
	\[
		\alpha_n - \beta_n \le F(X_{\delta,\eps,n},p_n, u(x_n,s_n),x_n,s_n) - F(Y_{\delta,\eps,n},q_n, v(y_n,t_n),y_n, t_n),
	\]
	and sending $n \to \oo$ then implies that
	\[
		a \le F(X_{\delta,\eps},p,u(0,t_0),0,t_0) - F(Y_{\delta,\eps},q,v(0,t_0),0,t_0).
	\]
	As $\eps \to 0$, along an appropriate subsequence, $X_{\delta,\eps}$ and $Y_{\delta,\eps}$ converge to some matrices $X_\delta,Y_\delta \in \mbb S^d$ satisfying \eqref{E:matrix} with right-hand side $A_\delta$. The proof is finished upon sending $h \to 0$.

\end{proof}
}

\end{appendix}
%%%%%%%%%%%%%%%%%%%%%%%%%%%%%%%%%%%%%%%%%%%%%%
%% Multiple Appendixes:                     %%
%%%%%%%%%%%%%%%%%%%%%%%%%%%%%%%%%%%%%%%%%%%%%%
%\begin{appendix}
%\section{???}
%
%\section{???}
%
%\end{appendix}

%%%%%%%%%%%%%%%%%%%%%%%%%%%%%%%%%%%%%%%%%%%%%%
%% Support information, if any,             %%
%% should be provided in the                %%
%% Acknowledgements section.                %%
%%%%%%%%%%%%%%%%%%%%%%%%%%%%%%%%%%%%%%%%%%%%%%
%\begin{acks}[Acknowledgments]
% The authors would like to thank ...
%\end{acks}
%%%%%%%%%%%%%%%%%%%%%%%%%%%%%%%%%%%%%%%%%%%%%%
%% Funding information, if any,             %%
%% should be provided in the                %%
%% funding section.                         %%
%%%%%%%%%%%%%%%%%%%%%%%%%%%%%%%%%%%%%%%%%%%%%%
\begin{funding}
The first author was partially supported by the ANR via the project ANR-16-CE40-0020-01.
The second author was partially supported by NSF DMS 1902658 and NSF DMS 1840314
\end{funding}

%%%%%%%%%%%%%%%%%%%%%%%%%%%%%%%%%%%%%%%%%%%%%%
%% Supplementary Material, including data   %%
%% sets and code, should be provided in     %%
%% {supplement} environment with title      %%
%% and short description. It cannot be      %%
%% available exclusively as external link.  %%
%% All Supplementary Material must be       %%
%% available to the reader on Project       %%
%% Euclid with the published article.       %%
%%%%%%%%%%%%%%%%%%%%%%%%%%%%%%%%%%%%%%%%%%%%%%
%\begin{supplement}
%\stitle{???}
%\sdescription{???.}
%\end{supplement}

%%%%%%%%%%%%%%%%%%%%%%%%%%%%%%%%%%%%%%%%%%%%%%%%%%%%%%%%%%%%%
%%                  The Bibliography                       %%
%%                                                         %%
%%  imsart-???.bst  will be used to                        %%
%%  create a .BBL file for submission.                     %%
%%                                                         %%
%%  Note that the displayed Bibliography will not          %%
%%  necessarily be rendered by Latex exactly as specified  %%
%%  in the online Instructions for Authors.                %%
%%                                                         %%
%%  MR numbers will be added by VTeX.                      %%
%%                                                         %%
%%  Use \cite{...} to cite references in text.             %%
%%                                                         %%
%%%%%%%%%%%%%%%%%%%%%%%%%%%%%%%%%%%%%%%%%%%%%%%%%%%%%%%%%%%%%

%% if your bibliography is in bibtex format, uncomment commands:
\bibliographystyle{imsart-number} % Style BST file (imsart-number.bst or imsart-nameyear.bst)
\bibliography{neumannrefs}       % Bibliography file (usually '*.bib')

%% or include bibliography directly:
% \begin{thebibliography}{}
% \bibitem{b1}
% \end{thebibliography}

\end{document}